\documentclass[1 [leqno,11pt]{amsart}
\usepackage{amssymb, amsmath,amsmath,latexsym,amssymb,amsfonts,amsbsy, amsthm}
\usepackage{graphicx}

\usepackage{tikz}
\usepackage{tikz-cd}

\setlength{\oddsidemargin}{0mm}
\setlength{\evensidemargin}{0mm} \setlength{\topmargin}{-15mm}
\setlength{\textheight}{220mm} \setlength{\textwidth}{155mm}

\numberwithin{equation}{section}

\allowdisplaybreaks



\let\D=\Delta



\def\R{\mathbf R}
\def\Z{\mathbf Z}
\def\N{\mathbf N}

\newcommand{\beq}{\begin{equation}}
\newcommand{\eeq}{\end{equation}}
\newcommand{\ben}{\begin{eqnarray}}
\newcommand{\een}{\end{eqnarray}}
\newcommand{\beno}{\begin{eqnarray*}}
\newcommand{\eeno}{\end{eqnarray*}}


\newtheorem{theorem}{Theorem}[section]

\newtheorem{lemma}[theorem]{Lemma}
\newtheorem{proposition}[theorem]{Proposition}

\newtheorem{remark}[theorem]{Remark}
\newtheorem{Theorem}{Theorem}[section]

\newtheorem{Corollary}[Theorem]{Corollary}

\newtheorem{example}[Theorem]{Example}

\begin{document}

\title[Global Three dimensional  steady  Prandtl Equation]
{Global stability  of  three dimensional  steady \\ Prandtl Equation}

\author{Weiming Shen}
\address{School of Mathematical Sciences, Capital Normal University,  Beijing, China
}
\email{wmshen@aliyun.com}

\author{Yue Wang}
\address{School of Mathematical Sciences, Capital Normal University,  Beijing, China}
\email{yuewang37@aliyun.com}

\author{Tong Yang}
\address{Department of Applied Mathematics, The Hong Kong Polytechnic University,  Hong Kong, China}
\email{t.yang@polyu.edu.hk}

\begin{abstract}
The well-posedness of  three dimensional Prandtl equation is an outstanding open problem
despite of the study in analytic and Gevrey function spaces. This problem is raised as the third open problem in the classical monograph by  Oleinik and Samokhin \cite{Olei}.
The paper aims to address
this  open problem  in the steady case by introducing  a novel approach to establish the  global  stability of  background profile that includes the celebrated Blasius solutions,  which is of particular interest in light of the	 recent affirmation of Prandtl's ansatz  in two dimensional steady setting by Iyer and Masmoudi \cite{IN3}.
In three dimensions, the well-established analytic approaches for two dimensional setting can not be applied because of  the appearance of the secondary flow.
     Rather than employing cancellation mechanisms or  coordinate transforms,
  we introduce new intrinsic vector fields featuring   curvature-type commutators   and establish vector field-based  maximum principles through pointwise and integral estimates  to address the loss of tangential derivatives.

\end{abstract}

\maketitle
\tableofcontents

\date{\today}
\section{Introduction}\label{20250703-intro}
To describe the fluid behavior governed by  the Navier-Stokes equation with no-slip boundary condition in high Reynolds number regime,
Prandtl in \cite{Pr} developed the celebrated boundary layer theory by observing the balance between convection and diffusion effects. In two dimensions (2D), Iyer and Masmoudi \cite{IN3} justified  Prandtl's boundary layer theory globally in the $x$-variable for a large class of
boundary layers including the classical Blasius profiles.
Denoting by $(u,v)$ the tangential component and by $w$ the vertical component of the velocity field,
the  steady boundary layer equation  in three  spatial dimensions (3D) takes the following form
\begin{equation}\label{eq:3DPrandtl}
	\left\{
	\begin{aligned}
		&u \partial_x u +v\partial_{y}u+w\partial_{z}u-\partial_{z}^2u=-\partial_{x}p,\quad in \quad  \Omega,\\&u \partial_x v +v\partial_{y}v+w\partial_{z}v-\partial_{z}^2v=-\partial_{y}p,\quad in \quad  \Omega,\\
		&\partial_xu+\partial_y v+\partial_z w=0,\quad in \quad  \Omega,\\
		&(u,v,w)|_{z=0}=0\quad\mbox{and}\quad \displaystyle\lim_{z\to+\infty} (u,v)=(U,V),\\
		&u|_{x=0,\,y=0}= u_{bd}, \quad v|_{x=0,\,y=0}= v_{bd},
	\end{aligned}
	\right.
\end{equation}where  $$\Omega=(0,X]\times(0,Y]\times(0,+\infty),$$ and Bernoulli's law holds for the outer flow,
\begin{align}\label{1121-3}\begin{split}
		&U\partial_x U+V\partial_y U+\partial_x p=0,\\
		&U\partial_x V+V\partial_y V+\partial_y p=0.
\end{split}\end{align}

Take $(u_B, v_B,w_B)$ to be a steady solution to the 3D Prandtl equation \eqref{eq:3DPrandtl} as follows, \begin{align}\label{FS} (u_B(x,y,z),v_B(x,y,z),w_B(x,y,z))=(u_{s}(\frac{x+y}{2},z),u_{s}(\frac{x+y}{2},z),w_{s}(\frac{x+y}{2},z)),
\end{align} which is a symmetric solution obtained by rotation from a smooth steady solution $(u_s, w_s)$ to the 2D Prandtl equation  \eqref{eq:Prandtl}  with  constant outer flow $\displaystyle\lim_{z\to+\infty} u_s=U$. Here and in the following, symmetry is respect to $x$ and $y$ coordinates. However, it is noted that such symmetry is just for convenience of presentation. The direction of the tangential velocity field $(u_B,v_B)$ in fact can be  arbitrary due to rescaling and  rotation so that the result holds for a 2D profile under the 3D perturbation.
We also assume the background  profile
 satisfies   the following monotonicity and decay conditions:
\begin{align}\label{mnt-0228}
	\partial_z u_B\gtrsim e^{-m_0z^2}\quad\text{for}\quad z>0,
\end{align}
\begin{align}\label{tail-1-0316-0228}
	|\partial_x^{n_1} \partial_y^{n_2} \partial_z^{n_3}u_B|\leq C_{n_1,n_2,n_3} e^{-\frac{2}{3}m_0z^2}\quad\text{as}\quad z\rightarrow+\infty,
\end{align} where $m_0$ is any positive constant, $n_1,n_2,n_3 $ are non-negative integers with $n_1+n_2+n_3>0$
and $C_{n_1,n_2,n_3}$ is a positive constant depending only on $n_1,n_2,n_3.$

We note that the above condition on the background profile is  general in the sense that  a  typical tail of boundary layer profiles mentioned in the classical book of   Oleinik-Samokhin \cite{Olei} (Beginning of Chapter 4) satisfies
\begin{align}\label{tail}
	U-u_s\sim e^{-mz^2} \quad \text{as}\quad z\rightarrow+\infty,
\end{align} where $m$ is a positive constant.

It is important to note that the above assumptions \eqref{mnt-0228}-\eqref{tail-1-0316-0228}  are satisfied by the well-known Blasius solutions which are  self-similar solutions to  the 2D  steady Prandtl equations with $-\partial_xp=0, U=const>0$ and have been experimentally confirmed as a basic flow in the Prandtl theory \cite{Sch00}. That is, consider
\begin{equation}\label{eq:Prandtl}
	\left\{
	\begin{aligned}
		&u \partial_x u +w\partial_{z}u-\partial_{z}^2u=0,\quad in \quad  D,\\
		&\partial_xu+\partial_z w=0,\quad in \quad  D,\\
		&u|_{z=0}=w|_{z=0}=0\quad\mbox{and}\quad \displaystyle\lim_{z\to+\infty} u(x,z)=U,\\
		& u|_{x=0}= u_0,
	\end{aligned}
	\right.
\end{equation}where  $D=(0,X]\times(0,+\infty)$ and Bernoulli's law holds,\begin{align}\label{Blaw}
	U\partial_x U+\partial_x p=0.
\end{align}

The  Blasius solution takes  the form
\begin{align}\label{Blasiusu}
	u_{s}(x,z)=Uf'(\zeta), \quad \zeta:=\frac{ z}{\sqrt{x+x_0}},
\end{align} where the  parameter $x_0$ is a positive constant
and $f$ satisfies $f''>0$ for $0\leq \zeta<+\infty,$ and
\begin{align}\label{df1}
	f'''+ff''=0,\quad f(0)=f'(0)=0.
\end{align}
It is known that
\begin{align}\label{df2}
	1-f'(\zeta)\sim \zeta^{-1}e^{-\frac{\zeta^2}{2}-C\zeta},\quad f''(\zeta)\sim \zeta(1-f') \quad as\quad\zeta\rightarrow +\infty,
\end{align} so that for $x\in[0,X],$
\begin{align}
	\partial_zu_s=\frac{U}{\sqrt{x+x_0}} f''\sim e^{-\frac{\zeta^2}{2}-C\zeta} \quad as\quad\zeta\rightarrow +\infty.
\end{align}

\subsection{Main Result}

We now present the main result in the paper. Without loss of generality, we assume
\begin{align}\label{1121-1-1}
	U=V=const
\end{align}
so that the Bernoulli's law \eqref{1121-3} implies
\begin{align}\label{1121-2}
	(\partial_xp,\partial_yp)=(0,0).
\end{align}
The stability  of the  profile  \eqref{FS} to \eqref{eq:3DPrandtl} is  stated as follows.

\begin{theorem}\label{1121-250628} Let $X,Y$ be any positive constants and $(u_B, v_B,w_B)$ defined in \eqref{FS} satisfies  \eqref{mnt-0228} and \eqref{tail-1-0316-0228}.
	Assume the boundary data $(u_{bd},v_{bd})$ in \eqref{eq:3DPrandtl} with \eqref{1121-1-1}-\eqref{1121-2}  satisfy the smooth compatibility conditions and the following growth rate assumptions. For any positive constant $\tilde{\varepsilon}$ with $\tilde{\varepsilon}\leq \tilde{\varepsilon}_0$ and any fixed  constant $l>1 $, we assume
	\begin{align}\label{bddata-250628-1}\begin{split}
			&\|(\partial_zu_{bd},\partial_zv_{bd}) -(\partial_zu_B,\partial_zv_B)\|_{W_{x,y}^{2,\infty}(\{x=0\}\cup\{y=0\}\cup\{z=0\}\cap\partial \Omega)}\\ \leq& \tilde{\varepsilon}^l [z\chi +e^{-m_0 z^2}(1-\chi)],	 \end{split}
	\end{align}  where $\tilde{\varepsilon}_0$ is a constant depending only on $X,$ $Y$ and $\chi(z)$ is the standard cutoff function with $\chi=1$ on $[0,1]$ and $\chi=0$ on $[2,+\infty]$.
	Then  there exists a unique solution to \eqref{eq:3DPrandtl} with \eqref{1121-1}-\eqref{1121-2}
 satisfying the following properties,
	\begin{align}\label{zt-250628}\begin{split}
			\|(u,v) -(u_B,v_B)\|_{W_{x,y,z}^{2,\infty}(\Omega)}\leq \tilde{ \varepsilon}.
		\end{split}
	\end{align}
	The  detailed growth rates
	in $z$ near $z=0$ and decay rates in $z$ for  large $z$  can be found in the proof.
\end{theorem}

 Theorem \ref{1121-250628}
firstly established the global  stability of steady solutions in function spaces with finite-order derivatives.
In light of the potential instability resulting from the three-dimensional phenomenon of secondary flow, the powers of $z$ near $z=0$ for the derivatives of fluid  variables are needed to  address  some critical indices in the proof with more details given in Theorem \ref{1121}.

 Although \eqref{eq:3DPrandtl}  is no longer a parabolic equation, the boundary data on $\{x=0\}\cup\{y=0\}\cup\{z=0\}\cap\partial \Omega$ in \eqref{bddata-250628} can still be obtained   from the prescribed data in \eqref{eq:3DPrandtl} by compatibility condition as follows.
By $u=0$ at $z=0,$ we have
$|\frac{\int_0^z\partial_x udz'}{u}|\leq Cz$ so that
\begin{align}\label{intuu}
	\frac{\int_0^z\partial_x udz'}{u}=0\quad \text{at}\quad z=0.
\end{align}
By \eqref{eq:3DPrandtl},  at $x=0,$ we have
\begin{align*}
	u^2\partial_{z}\frac{\int_0^z\partial_x udz'}{u} =&u \partial_x u -\int_0^z\partial_x udz'\partial_z u\\
	=&-v\partial_{y}u+\int_0^z\partial_y vdz'\partial_{z}u+\partial_{z}^2u
	:=RHS.
\end{align*}
Then by \eqref{intuu}, at $x=0$, we have
\begin{align}\label{code}
	\partial_x u =\partial_{z}(u\int_0^z \frac{ RHS}{u^2}dz').
\end{align}
Similarly, we can obtain the boundary data of $\partial_y v$ on $y=0.$

The smoothness requirement is used to ensure the existence of a smooth solution of the approximate quasi-linear equations. This is not the main concern of the paper. 
In fact, the uniform estimates guarantee the existence of strong solutions. 

\subsection{Methodologies of the Proof }
It is well known that the essential feature of the three dimensional problem is the appearance of the secondary flow. Hence in analysis,
in addition to the difficulty induced by the degeneracy and non-local terms as in two dimensional case,
the challenge  in three dimensional case is how to estimate the flow component perpendicular to the primary flow. In the setting of this paper, this  leads to estimate  the difference between $v$ and $u$ in the two tangential directions. Precisely, due to the loss of symmetry,  we need to introduce some new analytic techniques which are different from the existing ones for two space dimensions to establish the uniform estimates on  $$q=v-u, \quad \int_0^z q dz'$$
 and their derivatives.

First of all, we note that the well established analytic techniques such as coordinate transformation (von Mises transformation for steady flow) and the cancellation mechanisms through convection terms and vorticity can not be applied in the three space dimensions. In addition, it is well known that there is no maximum principle for system of equations unlike the scalar case which correspond to the two space dimensional problem studied in Oleinik's classical work. To overcome these difficulties, we apply the following new approach that can be also applied to other boundary layer problems in three space dimensions.

To prove the stability estimate, we will apply an induction argument. Based on the induction assumption and the boundary condition, the proof on the $n$-th approximation in \eqref{appeqeuclidean} relies on a bootstrap argument on the estimates given in \eqref{assumpn-2}.

In order to close the bootstrap argument, we introduce
a set of vector fields to replace the coordinate transformation and cancellation mechanism techniques used in two space dimensions. For this,  the analysis of the commutators of the vector fields plays an essential role.  Let us illustrate the vector fields as follows. Since  $w=-\int_0^z\,\partial_xu+\partial_y v\,dz'$ by \eqref{eq:3DPrandtl}, we rewrite \eqref{eq:3DPrandtl} as
\begin{align}\label{2.1}\begin{split}
		&u \partial_x u +(u+q)\partial_{y}u-\int_0^z\partial_xu+\partial_y vdz'\partial_{z}u-\partial_{z}^2u=0,\\   &u \partial_x v +(u+q)\partial_{y}v-\int_0^z\partial_xu+\partial_y vdz'\partial_{z}v-\partial_{z}^2v=0.\end{split}
\end{align} 
To absorb the non-local terms, we introduce the following operator  with given $(u,v)$
\begin{align}\label{p1uu}
	P^{u,v}_1w=&\nabla_\xi w +(1+\tilde{q})\nabla_{\eta}w -\nabla_{\psi}(u\nabla_{\psi}w),\end{align}
where $\tilde{q}=\frac{q}{u}$ and
\begin{align}\label{vf}\nabla _{\xi}=\partial_x-\frac{\int_0^z \partial_{x}u dz'}{u }\partial_z,\quad
	\nabla _{\eta}=\partial_y-\frac{\int_0^z \partial_yv dz'}{v }\partial_z,
	\quad\nabla _{\psi}=\frac{1}{u }\partial_z.\end{align}
Note that the newly introduced differential operators are not commutative, that is,
$$[\nabla _{\eta},\nabla _{\psi}]\neq 0,\quad [\nabla _{\eta},\nabla _{\xi}]\neq 0.$$
Therefore, we can not employ a coordinate transformation to  view the vector fields in \eqref{vf} as partial differential derivatives with respect to the new variables. This  is essentially  different
from the von Mises transformation in two dimensional case.
Then  the  question is how to obtain uniform estimates on the solution by  using these vector fields and how to reduce them  to uniform estimates in  the Euclidean coordinates.

When we estimate the derivatives of $u$, we can not
commute the vector fields as directly as  commuting ordinary partial differential derivatives. In fact, the  commutator of the ``tangential" vector fields generates the
curvature-type quantity defined by
\begin{align*}
	K\partial_z= [\nabla_\xi,\nabla_\eta].
\end{align*}
Note that $K$ also  measures the symmetry breaking related to the secondary flow which is
the essential feature in  the three dimensional flow compared to the  two  dimensional case.
If $(u,v,w)$ is symmetric as defined in \eqref{FS}, then $K=0$.  One main ingredient of the proof is to estimate $K$ and its derivatives through a surprising representation of $K$ given in \eqref{yq} with detailed calculation presented in the Appendix.

Another trade-off for using these vector fields is to deal with the blow-up rates.
To be specific, there is a multiplier $\frac{1}{u}$ in the expression of $\nabla_\xi,$ $\nabla_\eta$ while we note $u=0$ on $z=0.$ For example, the tangential vector fields related to $(u_B,v_B)$ is $$\nabla _{\tau}=\partial_x-\frac{\int_0^z \partial_{x}u_B dz'}{u_B }\partial_z.$$ Then for any  integer $i\geq 2,$
\begin{align}\label{26-04-26-2}
   \partial_{z}^i(\nabla _{\tau}z)=c_i\frac{(\partial_z u_B )^i}{u_B^{i+1} }\int_0^z \partial_{x}u_B dz'+\cdots\sim\frac{1}{z^{i-1} }+\cdots\quad \text{near}\quad z=0,
\end{align} where $c_i$ is a constant. Hence, we will 
construct some suitable barrier functions
to cope with the vector fields and the background boundary layer behavior.



Unlike the steady Prandtl equations in two space dimensions, one can apply von Mises transformation to reduce the 2D system to a degenerate parabolic equation so that maximum principle can be applied. For three space dimensions, there are two coupled equations for two tangential velocity components. As well known, there is no maximum principle for system of equations in general. However, with the vector fields, we can consider the  operator $P^{u,v}_1$ defined in \eqref{p1uu} 
and
\begin{align}\label{1121-6}
	P^{u,v}_2 w=&\nabla_\xi w +(1+\tilde{q})\nabla_{\eta}w -u\nabla_{\psi}^2 w.\end{align}
For these operators, we will establish   maximum principles  both in bounded and unbounded domains. Moreover, we  generalize these maximum principles for barrier functions with ridges in order to take care of the inner and outer layers inside the Prandtl boundary layer which suitably cope with the blow-up rates as mentioned in \eqref{26-04-26-2}.

Different from  the approximate solutions and their first order tangential vector field derivatives which can be estimated via  maximum principles established through pointwise estimates, the estimation of the second order tangential vector field derivatives is very subtle as shown in the proof of Theorem \ref{5.1}. A different maximum principle is established through integral estimates which crucially depends on determining the sign of the term associated with extra loss of derivatives induced by tangential symmetry breaking. Here, we use a toy model to present the essential ideas.

Denote by $f$ the difference between the second order tangential vector field derivatives of the approximate solution and the corresponding second order derivatives of the background profile plus suitable barrier function. Note that  by the representation of $K$, both $K$ and its normal derivative $\partial_z K$ have precise bounds based on the a priori assumptions.

Then $f$ satisfies an equation like
\begin{equation}
	\partial_x f+\partial_y f=-\partial_y K + R,
\end{equation}
which is a simplified version of \eqref{1104-1}. Here, $R$ represents the error term that can be controlled while $\partial_y K$ contains \textit{loss of tangential derivative which is one order higher than a priori assumptions.} Hence, instead of bounding $\partial_y K$, we construct new auxiliary function $F$ in the proof  and take advantage of the sign of $\partial_y F_+$
to overcome this difficulty as follows.

In order to prove $f\le 0$ up to $y=Y$, we can apply proof by contradiction. If it holds up to $y=y_*<Y$, then for some constant $y_2\in (y_*,Y)$, we introduce an auxiliary function $F$ defined by
$$
F=f+\gamma (y-y_*)\phi_{1,\frac{\alpha}{2}}-B+B\zeta(y) \quad\text{in}\quad [0,X]\times[y_*,y_2]\times[0,+\infty),
$$
where $\gamma>0$ is an arbitrarily small constant, $B$ is a controllable positive function used to bound $K$, $\phi_{1,\frac{\alpha}{2}}$ is defined in \eqref{phi1}  and $\zeta(y)$ is defined in \eqref{zeta}. The goal is to show that
\begin{equation}\label{toy_1}
\partial_x (\int_{y_*}^{y_2} \int_0^\infty F_+^2dzdy)\lesssim \int_{y_*}^{y_2} \int_0^\infty F_+^2dzdy \quad\text{in}\quad [0,X],
\end{equation}
so that Gronwall inequality implies $F_+=\max\{F,0\} \equiv 0$ by boundary conditions. Hence, this  implies $f\le 0$ holds beyond $y=y_*$. To achieve this, we will show that there exists $y_1\in (y_*,y_2)$ such that
$$
\partial_y F_+\ge 0, \, y\in (y_*,y_1);\quad \partial_y F_+\le 0, \, y\in (y_1,y_2),
$$ through a finite covering  argument.
By suitably choosing $B,$ $y_1$ and $y_2,$ we can prove \eqref{toy_1} as shown in the proof of Theorem \ref{5.1}.

By the methodologies above, the solution will be constructed through the following iteration scheme. Firstly, for an arbitrarily small positive  constant $\epsilon_0,$ set
$$u_0=v_0= \bar{u},\quad q_0=0,$$
where $ \bar{u}(x,y,z)=u_B(x,y,z+{\epsilon_0})$ and consider the $n$-th approximation
\begin{align}\label{appeqeuclidean}\begin{split}
		0= &\partial_x u_n +(1+\frac{q_{n-1}}{u_{n-1}})\partial_{y}u_n -(\frac{\int_0^z\partial_xu_{n-1}dz'}{u_{n-1}}
		+(1+\frac{q_{n-1}}{u_{n-1}})\frac{\int_0^z\partial_y v_{n-1}dz'}{v_{n-1}})\partial_{z}u_n \\&-\frac{1}{u_{n-1}}\partial_{z}(\frac{u_{n}}{u_{n-1}}\partial_{z}u_n)\quad \text{in} \quad \Omega,\\  0= &\partial_x v_n +(1+\frac{q_{n-1}}{u_{n-1}})\partial_{y}v_n -(\frac{\int_0^z\partial_xu_{n-1}dz'}{u_{n-1}}
		+(1+\frac{q_{n-1}}{u_{n-1}})\frac{\int_0^z\partial_y v_{n-1}dz'}{v_{n-1}})\partial_{z}v_n \\&-\frac{1}{u_{n-1}}\partial_{z}(\frac{v_{n}}{v_{n-1}}\partial_{z}v_n) \quad \text{in} \quad \Omega,
	\end{split}
\end{align} where $v_{n}=u_{n}+q_{n}$ and
$$\Omega=(0,X]\times(0,Y]\times(0,+\infty).$$
For the boundary data, we assume
$\lim_{z\to+\infty} (u_n,v_n)=(1,1)$ and the approximate non-degenerate boundary data on $\{z=0\}\cup\{x=0\}\cup\{y=0\}$ relying on the parameter $\epsilon_0$.
The detailed definition of the approximate boundary data will be  given in Subsection \ref{appbddata} in the Appendix with suitable growth rates.

For the above system, the main goal is to obtain  uniform estimates independent of ${\epsilon_0}, n$ on the following functions
\begin{align*}
	\partial_x^2 u_n,\, \partial_y \partial_x u_n, \, \partial_z \partial_x u_n, \,\partial_z \partial_yu_n, \, \partial_y^2 u_n,\, \partial_z^2 u_n,\\
	\partial_x^2 v_n,\, \partial_y \partial_x v_n, \, \partial_z \partial_x v_n, \,\partial_z \partial_yv_n, \, \partial_y^2 v_n,\, \partial_z^2 v_n.
\end{align*}
Then by compactness, for any ${\epsilon_0}$, there exists a solution $(u_{\epsilon_0},v_{\epsilon_0})$ of \eqref{2.1} with approximate boundary data. Finally, by letting ${\epsilon_0}$ go to $0,$ we obtain the solution $(u,v)$ of \eqref{eq:3DPrandtl}.

\subsection{Existing literature}

For two-dimensional steady flow, Oleinik-Samokhin in the  classical book \cite{Olei}  proved the  existence and uniqueness  of strong solutions by using von Mises transformation and the maximum principle. In the case of favorable pressure gradient, global-in-$x$ existence of  solutions was also obtained in \cite{Olei}. For the regularity, \cite{GI1} established higher order regularity and then \cite{YWZ} established the global $C^\infty$ regularity. For stability, we mention the pioneer work
\cite{Serrin} and the refined asymptotic analysis in \cite{I1}, \cite{asyBgeneral}, \cite{Iy2024} and \cite{JLY} for structural stability. In addition, Guo, the second author and Zhang \cite{GWZ24} established dynamic stability of steady solutions.
In
the case of adverse pressure gradient, the physical phenomenon of boundary layer separation is justified  in \cite{DM} and \cite{SWZ}. For validity, \cite{GM18} and \cite{GI1, GI2} firstly justified the  Prandtl layer expansion in a  small interval for shear flows and  Blasius-like profiles respectively,  cf.  also
\cite{GZ20}.
Recently, \cite{IN3} justified the  Prandtl  boundary layer expansion globally-in-$x$ for a large class of steady solutions that include the
Blasius profiles.
We also mention \cite{GN1, I2,I3,I4,I5,I6} for validity study with the moving boundary assumption.

For two-dimensional  unsteady flows, under the monotonicity condition,
the well-posedness was established by using Crocco transformation  in \cite{Olei,XZ}
and later cancellation mechanisms were observed by two research groups independently \cite{AWXY, MW}. Without monotonicity assumption,  the well-posedness  was  established in  analytic  and Gevrey function spaces,  cf. \cite{DG, GM, IV, LY, LCS, WWZ2024}, etc. In Sobolev spaces, Prandtl equations are in general ill-posed without monotonicity assumption, cf.  \cite{GD} and \cite{GerN}. For finite time blowup with singularity analysis and boundary layer separation in the unsteady setting, one can refer to  \cite{CGIM, CGM, EE, KVW, WZ}.  The validity of Prandtl expansion was established  in the
analytic  and  Gevrey function spaces \cite{GMM, SC1, SC2} with different assumptions on the background profile. One can also refer   to \cite{Ka84, Ma, WWZ17} and the references therein.  In the Sobolev setting, the invalidity  of Prandtl expansion  was studied in \cite{GGN1, GGN2, GreN, GN}.

In three space dimensions, the stability  of boundary layer equations is very challenging  because  the secondary
flows
appear in the boundary layer
as explained in \cite{Olei}.   In fact, the well-posedness of Prandtl equation was raised as the third open question in \cite{Olei}. Note that the
 tangential velocity field is curved in general. Therefore, the cancellation mechanisms observed in
 two dimensional case no longer hold and this is why there are no counterparts of the
Crocco or von Mises   transformations in the three dimensional setting.
For this,  the ill-posedness of the three dimensional system was studied  in \cite{LWY24} about
 perturbation of shear flows when the initial data satisfy $U(z)\not\equiv cV(z)$ with a constant $c$.  The well-posedness in Gevrey function spaces with index $2$ for the three-dimensional Prandtl system without any
structural assumption was established by Li, Masmoudi and the third author \cite{LY24}.
One can also refer to
\cite{LWY24-1} and \cite{FTZ2024} for the well-posedness and zero-viscosity limit results in three dimensions respectively.
Compared to the fruitful mathematical theories in the  two dimensional case,
much less is known about the well-posedness theories for both steady and unsteady flows in function spaces with finite order of derivatives. And the result of this paper aims to be the first step to fill in this gap in the steady setting.

The rest of the paper is organized as follows. The main result Theorem \ref{1121-250628} will be reduced into a form suitable for our methodology in Section \ref{2025-0630-section}. We will introduce the new vector fields with the representation of  commutators and the bootstrap argument in terms of these vector fields in Section \ref{s2}.  The vector fields-based ``Maximum Principles"  in both bounded and unbounded domains will be given  in Section \ref{s3}.  We will  establish in Section \ref{s4} and Section \ref{s5} the a priori estimates for the existence of solutions. In the Appendix, we will give some detailed calculations and  estimates used in the proof of the main result, also the corresponding estimates of derivatives in the Euclidean variables.

\section{Reduction of Theorem \ref{1121-250628}}\label{2025-0630-section}

 In this section, we show that Theorem \ref{1121} yields
   Theorem \ref{1121-250628}
through the  self-similar change of  variables as follows and Lemma \ref{2025-0630-lemma}. We  prove Theorem \ref{1121} in this paper.

\subsection{Self-similar change of  variables}\label{26-03-08}
 Through the self-similar change of  variables  \eqref{26-02-08-sscg} in subsection \ref{ssv}  in the Appendix with $\theta$ and $\mu$  being small positive constants independent of $\tilde{\varepsilon}$,  the assumptions in  Theorem \ref{1121-250628} are transformed into the following case:
 \begin{align}\label{26-02-08-xtheta}
 X\leq \theta,
 \end{align}
 and  \begin{align}\label{mnt}
	\partial_z u_B\gtrsim e^{-\frac{3}{2}\mu z^2} \quad\text{for}\quad z>0,
\end{align} \begin{align}\label{tail-1-0316}
	|\partial_x^{n_1} \partial_y^{n_2} \partial_z^{n_3}u_B|\leq C_{n_1,n_2,n_3} e^{-\mu \frac{z^{2}}{x+1} }\quad\text{as}\quad z\rightarrow+\infty,
\end{align}
\begin{align}\label{bddata-250628}\begin{split}
			&\|(\partial_zu_{bd},\partial_zv_{bd}) -(\partial_zu_B,\partial_zv_B)\|_{W_{x,y}^{2,\infty}(\{x=0\}\cup\{y=0\}\cup\{z=0\}\cap\partial \Omega)}\\ \leq& \tilde{\varepsilon}^l [z\chi +e^{-\frac{3}{2} \mu z^2}(1-\chi)]. \end{split}
	\end{align}

By the self-similar change of  variables above, we prove the following theorem and
 assume \eqref{26-02-08-xtheta}, \eqref{mnt}, \eqref{tail-1-0316} and  \begin{align}\label{1121-1}
 (U,V)=(1,1)
\end{align} throughout the paper
without loss of generality.

 \begin{theorem}\label{1121} Let $(u_B, v_B,w_B)$ be defined in \eqref{FS} satisfy \eqref{mnt}-\eqref{tail-1-0316}.
	Assume the boundary data $(u_{bd},v_{bd})$ in \eqref{eq:3DPrandtl} with \eqref{1121-1}-\eqref{1121-2}  satisfy the smooth compatibility conditions and the following growth rate assumptions. For a sufficiently small positive constant $\varepsilon$, assume
	\begin{align}\label{bddata}\begin{split}
			&|u_{bd} -u_B|+|v_{bd} -v_B|\leq\varepsilon^8 \Phi_2,\\ &
			|\partial_zu_{bd} -\partial_zu_B|+|\partial_zv_{bd} -\partial_zv_B|\leq \varepsilon^7 \Phi_2 ,\\&
			|\partial_{x,y}u_{bd} -\partial_{x}u_B|+|\partial_{x,y}v_{bd} -\partial_{x}v_B|\leq \varepsilon^6 \Phi_2,
			\\&
			|\partial_z\partial_{x,y}u_{bd} -\partial_z\partial_{x}u_B|+|\partial_z\partial_{x,y}v_{bd} -\partial_z\partial_{x}v_B|\leq \varepsilon^6 \Phi_1,\quad
			\\&|\partial_{x,y}\partial_{x,y}u_{bd} -\partial_{x}^2u_B|+
			|\partial_{x,y}\partial_{x,y}v_{bd} -\partial_{x}^2v_B|\leq\varepsilon^6 \Phi_1,\\
&\partial_zu_{bd},\,\,\partial_zv_{bd}\gtrsim e^{-\frac{3}{2}\mu z^2},
		\end{split}
	\end{align} where we use the notation $\partial_{x,y}$ to stand for $\partial_{x}$ or $\partial_{y}$ and for  $i=1,2,$ and
	\begin{equation}\Phi_i= e^{-\frac{A}{x+1}}\left\{
		\begin{aligned}
			( \frac{z }{\sqrt{x+1}})^i ,\quad & 0\leq \frac{z }{\sqrt{x+1}}\leq \delta,\\
			\delta^i ,\quad &  \delta\leq  \frac{z }{\sqrt{x+1}}\leq N,\\
			\delta^i  e^{\frac{3}{2}N^2\mu}e^{-\frac{3}{2}\mu \frac{z ^2}{x+1}},\quad &  \frac{z }{\sqrt{x+1}}\geq N.
		\end{aligned}\right.\\
	\end{equation}
	Here, $\mu,$  $\delta$, $\frac{1}{N}$  are small positive constants independent of $\varepsilon $ and $A$ is $\varepsilon$ to  some negative power.
	Then there exists a solution to \eqref{eq:3DPrandtl} with \eqref{26-02-08-xtheta} and \eqref{1121-1}-\eqref{1121-2}  satisfying the following properties,
	\begin{align}\label{zt}\begin{split}
			&|u -u_B|+|v -v_B|\leq \varepsilon e^{-\frac{A}{x+1}},\\ &
			|\partial_zu -\partial_zu_B|+|\partial_zv -\partial_zv_B|\leq \varepsilon  e^{-\frac{A}{x+1}},\\&
			|\partial_{x,y}u -\partial_{x}u_B|+|\partial_{x,y}v -\partial_{x}v_B|\leq \varepsilon  e^{-\frac{A}{x+1}},
			\\&
			|\partial_z\partial_{x,y}u -\partial_z\partial_{x}u_B|+|\partial_z\partial_{x,y}v -\partial_z\partial_{x}v_B|\leq \varepsilon  e^{-\frac{A}{x+1}},\quad
			\\&|\partial_{x,y}\partial_{x,y}u -\partial_{x}^2u_B|+
			|\partial_{x,y}\partial_{x,y}v -\partial_{x}^2v_B|\leq\varepsilon e^{-\frac{A}{x+1}},
\\&|\partial_{z}^2u -\partial_{z}^2u_B|+
			|\partial_{z}^2v -\partial_{z}^2v_B|\leq2\varepsilon  e^{-\frac{A}{x+1}}.
		\end{split}
	\end{align} And the  detailed growth rates
	in $z$ near $z=0$ and decay rates for $z$ large  can be found in the proof.
\end{theorem}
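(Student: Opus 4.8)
The plan is to construct the solution of Theorem~\ref{1121} as the limit of the iteration scheme \eqref{appeqeuclidean} and to establish uniform-in-$(n,\epsilon_0)$ a priori estimates through a bootstrap argument formulated entirely in terms of the intrinsic vector fields \eqref{vf}. Fixing a small $\epsilon_0>0$, initialising with $u_0=v_0=\bar u$, $q_0=0$, and solving the linearised degenerate problems \eqref{appeqeuclidean}, the core task is to show that the bootstrap bounds \eqref{assumpn-2} on $q_n/u_n$, on the first and second order $\nabla_\xi,\nabla_\eta$-derivatives of $u_n,v_n$ and of $\int_0^z q_n\,dz'$, on the normal derivatives, and on the curvature quantity $K$ --- all measured against the barrier functions $\Phi_1,\Phi_2$ --- propagate from step $n-1$ to step $n$ with the same constants.

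First I would handle the zeroth and first order estimates. The operators $P^{u,v}_1$ and $P^{u,v}_2$ in \eqref{p1uu} and \eqref{1121-6} are by construction of convective--degenerate-diffusive type $\nabla_\xi+(1+\tilde q)\nabla_\eta-(\text{degenerate diffusion})$, hence satisfy maximum principles in bounded and unbounded domains (Section~\ref{s3}). Applying these to $u_n-u_B$, to $v_n-u_B$, and to the scalar $q_n=v_n-u_n$ --- which, as the difference of the two equations in \eqref{appeqeuclidean}, again solves an equation of the same type and so is controllable despite the absence of a maximum principle for the $2\times 2$ system --- together with their first order tangential vector field derivatives, after subtracting barrier functions assembled from $\Phi_1,\Phi_2$ to absorb the background profile, the degeneracy at $z=0$ and the Gaussian decay as $z\to+\infty$, yields the corresponding lines of \eqref{zt} at first order.

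The main obstacle is the second order tangential vector field estimate, because the vector fields do not commute and $[\nabla_\xi,\nabla_\eta]=K\partial_z$ with $K\not\equiv0$ once the symmetry is broken. Here I would first use the representation of $K$ in \eqref{yq}, with the computation carried out in the Appendix, to obtain sharp pointwise control of $K$ and $\partial_zK$ from the a priori assumptions. The relevant second order quantity $f$ --- the difference between a second order $\nabla_\xi,\nabla_\eta$-derivative of the approximation, the matching background quantity, and a barrier --- then obeys an equation of the schematic form $\partial_x f+\partial_y f=-\partial_y K+R$ as in \eqref{1104-1}, where the indefinite source $-\partial_y K$ sits at a critical index and blocks a direct application of the maximum principle. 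Instead I would run the contradiction-plus-Gronwall scheme indicated after \eqref{toy_1}: assuming $f\le0$ fails beyond some $y_*<Y$, introduce the auxiliary function $F=f+\gamma(y-y_*)\phi_{1,\frac{\alpha}{2}}-B+B\zeta(y)$ with $\gamma$ arbitrarily small and $B$ large enough to dominate $K$, locate through a finite cover a threshold $y_1$ across which $\partial_yF_+$ changes sign, deduce the differential inequality \eqref{toy_1}, and conclude $F_+\equiv0$, hence $f\le0$ past $y_*$; this is the content of Theorem~\ref{5.1}, carried out together with the barrier functions with ridges that resolve the inner and outer sublayers.

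Finally, the normal second derivatives $\partial_z^2u_n$ and $\partial_z^2v_n$ are recovered from the equations \eqref{appeqeuclidean} themselves once the tangential quantities are controlled, and all vector field estimates are translated into the Euclidean estimates \eqref{zt} using the bounds on $\int_0^z\partial_xu\,dz'/u$ and the analogous ratios computed in the Appendix. This closes the bootstrap, so the estimates are uniform in $n$ and $\epsilon_0$; a compactness argument then gives a solution $(u_{\epsilon_0},v_{\epsilon_0})$ of \eqref{2.1} with the approximate boundary data, and letting $\epsilon_0\to0$ produces the solution of \eqref{eq:3DPrandtl} with the stated properties. Theorem~\ref{1121-250628} follows from Theorem~\ref{1121} via Lemma~\ref{2025-0630-lemma}.
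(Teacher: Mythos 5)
Your proposal follows essentially the same route as the paper: the iteration scheme \eqref{appeqeuclidean} with $\epsilon_0$-regularised data, the induction-plus-bootstrap bounds \eqref{assumpn-2} phrased in the vector fields, the maximum principles for $P_1,P_2$ with ridged barriers built from $\Phi_1,\Phi_2$, the representation \eqref{yq} controlling $K$ and $\partial_zK$, the contradiction-plus-Gronwall argument with the auxiliary function $F$ and cutoff $\zeta(y)$ for the second-order tangential estimate (Theorem \ref{5.1}), recovery of $\partial_z^2$ from the equation, transfer back to Euclidean derivatives, and compactness followed by $\epsilon_0\to0$. The only cosmetic differences (estimating $q_n$ through its own difference equation rather than, as the paper does, bounding $u_n-\bar u$ and $v_n-\bar u$ separately against the shifted profile and working with squared quantities like $u_n^2-\bar u^2$) do not change the substance of the argument.
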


\subsection{Theorem \ref{1121} to
   Theorem \ref{1121-250628} }
   Take $\varepsilon$ to be the constant satisfying
   \begin{align}\label{26-02-08-relt-epsilon}
  \tilde{\varepsilon}=2\varepsilon e^{-\frac{A}{1+X}}.
\end{align}
   In fact, setting $f(\varepsilon)=2\varepsilon e^{-\frac{A}{1+X}}$ and noting $ f(0^+)=0<\tilde{\varepsilon}\ll2 e^{-1}<2 e^{-\frac{1}{1+X}}=f(1),$ the choice of $\varepsilon$ in \eqref{26-02-08-relt-epsilon} is guaranteed by intermediate value theorem.
Then Theorem \ref{1121} directly yields
   Theorem \ref{1121-250628}
through a  self-similar change of  variables in subsection \ref{26-03-08} and the following Lemma \ref{2025-0630-lemma}.

\begin{lemma}\label{2025-0630-lemma} Through the self-similar change of  variables  \eqref{26-02-08-sscg}, \eqref{bddata} holds under the assumptions in  Theorem \ref{1121-250628}.
\end{lemma}
\begin{proof} As mentioned before, through the self-similar change of  variables  \eqref{26-02-08-sscg}, \eqref{26-02-08-xtheta}-\eqref{tail-1-0316} hold under the assumptions in  Theorem \ref{1121-250628}.
Take $l=1+\theta+2m_1$ where $m_1$ is any small positive constant. By \eqref{26-02-08-relt-epsilon}, for small $\tilde{\varepsilon},$
\begin{align*}
    \tilde{\varepsilon}^l=2^l\varepsilon^l e^{-\frac{A(1+\theta)}{1+X}}e^{-A\frac{2m_1}{1+X}}\leq 2^l\varepsilon^l e^{-A}e^{-Am_1}\leq \varepsilon^{10}e^{-A},
\end{align*}
where we have used  $\frac{2}{1+X}>1$ by \eqref{26-02-08-xtheta} with $\theta$ small and for any positive constants  $k$ and $m_1$, $$\varepsilon^{-k}e^{-Am_1}\rightarrow 0\quad \text{ as}\quad \varepsilon\rightarrow0,$$ since $A$ is $\varepsilon$ to some negative power.
Then \eqref{bddata-250628} implies \begin{align}\begin{split}
			&\|(\partial_zu_{bd},\partial_zv_{bd}) -(\partial_zu_B,\partial_zv_B)\|_{W_{x,y}^{2,\infty}(\{x=0\}\cup\{y=0\}\cup\{z=0\}\cap\partial \Omega)}\\ \leq& \varepsilon^9 e^{-A}[z\chi +e^{-\frac{3}{2} \mu z^2}(1-\chi)], \end{split}
	\end{align}by \eqref{26-02-08-relt-epsilon} and noting $A$ is $\varepsilon$ to some negative power. Note by \eqref{26-02-08-xtheta},
\begin{align*}
    e^{-A}\leq e^{-\frac{A}{x+1}}.
\end{align*}
 Then
by \eqref{1121-1} and \eqref{bddata-250628}, for $z$ large,
\begin{align*}
   |u_{bd}-u_B|\leq &\int_z^{+\infty}
    |\partial_z u_{bd}-\partial_z u_B|dz'
    \leq \varepsilon^9e^{-\frac{A}{x+1}} C\int_z^{+\infty}e^{-\frac{3}{2}\mu z^2}dz'
    \leq 	\varepsilon^9e^{-\frac{A}{x+1}} Ce^{-\frac{3}{2}\mu z^2}.
\end{align*} Moreover, by $u=v=0$ at $z=0$ and compatibility, it holds
 \begin{align*}
   \partial_{x,y} u=\partial_{x,y} v=  \partial_{x,y}\partial_{x,y} u=\partial_{x,y}\partial_{x,y} v=0 \quad at\quad z=0.
 \end{align*}
 Hence, \eqref{1121-1} and \eqref{bddata-250628} imply
\begin{align}\label{20250703-rd-main}\|(u_{bd},v_{bd}) -(u_B,v_B)\|_{W_{x,y}^{2,\infty}}\leq \varepsilon^9 e^{-\frac{A}{x+1}}C[z^2\chi +e^{-\frac{3}{2}\mu z^2}(1-\chi)],	
	\end{align}
Combining  \eqref{bddata-250628}, \eqref{20250703-rd-main}, \eqref{eq:3DPrandtl} and compatibility, we have
\begin{align}
\label{20250703-rd-main-1}\|(u_{bd},v_{bd}) -(u_B,v_B)\|_{W_{z}^{3,\infty}}\leq \varepsilon^9 Ce^{-\frac{A}{x+1}}.\end{align}
On the other hand,  by $u=v=0$ at $z=0$ and $w=-\int_0^z\,\partial_xu+\partial_y v\,dz'$, the solution $u$ to \eqref{eq:3DPrandtl} with \eqref{1121-1}-\eqref{1121-2}  satisfies
$$\partial_z^2 u=\partial_z^3 u=0\quad \text{at}\quad z=0,$$
by compatibility. Combining with \eqref{bddata-250628} and \eqref{20250703-rd-main-1}, it holds
\begin{align}
    |\partial_zu_{bd} -\partial_zu_B|\leq \varepsilon^9e^{-\frac{A}{x+1}}C [z^2\chi +e^{-\frac{3}{2}\mu z^2}(1-\chi)].
\end{align} In addition, \eqref{mnt} and  \eqref{bddata-250628} imply
$$\partial_zu_{bd},\,\,\partial_zv_{bd}\gtrsim e^{-\frac{3}{2}\mu z^2},\quad z>0.$$
		In a similar way, we can obtain other inequalities in \eqref{bddata}  from \eqref{bddata-250628}.
\end{proof}

\section{Vector fields and bootstrap argument}\label{s2}

In this section, we first  introduce the following  four vector fields
\begin{align}\label{26-03-08-def-der}\begin{split}&\nabla^{n-1}_{\xi}=\partial_x-\frac{\int_0^z \partial_{x}u_{n-1}dz'}{u_{n-1}}\partial_z,\quad
\nabla^{n-1}_{\eta}=\partial_y-\frac{\int_0^z \partial_yv_{n-1}dz'}{v_{n-1}}\partial_z,\\
&\nabla^{n-1}_{\psi}=\frac{1}{u_{n-1}}\partial_z,\quad
\nabla^{n-1}_{\tilde{\psi}}=\frac{1}{v_{n-1}}\partial_z.\end{split}\end{align}
By straight calculation, we have
\begin{align}\label{kh1}\begin{split}&\nabla^{n-1}_{\psi}=(1+\tilde{q}_{n-1})\nabla^{n-1}_{\tilde{\psi}},\\
 &[\nabla^{n-1}_{\xi},\nabla^{n-1}_{\psi}]\\=&\nabla^{n-1}_{\xi}\nabla^{n-1}_{\psi}-\nabla^{n-1}_{\psi}\nabla^{n-1}_{\xi}
\\=&\{\frac{1}{u_{n-1}}\partial^2_{xz}-\frac{\partial_{x}u_{n-1}}{u_{n-1}^2}\partial_{z}
-\frac{\int_0^z \partial_{x}u_{n-1}dz'}{u_{n-1}^2}\partial_z^2
+\frac{\int_0^z \partial_{x}u_{n-1}dz'}{u_{n-1}^3}\partial_zu_{n-1}\partial_z\}\\
&-\{\frac{1}{u_{n-1}}\partial^2_{zx}
-\frac{\int_0^z \partial_{x}u_{n-1}dz'}{u_{n-1}^2}\partial_z^2
+(-\frac{\partial_{x}u_{n-1}}{u_{n-1}^2}+\frac{\int_0^z \partial_{x}u_{n-1}dz'}{u_{n-1}^3}\partial_zu_{n-1})\partial_z\}\\
=&0,\end{split}
\end{align}
where 
$$\tilde{q}_{n-1}=\frac{q_{n-1}}{u_{n-1}}.$$
In addition, we have
\begin{align}\label{kh2}\begin{split}
[\nabla^{n-1}_{\eta},\nabla^{n-1}_{\tilde{\psi}}]=&0,\\
\nabla^{n-1}_{\psi}\nabla^{n-1}_{\eta}=&(1+\tilde{q}_{n-1})\nabla^{n-1}_{\tilde{\psi}}\nabla^{n-1}_{\eta}\\
=&(1+\tilde{q}_{n-1})\nabla^{n-1}_{\eta}\nabla^{n-1}_{\tilde{\psi}}\\
=&(1+\tilde{q}_{n-1})\nabla^{n-1}_{\eta}(\frac{1}{1+\tilde{q}_{n-1}}\nabla^{n-1}_{\psi})
\\
=&\nabla^{n-1}_{\eta}\nabla^{n-1}_{\psi}-\frac{\nabla^{n-1}_{\eta}\tilde{q}_{n-1}}{1+\tilde{q}_{n-1}}\nabla^{n-1}_{\psi},
\end{split}\end{align}implying
\begin{align*}
    [\nabla^{n-1}_{\eta},\nabla^{n-1}_{\psi}]=\frac{\nabla^{n-1}_{\eta}\tilde{q}_{n-1}}{1+\tilde{q}_{n-1}}\nabla^{n-1}_{\psi}
.
\end{align*}
However,
 $$[\nabla^{n-1}_{\eta},\nabla^{n-1}_{\psi}],\quad[\nabla^{n-1}_{\xi},\nabla^{n-1}_{\eta}],\quad [\nabla^{n-1}_{\psi},\nabla^{n-1}_{\tilde{\psi}}]\neq 0$$
 in general unless  $u_{n-1}= v_{n-1} .$ In particular, $[\nabla^{n-1}_{\xi},\nabla^{n-1}_{\eta}]$ is a vector field in normal direction with respect to boundary which is a key quantity related to  symmetry. For brevity, by denoting $\nabla^{n-1}_{\xi}=\partial_x-G\partial_z$ and $\nabla^{n-1}_{\eta}=\partial_y-F\partial_z$, we have
\begin{align}\label{K}\begin{split}
[\nabla^{n-1}_{\xi},\nabla^{n-1}_{\eta}]=&(\partial_yG-\partial_xF+G\partial_zF-F\partial_zG)\partial_z
\\=&K_{n-1}\partial_z.
\end{split}
\end{align} For symmetric solutions with respect to $x$ and $y$, $K_{n-1}\equiv0.$
For the perturbation of the symmetric profile, the smallness of $K_{n-1}$ is important to obtain stability.
For a better description of the perturbation,
  we give the notations for the following vector fields. Recall
 $ \bar{u}(x,y,z)=u_B(x,y,z+{\epsilon_0})$ and set
\begin{align*}&\nabla_{\tau_1}=\partial_x-\frac{\int_0^z \partial_{x}\bar{u}dz'}{\bar{u}}\partial_z,\quad
\nabla_{\tau_2}=\partial_y-\frac{\int_0^z \partial_y\bar{u}dz'}{\bar{u}}\partial_z,\quad
\nabla_{n}=\frac{1}{\bar{u}}\partial_z.\end{align*}
Note that
 $$\nabla_{\tau_1}\bar{u}=\nabla_{\tau_2}\bar{u},\quad [\nabla_{\tau_1},\nabla_{n}]=0.$$
The growth rates of the smooth function
 $\bar{u}$ and its derivatives will be used later with details given in Subsection \ref{1027} in the Appendix. The difference between vector fields $\nabla_\psi^{n-1}, \nabla_n$ and $\nabla_\xi^{n-1}, \nabla_\eta^{n-1}, \nabla_{\tau_1},\nabla_{\tau_2}$  will be given in Subsection \ref{tl} which yields the growth estimates of the remainders in the equations satisfied by $u_n-\bar{u}$ and its derivatives.

For brevity, we will use $\tilde{q},K,$ $\nabla_\xi, \nabla_\eta,\nabla_\psi$ to denote $\tilde{q}_{n-1},K_{n-1},$ $\nabla^{n-1}_\xi, \nabla^{n-1}_\eta,\nabla^{n-1}_\psi$ respectively  in the following discussion.

The relation of the vector field derivatives and the derivatives in  original coordinates is given as follows.
\begin{align}\label{rlt}\begin{split}&\partial_xf=\nabla _{\xi} f+\frac{\int_0^z \partial_{x}u_{n-1}dz'}{u_{n-1}}\partial_zf,\\
&\partial_yf=\nabla _{\eta} f+ \frac{\int_0^z \partial_{y}v_{n-1}dz'}{v_{n-1}}\partial_zf,\\
&\partial_zf=u_{n-1}\nabla _{\psi}f,\\
&\partial_z\partial_xf=\partial_z\nabla _{\xi} f+\partial_z(\frac{\int_0^z \partial_{x}u_{n-1}dz'}{u_{n-1}})\partial_zf+\frac{\int_0^z \partial_{x}u_{n-1}dz'}{u_{n-1}}\partial_z^2f,\\
&\partial_z\partial_yf=\partial_z\nabla _{\eta} f+\partial_z( \frac{\int_0^z \partial_{y}v_{n-1}dz'}{v_{n-1}})\partial_zf+ \frac{\int_0^z \partial_{y}v_{n-1}dz'}{v_{n-1}}\partial_z^2f.\end{split}\end{align}

\subsection{Commutator K}
In this subsection, we derive  the expressions of commutator $K$ and its derivative $\partial_z K$.
Suppose that  $f$ is a smooth function. With details given in Subsection \ref{AppK}  in the Appendix,  $K$ satisfies 
\begin{align}\label{0429-25}\begin{split}
K\partial_z(-u_{n-1}\nabla_{\psi}^2f)=  &- u_{n-1}\nabla_{\psi}^2(K\partial_zf)\\&- (\nabla_{\xi} \nabla_{\eta}u_{n-1}-\nabla_{\eta}\nabla_{\xi} u_{n-1})\nabla_{\psi}^2f
    \\
    &-2u_{n-1}\nabla_{\xi}(\frac{\nabla_{\eta}\tilde{q}}{1+\tilde{q}})\,\,\nabla_{\psi}^2f
-u_{n-1}\big(\nabla_{\psi}\nabla_{\xi}(\frac{\nabla_{\eta}\tilde{q}}{1+\tilde{q}})\big)\nabla_{\psi}f
.\end{split}
\end{align}
\subsubsection{Estimates of $K$ and $\partial_z K$ }
We will use the following function
\begin{align}\label{psi0125}
	\psi_{n-1}(x,y,z)=\int_0^z u_{n-1}(x,y,z')dz'.
\end{align}
Note that  $\psi_{n-1}$ is an increasing function with respect to $z$ since $ u_{n-1}>0$ for $z>0$ by induction assumption \eqref{n-1assumption}.
In particular, $$\nabla_\psi \psi_{n-1}=\frac{1}{u_{n-1}}\partial_z\int_0^z u_{n-1}dz'=1,\quad \nabla_\psi^2 \psi_{n-1}=0,\quad \nabla_\xi \psi_{n-1}=0,$$ and
\begin{align*}
\nabla_{\xi}\nabla_{\eta}\psi_{n-1}   -\nabla_{\eta}  \nabla_{\xi}\psi_{n-1} =K\partial_z\psi_{n-1}= Ku_{n-1}.
\end{align*} Hence, by replacing $f$ by $\psi_{n-1}$ in \eqref{0429-25}, we have
\begin{align}\label{3.7}\begin{split}
  \partial_z\nabla_{\psi}(Ku_{n-1}) =u_{n-1}\nabla_{\psi}^2(Ku_{n-1})=
-u_{n-1}\big(\nabla_{\psi}\nabla_{\xi}(\frac{\nabla_{\eta}\tilde{q}}{1+\tilde{q}})\big)
=-\partial_z\nabla_{\xi}(\frac{\nabla_{\eta}\tilde{q}}{1+\tilde{q}})
.\end{split}
\end{align}
By \eqref{K}, there exists a  function $K_{\infty}(x,y)$ such that \begin{align}\begin{split}
K=&(\partial_yG-\partial_xF+G\partial_zF-F\partial_zG)
\\ \rightarrow& K_\infty(x,y),\quad z\rightarrow +\infty,
\end{split}
\end{align}with $G=\frac{\int_0^z \partial_xu_{n-1}dz'}{u_{n-1}}$ and
$F=\frac{\int_0^z \partial_yv_{n-1}dz'}{v_{n-1}}$, where we have used the fact that $G, F, \partial_yG, \partial_xF$ are convergent as $z$ goes to infinity by the decay assumption of $\partial_{xy}^2u_{n-1},\partial_{xy}^2v_{n-1}, \partial_{x}u_{n-1}, \partial_{y}v_{n-1}.$
Then $K u_{n-1}\rightarrow K_{\infty}(x,y)$ as $z\rightarrow+\infty $ implies $\partial_z(K u_{n-1})\rightarrow 0$ as $z\rightarrow+\infty$. Hence
\begin{align*}
 -\nabla_{\psi}(Ku_{n-1})=\int_z^{+\infty} \partial_z\nabla_{\psi}(Ku_{n-1})dz'=-\int_z^{+\infty} \partial_z\nabla_{\xi}(\frac{\nabla_{\eta}\tilde{q}}{1+\tilde{q}})dz'
    =\nabla_{\xi}(\frac{\nabla_{\eta}\tilde{q}}{1+\tilde{q}}).
\end{align*}
Thus,
\begin{align}
 -\frac{1}{u_{n-1}}\partial_z (Ku_{n-1})=\nabla_{\xi}(\frac{\nabla_{\eta}\tilde{q}}{1+\tilde{q}}),
\end{align}
implies that
\begin{align}\label{dzk}
 \partial_z K=-\nabla_{\xi}(\frac{\nabla_{\eta}\tilde{q}}{1+\tilde{q}})
 - K\frac{\partial_zu_{n-1}}{u_{n-1}},
\end{align}
and
\begin{align}\label{yq}
    Ku_{n-1}&=\int_0^{z} \partial_z (Ku_{n-1})dz'= \int_0^{z} - u_{n-1}\nabla_{\xi}(\frac{\nabla_{\eta}\tilde{q}}{1+\tilde{q}})dz',
\end{align}
because $K=0$ at $z=0$ by the definition of $K$ in \eqref{K}.
Moreover,  by the induction assumption \eqref{n-1assumption} in Subsection \ref{Bsassm} and estimates of derivatives of $\tilde{q}$ in \eqref{tildq0221}, we have
\begin{align}\label{Ksmallinfty}
    |K|\leq \varepsilon^2e^{-\frac{A}{x+1}} C_2,
\end{align}and
\begin{align}\label{dzKsmallinfty}
    |\partial_z K|\leq\frac{ \varepsilon^2 C}{u_{n-1}}\phi_{1,0},
\end{align}where \begin{equation}\label{phi3}\phi_{1,0}= \left\{
  \begin{aligned}
 e^{-\frac{A}{x+1}},\quad &  0\leq \frac{z+{\epsilon_0}}{\sqrt{x+1}}\leq N,\\ e^{-\frac{A}{x+1}}e^{N^2\mu}e^{-\frac{(z+{\epsilon_0})^2}{x+1}\mu},\quad\quad & \frac{z+{\epsilon_0}}{\sqrt{x+1}}\geq N .
\end{aligned}\right.\\
\end{equation}

\subsection{Induction and bootstrap argument}\label{Bsassm}
\subsubsection{Induction }
We will apply an induction on $n\in\N$. Recall $$u_0=v_0=\bar{u}=u_B(x,y,z+{\epsilon_0}).$$
 Assume for $n\geq 1,$ $u_{k}$ and $v_{k},$ $k=0,1,\cdots,n-1,$ satisfy
 \begin{align}\label{n-1assumption}\begin{split}
&|u_{k}-\bar{u}|,\,|v_{k}-\bar{u}|\leq\varepsilon^6 \phi_{1,1+2\alpha},\\ &
|\partial_zu_{k}-\partial_z\bar{u}|,\,|\partial_zv_{k}-\partial_z\bar{u}|\leq \varepsilon^5 \phi_{1,\alpha} ,\\&|\partial_{x,y}u_{k}-\partial_{x}\bar{u}|,\,|\partial_{x,y}v_{k}-\partial_{x}\bar{u}|\leq \varepsilon^2 \phi_{1,1},
\\&
|\partial_z\partial_{x,y}u_{k}-\partial_z\partial_{x}\bar{u}|,\,|\partial_z\partial_{x,y}v_{k}-\partial_z\partial_{x}\bar{u}|\leq \varepsilon^2 \phi_{1,\alpha},\quad
\\&|\partial_{x,y}\partial_{x,y}u_{k}-\partial_{x}^2\bar{u}|,\,
|\partial_{x,y}\partial_{x,y}v_{k}-\partial_{x}^2\bar{u}|\leq\varepsilon^2 \phi_{1,\frac{\alpha}{2}},\\
&|\partial_{z}^2u_{k}-\partial_{z}^2\bar{u}|\leq \frac{\varepsilon e^{-\frac{A}{x+1}}}{1+\frac{\alpha}{7}}\min\{1,(z+\min_{[0,X]\times[0,Y]}\frac{\bar{u}(x,y,0)}{\partial_z\bar{u}(x,y,0)})^\alpha\}
,\\
&\partial_{z}u_{k},\,\,\partial_{z}v_{k}\geq c_0e^{-\frac{3}{2}\mu (z+\epsilon_0)^2},
 \end{split}
\end{align}
 in $\Omega$
  for some small positive constants $c_0<\min\{\min_{[0,X]\times[0,Y]\times[0,2]}\partial_zu_B,\frac{1}{10}e^{\frac{3}{2}\mu (z+\epsilon_0)^2}\partial_zu_B\},$
 $\delta\leq \frac{1}{3},$
 $\mu$, $\frac{1}{N}$ and   $\alpha$ independent of $\varepsilon$ and a large constant $A>C\varepsilon^{-\frac{6}{\alpha}}$.
 Recall $\phi_{1,0}$ in  \eqref{phi3} and set for $\beta\in(0,1+2\alpha],$
\begin{equation}\label{phi1}\phi_{1,\beta}= \left\{
  \begin{aligned}
  e^{-\frac{A}{x+1}}(\frac{z+{\epsilon_0}}{\sqrt{x+1}})^\beta,\quad & 0\leq \frac{z+{\epsilon_0}}{\sqrt{x+1}}\leq \delta,\\
  e^{-\frac{A}{x+1}}\delta^{\beta},\quad &  \delta\leq \frac{z+{\epsilon_0}}{\sqrt{x+1}}\leq N,\\
  e^{-\frac{A}{x+1}}\delta^{\beta}e^{N^2\mu}e^{-\frac{(z+{\epsilon_0})^2}{x+1}\mu},\quad\quad & \frac{z+{\epsilon_0}}{\sqrt{x+1}}\geq N .
\end{aligned}\right.\\
\end{equation}
Through the equation \eqref{appeqeuclidean}, the estimates of
 \begin{align*}
   \partial_z^4 u_k,\quad \partial_z^4 v_k,\quad \partial_{x,y}\partial_z^2 u_k,\quad \partial_{x,y}\partial_z^2 v_k,
\end{align*}
are derived from  \eqref{n-1assumption}.

Here are some remarks on \eqref{n-1assumption}.
The second to last inequality in \eqref{n-1assumption} is natural, since by compatibility, the solution of \eqref{eq:3DPrandtl} satisfies
\begin{align*}
    \partial_z^2 u=0\quad \text{on}\quad z=0.
\end{align*}
Moreover, for the second to  last inequality in \eqref{n-1assumption}, we have
\begin{align}\label{1119-1}\begin{split}
    |\partial_{z}^2u_{n-1}-\partial_{z}^2\bar{u}|\leq & \frac{\varepsilon e^{-\frac{A}{x+1}}}{1+\frac{\alpha}{7}}\min\{1,(z+\min_{[0,X]\times[0,Y]}\frac{\bar{u}(x,y,0)}{\partial_z\bar{u}(x,y,0)})^\alpha\}
\\ \leq & \varepsilon e^{-\frac{A}{x+1}}\min\{1,(z+\epsilon_0)^\alpha\},\end{split}
\end{align}
and the detailed calculation is given  in the Appendix.
In addition, by \eqref{mnt}, \eqref{tail-1-0316} and \eqref{n-1assumption},  for some positive constant $C_0,$ it holds
\begin{align}\label{positivelbu}
  0<c_0\leq \partial_z u_{n-1}, \partial_z v_{n-1}\leq C_0 \quad \text{in}\quad\Omega\cap\{z\leq 1\},
\end{align}
and  for $z$ large,
\begin{align}
  c_0e^{-\frac{3}{2}\mu (z+\epsilon_0)^2}\leq\partial_z u_{n-1}, \partial_z v_{n-1} \leq C_0 e^{-\frac{(z+{\epsilon_0})^2}{x+1}\mu}\quad \text{in}\quad\Omega.
\end{align}
Next, we derive the estimates of vector field derivatives based on \eqref{n-1assumption}.
\begin{lemma}\label{04160416e-v} Under the assumption in \eqref{n-1assumption},
it holds that
\begin{align*}
    |\partial_z\nabla_{\eta,\xi}(u_{n-1}-\bar{u})|\leq\varepsilon^2 C\phi_{1,\alpha},\,\,
|\nabla_{\eta,\xi}\nabla_{\eta,\xi}(u_{n-1}-\bar{u})|\leq\varepsilon^2 C\phi_{1,\frac{\alpha}{2}}\quad \text{in}\quad \Omega,
\end{align*}where $\nabla_{\eta,\xi}$ stands for $\nabla_{\eta}$ or $\nabla_{\xi}$.
\end{lemma}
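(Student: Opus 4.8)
The plan is to express the vector-field derivatives $\nabla_\xi$, $\nabla_\eta$ in terms of ordinary derivatives via the dictionary \eqref{rlt}, apply the dictionary a second time to the first-order quantities, and then feed in the pointwise bounds of \eqref{n-1assumption} term by term. Write $\nabla_\xi = \partial_x - G^{n-1}\partial_z$ with $G^{n-1}=\frac{\int_0^z\partial_x u_{n-1}\,dz'}{u_{n-1}}$, and similarly $\nabla_\eta=\partial_y - F^{n-1}\partial_z$; set $g:=u_{n-1}-\bar u$. The first task is to control $\partial_z\nabla_\xi g$ and $\partial_z\nabla_\eta g$. From \eqref{rlt}, $\partial_z\nabla_\xi g = \partial_z\partial_x g - \partial_z(G^{n-1})\partial_z g - G^{n-1}\partial_z^2 g$, and one must compare this with the corresponding expression for $\bar u$ using $\nabla_{\tau_1}$. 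The $\partial_z\partial_x$ and $\partial_z^2$ differences are directly bounded by $\varepsilon^2\phi_{1,\alpha}$ and by $\varepsilon\min\{1,(z+\epsilon_0)^\alpha\}$ from \eqref{n-1assumption}; the coefficient differences $G^{n-1}-G_{\bar u}$, $\partial_z G^{n-1}-\partial_z G_{\bar u}$ are estimated from the first three lines of \eqref{n-1assumption} together with the growth estimates of $\bar u$ promised in Subsection \ref{tl}, using that $u_{n-1}$ is bounded below by \eqref{positivelbu} and the tails \eqref{phi3}, \eqref{phi1}. Careful bookkeeping of the $z$-weights (each integral $\int_0^z\partial_x u_{n-1}dz'$ gains one power of the self-similar variable near $z=0$, which matches the difference between $\phi_{1,1}$ and $\phi_{1,\alpha}$ after division by $u_{n-1}\sim z$) yields the claimed bound $|\partial_z\nabla_{\eta,\xi}g|\le\varepsilon^2 C\phi_{1,\alpha}$.

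For the second-order quantity $\nabla_{\eta,\xi}\nabla_{\eta,\xi}g$ I would iterate: write $\nabla_\xi\nabla_\xi g = \partial_x(\nabla_\xi g) - G^{n-1}\partial_z(\nabla_\xi g)$, expand $\nabla_\xi g = \partial_x g - G^{n-1}\partial_z g$ inside, and collect the resulting terms, which involve $\partial_x^2 g$, $\partial_x\partial_z g$, $\partial_z^2 g$, $\partial_x G^{n-1}$, $\partial_z G^{n-1}$, and products thereof. The genuinely second-order pieces are controlled by the fifth line of \eqref{n-1assumption} ($|\partial_{x,y}\partial_{x,y}g|\le\varepsilon^2\phi_{1,\frac\alpha2}$), the fourth line, and \eqref{1119-1}; the terms carrying a coefficient $\partial_{x,y}G^{n-1}$ require one more differentiation of the non-local coefficient, which again reduces to second tangential derivatives of $u_{n-1}$ via \eqref{rlt}. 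The cross term $\nabla_\xi\nabla_\eta g$ is handled identically, except that one should note the commutator correction: $[\nabla_\xi,\nabla_\eta]=K_{n-1}\partial_z$ by \eqref{K}, and since $|K_{n-1}|\le\varepsilon^2 C_2$ and $|\partial_z\nabla_\eta g|\le\varepsilon^2 C\phi_{1,\alpha}$ from the first part, the commutator contributes a term of size $\varepsilon^2 C\phi_{1,\alpha}\le\varepsilon^2 C\phi_{1,\frac\alpha2}$ (absorbing the weight since $\phi_{1,\alpha}\le C\phi_{1,\frac\alpha2}$ after adjusting constants, using $\frac\alpha2<\alpha$ and the definition \eqref{phi1}), so it is consistent with the target.

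The main obstacle is the weight accounting near $z=0$ and in the far field simultaneously: one must verify that every error term, after the two applications of \eqref{rlt} and multiplication by the coefficients $G^{n-1},F^{n-1}$ and their derivatives, carries a $z$-weight no worse than $\phi_{1,\frac\alpha2}$ (respectively $\phi_{1,\alpha}$ for the first-order statement), where the critical region is the inner layer $0\le\frac{z+\epsilon_0}{\sqrt{x+1}}\le\delta$ in which $\phi_{1,\beta}\sim (\frac{z+\epsilon_0}{\sqrt{x+1}})^\beta$. Here the gain of $z$-powers from the primitives $\int_0^z\partial_{x,y}u_{n-1}dz'$ and the loss from dividing by $u_{n-1}\sim z$ must be balanced against the drop from $\phi_{1,1}$ to $\phi_{1,\frac\alpha2}$; since $\alpha<1$, the budget is tight but positive, and this is exactly where the precise exponents in \eqref{n-1assumption} were chosen. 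The far-field behavior is comparatively routine because all the relevant quantities decay like $\phi_{1,0}$ (Gaussian in the self-similar variable) and products of such decaying factors only help. Once both bounds are assembled, nothing further is needed, so I expect the proof to be short modulo this careful term-by-term inspection.
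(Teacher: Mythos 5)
There is a genuine gap, and it sits exactly at the point you flag as ``the main obstacle'' and then wave through. In your expansion $\partial_z\nabla_\xi g=\partial_z\partial_x g-\partial_z(G^{n-1})\partial_z g-G^{n-1}\partial_z^2 g$ you propose to bound the last term by the induction bound $|\partial_z^2 g|\le \varepsilon\min\{1,(z+\epsilon_0)^\alpha\}$ from \eqref{n-1assumption}/\eqref{1119-1} times $|G^{n-1}|\le C\bar u$. This product is only of size $\varepsilon$ (not $\varepsilon^2$) once $z+\epsilon_0$ is of order $\delta$ or larger, and it has no decay at all as $z\to\infty$, whereas the target $\varepsilon^2C\phi_{1,\alpha}$ is both $\varepsilon^2$-small and Gaussian in the self-similar variable. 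So the ``budget'' is not tight-but-positive: it is negative in the bulk region $\delta\le\frac{z+\epsilon_0}{\sqrt{x+1}}\le N$ and fails outright in the far field. The same defect reappears in your second-order step, where full iteration of \eqref{rlt} produces a term like $G^{n-1}F^{n-1}\partial_z^2 g$ with the same uncontrollable size. The paper's proof avoids this by a step your proposal lacks: it first uses the $(n-1)$-level approximate equation \eqref{appeqeuclidean} to rewrite $\partial_z^2u_{n-1}$ (and $\partial_z^2\bar u=2\bar u\nabla_{\tau_1}\bar u$) in terms of first-order tangential vector-field derivatives, obtaining the refined bound $|\partial_z^2(u_{n-1}-\bar u)|\le\varepsilon^2C\bar u^{-1}\phi_{1,\alpha}$ (its Step 1, \eqref{250416-zz}). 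Only with this upgraded bound does the product with the coefficients $G^{n-1},F^{n-1}\lesssim\bar u$ land inside $\varepsilon^2C\phi_{1,\alpha}$, and only then do Steps 2 and 3 (which otherwise resemble your term-by-term expansion via \eqref{rlt}) close.

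A secondary issue: to handle the cross term you invoke $|K_{n-1}|\le\varepsilon^2C_2$ from \eqref{Ksmallinfty}, but that estimate is derived in the paper from \eqref{tildq0221} and \eqref{assumpn}, which are themselves stated as consequences of \eqref{n-1assumption} \emph{together with this very lemma}; quoting it here is a forward reference bordering on circularity. It can be repaired (the quantity $K_{n-1}=\partial_yG-\partial_xF+G\partial_zF-F\partial_zG$ can be bounded directly from \eqref{n-1assumption}), and in fact the paper sidesteps the commutator entirely by expanding $\nabla_\eta\nabla_\xi$ into Euclidean derivatives via \eqref{rlt}. But without the equation-based refinement of $\partial_z^2(u_{n-1}-\bar u)$ described above, your argument does not prove the stated bounds.
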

\begin{proof}
If $n=1$, then $u_{n-1}=u_0=\bar{u}$ and the result holds.
Now we consider $n\geq 2.$

By \eqref{n-1assumption}, the growth rates of $\bar{u}$ and its derivatives in Subsection \ref{1027},
 \eqref{xitau} and \eqref{rlt},
we have \begin{align}\label{250416commongr}\begin{split}
&| \frac{\int_0^z \partial_{xy}^2u_{n-1}dz'}{u_{n-1}}|\leq C\bar{u}^{\frac{\alpha}{2}}, \quad|\partial_z( \frac{\int_0^z \partial_{y}v_{n-1}dz'}{v_{n-1}})|\leq C, \\&
  |\frac{\int_0^z \partial_{y}v_{n-1}dz'}{v_{n-1}}|\leq C\bar{u}\quad\text{in}\quad\Omega.\end{split}
\end{align}
and
\begin{align}\label{250416-nabla1}\begin{split}
   | \nabla_{\xi}^{n-2}u_{n-1}-\nabla_{\tau_1}\bar{u}|=&|\nabla_{\xi}^{n-2}(u_{n-1}-\bar{u})
    +(\nabla_{\xi}^{n-2}-\nabla_{\tau_1})\bar{u}|\leq \varepsilon^2C\phi_{1,1},\\
    | \nabla_{\eta}^{n-2}u_{n-1}-\nabla_{\tau_1}\bar{u}|=&
    | \nabla_{\eta}^{n-2}u_{n-1}-\nabla_{\tau_2}\bar{u}|\leq \varepsilon^2C\phi_{1,1}.
    \end{split}
\end{align}
Here we recall the definition of $ \nabla_{\xi}^{n-2},$ $\nabla_{\eta}^{n-2}$ in \eqref{26-03-08-def-der} and  $$\nabla_{\xi}^{0}=\nabla_{\tau_1}, \nabla_{\eta}^{0}=\nabla_{\tau_2}.$$

\textit{Step 1} We will prove
\begin{align}\label{250416-zz}
|\partial_z^2(u_{n-1}-\bar{u})|\leq  \varepsilon^{2}C\frac{1}{\bar{u}} \phi_{1,\alpha}\quad\text{in}\quad \Omega.
\end{align}

By \eqref{appeqeuclidean}, we have
\begin{align}\begin{split}
\partial_{z}^2u_{n-1}= &[u_{n-2}(\nabla_{\xi}^{n-2} u_{n-1} +(1+\tilde{q}_{n-2})\nabla_{\eta}^{n-2} u_{n-1} )-\partial_{z}u_{n-1} \partial_{z}(\frac{u_{n-1}}{u_{n-2}} )]\frac{u_{n-2}}{u_{n-1}},\\
\partial_{z}^2\bar{u}= &2\bar{u}\nabla_{\tau_1} \bar{u} \quad \text{in}\quad \Omega.
    \end{split}
\end{align}
Note
by \eqref{n-1assumption}, it holds
\begin{align*}\begin{split}
|\partial_z (\frac{ u_{n-1}}{u_{n-2}})|=&|\partial_z (\frac{ u_{n-1}-u_{n-2}}{u_{n-2}})|\\ \leq &\frac{ |\partial_z u_{n-1}-\partial_zu_{n-2}|}{u_{n-2}}+\frac{\partial_zu_{n-2}|u_{n-2}-u_{n-1}|}{u_{n-2}^2}
\\ \leq& C\varepsilon^{5}\frac{1}{u_{n-2}} \phi_{1,\alpha}\quad\text{in}\quad \Omega.\end{split}
\end{align*}
Moreover, combining \eqref{250416-nabla1}, we complete the proof of \eqref{250416-zz}.

\textit{Step 2} We will prove
\begin{align}\label{250416-znabla1}
| \partial_z\nabla _{\eta,\xi} (u_{n-1}-\bar{u})|\leq
\varepsilon^2 C\phi_{1,\alpha}\quad \text{in}\quad \Omega.
\end{align}
By \eqref{rlt}, we have
\begin{align*}
  \partial_z\nabla _{\eta} (u_{n-1}-\bar{u})=  & \partial_z\partial_y(u_{n-1}-\bar{u})-\partial_z( \frac{\int_0^z \partial_{y}v_{n-1}dz'}{v_{n-1}})\partial_z(u_{n-1}-\bar{u})
  \\&- \frac{\int_0^z \partial_{y}v_{n-1}dz'}{v_{n-1}}\partial_z^2(u_{n-1}-\bar{u}).
\end{align*}
Then by \eqref{250416commongr} and \eqref{250416-zz}, we prove \begin{align*}
| \partial_z\nabla _{\eta} (u_{n-1}-\bar{u})|\leq
\varepsilon^2 C\phi_{1,\alpha}\quad \text{in}\quad \Omega.
\end{align*} Similarly, we can prove
$| \partial_z\nabla _{\xi} (u_{n-1}-\bar{u})|\leq
\varepsilon^2 C\phi_{1,\alpha}$ in $\Omega.$  Then we complete the proof of \eqref{250416-znabla1}.

\textit{Step 3} We will prove
\begin{align}\label{250416-nabla22}
| \nabla _{\eta}\nabla _{\xi} (u_{n-1}-\bar{u})|\leq
\varepsilon^2 C\phi_{1,\frac{\alpha}{2}}\quad \text{in}\quad \Omega.
\end{align}
By \eqref{rlt}, we have
\begin{align*}\begin{split}
\nabla _{\eta} \nabla _{\xi}(u_{n-1}-\bar{u})=&\partial_{yx}^2(u_{n-1}-\bar{u})- \frac{\int_0^z \partial_{xy}^2u_{n-1}dz'}{u_{n-1}}\partial_z (u_{n-1}-\bar{u})
\\&+ \frac{\partial_yu_{n-1}\int_0^z \partial_{x}u_{n-1}dz'}{u_{n-1}^2}\partial_z (u_{n-1}-\bar{u})
\\&- \frac{\int_0^z \partial_{x}u_{n-1}dz'}{u_{n-1}}\partial_{zy}^2 (u_{n-1}-\bar{u})
- \frac{\int_0^z \partial_{y}v_{n-1}dz'}{v_{n-1}}\partial_z\nabla _{\xi}(u_{n-1}-\bar{u})
\end{split}\end{align*}
Then  by \eqref{n-1assumption}, \eqref{250416commongr} and \eqref{250416-znabla1}, we complete the proof of \eqref{250416-nabla22}.
The rest inequalities in this lemma can be proved similarly and then we skip the details.
\end{proof}
 \eqref{n-1assumption} and Lemma \ref{04160416e-v} imply  for some positive constant $C,$
\begin{align}\label{assumpn}\begin{split}
&|q_{n-1}|\leq\varepsilon^6 C\phi_{1,1+2\alpha},\quad
|\partial_zq_{n-1}|\leq \varepsilon^5C \phi_{1,\alpha},
\\&
|\partial_z\nabla_{\eta,\xi}q_{n-1}|\leq\varepsilon^2 C\phi_{1,\alpha},\quad
|\nabla_{\eta,\xi}\nabla_{\eta,\xi}q_{n-1}|\leq\varepsilon^2 C\phi_{1,\frac{\alpha}{2}},\\
&|u_{n-1}|+|v_{n-1}|\leq C ,\quad |\partial_zu_{n-1}|+|\partial_zv_{n-1}|\leq C e^{-\frac{(z+{\epsilon_0})^2}{x+1}\mu}
,\\&
|\nabla_{\eta,\xi}u_{n-1}|\leq C\min\{z+{\epsilon_0},1\}e^{-\frac{(z+{\epsilon_0})^2}{x+1}\mu},
\quad
|\partial_z\nabla_{\eta,\xi}u_{n-1}|\leq Ce^{-\frac{(z+{\epsilon_0})^2}{x+1}\mu},
\\& |\nabla_{\eta,\xi}\nabla_{\eta,\xi}u_{n-1}|+|\nabla_{\eta,\xi}\nabla_{\eta,\xi}v_{n-1}|\leq C \min\{(z+{\epsilon_0})^{\frac{\alpha}{2}},1\}e^{-\frac{(z+{\epsilon_0})^2}{x+1}\mu},
 \end{split}
\end{align}
  in $\Omega$.
In particular, by the boundary data on $z=0$ and $|\partial_z\nabla_{\eta,\xi}q_{n-1}|\leq\varepsilon^2 C\phi_{1,\alpha}$,
we  have
\begin{align}\label{0221qn-1}
|\nabla_{\eta,\xi}q_{n-1}|\leq\varepsilon^2 C\phi_{1,1+\alpha},\quad \text{in}\quad \Omega.
\end{align}
Thus we have the following estimates of vector field derivatives of  $\tilde{q},$
\begin{align}\label{tildq0221}\begin{split}
|\nabla_{\eta,\xi}\tilde{q}|=&|\nabla_{\eta,\xi}(\frac{q_{n-1}}{u_{n-1}})|
=|\frac{1}{u_{n-1}}\nabla_{\eta,\xi}q_{n-1}-\nabla_{\eta,\xi}u_{n-1}\frac{q_{n-1}}{u_{n-1}^2}|\leq\varepsilon^2 C\phi_{1,\alpha},\\
|\nabla_{\eta,\xi}\nabla_{\eta,\xi}\tilde{q}|=&|-\frac{\nabla_{\eta,\xi}u_{n-1}}{u_{n-1}^2}\nabla_{\eta,\xi}q_{n-1}+\frac{1}{u_{n-1}}\nabla_{\eta,\xi}\nabla_{\eta,\xi}q_{n-1}
-\nabla_{\eta,\xi}\nabla_{\eta,\xi}u_{n-1}\frac{q_{n-1}}{u_{n-1}^2}
\\&-\frac{\nabla_{\eta,\xi}u_{n-1}}{u_{n-1}}\nabla_{\eta,\xi}\tilde{q}+\frac{\nabla_{\eta,\xi}u_{n-1}\nabla_{\eta,\xi}u_{n-1}}{u_{n-1}^2}\tilde{q}|
\\ \leq&\varepsilon^2 C\frac{1}{u_{n-1}}\phi_{1,\frac{\alpha}{2}}\quad \quad \text{in}\quad \Omega.\end{split}
\end{align}

To complete the induction,
we will show that $u_{n}$ and $v_{n}$ satisfy
 \begin{align}\label{assumpn-2}\begin{split}
&|u_{n}-\bar{u}|,\,|v_{n}-\bar{u}|\leq d_0\varepsilon^6 \phi_{1,1+2\alpha},\\&
|\partial_zu_{n}-\partial_z\bar{u}|,\,|\partial_zv_{n}-\partial_z\bar{u}|\leq d_0\varepsilon^5 \phi_{1,\alpha},\\&
|\nabla_{\eta,\xi}u_{n}-\nabla_{\tau_1}\bar{u}|,\,|\nabla_{\eta,\xi}v_{n}-\nabla_{\tau_1}\bar{u}|\leq d_0\varepsilon^2 \phi_{1,1},
\\&
|\partial_z\nabla_{\eta,\xi}u_{n}-\partial_z\nabla_{\tau_1}\bar{u}|,\,|\partial_z\nabla_{\eta,\xi}v_{n}-\partial_z\nabla_{\tau_1}\bar{u}|\leq d_0\varepsilon^2 \phi_{1,\alpha},\quad
\\&|\nabla_{\eta,\xi}\nabla_{\eta,\xi}u_{n}-\nabla_{\tau_1}^2\bar{u}|,\,
|\nabla_{\eta,\xi}\nabla_{\eta,\xi}v_{n}-\nabla_{\tau_1}^2\bar{u}|\leq d_0\varepsilon^2 \phi_{1,\frac{\alpha}{2}},
\\&|\partial_{z}^2u_{n}-\partial_{z}^2\bar{u}|\leq \frac{\varepsilon e^{-\frac{A}{x+1}}}{1+\frac{\alpha}{7}}\min\{1,(z+\min_{[0,X]\times[0,Y]}\frac{\bar{u}(x,y,0)}{\partial_z\bar{u}(x,y,0)})^\alpha\}
,\\
&\partial_{z}u_{n},\,\,\partial_{z}v_{n}\geq c_0e^{-\frac{3}{2}\mu (z+\epsilon_0)^2},
 \end{split}
\end{align} for some positive constant  $d_0\ll 1$ independent of $\varepsilon$ and  $\varepsilon\ll d_0 $
 in $\Omega$, which implies \eqref{n-1assumption} with $n-1$ replaced by $n$ through the transformation formulas in \eqref{rlt}. ( Refer to Theorem \ref{lm6} in Subsection \ref{66} in the Appendix for detailed calculation of transformation.)
\subsubsection{Bootstrap argument}
To prove \eqref{assumpn-2}, we will employ a bootstrap argument as follows.

If \eqref{assumpn-2} holds in $\Omega$, then it is proved. Otherwise, there exists a
maximum  $Y^*\in(0,Y)$   such that
\begin{align}\label{defy}
Y^*=\max \{y_0\in[0,Y]|\eqref{assumpn-2}\,\, \text{holds}\,\, \text{in}\,\,\{0\leq y\leq y_0\} \cap\Omega\},
\end{align}
 by the boundary condition. To be rigorous, we will add $-\gamma$ and $\gamma$ to the right hand side of the last inequality and other inequalities in \eqref{assumpn-2} respectively
 with
 $0<\gamma \ll\epsilon^5_0 e^{-A}$  so that  $Y^*$ is well-defined and finally let $\gamma$ go to $0$ as in the proof of Theorem \ref{5.1}. Here, we write it in this form just for brevity. We will prove the inequalities in \eqref{assumpn-2} hold in $\Omega\cap\{0\leq y \leq Y^*\}$ with $d_0$ replaced by a constant strictly smaller than $d_0$ in the first 5 lines and $\frac{\varepsilon}{1+\frac{\alpha}{7}} , c_0$ replaced by $\frac{\varepsilon}{1+\frac{2\alpha}{13}} , 2c_0$ respectively in the last two lines.
 Then there is no $Y^*\in(0,Y)$  satisfying \eqref{defy}. Hence, \eqref{assumpn-2} holds in $\Omega$.

  In the following, we  present the estimates of $u_n$ and its derivatives, since the analysis on $v_n$ and its derivatives is the same.
\section{ Maximum principle in terms of   the vector fields}\label{s3}
For the function $\psi_{n-1}$ defined in \eqref{psi0125}, we have
\begin{align}\label{psin-1}\begin{split}
\nabla_\psi \psi_{n-1}=&\frac{1}{u_{n-1}}\partial_z\int_0^z u_{n-1}(x,y,z')dz'=1,\quad \nabla_\psi^2 \psi_{n-1}=0,\\
 -\nabla_\psi^2 \psi_{n-1}^\beta=&-\beta\nabla_\psi( \psi_{n-1}^{\beta-1})=-\beta(\beta-1)\psi_{n-1}^{\beta-2},\\
 \nabla_{\xi}\psi_{n-1}=&\int_0^z \partial_xu_{n-1}dz'-\int_0^z \partial_{x}u_{n-1}dz'=0,
 \\
 \nabla_{\eta}\psi_{n-1}=&\int_0^z \partial_yu_{n-1}dz'-\frac{u_{n-1}\int_0^z \partial_yv_{n-1}dz'}{v_{n-1}}\\
 =&\int_0^z \partial_yu_{n-1}dz'-\frac{\int_0^z \partial_yu_{n-1}+ \partial_yq_{n-1}dz'}{1+\tilde{q}}
 \\
 =&\frac{\tilde{q}}{1+\tilde{q}}\int_0^z \partial_yu_{n-1}dz'-\frac{ \int_0^z \partial_yq_{n-1}dz'}{1+\tilde{q}},\end{split}
\end{align} where  $ |\int_0^z \partial_yq_{n-1}dz'|, |\tilde{q}|\ll \varepsilon$ in $\Omega$ by \eqref{n-1assumption}. Hence, for $\beta\in \R_+,$
\begin{align}\label{4.2}\begin{split}
   &\nabla_\xi \psi_{n-1}^\beta +(1+\tilde{q})\nabla_{\eta}  \psi_{n-1}^\beta+b\nabla_{\psi}\psi_{n-1}^\beta-u_{n}\nabla_{\psi}^2\psi_{n-1}^\beta
 +c \psi_{n-1}^\beta \\ =&\beta(\tilde{q}\int_0^z \partial_yu_{n-1}dz'- \int_0^z \partial_yq_{n-1}dz'+b)\psi_{n-1}^{\beta-1}-\beta(\beta-1)u_{n}\psi_{n-1}^{\beta-2} +c \psi_{n-1}^\beta.
\end{split}\end{align}
 Note that  $u_{n-1}\psi_{n-1}^{\beta-2}\geq \lambda_0 \psi_{n-1}^{\beta-\frac{3}{2}}$ for some positive constant $\lambda_0$ when $z\ll1$.
  If $b$ and $c$ are  bounded functions, then  for  small $z$  such that $$\psi_{n-1}\leq \delta$$ with $\delta$ depending on $\|b\|_{L^\infty((0,X]\times(0,Y]\times(0,\infty))}$ and $\|c\|_{L^\infty((0,X]\times(0,Y]\times(0,\infty))}$, we have the following two inequalities. For $\beta\in(0,1)$, there exists a positive constant $\lambda_1$ such that \begin{align}\label{psin-1x1}\begin{split}
   \nabla_\xi \psi_{n-1}^\beta +(1+\tilde{q})\nabla_{\eta}  \psi_{n-1}^\beta+b\nabla_{\psi}\psi_{n-1}^\beta-u_{n}\nabla_{\psi}^2\psi_{n-1}^\beta
  +c \psi_{n-1}^\beta\geq \lambda_1 \psi_{n-1}^{\beta-\frac{3}{2}}>0, \quad \psi_{n-1}\leq \delta,\end{split}\end{align}
   and for $\beta\in(1,+\infty)$, there exists a positive constant $\lambda_2$  such that \begin{align}\label{psin-1x1-alphabig}\begin{split}
   \nabla_\xi \psi_{n-1}^\beta +(1+\tilde{q})\nabla_{\eta}  \psi_{n-1}^\beta+b\nabla_{\psi}\psi_{n-1}^\beta-u_{n}\nabla_{\psi}^2\psi_{n-1}^\beta
  +c \psi_{n-1}^\beta\leq -\lambda_2 \psi_{n-1}^{\beta-\frac{3}{2}}<0,\quad \psi_{n-1}\leq \delta.\end{split}\end{align}
On the other hand,
for $\psi_{n-1}\geq \delta$, by \eqref{n-1assumption}, we have
\begin{align}\label{psin-1x2}\begin{split}
  |(1+\tilde{q})\nabla_{\eta}\psi_{n-1}^{\beta}+b\nabla_{\psi}\psi_{n-1}^{\beta}| = &|\beta(\tilde{q}\int_0^z \partial_yu_{n-1}dz'- \int_0^z \partial_yq_{n-1}dz'+b) \psi_{n-1}^{\beta-1}|\\ \leq &\beta( \frac{1+\|b\|_{L^\infty((0,X]\times(0,Y]\times(0,\infty))}}{\delta})\psi_{n-1}^{\beta}
,\quad \psi_{n-1}\geq \delta.\end{split}\end{align}

With these estimates, we will prove the following two maximum principles.

\begin{lemma}[Maximum Principle in bounded domain]\label{bdMP}
For any positive constant $z_0,$ assume $f\in C^2\big((0,X]\times(0,Y]\times(0,z_0)\big)\cap C\big([0,X]\times[0,Y]\times[0,z_0]\big).$
  If
$f\leq 0 $ on $ [0,X]\times[0,Y]\times\{z=0,z_0\}\cup\{x=0\}\times[0,Y]\times[0,z_0]\cup[0,X]\times\{y=0\}\times[0,z_0],$ and
\begin{align}\label{bdmax}
    Lf\leq0 \quad\text{in}\quad(0,X]\times(0,Y]\times(0,z_0),
   \end{align} where $L f=\nabla_\xi f +(1+\tilde{q})\nabla_{\eta}  f+b\nabla_{\psi}f-u_{n}\nabla_{\psi}^2f +cf
$, $1+\tilde{q}>0$ and $b$ and $c$ are bounded functions,
then $f\leq 0$ in $[0,X]\times[0,Y]\times[0,z_0].$
\end{lemma}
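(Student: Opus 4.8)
The plan is to prove Lemma~\ref{bdMP} by the classical maximum-principle argument adapted to the first-order operator $L$, using the barrier $\psi_{n-1}^\beta$ with $\beta\in(0,1)$ constructed above. First I would assume for contradiction that $\sup_{[0,X]\times[0,Y]\times[0,z_0]} f = M > 0$. Since $f$ is smooth and the domain $[0,X]\times[0,Y]\times[0,z_0]$ is compact, the supremum is attained at some point $p_0=(x_0,y_0,z_0')$. By the boundary hypothesis $f\le 0$ on the relevant parts of $\partial(\text{domain})$, the point $p_0$ cannot lie on $\{z=0\}$, $\{z=z_0\}$, $\{x=0\}$, or $\{y=0\}$; hence $x_0>0$, $y_0>0$, and $0<z_0'<z_0$, i.e. $p_0$ is interior in the $z$-direction and on the ``outflow'' side in $x,y$.

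The next step is to regularize $f$ so that the maximum is strict and moves into the genuine interior. Fix a small $\lambda>0$ (to be sent to $0$) and consider $f_\lambda = f - \lambda\,\psi_{n-1}^\beta - \lambda(X-x) - \lambda(Y-y)$ with $\beta\in(0,1)$ chosen so that, on the region $\psi_{n-1}\le\delta$, inequality~\eqref{psin-1x1} gives $L(\psi_{n-1}^\beta)\ge \lambda_1\psi_{n-1}^{\beta-3/2}>0$; on the complementary region $\psi_{n-1}\ge\delta$ one uses \eqref{psin-1x2} to bound $L(\psi_{n-1}^\beta)$ from below by a (possibly negative) constant, which can be absorbed by adding a term like $\lambda C\,x$ or by noting $\psi_{n-1}^\beta$ is bounded there and adjusting the linear corrections. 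Because $\psi_{n-1}^\beta\ge0$, vanishes at $z=0$, and the linear terms $(X-x),(Y-y)$ are nonnegative and vanish at $x=X$, $y=Y$ respectively, one checks that $f_\lambda<0$ on $\{z=0\}\cup\{z=z_0\}\cup\{x=0\}\cup\{y=0\}$, while $\sup f_\lambda \ge M - \lambda C' > 0$ for $\lambda$ small. Hence $f_\lambda$ attains a positive interior maximum at some $p_\lambda=(x_\lambda,y_\lambda,z_\lambda)$ with $x_\lambda>0$, $y_\lambda>0$, $0<z_\lambda<z_0$.

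At $p_\lambda$ the first-order conditions give $\nabla_\psi f_\lambda = \frac{1}{u_{n-1}}\partial_z f_\lambda = 0$ and the second-order condition gives $\partial_z^2 f_\lambda\le 0$, hence $\nabla_\psi^2 f_\lambda = \frac{1}{u_{n-1}}\partial_z(\frac{1}{u_{n-1}}\partial_z f_\lambda)\le 0$ at $p_\lambda$ (using $\partial_z f_\lambda=0$ there). For the tangential fields, since $p_\lambda$ is a maximum in the full $(x,y,z)$-domain with $x_\lambda>0,y_\lambda>0$, we have $\partial_x f_\lambda\ge 0$ and $\partial_y f_\lambda\ge 0$ at $p_\lambda$ if $x_\lambda<X$, $y_\lambda<Y$, with equality in the interior; combined with $\partial_z f_\lambda=0$ this yields $\nabla_\xi f_\lambda = \partial_x f_\lambda \ge 0$ and $\nabla_\eta f_\lambda = \partial_y f_\lambda\ge 0$ (the correction terms involving $\partial_z$ drop out). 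The boundary cases $x_\lambda=X$ or $y_\lambda=Y$ are handled by the strict negativity of the linear corrections, or directly since $f_\lambda$ still has a one-sided max there giving the correct sign. Therefore
\[
L f_\lambda = \nabla_\xi f_\lambda + (1+\tilde q)\nabla_\eta f_\lambda + b\nabla_\psi f_\lambda - u_n\nabla_\psi^2 f_\lambda + c f_\lambda \ge c f_\lambda
\]
at $p_\lambda$, using $1+\tilde q>0$, $u_n>0$, $\nabla_\psi f_\lambda=0$, $\nabla_\psi^2 f_\lambda\le 0$. On the other hand, $L f \le 0$ by hypothesis \eqref{bdmax} and $L(\psi_{n-1}^\beta + (X-x)+(Y-y))$ is strictly positive at $p_\lambda$ after the adjustments above (the dominant term being $\lambda_1\psi_{n-1}^{\beta-3/2}$ or the linear-term contributions $+\lambda$ from $\nabla_\xi(X-x)=-1$... here care is needed with signs, so one instead uses $+\lambda x+\lambda y$ and the $\psi^\beta$ barrier is the real workhorse for small $z$, while for $\psi_{n-1}\ge\delta$ one notes $z$ is bounded below and a linear-in-$x$ barrier $e^{\Lambda x}$ with $\Lambda$ large beats the bounded lower-order terms). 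Thus $L f_\lambda < L f \le 0$ at $p_\lambda$, contradicting $L f_\lambda \ge c f_\lambda$ once we also absorb the $c f_\lambda$ term: since $f_\lambda(p_\lambda)>0$ is bounded and the barrier term can be taken as large as we like (multiply the barrier by a large constant, or use $e^{\Lambda x}$-type weights so that $c f_\lambda$ is dominated), we reach a contradiction. Letting $\lambda\to 0$ then forces $M\le 0$, i.e. $f\le 0$ throughout.

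I expect the main obstacle to be the bookkeeping around the region $\psi_{n-1}\ge\delta$, where the barrier $\psi_{n-1}^\beta$ no longer has a good sign under $L$: there one must combine it with an exponential weight $e^{\Lambda x}$ (with $\Lambda$ large depending on $\|b\|_{L^\infty}$, $\|c\|_{L^\infty}$, and the constant in \eqref{psin-1x2}) so that the $\nabla_\xi$ term produces $-\Lambda e^{\Lambda x}$, dominating all lower-order contributions including $c f_\lambda$; the delicate point is to choose $\delta$ (via \eqref{psin-1x1}) and $\Lambda$ consistently so that the composite barrier $\Psi := e^{\Lambda x}(\psi_{n-1}^\beta + \delta^\beta)$ satisfies $L\Psi > 0$ uniformly on all of $[0,X]\times[0,Y]\times[0,z_0]$ while still vanishing appropriately on $\{z=0\}$. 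A secondary technical point is justifying the sign of $\nabla_\psi^2 f_\lambda$ at the maximum, which requires knowing $\partial_z f_\lambda=0$ there (true since $z_\lambda$ is interior) so that the cross term in $\nabla_\psi^2 = \frac{1}{u_{n-1}^2}\partial_z^2 - \frac{\partial_z u_{n-1}}{u_{n-1}^3}\partial_z$ reduces to $\frac{1}{u_{n-1}^2}\partial_z^2 f_\lambda \le 0$.
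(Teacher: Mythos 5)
Your skeleton is right --- argue by contradiction at a maximum point, use that the data are prescribed on $\{x=0\}$, $\{y=0\}$, $\{z=0,z_0\}$ so the maximum sits in $(0,X]\times(0,Y]\times(0,z_0)$, and read off $\partial_x\ge 0$, $\partial_y\ge 0$ (one-sided at $x=X$, $y=Y$), $\partial_z=0$, $\partial_z^2\le 0$, hence $\nabla_\xi,\nabla_\eta\ge 0$, $\nabla_\psi=0$, $\nabla_\psi^2\le 0$. But there is a genuine gap in how you absorb the zeroth-order term. The lemma allows $c$ of arbitrary sign (the paper even remarks this explicitly), and at your maximum point $p_\lambda$ the sign conditions only give $Lf_\lambda - cf_\lambda\ge 0$, while your direct computation gives $Lf_\lambda - cf_\lambda \le -cf - \lambda(L\Psi - c\Psi)$. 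If $c<0$ at $p_\lambda$, the term $-cf$ is a fixed positive quantity of size roughly $\|c\|_{L^\infty}M$, whereas your barrier contribution is only $O(\lambda)$; so no contradiction survives the limit $\lambda\to 0$. Your two proposed fixes do not repair this: multiplying the additive barrier by a large constant destroys the conclusion $f\le 0$ in the limit (you would only get $f\lesssim \Psi$), and putting $e^{\Lambda x}$ \emph{inside} the additive barrier still leaves its contribution at $O(\lambda)$.

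The paper's device is a \emph{multiplicative} weight: set $G=fe^{-Bx}$ with $B>\|c\|_{L^\infty}$. Since $e^{-Bx}$ depends only on $x$, the vector fields act on it trivially, and at a positive maximum of $G$ one gets on one hand $LG-cG\ge 0$ from the sign conditions, and on the other hand
\begin{align*}
LG-cG=e^{-Bx}Lf+(-B-c)e^{-Bx}f<0,
\end{align*}
because $Lf\le 0$, $-B-c<0$ and $f(p)>0$; this is already strict, so no regularization, no $\psi_{n-1}^\beta$ barrier, and no splitting into $\psi_{n-1}\le\delta$ versus $\psi_{n-1}\ge\delta$ is needed in the bounded domain. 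The barrier machinery you imported (via \eqref{psin-1x1}--\eqref{psin-1x2}) is what the paper reserves for the \emph{unbounded}-domain principle (Lemma \ref{infMPx}), where it compensates for the missing boundary condition as $z\to\infty$; here it only obscures the argument while the one ingredient actually required --- the exponential-in-$x$ conjugation handling $c$ of arbitrary sign --- is missing from your write-up.
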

\begin{proof}
Set $$G=fe^{-Bx},$$ where
\begin{align}\label{largeB}
 B>\|c\|_{L^\infty((0,X]\times(0,Y]\times(0,z_0))}.
\end{align}


 By the assumption,
$G\leq 0 $ on the boundary $ \big(\{x=0\}\cup\{y=0\}\cup\{z=0,z_0\}\big)\cap [0,X]\times[0,Y]\times[0,z_0].$ Hence, $G$ does not attain a positive maximum on the boundary.

We now prove $G$ does not attain any  positive maximum in the interior $(0,X]\times(0,Y]\times(0,z_0)$ by contradiction.
If $G$ attains its positive maximum at some interior point $p\in (0,X]\times(0,Y]\times(0,z_0),$ then
  \begin{align}\label{fp}
    f(p)>0,
\end{align}and
$\partial_zG(p)=0, \partial_xG(p)\geq0, \partial_yG(p)\geq0$ and $\partial_z^2G(p)\leq0.$ Hence
\begin{align}\label{xzx}\begin{split}
   \nabla_\eta G(p)\geq&0,\quad \nabla_\xi G(p)\geq0,\quad \nabla_{\psi}G(p)=0,
  \\  \nabla_{\psi}^2G=&\frac{1}{u_{n-1}}\partial_z(\frac{1}{u_{n-1}}\partial_zG)
    =\frac{1}{u_{n-1}}(
    \frac{1}{u_{n-1}}\partial_z^2G- \frac{\partial_zu_{n-1}}{u_{n-1}^2}\partial_zG)
     \leq0\quad \text{at}\quad p,\end{split}
\end{align}
which implies that
 $$LG-cG\geq 0\quad \text{at} \quad p.$$ However,  by \eqref{largeB} and \eqref{fp},
\begin{align*}
    LG-cG=e^{-Bx}Lf+(-B-c)e^{-Bx}f<0\quad \text{at} \quad p,
\end{align*} which leads to a contradiction.
Therefore, $G\leq 0$ in $[0,X]\times[0,Y]\times[0,z_0]$, so is  $f.$
\end{proof}

\begin{lemma}[Maximum Principle in unbounded domain]\label{infMPx}
If $f\leq M$ for some positive constant $M,$ $f\leq 0 $ on $ [0,X]\times[0,Y]\times\{z=0\}\cup\{x=0\}\times[0,Y]\times[0,+\infty)\cup[0,X]\times\{y=0\}\times[0,+\infty)$ and
\begin{align}\label{maxassumpx}
    Lf\leq0 \quad\text{in}\quad(0,X]\times(0,Y]\times(0,\infty),
   \end{align} where $L f=\nabla_\xi f +(1+\tilde{q})\nabla_{\eta}  f+b\nabla_{\psi}f-u_{n}\nabla_{\psi}^2f +cf
$, \eqref{psin-1x2} holds and $b$ and $c$ are bounded functions,
then $f\leq 0$ in $[0,X]\times[0,Y]\times[0,+\infty).$
\end{lemma}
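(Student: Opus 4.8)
The plan is to reduce Lemma~\ref{infMPx} to the bounded-domain maximum principle, Lemma~\ref{bdMP}, by a truncation-plus-barrier argument. Concretely, I would subtract from $f$ a small multiple $\epsilon\Psi$ of a barrier function $\Psi$ which grows in $z$ and is a supersolution, $L\Psi\ge 0$; the growth forces $f-\epsilon\Psi$ to be negative once $z$ is large, so that on each truncated slab $[0,X]\times[0,Y]\times[0,z_0]$ the hypotheses of Lemma~\ref{bdMP} are met, and passing $\epsilon\to 0$ then yields $f\le 0$.

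For the barrier I would take
$$\Psi=e^{Bx}\,\psi_{n-1}^{\beta},$$
with $\psi_{n-1}$ as in \eqref{psin-1}, a fixed exponent $\beta\in(0,1)$, and a large constant $B>0$. Since $\nabla_\xi$ is the only one of the three vector fields containing $\partial_x$, one has $L(e^{Bx}g)=e^{Bx}\big(Lg+Bg\big)$ for any $g$, hence $L\Psi=e^{Bx}\big(L\psi_{n-1}^{\beta}+B\psi_{n-1}^{\beta}\big)$. For $\psi_{n-1}\le\delta$, estimate \eqref{psin-1x1} gives $L\psi_{n-1}^{\beta}\ge\lambda_1\psi_{n-1}^{\beta-3/2}>0$, so $L\Psi>0$ there for every $B$. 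For $\psi_{n-1}\ge\delta$, using $\nabla_\xi\psi_{n-1}^{\beta}=0$, the favorable sign $-u_{n}\nabla_\psi^2\psi_{n-1}^{\beta}=\beta(1-\beta)u_{n}\psi_{n-1}^{\beta-2}\ge0$, the bound \eqref{psin-1x2} on $(1+\tilde q)\nabla_\eta\psi_{n-1}^{\beta}+b\nabla_\psi\psi_{n-1}^{\beta}$, and $|c\psi_{n-1}^{\beta}|\le\|c\|_{L^\infty}\psi_{n-1}^{\beta}$, one obtains $L\psi_{n-1}^{\beta}\ge-\big(\beta\tfrac{1+\|b\|_{L^\infty}}{\delta}+\|c\|_{L^\infty}\big)\psi_{n-1}^{\beta}$, so $L\Psi\ge 0$ once $B\ge\beta\tfrac{1+\|b\|_{L^\infty}}{\delta}+\|c\|_{L^\infty}$. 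Thus $L\Psi\ge 0$ throughout, $\Psi\ge 0$, and — since $u_{n-1}\to 1$ as $z\to+\infty$ uniformly on $[0,X]\times[0,Y]$ by \eqref{n-1assumption} and the decay of $\bar{u}$ — one has $\psi_{n-1}=\int_0^z u_{n-1}\,dz'\gtrsim z$ for $z$ large, so $\Psi(x,y,z)\to+\infty$ as $z\to+\infty$, uniformly in $(x,y)$.

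Now fix $\epsilon>0$ and put $f_\epsilon:=f-\epsilon\Psi$. On $\{z=0\}$, $\{x=0\}$ and $\{y=0\}$ we have $f\le 0$ and $\Psi\ge 0$, so $f_\epsilon\le 0$ there; since $f\le M$ and $\epsilon\Psi\to+\infty$ uniformly in $(x,y)$, there is $z_0=z_0(\epsilon)$ with $f_\epsilon\le 0$ whenever $z\ge z_0$; and $Lf_\epsilon=Lf-\epsilon L\Psi\le 0$ in the interior. Hence Lemma~\ref{bdMP}, applied on $[0,X]\times[0,Y]\times[0,z_0]$ (legitimate since $b,c$ are bounded and $1+\tilde q>0$), gives $f_\epsilon\le 0$ on that slab, and therefore $f\le\epsilon\Psi$ on all of $[0,X]\times[0,Y]\times[0,+\infty)$. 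Letting $\epsilon\to 0^+$ at each fixed point gives $f\le 0$, which is the claim.

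The one point requiring care is that the barrier be a supersolution both near $z=0$, where the equation degenerates, and for $z$ large, where the zeroth-order contribution $c\psi_{n-1}^{\beta}$ may be negative and unbounded in magnitude; the former is exactly the role of the degenerate-parabolic gain $\lambda_1\psi_{n-1}^{\beta-3/2}$ in \eqref{psin-1x1} (which is also why the choice $\beta\in(0,1)$ is convenient, as it makes $-u_n\nabla_\psi^2\psi_{n-1}^{\beta}$ favorable), while the latter is handled by the exponential weight $e^{Bx}$ — which, just as in the proof of Lemma~\ref{bdMP}, effectively turns $c$ into $B+c$ — together with \eqref{psin-1x2}. Everything else is the routine comparison already built into Lemma~\ref{bdMP}.
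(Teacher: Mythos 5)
Your proof is correct and follows essentially the same route as the paper: a barrier of the form $e^{Bx}$ times a small power of $\psi_{n-1}$, verified to be a supersolution via \eqref{psin-1x1}--\eqref{psin-1x2}, combined with the bounded-domain principle Lemma \ref{bdMP} and a limiting argument. The only cosmetic difference is how the limit is organized: the paper scales the barrier by $M/\hat{\psi}_{z_2}^{\alpha}$ and sends the truncation height $z_2\to\infty$, whereas you fix the barrier, use its growth in $z$ to confine the comparison to a finite slab, and send the prefactor $\epsilon\to 0$; both are the same Phragm\'en--Lindel\"of-type device.
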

\begin{remark}
As the standard maximum principle for parabolic equation in bounded domains, there is no requirement on the sign of $c.$
\end{remark}
\begin{proof}
Set
\begin{align}\label{hatpsi}
\hat{\psi}_{z}=\min_{(x,y)\in[0,X]\times[0,Y]}\psi_{n-1}(x,y,z), \quad z\in[0,+\infty),
\end{align}
 where $\psi_{n-1}$ is defined in \eqref{psi0125}. Then
 \begin{align}\label{hpsitoinf}
  \hat{\psi}_{z}\rightarrow+\infty, \quad z\rightarrow+\infty,
 \end{align}
because
     \begin{align*}
        \lim_{z\rightarrow+\infty}u_{n-1}(x,y,z)= 1, \quad (x,y)\in[0,X]\times[0,Y].
     \end{align*}

For any  $z_1\in(0,+\infty)$ and any  $z_2\in(z_1,+\infty),$  by the definition \eqref{hatpsi},
$\hat{\psi}_{z_2}$ is a constant such that $$0<\hat{\psi}_{z_2}\leq\psi_{n-1}(x,y,z_2),\quad (x,y)\in[0,X]\times[0,Y].$$ Set
\begin{align*}
    G(x,y,z)=&f(x,y,z)-\frac{M\psi_{n-1}^\alpha(x,y,z)}{\hat{\psi}_{z_2}^\alpha}e^{Bx}
    \quad \text{in} \quad [0,X]\times[0,Y]\times[0,z_2],
\end{align*}
where $\alpha\in(0,1)$ and $B$ is a large positive constant such that $$B>\|c\|_{L^{\infty}( [0,X]\times[0,Y]\times[0,+\infty))}+\frac{1+\|b\|_{L^\infty((0,X]\times(0,Y]\times(0,\infty))}}{\delta}.$$
 Then $G\leq 0 $ on $ \{z=0,z_2\}\cup\{x=0\}\cup\{y=0\}$ and  by \eqref{maxassumpx}, \eqref{4.2}, \eqref{psin-1x1} and \eqref{psin-1x2}, \begin{align*}
   L G(x,y,z)<&Lf(x,y,z)+( \frac{1+\|b\|_{L^\infty((0,X]\times(0,Y]\times(0,\infty))}}{\delta}-c-B)\frac{M\psi_{n-1}^\alpha(x,y,z)}{\hat{\psi}_{z_2}^\alpha}e^{Bx}
   \\ < &0 \quad \text{in} \quad (0,X]\times(0,Y]\times[0,z_2].
\end{align*}
     By Lemma \ref{bdMP},  we have $G\leq 0$ in $[0,X]\times[0,Y]\times[0,z_2].$ Hence $f(x,y,z_1)\leq \frac{M\psi_{n-1}^{\alpha}(x,y,z_1)}{\hat{\psi}_{z_2}^\alpha}e^{Bx}.$ Letting $z_2\rightarrow+\infty,$  we have $f(x,y,z_1)\leq0$  by \eqref{hpsitoinf}.
     Finally, since $z_1$ is arbitrary, we have $f\leq 0$ in $[0,X]\times[0,Y]\times[0,+\infty).$
\end{proof}
\subsection{Barrier functions for $P_1$ and $P_2$}
Corresponding to \eqref{p1uu} and \eqref{1121-6},  we denote
\begin{align}\label{p10106}\begin{split}
    P_1w=&\nabla_\xi w +(1+\tilde{q})\nabla_{\eta}w -\nabla_{\psi}(u_{n}\nabla_{\psi}w)\\ =&\partial_x w +(1+\tilde{q})\partial_{y}w -\tilde{b}\partial_{z} w -\frac{u_{n}}{u_{n-1}^2}\partial_{z}^2w-\frac{1}{u_{n-1}}\partial_z(\frac{u_{n}}{u_{n-1}})\,\partial_{z} w ,\end{split}\end{align}and \begin{align}\label{p20106}
     P_2 w=&\nabla_\xi w +(1+\tilde{q})\nabla_{\eta}  w-u_{n}\nabla_{\psi}^2w,
\end{align}  where
\begin{align}\label{linemethodb}
	\tilde{b}=\frac{\int_0^z \partial_{x}u_{n-1}dz'}{u_{n-1}}+(1+\tilde{q})\frac{\int_0^z \partial_{y}v_{n-1}dz'}{v_{n-1}}.
\end{align}
Note that 
\begin{align}\label{linemethodbsign}
-C\leq \tilde{b}=\frac{\int_0^z \partial_{x}u_{n-1}dz'}{u_{n-1}}+(1+\tilde{q})\frac{\int_0^z \partial_{y}v_{n-1}dz'}{v_{n-1}}\leq  C\min\{z,1\} \quad \text{in} \quad \Omega.
\end{align}Now we give the barrier functions that match the maximum principle for the vector fields.

By \eqref{pphi} in  Subsection \ref{bf} in the Appendix, we have  for positive constants $0<\alpha\ll1,$ $0<\beta<1,$
there exist positive constants $\delta_0$ and $c_2$ independent of $\varepsilon$ such that
\begin{align}\label{1118-3}\begin{split}
   & P_1 \phi_{1,\alpha}\geq c_2(\frac{\alpha(1-\alpha)}{u_{n-1} (z+\epsilon_0)^2}+A)\phi_{1,\alpha},\,\,
     P_2 \phi_{2,\beta}\geq c_2(\beta(1-\beta)\psi_{n-1}^{-\frac{3}{2 } }+A)\phi_{2,\beta},
     \\& P_2 \phi_{2,1}\geq c_2(\alpha\psi_{n-1}^{\alpha-\frac{3}{2 } }+A)\phi_{2,1} \quad  \text{in} \quad(0,X]\times(0,Y^*]\times[0,\delta_0],\\
    &  P_1 \phi_{1,\alpha}\geq c_2A\phi_{1,0} , P_2 \phi_{2,\beta}, P_2\phi_{2,0},  P_2 \phi_{2,1}\geq c_2A\phi_{2,0}  \\& \text{in} \quad(0,X]\times(0,Y^*]\times[0,+\infty)\setminus\{\text{ridges of the barrier functions}\},
\end{split}\end{align}
where $\phi_{1,\alpha}$ and $\phi_{1,0}$ are defined
in \eqref{phi3} and \eqref{phi1}, and for $\beta\in[0,1),$
\begin{equation}\label{phi2beta}\phi_{2,\beta}(x,y,z)= \left\{
  \begin{aligned}
  e^{-\frac{A}{x+1}}(\frac{\psi_{n-1}(x,y,z )}{\sqrt{x+1}})^{\beta},\quad\quad & 0\leq\frac{\psi_{n-1}(x,y,z )}{\sqrt{x+1}}\leq \delta,\\
    e^{-\frac{A}{x+1}}\delta^{\beta},\quad\quad & \delta\leq\frac{\psi_{n-1}(x,y,z )}{\sqrt{x+1}}\leq N ,\\
    e^{-\frac{A}{x+1}}\delta^{\beta}e^{N^2\frac{9}{10}\mu}e^{-\frac{\psi_{n-1}^2(x,y,z )}{x+1}\frac{9}{10}\mu},\quad\quad & \frac{\psi_{n-1}(x,y,z )}{\sqrt{x+1}}\geq N ,
\end{aligned}\right.\\
\end{equation}in particular,
\begin{equation}\label{phi20}\phi_{2,0}(x,y,z)= \left\{
  \begin{aligned}
   e^{-\frac{A}{x+1}},\quad\quad & 0\leq\frac{\psi_{n-1}(x,y,z )}{\sqrt{x+1}}\leq N ,\\
    e^{-\frac{A}{x+1}}e^{N^2\frac{9}{10}\mu}e^{-\frac{\psi_{n-1}^2(x,y,z )}{x+1}\frac{9}{10}\mu},\quad\quad & \frac{\psi_{n-1}(x,y,z )}{\sqrt{x+1}}\geq N ,
\end{aligned}\right.\\
\end{equation} and for  $0<\alpha\ll1,$
\begin{equation}\label{phi4}\phi_{2,1}= \left\{
  \begin{aligned}
  e^{-\frac{A}{x+1}}(\psi_{n-1}-\psi_{n-1}^{1+\alpha}),\quad & 0\leq\psi_{n-1}(x,y,z)\leq \delta,\\
   e^{-\frac{A}{x+1}}\delta(1-\delta^{\alpha}),\quad & \psi_{n-1}(x,y,z)\geq \delta.
\end{aligned}\right.\\
\end{equation}
Here the positive constant $\delta\leq \frac{1}{2}$ is independent of $\varepsilon.$
We should pay  attention on the growth rates in \eqref{1118-3} near $z=0$. In particular, $$\frac{1}{u_{n-1}}\sim\frac{1}{z+\epsilon_0},\,\,\frac{1}{\psi}\sim\frac{1}{z(z+\epsilon_0)}
\quad\text{near}\quad z=0.$$ The order of the growth rates is necessary to control the large terms with certain order in the equations resulted from the degeneracy of Prandtl equation at the boundary $z=0$.

On the other hand, since $\lim_{z\rightarrow+\infty}u_{n-1}=1$, $$\frac{\psi_{n-1}}{z}\rightarrow 1\quad \text{as} \quad z\rightarrow\infty,$$
which characterizes the decay rates of $\phi_{2,\beta}.$

\subsection{Generalized Maximum Principle for functions with ridges}

 Although these barrier functions are not $C^1,$ their left-hand derivatives with respect to $z$, i.e. $\partial_z^-$ are strictly bigger than their right-hand  derivatives with respect to $z$, i.e. $\partial_z^+$ at the ridges so that certain extrema of auxiliary function cannot be attained at the  ridges shown as follows.

 \begin{proposition}\label{26-06-13-2}
  If  a barrier function $\phi\in C^{1}(\R_+\setminus \{z_0\})$ satisfies
 \begin{align}\label{26-04-26}
    \partial_z^-\phi>\partial_z^+\phi\quad \text{at}\quad z_0,
 \end{align} where  $z_0$ is a positive constant,
 then for any function $f\in C^{1}(\R_+)$, $f-\phi$ cannot attain its maximum at $z_0.$
  \end{proposition}
  \begin{proof}
  We employ a proof by contradiction. If $f-\phi$ attains its maximum at $z_0,$
  then
  \begin{align*}
  & \partial_z^+(f-\phi)=\lim_{t\rightarrow0^+}\frac{(f-\phi)|_{z=z_0+t}-(f-\phi)|_{z=z_0}}{t}\leq 0,\\ &\partial_z^-(f-\phi)=\lim_{t\rightarrow0^+}\frac{(f-\phi)|_{z=z_0-t}-(f-\phi)|_{z=z_0}}{-t}\geq 0.
  \end{align*} Hence, $\partial_z^-(f-\phi)\geq\partial_z^+(f-\phi)$ at $z_0$, since $f\in C^{1}(\R_+)$. then $\partial_z^-\phi\leq\partial_z^+\phi$ at $z_0,$
 which contradicts to \eqref{26-04-26}. The proof is completed.

  \end{proof}

Based on this observation, we can modify the proof and establish generalized  maximum principles in terms of vector fields for barrier functions with ridges as shown in Example \ref{1118-4}. cf.  Serrin \cite{Serrin}.
When we use these barrier functions, we will further divide $\Omega$ into two domains $\Omega_s=\{0\leq z\leq \delta_\varepsilon\}\cap\Omega$
and $\Omega_b=\{ z\geq \delta_\varepsilon\}\cap\Omega$
where $\delta_\varepsilon=\varepsilon^m\ll \delta$, $m\in (0,+\infty)$. In particular, we take $A>C\varepsilon^{-2m}$ to control the remainder in $\Omega_b=\{ z\geq \delta_\varepsilon\}\cap\Omega.$

{\centering
\begin{tikzpicture}[scale=9]

\draw[->,thick] (-0.1,0) -- (0.7,0) node[right] {$\psi_{n-1}$};
\draw[->,thick] (0,-0.1) -- (0,0.3) node[above] {$\phi_{2,1}$};

\draw[domain=0:0.3] plot (\x,{(\x-\x*\x)}) ;

\draw[domain=0.3:0.6] plot (\x,{0.3-0.09}) ;
\draw[dotted] (0.3,0.21) -- (0.3,0) node[below]{$\delta$};
\draw[dotted] (0.1,0.09) -- (0.1,0) node[below]{$\varepsilon^{2m}$};
\end{tikzpicture}
\begin{center}
	Figure 1
\end{center}
\par}

\vspace{0.2cm}

Here is a simple example for better explanation.
\begin{example}\label{1118-4} If a smooth function $f$ satisfies
$L_{simple} f\leq \phi_{2,0}$ in $\Omega$, then $f-\varepsilon^{6.5}\phi_{2,1}$ cannot attain its maximum in  $\Omega,$
 where $ L_{simple}=\nabla_\xi -\nabla_{\psi}^2.$
\end{example}
\begin{proof} First, auxiliary function $f-\varepsilon^{6.5}\phi_{2,1}$ cannot attain its maximum
on the ridge $\{\psi_{n-1}(x,y,z)=\delta\}$ by Proposition \ref{26-06-13-2}, since $\partial_z^-\phi_{2,1}>\partial_z^+\phi_{2,1}$ at this ridge by noting $\partial_z \psi_{n-1}=u_{n-1}$. Next,
we claim \begin{align*}
    L_{simple}(\varepsilon^{6.5}\phi_{2,1})> \phi_{2,0} \quad \text{in}\quad \Omega\setminus\{\psi_{n-1}(x,y,z)=\delta\}.
\end{align*}
Then $ L_{simple}(f-\varepsilon^{6.5}\phi_{2,1})<0$ in $\Omega\setminus\{\psi_{n-1}(x,y,z)=\delta\}.$ Hence, $f-\varepsilon^{6.5}\phi_{2,1}$ cannot attain its maximum in $\Omega\setminus\{\psi_{n-1}(x,y,z)=\delta\}$ by the same argument in the proof of the maximum principles in terms of vector fields.

We now prove the Claim as follows. See Figure 1.
Take
\begin{align*}
\Omega_s=&\{0\leq z\leq \varepsilon^m\}\cap\Omega\subset \{0\leq \psi_{n-1}(x,y,z)\leq \varepsilon^{2m}C\},\\
\Omega_b=&\{ z\geq  \varepsilon^m\}\cap\Omega\subset \{ \psi_{n-1}(x,y,z)\geq \varepsilon^{2m}c\},
\end{align*}
 where $c$ and $C $ are independent of $\varepsilon.$
 Then by the definition in \eqref{phi4}, we have
\begin{equation}L_{simple}\phi_{2,1}= \left\{
  \begin{aligned}e^{-\frac{A}{x+1}}\alpha(1+\alpha)\psi_{n-1}^{\alpha-1}+
 \frac{ A}{(1+x)^2}e^{-\frac{A}{x+1}}(\psi_{n-1}-\psi_{n-1}^{1+\alpha}),\quad & \psi_{n-1}(x,y,z)\leq \delta,\\
  \frac{A}{(1+x)^2} e^{-\frac{A}{x+1}}\delta(1-\delta^{\alpha}),\quad & \psi_{n-1}(x,y,z)\geq \delta.
\end{aligned}\right.\\
\end{equation}
Then
\begin{align*}
    L_{simple}(\varepsilon^{6.5}\phi_{2,1})>  e^{-\frac{A}{x+1}} \quad \text{in}\quad \Omega_s
\end{align*}
by taking $m>\frac{6.5}{2(1-\alpha)}$ so that $\varepsilon^{2m(1-\alpha)}<\varepsilon^{6.5}$ with $\varepsilon$ being sufficiently small. Since for $\delta$ small independent of $\varepsilon,$ we have
$$\psi_{n-1}-\psi_{n-1}^{1+\alpha}\geq \frac{1}{2}\psi_{n-1}\quad  \text{for} \quad \psi_{n-1}(x,y,z)\leq \delta,$$  for $A\geq C\varepsilon^{-2m-6.5}$  with $C$ being  independent of $\varepsilon,$ we have
\begin{align*}
    L_{simple}(\varepsilon^{6.5}\phi_{2,1})\geq \varepsilon^{6.5} \frac{A}{(x+1)^2}e^{-\frac{A}{x+1}}\frac{c}{2} \varepsilon^{2m}> \phi_{2,0} \quad \text{in} \quad\Omega_b\setminus\{\psi_{n-1}(x,y,z)=\delta\}.
\end{align*}
\end{proof}
\section{Estimates on $u_n$ and first order derivatives}\label{s4}

In this section, from time to time we  use $w_n$ to stand for $u_n$  in order to have a better presentation  of some equations. Since the methods  for estimating $u_n, v_n$ and their derivatives are the same,   we only present the  estimation on  $u_n$ and its derivatives.

\subsection{Estimates on  $u_n,\partial_z u_n$ }
\subsubsection{Equations}
It holds that
\begin{align}\label{qn0}\begin{split}
   P_1 w_n= \nabla_{\xi} w_n +(1+\tilde{q})\nabla_{\eta}w_n  -\nabla_{\psi}(u_{n}\nabla_{\psi}w_n)
      =0 .\end{split}\end{align}
Then
\begin{align}\label{dpsi3un}\begin{split}
\frac{1}{u_n }(\nabla_{\xi} u_n +(1+\tilde{q})\nabla_{\eta}u_n  -(\nabla_{\psi}u_{n})^2)
      =&\nabla_{\psi}^2u_{n},\\
      \nabla_{\psi}\big(\frac{1}{u_n }(\nabla_{\xi} u_n +(1+\tilde{q})\nabla_{\eta}u_n  -(\nabla_{\psi}u_{n})^2)\big)
      =&\nabla_{\psi}^3u_{n},\end{split}
\end{align}
and
\begin{align}\label{ppun2}
 P_2 u_n^2= \nabla_{\xi} u_{n}^2 +(1+\tilde{q})\nabla_{\eta}u_{n}^2  -u_{n}\nabla_{\psi}^2u_{n}^2
      =0 ,\end{align}
  which implies that
\begin{align}\label{0323-446}\begin{split}
0= & \nabla_{\xi} \nabla_{\psi}u_{n}^2 +(1+\tilde{q})\nabla_{\eta}\nabla_{\psi}u_{n}^2
  -u_{n}\nabla_{\psi}^2(\nabla_{\psi}u_{n}^2)
   -(\nabla_{\psi}u_{n})\nabla_{\psi}(\nabla_{\psi}u_{n}^2)  \\
   & -\nabla_{\eta}\tilde{q}\nabla_{\psi}u_{n}^2
   +\nabla_{\psi}\tilde{q}\nabla_{\eta}u_{n}^2 \\
   = & \nabla_{\xi} \nabla_{\psi}u_{n}^2 +(1+\tilde{q})\nabla_{\eta}\nabla_{\psi}u_{n}^2
  -u_{n}\nabla_{\psi}^2(\nabla_{\psi}u_{n}^2)
   -(\nabla_{\psi}u_{n}^2)\frac{1}{2u_{n}}\nabla_{\psi}^2u_{n}^2  \\
   & -\nabla_{\eta}\tilde{q}\nabla_{\psi}u_{n}^2
   +\nabla_{\psi}\tilde{q}\nabla_{\eta}u_{n}^2 .\end{split}\end{align}
 On the other hand, since  \begin{align}\label{0823}
 0= &\nabla_{\tau_1}    \bar{u}^2+   \nabla_{\tau_1} \bar{u}^2- \bar{u}\nabla_{n}^2  \bar{u}^2,
\end{align} we have
\begin{align}\label{u-baru}
  \nabla_{\xi} \big(u_{n}^2-\bar{u}^2 \big) +(1+\tilde{q})\nabla_{\eta} \big(u_{n}^2-\bar{u}^2 \big) -u_{n}\nabla_{\psi}^2 \big(u_{n}^2-\bar{u}^2 \big)
      =R_0 ,\end{align}
      where  \begin{align}\label{r0-0106}
       R_0:=&
       (2\nabla_{\tau_1}  -\nabla_{\xi}-(1+\tilde{q})\nabla_{\eta} ) \bar{u}^2+(-\bar{u}\nabla_{n}^2+u_{n}\nabla_{\psi}^2 )\bar{u}^2
       \\=&
       (\nabla_{\tau_1}+\nabla_{\tau_2}  -\nabla_{\xi}-(1+\tilde{q})\nabla_{\eta} ) \bar{u}^2+(-\bar{u}\nabla_{n}^2+u_{n}\nabla_{\psi}^2 )\bar{u}^2,\end{align}
 and
\begin{align*}
 0= &\nabla_{\tau_1}  \nabla_{n}  \bar{u}^2+   \nabla_{\tau_1} \nabla_{n} \bar{u}^2-\nabla_{n} \bar{u}\nabla_{n}^2  \bar{u}^2- \bar{u}\nabla_{n}^3 \bar{u}^2.
\end{align*} Then
\begin{align*}
 0= &\nabla_{\xi}\nabla_{n}\bar{u}^2 +(1+\tilde{q})\nabla_{\eta} \nabla_{n}\bar{u}^2-u_{n} \nabla_{\psi}^2 (\nabla_{n}\bar{u}^2)
 -(\nabla_{n} \bar{u}^2)\frac{1}{2\bar{u}}\nabla_{\psi}^2u_n^2
 \\
 & +(2\nabla_{\tau_1}  -\nabla_{\xi}-(1+\tilde{q})\nabla_{\eta} ) \nabla_{n}\bar{u}^2+(-\bar{u}\nabla_{n}^2+u_{n}\nabla_{\psi}^2 )\nabla_{n}\bar{u}^2 \\&+(\nabla_{n} \bar{u}^2)\frac{1}{2\bar{u}}(\nabla_{\psi}^2u_n^2-\nabla_{n}^2\bar{u}^2) ,
\end{align*}
which implies that
 \begin{align}\label{6.7}\begin{split}
\nabla_{\xi} (\nabla_{\psi}u_{n}^2 -\nabla_{n}\bar{u}^2) +(1+\tilde{q})\nabla_{\eta}(\nabla_{\psi}u_{n}^2 -\nabla_{n}\bar{u}^2)
  -u_{n}\nabla_{\psi}^2(\nabla_{\psi}u_{n}^2 -\nabla_{n}\bar{u}^2)
  = R_1,\end{split}\end{align}
 where
\begin{align*}
  R_1=&\nabla_{\eta}\tilde{q}\nabla_{\psi}u_{n}^2
   -\nabla_{\psi}\tilde{q}\nabla_{\eta}u_{n}^2
   \\& +(2\nabla_{\tau_1}  -\nabla_{\xi}-(1+\tilde{q})\nabla_{\eta} ) \nabla_{n}\bar{u}^2+(-\bar{u}\nabla_{n}^2+u_{n}\nabla_{\psi}^2 )\nabla_{n}\bar{u}^2 \\&+(\nabla_{n} \bar{u}^2)\frac{1}{2\bar{u}}(\nabla_{\psi}^2u_n^2-\nabla_{n}^2\bar{u}^2) +\nabla_{\psi}^2u_{n}^2( \frac{1}{2u_{n}} \nabla_{\psi}u_{n}^2-\frac{1}{2\bar{u}}\nabla_{n}\bar{u}^2)
   .
\end{align*} Here we note $(\nabla_{\tau_1}  -(1+\tilde{q})\nabla_{\eta} ) \nabla_{n}\bar{u}^2=(\nabla_{\tau_2}  -(1+\tilde{q})\nabla_{\eta} ) \nabla_{n}\bar{u}^2.$
\subsubsection{Estimates on $u_n-\bar{u}$}
First, we estimate $ | u_{n}^2-\bar{u}^2 |$ as follows.
\begin{lemma}\label{xz4} For any  constant $\beta\in(\frac{1}{2},1),$ it holds
 \begin{align*}
    | u_{n}^2-\bar{u}^2 |\leq \varepsilon^{8} C{\epsilon_0}^3\phi_{2,0}+\varepsilon^7 \phi_{2,\beta} \quad \text{in} \quad \Omega\cap\{0\leq y\leq Y^*\},
 \end{align*}
 where
 $\phi_{2,0}$ and $\phi_{2,\beta}$ are defined in \eqref{phi20}-\eqref{phi2beta}. \end{lemma}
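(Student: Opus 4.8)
The plan is to treat Lemma \ref{xz4} by a maximum principle for the operator $P_2$ applied to $g:=u_n^2-\bar u^2$, with the function $\Psi:=\varepsilon^8 C\epsilon_0^3\phi_{2,0}+\varepsilon^7\phi_{2,\beta}$ playing the role of a two–scale barrier. By \eqref{u-baru}, $g$ satisfies $\nabla_\xi g+(1+\tilde q)\nabla_\eta g-u_n\nabla_\psi^2 g=R_0$ on $\Omega\cap\{0\le y\le Y^*\}$, where $R_0$ is given by \eqref{r0-0106}. The first and most delicate step is to estimate $R_0$. Writing $R_0=(2\nabla_{\tau_1}-\nabla_\xi-(1+\tilde q)\nabla_\eta)\bar u^2+(-\bar u\nabla_n^2+u_n\nabla_\psi^2)\bar u^2$ and using $\nabla_{\tau_1}\bar u=\nabla_{\tau_2}\bar u$, I would rewrite the first bracket as $(\nabla_{\tau_1}-\nabla_\xi)\bar u^2+(\nabla_{\tau_2}-(1+\tilde q)\nabla_\eta)\bar u^2$, so that only the differences of the vector–field coefficients — governed by $u_{n-1}-\bar u$, $v_{n-1}-\bar u$ and $\tilde q$, all small by \eqref{n-1assumption}, \eqref{assumpn} and the comparisons of Subsection \ref{tl} — enter. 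For the second bracket I would use $-\bar u\nabla_n^2+u_n\nabla_\psi^2=(u_n-\bar u)\nabla_\psi^2+\bar u(\nabla_\psi^2-\nabla_n^2)$ together with the explicit growth and decay rates of $\bar u$ and its derivatives recorded in Subsection \ref{1027}. The outcome should be a pointwise bound in which $R_0$ is dominated by a small multiple of the quantities $\big(\tfrac{\alpha(1-\alpha)}{u_{n-1}(z+\epsilon_0)^2}+A\big)\phi_{2,\beta}$ and $A\,\epsilon_0^3\,\phi_{1,0}$.

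Second, I would feed this into \eqref{1118-3}: near the boundary one has $P_2\phi_{2,\beta}\ge c_2(\beta(1-\beta)\psi_{n-1}^{-3/2}+A)\phi_{2,\beta}$ and $P_2\phi_{2,0}\ge c_2A\phi_{1,0}$, while away from the ridges $P_2\phi_{2,\beta},P_2\phi_{2,0}\ge c_2A\phi_{1,0}$ on $(0,X]\times(0,Y^*]\times[0,+\infty)$. Since $\beta\in(\tfrac12,1)$ keeps $\beta(1-\beta)$ bounded away from $0$, and since the choice $A\ge C\varepsilon^{-2m}$ absorbs the part of $R_0$ living in $\Omega_b$, enlarging the constant $C$ in the statement makes $P_2\Psi\ge|R_0|$ on $(0,X]\times(0,Y^*]\times[0,+\infty)$ off the ridges of $\phi_{2,0}$ and $\phi_{2,\beta}$; equivalently both $g-\Psi$ and $-g-\Psi$ satisfy $P_2(\cdot)\le 0$ there.

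Third, I would check the boundary inequalities. On $\{z=0\}$ the approximate boundary data is non-degenerate of size comparable to $\epsilon_0$ and differs from $\bar u$ at an $\varepsilon^8\epsilon_0^2$ level by its construction in Subsection \ref{appbddata}, so $|g|\le\varepsilon^8 C\epsilon_0^3\phi_{2,0}$ there after increasing $C$; on $\{x=0\}$ and $\{y=0\}$ the same construction gives the analogous inequalities against $\Psi$; and as $z\to+\infty$ one has $u_n,\bar u\to 1$, hence $g\to 0$, and $g$ is bounded. Then the ridge–generalized version of Lemma \ref{infMPx} — in which the strict inequality $\partial_z^-\phi>\partial_z^+\phi$ at each ridge prevents an extremum from being attained there, cf.\ Serrin \cite{Serrin} — yields $g\le\Psi$, and running the identical argument for $-g$, which solves the same equation with source $-R_0$ and $|-R_0|=|R_0|$, yields $-g\le\Psi$. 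Combining gives $|u_n^2-\bar u^2|\le\Psi$, which is the claim.

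The hardest part is the estimate of $R_0$, and within it the inner region $z\lesssim\delta_\varepsilon$, where $\bar u\sim z+\epsilon_0$ makes the second–order vector fields $\nabla_\psi^2$ and $\nabla_n^2$ singular: there one has to extract the exact cancellation in $(-\bar u\nabla_n^2+u_n\nabla_\psi^2)\bar u^2$, and the residue that does not vanish at $z=0$ is precisely what forces the extra term $\varepsilon^8 C\epsilon_0^3\phi_{2,0}$ in the barrier. One must also ensure the Gaussian tails built into $\phi_{2,\beta}$ and $\phi_{2,0}$ dominate the tails of the derivatives of $\bar u$ in the far field $z/\sqrt{x+1}\gtrsim N$, and that the intermediate region $\delta\le z/\sqrt{x+1}\le N$ introduces no new obstruction.
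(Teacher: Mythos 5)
Your proposal follows essentially the same route as the paper: derive the equation \eqref{u-baru} for $u_n^2-\bar u^2$, bound $R_0$ by $C\phi_{1,0}$ via the vector-field comparisons \eqref{gg}, \eqref{cy}--\eqref{cy-1} and the lower bound $u_{n-1}\gtrsim\epsilon_0$, absorb it with the barriers $\phi_{2,0},\phi_{2,\beta}$ through \eqref{1118-3} by taking $A$ large (split into the inner region where $\psi_{n-1}^{\beta-3/2}$ dominates and the outer region handled by $A$, as in Example \ref{1118-4}), use the compatible boundary data \eqref{flw0106} at $z=0$, and conclude by the ridge-adapted maximum principle, repeating for $-(u_n^2-\bar u^2)$. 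This matches the paper's argument, so no substantive differences to report.
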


 \begin{proof} Our goal is to prove $g=u_{n}^2-\bar{u}^2-\varepsilon^{8} C{\epsilon_0}^3\phi_{2,0} -\varepsilon^7 \phi_{2,\beta}\leq 0$ in $\Omega\cap\{0\leq y\leq Y^*\}$ by generalized maximum principle for functions with ridges.

\textit{ Step 1} We will prove  $g$ can not attain positive maximum on $\Omega\cap\{0\leq y\leq Y^*\}\setminus \{\text{ridges}\}.$

\textit{We will prove by contradiction.}
 By \eqref{1118-3} and \eqref{u-baru}, we have in $\Omega\cap\{0\leq y\leq Y^*\}$,
 \begin{align*}
  P_2 \big(u_{n}^2-\bar{u}^2-\varepsilon^{8} C{\epsilon_0}^3\phi_{2,0} \big)
      \leq R_0 ,\end{align*}
  where  $R_0 $ is defined in \eqref{r0-0106}.  By \eqref{gg}, \eqref{cy} and \eqref{cy-1} in Lemma \ref{6100205}, we have in $\Omega\cap\{0\leq y\leq Y^*\}$,\begin{align}
  |(\nabla_{\tau_1}+\nabla_{\tau_2}  -\nabla_{\xi}-(1+\tilde{q})\nabla_{\eta} ) \bar{u}^2+(-\bar{u}\nabla_{n}^2+u_{n}\nabla_{\psi}^2 )\bar{u}^2|\leq C\bar{u}^2\phi_{1,0}+C\phi_{1,0}.
\end{align}
This implies  $|R_0|\leq C\phi_{1,0}$ where we note that by \eqref{26-06-10-ncg},
\begin{align}\label{lb0126g}
c_0\epsilon_0\leq u_{n-1}\quad \text{in}\quad\Omega\cap\{0\leq y\leq Y^*\}.
\end{align}
        Then by \eqref{1118-3} and taking $A\gg \varepsilon^{-7}$,
        it holds
      $$P_2 (u_{n}^2-\bar{u}^2-\varepsilon^{8} C{\epsilon_0}^3\phi_{2,0} -\varepsilon^7 \phi_{2,\beta} )<0\quad \text{in} \quad \Omega\cap\{0\leq y\leq Y^*\}\setminus \{\frac{\psi_{n-1}(x,y,z)}{\sqrt{x+1}}=\delta, N\},$$
       as explained in Example \ref{1118-4}.

        However, if $g$ attains its positive maximum at some point $p_0\in (0,X]\times(0,Y^*]\times(0,+\infty),$ then by the argument in the  proof of maximum principle, we have $$P_2(g)|_{p_0}\geq 0,$$
        which leads to a contradiction.
        Then we complete step 1.

\textit{ Step 2} We will prove    $g$ can not attain positive maximum on  ridges.

      $g$ cannot attain its positive maximum on the ridges $\{\frac{\psi_{n-1}(x,y,z)}{\sqrt{x+1}}=\delta, N\}$ by Proposition \ref{26-06-13-2}, since for any fixed $x$, $$\partial_z^-(\varepsilon^{8} C{\epsilon_0}^3\phi_{2,0}+\varepsilon^7 \phi_{2,\beta})>\partial_z^+(\varepsilon^{8} C{\epsilon_0}^3\phi_{2,0}+\varepsilon^7 \phi_{2,\beta})\quad\text{at the ridges}.$$

    Combining Step 1 and Step 2,   $g$ can not attain positive maximum on  $\Omega\cap\{0\leq y\leq Y^*\}$.

 Next, by  the
       boundary condition including the compatible condition \eqref{flw0106}, i.e.
      \begin{align*}
       |u_{n}^2-\bar{u}^2|\leq \varepsilon^{8} C{\epsilon_0}^3\phi_{2,0}\quad \text{at}\quad \{z=0\}\cap\overline{\Omega},
      \end{align*} we have $g\leq 0$ on the boundary.

      In summary,  by the argument in the proof of Lemma \ref{infMPx}[Maximum Principle in unbounded domain], we have
      $g=u_{n}^2-\bar{u}^2-\varepsilon^{8} C{\epsilon_0}^3\phi_{2,0} -\varepsilon^7 \phi_{2,\beta}\leq 0$ in $\Omega\cap\{0\leq y\leq Y^*\}$.

      Similarly, we can prove
      $$[-(u_{n}^2-\bar{u}^2)-\varepsilon^{8} C{\epsilon_0}^3\phi_{2,0}] -\varepsilon^7 \phi_{2,\beta}\leq 0\quad \text{in} \quad \Omega\cap\{0\leq y\leq Y^*\}.$$
 \end{proof}
 Based on Lemma \ref{xz4}, we can refine the decay estimate of $u_n-\bar{u}$ in $z$ for $z$ large given in the following lemma.
 \begin{lemma}\label{xz40316}It holds
 \begin{align*}
    | u_{n}^2-\bar{u}^2 |\leq\varepsilon^{6.5} \phi_{1,0} \quad \text{in} \quad \Omega\cap\{0\leq y\leq Y^*\}.
 \end{align*}
 \end{lemma}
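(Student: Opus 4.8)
The plan is to run the vector‑field maximum principle once more on $u_n^2-\bar u^2$, but now against the sharper barrier $\varepsilon^{6.5}\phi_{1,0}$ rather than the $\phi_{2,\beta}$ barriers used in Lemma~\ref{xz4}. The starting point is \eqref{u-baru}, i.e. $P_2(u_n^2-\bar u^2)=R_0$ with $R_0$ as in \eqref{r0-0106}; as recorded in the proof of Lemma~\ref{xz4}, $|R_0|\le C\phi_{1,0}$ in $\Omega\cap\{0\le y\le Y^*\}$ (the decay of $R_0$ is governed by the derivatives of $\bar u=u_B(x,y,z+\epsilon_0)$, hence by $z+\epsilon_0$ rather than $\psi_{n-1}$, which is exactly why $\phi_{1,0}$ is the natural envelope). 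Note that the a priori bound $|u_n^2-\bar u^2|\le\varepsilon^7\phi_{2,\beta}$ from Lemma~\ref{xz4} is too weak in the far field because $\psi_{n-1}\le z$; the sharp tail rate is dictated by $R_0$, which is what the new barrier captures.

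First I would establish the barrier inequality $P_2\phi_{1,0}\ge c_2 A\phi_{1,0}$ in $\Omega\cap\{0\le y\le Y^*\}$ away from the single ridge $\{(z+\epsilon_0)/\sqrt{x+1}=N\}$, the analogue of \eqref{1118-3} for $\phi_{1,0}$ (computed as in Subsection~\ref{bf}). In the inner region $\phi_{1,0}=e^{Ax}$ is $z$‑independent, so $\nabla_\psi\phi_{1,0}=\nabla_\psi^2\phi_{1,0}=\nabla_\eta\phi_{1,0}=0$ and $P_2\phi_{1,0}=\nabla_\xi\phi_{1,0}=Ae^{Ax}$. In the tail, writing $\rho=z+\epsilon_0$ and $s=x+1$, one computes $\nabla_\xi\phi_{1,0}=\big(A+\mu\rho^2 s^{-2}+O(\mu\rho s^{-1})\big)\phi_{1,0}$, the positive term $\mu\rho^2 s^{-2}$ coming from applying $\partial_x$ to the Gaussian $e^{-\mu\rho^2/s}$, while $-u_n\nabla_\psi^2\phi_{1,0}=\big(\frac{u_n}{u_{n-1}^2}(2\mu s^{-1}-4\mu^2\rho^2 s^{-2})+\text{l.o.t.}\big)\phi_{1,0}$, the lower‑order terms involving $\partial_z u_{n-1}$ and the deviations of $u_n,u_{n-1}$ from $1$, all controlled by \eqref{n-1assumption} and Lemma~\ref{xz4}. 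Since $\mu$ is small and $N$ large, $\mu\rho^2 s^{-2}(1-4\mu)$ dominates the $O(\mu\rho s^{-1})$ contributions and $P_2\phi_{1,0}\ge A\phi_{1,0}$ there.

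With this in hand, for $A\gg\varepsilon^{-6.5}$ we obtain $P_2\big(\pm(u_n^2-\bar u^2)-\varepsilon^{6.5}\phi_{1,0}\big)\le C\phi_{1,0}-\varepsilon^{6.5}c_2 A\phi_{1,0}<0$ off the ridge. On $\{z=0\}$, the compatibility of the approximate data (via \eqref{flw0106}) gives $|u_n^2-\bar u^2|\le\varepsilon^8 C\epsilon_0^3<\varepsilon^{6.5}\phi_{1,0}$ since $\phi_{1,0}|_{z=0}=e^{Ax}\ge1$; on $\{x=0\}$ and $\{y=0\}$ the bound $|u_n^2-\bar u^2|<\varepsilon^{6.5}\phi_{1,0}$ follows from \eqref{bddata} and the construction of the approximate boundary data in Subsection~\ref{appbddata}. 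Moreover $u_n^2-\bar u^2$ is bounded, and by Lemma~\ref{xz4} it is $\le\varepsilon^7\phi_{2,\beta}$, which decays fast enough that the cutoff correction $\frac{M\psi_{n-1}^\alpha}{\hat\psi_{z_2}^\alpha}e^{Bx}$ from the proof of Lemma~\ref{infMPx} dominates it on $\{z=z_2\}$ as $z_2\to+\infty$. Hence the unbounded‑domain maximum principle, in the generalized form allowing a barrier with a ridge — admissible because $\partial_z^-\phi_{1,0}=0>\partial_z^+\phi_{1,0}$ at $\{(z+\epsilon_0)/\sqrt{x+1}=N\}$, so no positive maximum is attained there, cf.\ Example~\ref{1118-4} — yields $\pm(u_n^2-\bar u^2)\le\varepsilon^{6.5}\phi_{1,0}$ in $\Omega\cap\{0\le y\le Y^*\}$, which is the claim.

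The main obstacle is the far‑field barrier estimate $P_2\phi_{1,0}\ge c_2 A\phi_{1,0}$: unlike the $\phi_{2,\beta}$ used in Lemma~\ref{xz4}, for which $\nabla_\psi\psi_{n-1}=1$ and $\nabla_\xi\psi_{n-1}=0$ hold exactly, the function $\phi_{1,0}$ is built from $z+\epsilon_0$, so $\nabla_\psi(z+\epsilon_0)=u_{n-1}^{-1}$ and the second‑order term $-u_n\nabla_\psi^2\phi_{1,0}$ carries a negative Gaussian contribution $\sim -4\mu^2\rho^2 s^{-2}\phi_{1,0}$ that must be beaten by the positive $\mu\rho^2 s^{-2}\phi_{1,0}$ coming from $\nabla_\xi$; this is precisely where the smallness of $\mu$ and largeness of $N$ enter, and it also forces us to feed in the bounds of Lemma~\ref{xz4} to absorb the $\partial_z u_{n-1}$‑type remainders.
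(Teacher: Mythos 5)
Your proposal is correct and is essentially the paper's argument: comparison of $\pm(u_n^2-\bar u^2)$ with the barrier $\varepsilon^{6.5}\phi_{1,0}$ under $P_2$, using $|R_0|\le C\phi_{1,0}$ from Lemma \ref{xz4}, $A\gg\varepsilon^{-6.5}$, and the tail inequality $P_2\phi_{1,0}\ge A\phi_{1,0}$, which is exactly \eqref{0323-p2phi} (Lemma \ref{infgrowthz}) that you re-derive. The only cosmetic difference is that the paper applies the maximum principle solely on $\{z\ge N\}$, taking the data on $\{z=N\}$ from Lemma \ref{xz4} (the region $z\le N$ being already covered by that lemma since $\phi_{1,0}=e^{Ax}$ there), whereas you run it on the whole domain via the ridge argument at $\{(z+\epsilon_0)/\sqrt{x+1}=N\}$.
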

 \begin{proof}
 By Lemma \ref{xz4} and the prescribed boundary conditions on $\{x=0\}\cup\{y=0\},$
 $$\pm(u_{n}^2-\bar{u}^2)-\varepsilon^{6.5} \phi_{1,0}\leq 0\quad \text{on}\quad [0,X]\times[0,Y^*]\times[N,+\infty)\cap(\{x= 0\}\cup\{y=0\}\cup\{z= N\}),$$
 for small $\varepsilon.$ Moreover, by taking $A>>\varepsilon^{-6.5}$, \eqref{u-baru} and \eqref{0323-p2phi}, we have
 \begin{align*}
 P_2\big(\pm(u_{n}^2-\bar{u}^2)-\varepsilon^{6.5} \phi_{1,0}\big)
      =R_0 -\frac{1}{2}A\varepsilon^{6.5} \phi_{1,0}<0\quad \text{in}\quad (0,X]\times(0,Y^*]\times(N,+\infty)\end{align*}
where $|R_0|\leq C\phi_{1,0}$ by the estimates in the proof of Lemma \ref{xz4}.
Then by applying the maximum principle in the domain $ [0,X]\times[0,Y^*]\times[N,+\infty),$ we have the desired result.

 \end{proof}

Moreover, based on Lemma \ref{xz4}, we can refine the growth estimate of $u_n-\bar{u}$ in $z$ near $z=0$ given in the following theorem.

\begin{theorem} \label{thm1} For some constant  $\alpha\in(0,\frac{1}{4})$,  it holds
 \begin{align}\label{sm1}
    | u_{n}^2-\bar{u}^2 |\leq  \varepsilon^{8} C{\epsilon_0}^3\phi_{2,0}+ \varepsilon^{6.5} \phi_{2,1}\psi_{n-1}^\alpha \quad \text{in} \quad \Omega\cap\{0\leq y\leq Y^*\}\cap\{0\leq z\leq \delta_4\},
 \end{align}  for some small positive constant $\delta_4 $, which implies
 \begin{align}\label{0821jl}
    | u_{n}-\bar{u} |\leq \frac{d_0\varepsilon^6}{4} \phi_{1,1+2\alpha} \quad \text{in} \quad \Omega\cap\{0\leq y\leq Y^*\},
 \end{align} by  Lemma \ref{xz40316},
 where   $d_0$  is defined in \eqref{assumpn-2} and $\phi_{2,1}$ in \eqref{phi4}.

 \end{theorem}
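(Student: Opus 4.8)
The plan is to run a maximum-principle argument with a ridged barrier function for the operator $P_2$ acting on $u_n^2-\bar u^2$, now refined near $z=0$ to capture the $\psi_{n-1}^{\alpha}$ gain over the crude bound of Lemma \ref{xz4}. First I would localize to the strip $\Omega\cap\{0\le y\le Y^*\}\cap\{0\le z\le\delta_4\}$ and consider the comparison function
\begin{align*}
 g = u_n^2-\bar u^2 - \varepsilon^8 C\epsilon_0^3\phi_{2,0} - \varepsilon^{6.5}\phi_{2,1}\psi_{n-1}^{\alpha}.
\end{align*}
By \eqref{u-baru}, $u_n^2-\bar u^2$ satisfies $P_2(u_n^2-\bar u^2)=R_0$ with $|R_0|\le C\phi_{1,0}$ as established in the proof of Lemma \ref{xz4}. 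The key computational input is a lower bound $P_2(\phi_{2,1}\psi_{n-1}^{\alpha})\gtrsim (\text{something like } \alpha\psi_{n-1}^{\alpha-3/2}+A)\phi_{2,1}\psi_{n-1}^\alpha$ in the region $\psi_{n-1}\le\delta$ — this should follow from the barrier estimates in \eqref{1118-3} (the bound for $P_2\phi_{2,1}$) together with the product rule for $P_2$ and the identities $\nabla_\psi\psi_{n-1}=1$, $\nabla_\psi^2\psi_{n-1}=0$, $\nabla_\xi\psi_{n-1}=0$, $\nabla_\eta\psi_{n-1}=\frac{\tilde q}{1+\tilde q}\int_0^z\partial_y u_{n-1}dz'-\frac{\int_0^z\partial_y q_{n-1}dz'}{1+\tilde q}$ from \eqref{psin-1} (which is $O(\varepsilon)$ small), plus the cross term $-2u_n\nabla_\psi\phi_{2,1}\nabla_\psi\psi_{n-1}^\alpha$ coming from $-u_n\nabla_\psi^2(\phi_{2,1}\psi_{n-1}^\alpha)$. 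The upshot is that $P_2 g<0$ off the ridges $\{\psi_{n-1}=\delta\}$ (and the artificial boundary $z=\delta_4$), once $A\gg\varepsilon^{-6.5}$ and $\epsilon_0$-dependent constants are absorbed; the singular term $\alpha\psi_{n-1}^{\alpha-3/2}$ near $z=0$ is what lets us beat $R_0$ after dividing by suitable powers of $u_{n-1}\gtrsim c_0\epsilon_0$.

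Next I would check the boundary and ridge behavior. On $\{z=0\}$ the compatibility bound $|u_n^2-\bar u^2|\le \varepsilon^8 C\epsilon_0^3\phi_{2,0}$ (as in Lemma \ref{xz4}, via \eqref{flw0106}) gives $g\le 0$; on $\{x=0\}\cup\{y=0\}$ the prescribed boundary data in \eqref{bddata} give $g\le 0$ for $\varepsilon$ small; on the artificial top $\{z=\delta_4\}$ we use Lemma \ref{xz4} itself, since there $\psi_{n-1}\sim\delta_4$ is bounded below and $\varepsilon^{6.5}\phi_{2,1}\psi_{n-1}^\alpha\gtrsim\varepsilon^{6.5}\delta_4^{1+\alpha}$ dominates $\varepsilon^7\phi_{2,\beta}$ for $\delta_4$ fixed and $\varepsilon$ small. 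At the ridge $\{\psi_{n-1}=\delta\}$, the barrier $\phi_{2,1}\psi_{n-1}^\alpha$ has $\partial_z^-(\cdot)>\partial_z^+(\cdot)$ because $\phi_{2,1}$ does (its right derivative vanishes there while its left derivative is positive) and $\psi_{n-1}^\alpha$ is $C^1$ across the ridge with positive $z$-derivative; hence $g$ cannot attain a positive maximum on the ridge, exactly as in Example \ref{1118-4}. Therefore the maximum-principle argument (Lemma \ref{bdMP}, applied in the bounded domain $\{0\le z\le\delta_4\}$ with the $e^{-Bx}$ weight, after handling the ridge) forces $g\le 0$, and symmetrically $-g'\le0$ for $g'=-(u_n^2-\bar u^2)-\varepsilon^8 C\epsilon_0^3\phi_{2,0}-\varepsilon^{6.5}\phi_{2,1}\psi_{n-1}^\alpha$, giving \eqref{sm1}.

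Finally I would deduce \eqref{0821jl} from \eqref{sm1}, Lemma \ref{xz4} and Lemma \ref{xz40316} by writing $u_n-\bar u=\frac{u_n^2-\bar u^2}{u_n+\bar u}$ and using $u_n+\bar u\ge \bar u\gtrsim \min\{z+\epsilon_0,1\}$. In the near-field $z\le\delta_4$ one uses \eqref{sm1}: there $\phi_{2,1}\sim e^{Ax}\psi_{n-1}\sim e^{Ax}(z+\epsilon_0)^2/\sqrt{x+1}$ up to constants and $\psi_{n-1}^\alpha\sim (z+\epsilon_0)^{2\alpha}$, while $\bar u\gtrsim(z+\epsilon_0)$, so $|u_n-\bar u|\lesssim \varepsilon^{6.5}e^{Ax}(z+\epsilon_0)^{1+2\alpha}(x+1)^{-1/2}+\varepsilon^8\epsilon_0^3\phi_{2,0}/(z+\epsilon_0)\lesssim \frac{d_0\varepsilon^6}{4}\phi_{1,1+2\alpha}$, the factor $\varepsilon^{0.5}$ and $\epsilon_0^3/(z+\epsilon_0)\le\epsilon_0^2$ more than compensating any $d_0$ loss; in the far-field $z\ge\delta_4$ one instead uses Lemma \ref{xz40316} ($|u_n^2-\bar u^2|\le\varepsilon^{6.5}\phi_{1,0}$) together with $\bar u$ bounded below and $\phi_{1,0}\le C\delta_4^{-(1+2\alpha)}\phi_{1,1+2\alpha}$ on $z\ge\delta_4$, which again gives the claim for $\varepsilon$ small. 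The main obstacle is the first step: verifying that $P_2(\phi_{2,1}\psi_{n-1}^\alpha)$ genuinely has the right sign and a large enough lower bound, in particular controlling the non-symmetric cross terms involving $\nabla_\eta\psi_{n-1}$, $\nabla_\psi\tilde q$ and the discrepancy between $\psi_{n-1}$ and the model profile, using \eqref{psin-1}, \eqref{tildq0221} and the induction hypothesis \eqref{n-1assumption}; the ridge bookkeeping and the conversion to \eqref{0821jl} are then routine.
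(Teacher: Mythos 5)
There is a genuine gap at the heart of your first step: the claimed lower bound $P_2(\phi_{2,1}\psi_{n-1}^{\alpha})\gtrsim(\alpha\psi_{n-1}^{\alpha-3/2}+A)\phi_{2,1}\psi_{n-1}^{\alpha}$ is false near $z=0$, and in fact the sign goes the wrong way exactly where the refinement matters. Near the boundary $\phi_{2,1}\psi_{n-1}^{\alpha}=e^{Ax}(\psi_{n-1}^{1+\alpha}-\psi_{n-1}^{1+2\alpha})$, a combination of powers of $\psi_{n-1}$ with exponent larger than $1$; by \eqref{4.2} (see also \eqref{psin-1x1-alphabig}) the dominant term of $P_2$ on such a power is $-u_n\beta(\beta-1)\psi_{n-1}^{\beta-2}<0$ for $\beta>1$. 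Carrying out the product rule you sketch, with $\nabla_\psi\phi_{2,1}\approx e^{Ax}$ near $z=0$, the singular contributions are $+\alpha(1-\alpha)u_ne^{Ax}\psi_{n-1}^{\alpha-1}$ from $\phi_{2,1}P_2\psi_{n-1}^{\alpha}$ and $-2\alpha u_ne^{Ax}\psi_{n-1}^{\alpha-1}$ from the cross term $-2u_n\nabla_\psi\phi_{2,1}\nabla_\psi\psi_{n-1}^{\alpha}$, so the net coefficient is $-\alpha(1+\alpha)u_n\psi_{n-1}^{\alpha-1}e^{Ax}\to-\infty$ as $z\to0$. Hence $\phi_{2,1}\psi_{n-1}^{\alpha}$ is a subsolution of $P_2$ near $z=0$, not a supersolution; consequently $P_2 g$ for your comparison function $g=u_n^2-\bar u^2-\varepsilon^8C\epsilon_0^3\phi_{2,0}-\varepsilon^{6.5}\phi_{2,1}\psi_{n-1}^{\alpha}$ is forced to be \emph{positive} of size $\varepsilon^{6.5}\alpha u_n\psi_{n-1}^{\alpha-1}e^{Ax}$ near the boundary (and this cannot be absorbed by $A$ or by the $\phi_{2,0}$ term uniformly in $\epsilon_0$), so the maximum principle of Lemma \ref{bdMP} simply does not apply and no contradiction is available when the maximum sits near $z=0$.

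This is precisely why the paper does not test $u_n^2-\bar u^2$ against the barrier $\phi_{2,1}\psi_{n-1}^{\alpha}$; instead it divides the unknown by the weight and studies $(u_n^2-\bar u^2-\varepsilon^8C\epsilon_0^3\phi_{2,0})\psi_{n-1}^{-\alpha}-\varepsilon^{6.5}\phi_{2,1}$ by contradiction at an interior positive maximum $p_0$. In that formulation the barrier is still $\phi_{2,1}$, which genuinely satisfies \eqref{1118-3}; the dangerous zeroth-order term generated by $\psi_{n-1}^{-\alpha}$, namely $-\alpha(\alpha+1)(u_n^2-\bar u^2-\varepsilon^8C\epsilon_0^3\phi_{2,0})u_n\psi_{n-1}^{-\alpha-2}$, has a \emph{favorable} sign at $p_0$ because of \eqref{pp0}, and the first-order cross term $2\alpha u_n\nabla_\psi(u_n^2-\bar u^2-\cdots)\psi_{n-1}^{-\alpha-1}$ is controlled only through the sharp near-boundary gradient bound \eqref{xz2} of Lemma \ref{6100205} (which ultimately rests on the compatibility at $z=0$ and the $\partial_z^2$ bound). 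Your proposal never invokes this input, and it cannot be bypassed: the inequality only needs to hold at the maximum point, where the sign structure of the quotient saves the argument, whereas your global supersolution inequality fails pointwise. The conversion to \eqref{0821jl} in your last paragraph and the ridge bookkeeping are indeed routine and consistent with the paper, but the core comparison step needs to be redone along the paper's quotient (or an equivalent reweighting) to be correct.
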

 \begin{proof} For convenience, we take $\delta_4$ small enough such that $\psi_{n-1}(x,y,\delta_4)\leq \delta$ where  $\delta$ and $\phi_{2,1}$ are defined in \eqref{phi4}.

 First, we will prove by contradiction that
  \begin{align}\label{sm10126step1}
     u_{n}^2-\bar{u}^2 \leq  \varepsilon^{8} C{\epsilon_0}^3\phi_{2,0}+ \varepsilon^{6.5} \phi_{2,1}\psi_{n-1}^\alpha \quad \text{in} \quad \Omega\cap\{0\leq y\leq Y^*\}\cap\{0\leq z\leq \delta_4\}.
 \end{align}

  If \eqref{sm10126step1} holds in $\Omega\cap\{0\leq y\leq Y^*\}\cap\{0\leq z\leq \delta_4\},$ then it is proved.

  Otherwise, there exists an interior point $p_0\in(0,X]\times(0,Y^*]\times(0,\delta_4)$ such that
 $(u_{n}^2-\bar{u}^2-\varepsilon^{8} C{\epsilon_0}^3\phi_{2,0})\psi_{n-1}^{-\alpha}-\varepsilon^{6.5} \phi_{2,1}$ attains its positive maximum in $ \Omega\cap\{0\leq y\leq Y^*\}\cap\{0\leq z\leq \delta_4\}$ at $p_0,$  since
 \eqref{sm1} holds on $x=0,$ $y=0$ and $ z=0,\delta_4,$  by the boundary data and Lemma \ref{xz4}.
Then by applying the argument used for  \eqref{xzx}, we have
\begin{align}\label{p0sign}
P_2[(u_{n}^2-\bar{u}^2-\varepsilon^{8} C{\epsilon_0}^3\phi_{2,0})\psi_{n-1}^{-\alpha}-\varepsilon^{6.5} \phi_{2,1}](p_0)\geq 0,
\end{align} and
\begin{align}\label{pp0}
 u_{n}^2-\bar{u}^2 - \varepsilon^{8} C{\epsilon_0}^3\phi_{2,0}>0\quad \text{at}\quad p_0.
\end{align}
 On the other hand,  for small $\varepsilon,$ by \eqref{xz2}, \eqref{xz3}, \eqref{psin-1} and \eqref{pp0}, we have
  \begin{align*}
  P_2 \big((u_{n}^2-\bar{u}^2 -\varepsilon^{8} C{\epsilon_0}^3\phi_{2,0})\psi_{n-1}^{-\alpha}\big)
      \leq& |R_0|\psi_{n-1}^{-\alpha}-\alpha(\alpha+1)(u_{n}^2-\bar{u}^2- \varepsilon^{8} C{\epsilon_0}^3\phi_{2,0} ) u_n \psi_{n-1}^{-\alpha-2}
      \\&+2\alpha u_n \nabla_\psi (u_{n}^2-\bar{u}^2 -\varepsilon^{8} C{\epsilon_0}^3\phi_{2,0}) \psi_{n-1}^{-\alpha-1}\\&+\varepsilon^2C(u_{n}^2-\bar{u}^2- \varepsilon^{8} C{\epsilon_0}^3\phi_{2,0} )\psi_{n-1}^{-\alpha}
      \\ \leq& C\psi_{n-1}^{-\alpha-1}u_{n-1}^2\phi_{1,0}\quad \text{at}\quad p_0.\end{align*}
In fact, we have used the following:
 \begin{align*}
 &|\nabla_\eta \psi_{n-1}^{-\alpha}|\leq \varepsilon^2C \psi_{n-1}^{-\alpha},
 \quad\epsilon_0\leq Cu_{n-1}\quad \text{in } \quad \Omega\cap\{0\leq y\leq Y^*\}\cap\{0\leq z\leq \delta_4\} ,
  \\&P_2 \big(u_{n}^2-\bar{u}^2-\varepsilon^{8} C{\epsilon_0}^3\phi_{2,0} \big)
      \leq R_0\quad \text{in } \quad \Omega\cap\{0\leq y\leq Y^*\}\cap\{0\leq z\leq \delta_4\} ,\end{align*}
   with
 \begin{align}\label{xz3}
 |R_0|\leq C\phi_{1,0}
 \end{align}
from the proof of Lemma \ref{xz4}.
      Then by \eqref{1118-3} and argument as explained in Example \ref{1118-4},  taking $A\gg \varepsilon^{-7}$ and $\delta_4$ sufficiently small,  for  small $\alpha$ with   $2\alpha<\frac{1}{2}$, it holds that
      $$P_2 \big((u_{n}^2-\bar{u}^2-\varepsilon^{8} C{\epsilon_0}^3\phi_{2,0} )\psi_{n-1}^{-\alpha} -\varepsilon^{6.5}  \phi_{2,1} \big)(p_0)<0,$$  which contradicts to \eqref{p0sign}. Therefore,
      $(u_{n}^2-\bar{u}^2-\varepsilon^{8} C{\epsilon_0}^3\phi_{2,0})-\varepsilon^{6.5}  \phi_{2,1}\psi_{n-1}^{\alpha}\leq 0$  in $\Omega\cap\{0\leq y\leq Y^*\}\cap\{0\leq z\leq \delta_4\}$.

      Similarly, we can prove
      \begin{align}
    -( u_{n}^2-\bar{u}^2 )\leq  \varepsilon^{8} C{\epsilon_0}^3\phi_{2,0}+ \varepsilon^{6.5} \phi_{2,1}\psi_{n-1}^\alpha \quad \text{in} \quad \Omega\cap\{0\leq y\leq Y^*\}\cap\{0\leq z\leq \delta_4\}.
 \end{align}
 Hence, \eqref{sm1} holds in $\Omega\cap\{0\leq y\leq Y^*\}\cap\{0\leq z\leq \delta_4\}.$

      Finally,  \eqref{psin-1un-1} and \eqref{lb0126g} give \eqref{0821jl}.
 \end{proof}

\begin{remark}\label{rk4.2}We can refine the size of growth rate of $u_k-\bar{u}$ in $z$ near $z=0$ for each $k=1,\cdots, n-1$ as given in  \eqref{0821jl}.
\end{remark}

\subsubsection{Estimates on $\partial_z u_n$}

 \begin{theorem}\label{dzunphi0} It holds that
 \begin{align}\label{changchun0106}\begin{split}
    |\frac{u_n}{u_{n-1}}\partial_z u_n-\partial_z\bar{u}|\leq & \frac{d_0\varepsilon^6}{4} \phi_{2,1}+C\varepsilon^7{\epsilon_0}\phi_{2,0},\quad \text{in} \quad \Omega\cap\{0\leq y\leq Y^*\},\\
     |\frac{u_n}{u_{n-1}}\partial_z u_n-\partial_z\bar{u}|\leq & \frac{d_0\varepsilon^6}{4} \phi_{1,0},\quad \text{in} \quad \Omega\cap\{0\leq y\leq Y^*\},\end{split}
 \end{align}
where  $\phi_{2,1} $ is  defined in \eqref{phi4}. Hence,
  \begin{align}\label{1012}
    |\partial_z u_n-\partial_z\bar{u}|\leq \frac{d_0\varepsilon^5}{4} \phi_{1,\alpha} \quad \text{in} \quad \Omega\cap\{0\leq y\leq Y^*\},
 \end{align}
 by induction assumption, Remark \ref{rk4.2} and \eqref{0821jl}.
 \end{theorem}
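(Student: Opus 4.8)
The starting point is that $\frac{u_n}{u_{n-1}}\partial_z u_n=\tfrac12\nabla_{\psi}u_n^2$ and $\partial_z\bar{u}=\tfrac12\nabla_{n}\bar{u}^2$, so the quantity to be estimated is, up to the factor $\tfrac12$, $g:=\nabla_{\psi}u_n^2-\nabla_{n}\bar{u}^2$; by \eqref{6.7}, this $g$ solves the $P_2$-equation $P_2 g=R_1$. The plan is to run the vector-field maximum principle of Section \ref{s3} on $g$ with the $P_2$-barriers $\phi_{2,1},\phi_{2,0}$ of \eqref{1118-3}, in exact analogy with Lemma \ref{xz4}, Theorem \ref{thm1} and Lemma \ref{xz40316}: first prove $|g|\le\frac{d_0\varepsilon^6}{2}\phi_{2,1}+C\varepsilon^7{\epsilon_0}\phi_{2,0}$ on $\Omega\cap\{0\le y\le Y^*\}$ (passing, if convenient, through a coarser $\phi_{2,\beta}$-bound for an auxiliary $\beta\in(0,1)$ and then refining the near-boundary index to $\phi_{2,1}$ as Theorem \ref{thm1} refines Lemma \ref{xz4}); then prove the decay bound $|g|\le\varepsilon^{6.5}\phi_{1,0}$ for $z$ large as in Lemma \ref{xz40316}. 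Dividing the first bound by $2$ gives \eqref{changchun0106}, and a short computation then yields \eqref{1012}.

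The heart of the argument is the estimate $|R_1|\le C\phi_{1,0}$ with $C$ independent of $\varepsilon,n,{\epsilon_0}$, together with the matching ${\epsilon_0}$-weighted behavior near $z=0$. The summands of $R_1$ in \eqref{6.7} split into three groups. The curvature-type pair $\nabla_{\eta}\tilde{q}\,\nabla_{\psi}u_n^2-\nabla_{\psi}\tilde{q}\,\nabla_{\eta}u_n^2$ is controlled by the $\tilde{q}$-bounds \eqref{tildq0221}, the first-derivative bounds \eqref{assumpn} and the a priori assumption \eqref{assumpn-2}. The consistency terms $(2\nabla_{\tau_1}-\nabla_{\xi}-(1+\tilde{q})\nabla_{\eta})\nabla_{n}\bar{u}^2$ and $(-\bar{u}\nabla_{n}^2+u_n\nabla_{\psi}^2)\nabla_{n}\bar{u}^2$ are bounded using the discrepancy estimates between $\nabla_{\xi},\nabla_{\eta},\nabla_{\psi}$ and $\nabla_{\tau_1},\nabla_{n}$ from Subsection \ref{tl} (small by \eqref{n-1assumption}) and the explicit growth rates of $\bar{u}$ and its derivatives. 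The terms carrying $\nabla_{\psi}^2u_n^2-\nabla_{n}^2\bar{u}^2$ and $\tfrac{1}{2u_n}\nabla_{\psi}u_n^2-\tfrac{1}{2\bar{u}}\nabla_{n}\bar{u}^2$ are treated after the key simplification that, by $P_2u_n^2=0$ in \eqref{ppun2} and \eqref{0823}, one has $\nabla_{\psi}^2u_n^2=\frac{1}{u_n}\big(\nabla_{\xi}u_n^2+(1+\tilde{q})\nabla_{\eta}u_n^2\big)$ and $\nabla_{n}^2\bar{u}^2=\frac{2}{\bar{u}}\nabla_{\tau_1}\bar{u}^2$, which turns all second normal derivatives into first tangential derivatives already controlled by Lemma \ref{04160416e-v}, Lemma \ref{xz4} and Lemma \ref{xz40316}. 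Summing and tracking the factors $1/u_{n-1},1/\bar{u}\sim 1/({\epsilon_0}+z)$ near $z=0$ (via \eqref{positivelbu}) yields $|R_1|\le C\phi_{1,0}$, with the near-$z=0$ part absorbed by the $C\varepsilon^7{\epsilon_0}\phi_{2,0}$ barrier. This bookkeeping — keeping $C$ independent of $\varepsilon$ while matching exactly the ${\epsilon_0}$-powers of $\phi_{2,0}$ at the degeneracy $u_{n-1},\bar{u}\sim{\epsilon_0}+z$, which makes several terms of $R_1$ borderline — is the step I expect to be the main obstacle.

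With $|R_1|\le C\phi_{1,0}$ in hand, take $A\gg\varepsilon^{-7}$; then \eqref{1118-3} gives $P_2\big(\frac{d_0\varepsilon^6}{2}\phi_{2,1}+C\varepsilon^7{\epsilon_0}\phi_{2,0}\big)>C\phi_{1,0}\ge|R_1|$ off the ridges, hence $P_2\big(g-\frac{d_0\varepsilon^6}{2}\phi_{2,1}-C\varepsilon^7{\epsilon_0}\phi_{2,0}\big)<0$ there. The ridge property $\partial_z^-\phi>\partial_z^+\phi$ excludes an interior maximum on a ridge, and the approximate boundary data on $\{z=0\}\cup\{x=0\}\cup\{y=0\}$ (Subsection \ref{appbddata}, cf.\ \eqref{bddata}) fixes the sign of $g$ minus the barrier on the rest of the parabolic boundary; the maximum principle in unbounded domains (Lemma \ref{infMPx}) and its ridge generalization then give $|g|\le\frac{d_0\varepsilon^6}{2}\phi_{2,1}+C\varepsilon^7{\epsilon_0}\phi_{2,0}$ on $\Omega\cap\{0\le y\le Y^*\}$. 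The large-$z$ bound $|g|\le\varepsilon^{6.5}\phi_{1,0}$ follows by the same argument on $[0,X]\times[0,Y^*]\times[N,+\infty)$, exactly as in Lemma \ref{xz40316}.

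Finally, for \eqref{1012} I would use the identity
\begin{align*}
\partial_z u_n-\partial_z\bar{u}=\frac{u_{n-1}}{u_n}\Big(\frac{u_n}{u_{n-1}}\partial_z u_n-\partial_z\bar{u}\Big)+\partial_z\bar{u}\,\frac{u_{n-1}-u_n}{u_n}.
\end{align*}
The first term is bounded by $\big(\frac{d_0\varepsilon^6}{4}\phi_{2,1}+C\varepsilon^7{\epsilon_0}\phi_{2,0}\big)$ times $u_{n-1}/u_n\approx1$, using $\phi_{2,1}\lesssim\psi_{n-1}\lesssim\bar{u}\,z$; the second is bounded using $|u_{n-1}-u_n|\lesssim\varepsilon^6\phi_{1,1+2\alpha}$ (from the induction assumption, Remark \ref{rk4.2} and \eqref{0821jl}) and the lower bound $u_n\gtrsim{\epsilon_0}+z$. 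For $\alpha$ small, both contributions are $\le\frac{d_0\varepsilon^5}{4}\phi_{1,\alpha}$, which is the required strict improvement of \eqref{assumpn-2}; the only delicate point here is checking that the weight arithmetic ($\psi_{n-1}\,\varepsilon^6\lesssim\varepsilon^5\phi_{1,\alpha}$ near $z=0$, and the analogous inequality for $\phi_{1,1+2\alpha}/({\epsilon_0}+z)$) closes at the critical index.
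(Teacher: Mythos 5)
Your proposal follows essentially the same route as the paper: it works with $g=\nabla_{\psi}u_n^2-\nabla_{n}\bar u^2$ through equation \eqref{6.7}, establishes $|R_1|\le C\phi_{1,0}$ using exactly the paper's key simplification (the identity $P_2u_n^2=0$ from \eqref{ppun2} converting $\nabla_{\psi}^2u_n^2-\nabla_n^2\bar u^2$ into first tangential derivatives, plus the $\tilde q$-bounds \eqref{tildq0221} and the vector-field discrepancy estimates), and then runs the $P_2$-maximum principle with the ridged barriers $\phi_{2,1},\phi_{2,0},\phi_{1,0}$ with $A$ large, exactly as in Lemma \ref{xz4}, Theorem \ref{thm1} and Lemma \ref{xz40316}, before converting to \eqref{1012} via the lower bound $u_n,u_{n-1}\gtrsim z+\epsilon_0$ and the refined bounds on $u_n-\bar u$, $u_{n-1}-\bar u$. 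The step you flag as the main obstacle (the $R_1$ bookkeeping near the degeneracy) is precisely the paper's Step 1, and your treatment of it matches the paper's; the argument is correct.
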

 \begin{proof}By \eqref{6.7},
  \begin{align}\begin{split}
 P_2(\nabla_{\psi}u_{n}^2 -\nabla_{n}\bar{u}^2)
  = R_1.\end{split}\end{align}

  \textit{Step 1}: We will prove $| R_1|\leq C\phi_{1,0}.$

\noindent
 Note that
 $$\frac{1}{2}\nabla_{\psi}u_{n}^2=\frac{u_{n}}{u_{n-1}}\partial_z u_{n}.$$
 By induction assumption and the definition of $Y^*$, we have
 \begin{align*}
&| \nabla_{\eta}\tilde{q}|\leq \varepsilon^2 e^{-\frac{A}{x+1}}C,\quad |\nabla_{\psi}\tilde{q}|
 = |\frac{\partial_z\tilde{q}}{u_{n-1}}|\leq\frac{\varepsilon^5C}{u_{n-1}^2}\phi_{1,\alpha},
 \\&|\nabla_{\eta}u_{n}^2|\leq Cu_{n-1}\min\{z+\epsilon_0,1\}e^{-\frac{(z+{\epsilon_0})^2}{x+1}\mu}.
 \end{align*}
Then by \eqref{gg} in Appendix,
\begin{align*}
|\nabla_{\eta}\tilde{q}\nabla_{\psi}u_{n}^2
   -\nabla_{\psi}\tilde{q}\nabla_{\eta}u_{n}^2
    +(\nabla_{\tau_1} + \nabla_{\tau_2}-\nabla_{\xi}-(1+\tilde{q})\nabla_{\eta} ) \nabla_{n}\bar{u}^2
    | \leq C \phi_{1,0}.
\end{align*}
Since $ \partial_z\nabla_{n}\bar{u}^2=2\partial_z^2\bar{u}$,
by \eqref{nnbaru}, $|\partial_z\nabla_{n}^2 \bar{u}^2|\leq Ce^{-\frac{(z+{\epsilon_0})^2}{x+1}\mu}$ and $|\frac{2}{\bar{u}}\partial_z^2\bar{u}|=|\nabla_{n}^2 \bar{u}^2|\leq C\bar{u}e^{-\frac{(z+{\epsilon_0})^2}{x+1}\mu}.$ Hence, by \eqref{cy}
$$|(-\bar{u}\nabla_{n}^2+u_{n}\nabla_{\psi}^2 )\nabla_{n}\bar{u}^2 |\leq C\phi_{1,0}.$$
Then by \eqref{ppun2}, induction assumption and definition of $Y^*,$ it holds that
\begin{align*}
   | \nabla_{\psi}^2u_n^2-\nabla_{n}^2\bar{u}^2|=|
   2
  \nabla_{\xi} u_{n}+2(1+\tilde{q})\nabla_{\eta}u_{n}
     - 4\nabla_{\tau_1}\bar{u}|\leq \varepsilon^2 C\phi_{1,1}.
\end{align*}
Thus, we have
\begin{align*}
|(\nabla_{n} \bar{u}^2)\frac{1}{2\bar{u}}(\nabla_{\psi}^2u_n^2-\nabla_{n}^2\bar{u}^2)|
\leq |\frac{2\partial_z\bar{u}}{2\bar{u}}(\nabla_{\psi}^2u_n^2-\nabla_{n}^2\bar{u}^2)|
\leq \varepsilon^2  C\phi_{1,0}. \end{align*}
By \eqref{ppun2}, $|\nabla_{\psi}^2u_{n}^2|\leq Cu_{n-1}e^{-\frac{(z+{\epsilon_0})^2}{x+1}\mu}.$ Then
\begin{align*}
   | \nabla_{\psi}^2u_{n}^2( \frac{1}{2u_{n}} \nabla_{\psi}u_{n}^2-\frac{1}{2\bar{u}}\nabla_{n}\bar{u}^2)|\leq  Cu_{n-1}
  | \frac{1}{u_{n-1}}\partial_zu_{n}-\frac{1}{\bar{u}}\partial_z\bar{u}|\leq C\phi_{1,0}.
\end{align*}
In summary, $|R_1|\leq C\phi_{1,0},$ where  $R_1$ is defined in \eqref{6.7}.

 \textit{Step 2}: We will derive \eqref{changchun0106}.

 Employing \eqref{0323-p2phi} and taking $A\gg \varepsilon^{-6}$, by a similar argument on \eqref{6.7} as in the previous proof of Lemma \ref{xz4}
 and Lemma   \ref{xz40316}, we have
 \begin{align*}
    \frac{1}{2} | \nabla_{\psi}u_{n}^2-\nabla_{n}\bar{u}^2|=|\frac{u_n}{u_{n-1}}\partial_z u_n-\partial_z\bar{u}|\leq& \frac{d_0\varepsilon^6}{4} \phi_{2,1}+C\varepsilon^7\epsilon_0\phi_{2,0}\quad \text{in} \quad \Omega\cap\{0\leq y\leq Y^*\},\\
     \frac{1}{2} | \nabla_{\psi}u_{n}^2-\nabla_{n}\bar{u}^2|=|\frac{u_n}{u_{n-1}}\partial_z u_n-\partial_z\bar{u}|\leq& \frac{d_0\varepsilon^6}{4} \phi_{1,0}\quad \text{in} \quad \Omega\cap\{0\leq y\leq Y^*\}.
 \end{align*}
 This completes the proof of  the theorem.
 \end{proof}

\begin{remark}\label{jili}
	The first estimate  in \eqref{changchun0106} shows the growth rates in $z$ near $z=0$ and the second line of \eqref{changchun0106} shows the  decay rates in $z$ near infinity.
	Here is a direct consequence which will be used later.
	\begin{align}\label{1026}
		|\partial_z u_n-\partial_z\bar{u}|\leq \varepsilon^4C\phi_{1,0}\quad \text{in}\quad \Omega\cap\{0\leq y\leq Y^*\}\cap\{z\geq (d_0\varepsilon)^{\frac{1}{1-\alpha}}\}.
	\end{align}
\end{remark}

\begin{theorem}
 It holds
 \begin{align}\label{mon0315dj}
     \partial_z u_n\geq 2c_0e^{-\frac{3}{2}\mu (z+\epsilon_0)^2}\quad \text{in}\quad \Omega\cap\{0\leq y\leq Y^*\},
 \end{align}
 where $c_0$ defined in \eqref{n-1assumption} is a positive constant independent of $\varepsilon.$
 \end{theorem}
\begin{proof}
First, by \eqref{0323-446} and \eqref{p10106}, \begin{align}\begin{split}
P_1(\nabla_{\psi}u_{n}^2)
   =\bar{R}
   ,\end{split}\end{align} where $$|\bar{R}|=| -\nabla_{\eta}\tilde{q}\nabla_{\psi}u_{n}^2
   +\nabla_{\psi}\tilde{q}\nabla_{\eta}u_{n}^2| \leq \varepsilon^2 C\phi_{1,0}e^{-\mu \frac{(z+\epsilon_0)^2}{1+x}}\quad\text{in}\quad(0,X]\times(0,Y^*]\times[0,\infty).$$
Take $N_1$ large depending on $N$ such that
\begin{align}\label{N10316-1}
 |\frac{2u_n}{u_{n-1}}-2 |<\frac{1}{10}
   \quad\text{in}\quad(0,X]\times(0,Y^*]\times(N_1,\infty),
\end{align} and
\begin{align}\label{N10316}
 - c_0e^{-\frac{3}{2}\mu (z+\epsilon_0)^2}+|\bar{R}|<0
   \quad\text{in}\quad(0,X]\times(0,Y^*]\times(N_1,\infty),
\end{align}
where $N$ is the constant in \eqref{phi3}.
Then  by \eqref{N10316} and Lemma \ref{mon-3-15},
\begin{align}\label{0316intsign}
    P_1(5c_0e^{-\frac{3}{2}\mu(z+\epsilon_0)^2}-\nabla_{\psi}u_{n}^2 )<0 \quad\text{in}\quad(0,X]\times(0,Y^*]\times(N_1,\infty).
   \end{align}

   Second,  we analyze the boundary data. by \eqref{1012}, for $\varepsilon$ small depending on $N_1,$
   \begin{align}\label{0316psiz}
      \nabla_\psi u_n^2=\frac{2u_n}{u_{n-1}}\partial_z u_n
   \end{align} satisfies
    \begin{align}\label{mon0315bd}
     \nabla_\psi u_n^2\geq 5c_0e^{-\frac{3}{2}\mu (z+\epsilon_0)^2}\quad \text{on}\quad \Omega\cap\{0\leq y\leq Y^*\}\cap\{z= N_1\},
 \end{align}  and
   \begin{align}\label{mon0315bd-2-1}
    \partial_z u_n\geq 2c_0e^{-\frac{3}{2}\mu (z+\epsilon_0)^2}\quad \text{in}\quad \Omega\cap\{0\leq y\leq Y^*\}\cap\{z\leq N_1\},
 \end{align} with $c_0$ being small enough.
 Moreover, by \eqref{mnt} and \eqref{bddata},
 \begin{align}\label{djdl0316ch}
 \nabla_\psi u_n^2\geq 5c_0e^{-\frac{3}{2}\mu (z+\epsilon_0)^2}\,\,\text{on} \,\,[0,X]\times[0,Y^*]\times[N_1,+\infty)\cap(\{x= 0\}\cup\{y=0\}).\end{align}
Then by \eqref{mon0315bd} and \eqref{djdl0316ch},   $
     \nabla_\psi u_n^2\geq 5c_0e^{-\frac{3}{2}\mu (z+\epsilon_0)^2}$ on
     $  [0,X]\times[0,Y^*]\times[N_1,+\infty)\cap(\{x= 0\}\cup\{y=0\}\cup\{z= N_1\}).$

Finally, by the boundary condition, \eqref{0316intsign} and applying the maximum principle in $[0,X]\times[0,Y^*]\times[N_1,+\infty),$  we have
\begin{align}\label{mon0315bd-1}
5c_0e^{-\frac{3}{2}\mu (z+\epsilon_0)^2}-\nabla_{\psi}u_{n}^2\leq 0 \quad \text{in}\quad \Omega\cap\{0\leq y\leq Y^*\}\cap\{z\geq N_1\}.
\end{align}
By \eqref{mon0315bd-1}, \eqref{0316psiz} and \eqref{N10316-1}, we have
  \begin{align}\label{mon0315bd-2-1-final}
    \partial_z u_n\geq 2c_0e^{-\frac{3}{2}\mu (z+\epsilon_0)^2}\quad \text{in}\quad \Omega\cap\{0\leq y\leq Y^*\}\cap\{z\geq N_1\}.
 \end{align}
Combining \eqref{mon0315bd-2-1} and \eqref{mon0315bd-2-1-final}, we have  the conclusion.
\end{proof}

\subsection{Estimates on 1st order tangential vector field derivatives}
\subsubsection{ Equation for $\nabla_{\eta}w_n=\nabla_{\eta}u_n$}It holds
\begin{align}\label{qn1}\begin{split}
   0=&\nabla_{\xi} \nabla_{\eta} w_n +(1+\tilde{q})\nabla_{\eta}^2w_n  -\nabla_{\psi}(u_{n}\nabla_{\psi}\nabla_{\eta} w_n)+\nabla_{\eta}\tilde{q}\nabla_{\eta}w_n\\& -K\partial_z w_n
   -\frac{\nabla_{\eta}\tilde{q}}{1+\tilde{q}}\nabla_{\psi}(u_{n}\nabla_{\psi} w_n)-\nabla_{\psi}(\nabla_{\eta}u_{n}\nabla_{\psi} w_n)-\nabla_{\psi}(\frac{\nabla_{\eta}\tilde{q}}{1+\tilde{q}}u_{n}\nabla_{\psi} w_n)\\=&P_1\nabla_{\eta} w_n+\nabla_{\eta}\tilde{q}\nabla_{\eta}w_n
   -\nabla_{\psi}(\nabla_{\eta}u_{n}\nabla_{\psi} w_n)+f_1,\end{split}\end{align} where
      \begin{align}
        f_1=&-K\partial_z u_n
   -\frac{\nabla_{\eta}\tilde{q}}{1+\tilde{q}}\frac{1}{u_{n-1}}\partial_z( \frac{u_{n}}{u_{n-1}} \partial_z u_n)-\frac{1}{u_{n-1}}\partial_z(\frac{\nabla_{\eta}\tilde{q}}{1+\tilde{q}}\frac{u_{n}}{u_{n-1}}\partial_z u_n).   \end{align}

 By induction assumption and \eqref{Ksmallinfty}, we have
   \begin{align}
  |f_1 |\leq \frac{\varepsilon^2C}{u_{n-1}^{2-\alpha}}\phi_{1,0},
\end{align}where we have used \eqref{0221qn-1}.
\subsubsection{Equation for $\nabla_{\xi}w_n=\nabla_{\xi}u_n$}
First, we multiply \eqref{qn0}  by $\frac{1}{1+\tilde{q}}$ and obtain
\begin{align*}
   \frac{1}{1+\tilde{q}} \nabla_{\xi} w_n +\nabla_{\eta}w_n  -\frac{1}{1+\tilde{q}}\nabla_{\psi}(u_{n}\nabla_{\psi}w_n)
      =0.
\end{align*}
Then we have
\begin{align}\begin{split}
 0=& \frac{1}{1+\tilde{q}} \nabla_{\xi}^2 w_n +\nabla_{\eta}\nabla_{\xi}w_n  -\frac{1}{1+\tilde{q}}\nabla_{\psi}(u_{n}\nabla_{\psi}\nabla_{\xi}w_n)
  - \frac{ \nabla_{\xi}\tilde{q}}{(1+\tilde{q})^2} \nabla_{\xi} w_n   \\&+K\partial_z w_n+\frac{ \nabla_{\xi}\tilde{q}}{(1+\tilde{q})^2}\nabla_{\psi}(u_{n}\nabla_{\psi}w_n) -\frac{1}{1+\tilde{q}}\nabla_{\psi}(\nabla_{\xi}u_{n}\nabla_{\psi}w_n)  .\end{split}
\end{align}
        Hence, \begin{align}\label{qn2}\begin{split}
 0=&  P_1\nabla_{\xi}w_n - \frac{ \nabla_{\xi}\tilde{q}}{1+\tilde{q}} \nabla_{\xi} w_n       -\nabla_{\psi}(\nabla_{\xi}u_{n}\nabla_{\psi}w_n)
   +f_2,\end{split}
\end{align}where
\begin{align*}
        f_2 =&(1+\tilde{q})K\partial_z w_n+
         \frac{ \nabla_{\xi}\tilde{q}}{1+\tilde{q}}\nabla_{\psi}(u_{n}\nabla_{\psi}w_n)
         =(1+\tilde{q})K\partial_z u_n+ \frac{ \nabla_{\xi}\tilde{q}}{1+\tilde{q}}\frac{1}{u_{n-1}} \partial_z(\frac{u_n}{u_{n-1}}\partial_z u_{n})
         .   \end{align*}
     By induction assumption and \eqref{Ksmallinfty}, we have
   \begin{align}\label{4.17}
  |f_2 |\leq \frac{\varepsilon^2C}{u_{n-1}^{2-\alpha}}\phi_{1,0}.
\end{align}

\subsubsection{Equation for $\nabla_{\tau_1}    \bar{u}$}
\begin{align*}
 0= &\nabla_\xi \nabla_{\tau_1}\bar{u} +(1+\tilde{q})\nabla_{\eta}\nabla_{\tau_1}\bar{u}  -\nabla_{\psi}(\nabla_{\tau_1} \bar{u}\nabla_{\psi}u_n) -\nabla_{\psi} ( u_n\nabla_{\psi}\nabla_{\tau_1}\bar{u}) \\& +(2\nabla_{\tau_1}-\nabla_\xi -(1+\tilde{q})\nabla_{\eta})  \nabla_{\tau_1} \bar{u}
 \\&+ \nabla_{\psi}(\nabla_{\tau_1} \bar{u}\nabla_{\psi}u_n) -\nabla_{n}( \nabla_{\tau_1}\bar{u}\nabla_{n}  \bar{u})+\nabla_{\psi} ( u_n\nabla_{\psi}\nabla_{\tau_1}\bar{u})-\nabla_{n}( \bar{u}\nabla_{n}  \nabla_{\tau_1}\bar{u}).
\end{align*}
\subsubsection{Estimates on  $\nabla_{\eta,\xi}u_n-\nabla_{\tau_1}\bar{u}$}
Note that the tangential vector field derivative $w=\nabla_{\eta,\xi}u_n$  satisfies
\begin{align}\begin{split}
  0=  P_1w  -\nabla_{\psi} (w\nabla_{\psi} u_n) +\bar{c}w+F,
  \end{split}
\end{align}where $ |\bar{c}|\leq \varepsilon^2 C\phi_{1,0}$. (Refer to \eqref{qn1} and \eqref{qn2} for the specific expression of $\bar{c}$ and $F.$) Hence, we have
\begin{align}\label{P1111}\begin{split}
  P_1(w-\nabla_{\tau_1}\bar{u})  -\nabla_{\psi} \big ((w-\nabla_{\tau_1}\bar{u})\nabla_{\psi} u_n\big)=R_2
  ,
  \end{split}
\end{align}
where
\begin{align}\label{r2xi}\begin{split}
    R_2=&-(\bar{c}w+F)+(2\nabla_{\tau_1}-\nabla_\xi -(1+\tilde{q})\nabla_{\eta})  \nabla_{\tau_1} \bar{u}
 \\&+ \nabla_{\psi}(\nabla_{\tau_1} \bar{u}\nabla_{\psi}u_n) -\nabla_{n}( \nabla_{\tau_1}\bar{u}\nabla_{n}  \bar{u})+\nabla_{\psi} ( u_n\nabla_{\psi}\nabla_{\tau_1}\bar{u})-\nabla_{n}( \bar{u}\nabla_{n}  \nabla_{\tau_1}\bar{u}).
\end{split}
\end{align}
\begin{theorem}\label{r2imp} For a small positive constant $\alpha_0,$ it holds that
 \begin{align}\label{0323-toimp}
    |\nabla_{\eta,\xi}u_n-\nabla_{\tau_1}\bar{u} |\leq \varepsilon^5 \phi_{2,\alpha_0}+C\varepsilon^6\epsilon_0^2\phi_{2,0} \quad \text{in} \quad \Omega\cap\{0\leq y\leq Y^*\},
 \end{align} which implies
 \begin{align}\label{20250206}
 |\nabla_{\eta,\xi}u_n-\nabla_{\tau_1}\bar{u} |\leq C\varepsilon^5 \phi_{1,0}\quad  \text{in} \quad \Omega\cap\{0\leq y\leq Y^*\}.
 \end{align}Here $\alpha_0>\alpha.$
 \end{theorem}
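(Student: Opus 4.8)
\emph{Reduction to a maximum principle.} The plan is to prove \eqref{0323-toimp} by comparing $v:=\nabla_{\eta,\xi}u_n-\nabla_{\tau_1}\bar u$, which by \eqref{P1111} satisfies
\[
\mathcal L v:=P_1 v-\nabla_\psi\big(v\,\nabla_\psi u_n\big)=R_2,
\]
with the barrier $\Psi:=\varepsilon^5\phi_{2,\alpha_0}+C\varepsilon^6\epsilon_0^2\phi_{2,0}$. Expanding $\mathcal L$ and using the identity $u_n\nabla_\psi^2 u_n=\nabla_\xi u_n+(1+\tilde q)\nabla_\eta u_n-(\nabla_\psi u_n)^2$ from \eqref{dpsi3un},
\[
\mathcal L v=\nabla_\xi v+(1+\tilde q)\nabla_\eta v-2(\nabla_\psi u_n)\nabla_\psi v-u_n\nabla_\psi^2 v-(\nabla_\psi^2 u_n)v,
\]
which is precisely of the type treated in Lemma \ref{bdMP} and Lemma \ref{infMPx}, with first-order coefficient $-2\nabla_\psi u_n$ and zeroth-order coefficient $c=-\nabla_\psi^2 u_n$. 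By the monotonicity \eqref{mon0315dj} and \eqref{dpsi3un}, $c$ is large and positive for small $z$ and bounded below throughout $\Omega\cap\{0\le y\le Y^*\}$, which is all that is used in the proofs of those lemmas and in their ridge version (Example \ref{1118-4}); hence the vector-field maximum principle applies to $\mathcal L$.

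\emph{Bounding $R_2$.} In \eqref{r2xi}, with $w=\nabla_{\eta,\xi}u_n$, the lower-order part $-(\bar c w+F)$ is controlled via $|\bar c|\le\varepsilon^2C\phi_{1,0}$, the bootstrap bound $|\nabla_{\eta,\xi}u_n|\le C\phi_{1,1}$ on $\{0\le y\le Y^*\}$ from \eqref{assumpn-2}, and $|F|\le\varepsilon^2Cu_{n-1}^{-2+\alpha}\phi_{1,0}$, the last of these resting on $|K|\le\varepsilon^2C_2$ from \eqref{Ksmallinfty} together with \eqref{assumpn} and \eqref{tildq0221}. The remaining terms of $R_2$ compare the $(n-1)$-vector fields $\nabla_\xi,\nabla_\eta,\nabla_\psi$ with the background fields $\nabla_{\tau_1},\nabla_n$ acting on $\nabla_{\tau_1}\bar u$, and $u_n$ with $\bar u$; these are handled by the tail bounds of Subsection \ref{tl}, the induction hypothesis \eqref{n-1assumption}, the already-established Theorem \ref{thm1} and Theorem \ref{dzunphi0}, and the growth rates of $\bar u$ from Subsection \ref{1027}. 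This yields $|R_2|\le\varepsilon^2Cu_{n-1}^{-2+\alpha}\phi_{1,0}$ plus a part carrying an $\epsilon_0$-gain, i.e. $R_2$ is, up to the $\varepsilon^2$ prefactor, of the size the $P_1$-barrier estimate \eqref{1118-3} is built to beat.

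\emph{The barrier inequality (main obstacle).} The heart of the proof is verifying $\mathcal L\Psi\ge|R_2|$ on $(0,X]\times(0,Y^*]\times[0,+\infty)$ off the ridges of $\phi_{2,\alpha_0},\phi_{2,0}$. Split $\Omega$ as in Section \ref{s3} into the inner region $\{0\le z\le\delta_\varepsilon\}$ and outer region $\{z\ge\delta_\varepsilon\}$. On the outer region $u_{n-1},u_n$ are bounded below, so the two extra terms $-2(\nabla_\psi u_n)\nabla_\psi\Psi-(\nabla_\psi^2 u_n)\Psi$ compared with $P_2$ are harmless, and taking $A$ a sufficiently large negative power of $\varepsilon$ the term $c_2A\phi_{1,0}$ in $P_2\phi_{2,\alpha_0},P_2\phi_{2,0}\ge c_2A\phi_{1,0}$ from \eqref{1118-3} dominates $|R_2|$. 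On the inner region $\mathcal L\phi_{2,\alpha_0}$ has three leading contributions near $z=0$: the favorable elliptic gain $\alpha_0(1-\alpha_0)u_n\psi_{n-1}^{-2}\phi_{2,\alpha_0}$, the favorable zeroth-order term $-(\nabla_\psi^2 u_n)\phi_{2,\alpha_0}$ (positive, of order $\psi_{n-1}^{-3/2}\phi_{2,\alpha_0}$ for $z\gg\epsilon_0$), and the unfavorable first-order term $-2\alpha_0(\partial_z u_n/u_{n-1})\psi_{n-1}^{-1}\phi_{2,\alpha_0}$; these turn out to be of the same order, and their sum is a positive multiple of $\psi_{n-1}^{-3/2}\phi_{2,\alpha_0}$ exactly when $\alpha_0$ is small and $\alpha_0>\alpha$. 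Choosing $\delta_\varepsilon=\varepsilon^m$ with $m$ large, $\varepsilon^5\mathcal L\phi_{2,\alpha_0}$ then beats the $\varepsilon^2u_{n-1}^{-2+\alpha}\phi_{1,0}$ part of $|R_2|$ on the entire inner region, while the $\epsilon_0$-gain part is absorbed by $\mathcal L(C\varepsilon^6\epsilon_0^2\phi_{2,0})$, whose zeroth-order term $-(\nabla_\psi^2 u_n)C\varepsilon^6\epsilon_0^2\phi_{2,0}$ is large and positive near $z=0$. I expect this inner-region computation — tracking the exact powers of $\psi_{n-1},u_{n-1},\epsilon_0$ so the stretching term $-\nabla_\psi(\cdot\,\nabla_\psi u_n)$ does not spoil the $P_2$-gain, and locating the critical index $\alpha_0>\alpha$ — to be the main obstacle, analogous to but more delicate than the proof of Theorem \ref{thm1}.

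\emph{Boundary data and conclusion.} At $z=0$ the $\int_0^z(\cdot)$-weights vanish, so $\nabla_{\eta,\xi}u_n=\partial_{x,y}u_n$ and $\nabla_{\tau_1}\bar u=\partial_x\bar u$; hence the approximate boundary data of Subsection \ref{appbddata} — obtained by compatibility from \eqref{bddata}, with the passage from $u_B$ to $\bar u=u_B(\cdot,z+\epsilon_0)$ producing the $\epsilon_0$-powers — give $|v|\le C\varepsilon^6\epsilon_0^2\phi_{2,0}\le\Psi$ at $z=0$, and on $\{x=0\}\cup\{y=0\}$ the prescribed data likewise control $v$; at each ridge $\partial_z^-\Psi>\partial_z^+\Psi$, so by Serrin's observation $\pm v-\Psi$ cannot attain a positive maximum there. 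Applying Lemma \ref{bdMP} and Lemma \ref{infMPx}, in their ridge-generalized forms, to $\pm v-\Psi$ on the inner and outer regions respectively yields $|v|\le\Psi$, which is \eqref{0323-toimp}. Finally, since $\phi_{2,\alpha_0}\le C\phi_{1,0}$ and $\epsilon_0^2\phi_{2,0}\le\phi_{2,0}\le C\phi_{1,0}$ on $\Omega$, we get $\Psi\le C\varepsilon^5\phi_{1,0}$, which gives \eqref{20250206}.
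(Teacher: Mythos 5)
Your proof of \eqref{0323-toimp} is essentially the paper's: the same rewriting of \eqref{P1111} as $(P_2+P_3)(\nabla_{\eta,\xi}u_n-\nabla_{\tau_1}\bar u)=R_2$ with $P_3g=-2(\nabla_\psi u_n)\nabla_\psi g-(\nabla_\psi^2u_n)g$, the same inner-region mechanism (the $\psi_{n-1}^{-3/2}$ gain of $P_2\phi_{2,\alpha_0}$ combined with the positivity $-\nabla_\psi^2u_n\ge \tfrac{c_0^2}{2u_{n-1}^3}$ for small $z$, absorbing the unfavorable $-2\alpha_0\tfrac{\partial_zu_n}{u_{n-1}\psi_{n-1}}\phi_{2,\alpha_0}$ term when $\alpha_0$ is small), the same inner/outer splitting with $A$ a large negative power of $\varepsilon$, and the same roles of the ridges and of the $C\varepsilon^6\epsilon_0^2\phi_{2,0}$ term at $z=0$. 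Two small inaccuracies in this part do not derail it: the paper only establishes $|R_2|\le Cu_{n-1}^{-2}\phi_{1,0}$ (your uniform $\varepsilon^2$ prefactor is not available for every term, e.g.\ the contribution through $\partial_z^2(u_n-\bar u)$ carries only one power of $\varepsilon$), and the positivity of the inner-region computation needs only $\alpha_0$ small --- the condition $\alpha_0>\alpha$ is not what makes that sum positive.

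The genuine gap is your last sentence. The inequalities $\phi_{2,\alpha_0}\le C\phi_{1,0}$ and $\phi_{2,0}\le C\phi_{1,0}$ are false on $\Omega$, so \eqref{20250206} does not follow from \eqref{0323-toimp} by a pointwise comparison of barriers. Indeed, for large $z$ one has $u_{n-1}\le 1+o(1)$ while $\int_0^{z}(1-u_{n-1})\,dz'$ tends to an order-one constant $\kappa>0$ (a displacement-thickness defect), so $\psi_{n-1}=z-\kappa+o(1)$; hence in the Gaussian-tail regions $\phi_{2,\beta}/\phi_{1,0}=\delta^\beta e^{\mu[(z+\epsilon_0)^2-\psi_{n-1}^2]/(x+1)}\sim e^{2\mu\kappa z/(x+1)}\to\infty$. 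The comparison goes the wrong way: $\phi_{1,0}\lesssim\phi_{2,0}$, not the reverse. This is precisely why the paper's proof contains a separate Step 2 (and why Lemma \ref{xz40316} exists as a separate statement after Lemma \ref{xz4}): one must run a second maximum-principle argument on $[0,X]\times[0,Y^*]\times[N,+\infty)$ with the barrier $C\varepsilon^5\phi_{1,0}$, using $P_2\phi_{1,0}\ge A\phi_{1,0}$ there (see \eqref{0323-p2phi}), the crude bound $|\hat R_2|\le C\phi_{1,0}$ with $A\gg\varepsilon^{-5}$, the prescribed data on $\{x=0\}\cup\{y=0\}$, and the value on $\{z=N\}$ supplied by \eqref{0323-toimp}. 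Without this additional step your argument only yields the decay $e^{-\mu\psi_{n-1}^2/(x+1)}$, which is strictly weaker than the $\phi_{1,0}$ decay asserted in \eqref{20250206}.
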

 \begin{proof} \textit{Step 1} We will prove \eqref{0323-toimp}.
 By \eqref{P1111}, $w=\nabla_{\eta,\xi}u_n$ satisfies
  \begin{align}\label{426}\begin{split}
 ( P_2+ P_3)(w-\nabla_{\tau_1}\bar{u})  =R_2
  ,
  \end{split}
\end{align}
where $P_3$ contains 0th-order term and 1st-order derivative with respect to normal direction as follows, \begin{align}\begin{split}
  P_3 g:=- 2(\nabla_{\psi} u_n)\nabla_{\psi} g -\nabla_{\psi}^2 u_n g.
  \end{split}
\end{align}
By \eqref{xitau},   \eqref{dpsi3un} for $\nabla_{\psi}^2 u_n$ and $\nabla_{\psi}^3 u_n$, \eqref{taunu} for $\bar{u}$, \eqref{cy}, Theorem \ref{thm1} and Theorem \ref{dzunphi0},  we have
\begin{align}\label{r2sheng}
   | R_2|\leq \frac{C}{u_{n-1}^{2}}\phi_{1,0}.
   \end{align}
Now  we deal with $P_3$ as follows. First, 
for $\delta_1\in(0,\delta_0)$ small enough, \begin{align*}
  -\nabla_{\psi}^2u_{n}=&-\frac{1}{u_{n-1}}\partial_z(\frac{1}{u_{n-1}}\partial_zu_n)
  =-\frac{1}{u_{n-1}^2}\partial_z^2u_n+\frac{1}{u_{n-1}^3}\partial_zu_{n-1}\partial_zu_n
 \\ \geq &\frac{c_0^2}{2u_{n-1}^3}\quad \text{in}\quad \Omega\cap\{0\leq y\leq Y^*\}\cap\{z\leq
 \delta_1\} ,\end{align*}  where  $\delta_0$ and $c_0$ are defined in \eqref{pphi} and \eqref{positivelbu} respectively which are independent of $\alpha_0.$
Then for small positive constants $\alpha_0, \delta_1\ll 1$, it holds
\begin{align}\begin{split}
&P_3\phi_{2,\alpha_0}
 +\frac{c_2\alpha_0(1-\alpha_0)}{8}\psi_{n-1}^{-\frac{3}{2 }}\phi_{2,\alpha_0}\\=&-2\nabla_{\psi}u_{n}\nabla_{\psi}\phi_{2,\alpha_0}
  -(\nabla_{\psi}^2u_{n})\phi_{2,\alpha_0}+\frac{c_2\alpha_0(1-\alpha_0)}{8}\psi_{n-1}^{-\frac{3}{2 }}\phi_{2,\alpha_0}\\=&(
  -2\alpha_0\frac{\partial_zu_n}{u_{n-1}\psi_{n-1}}
  -\frac{1}{u_{n-1}^2}\partial_z^2u_n
  +\frac{1}{u_{n-1}^3}\partial_zu_{n-1}\partial_zu_n
  +\alpha_0\frac{c_2(1-\alpha_0)}{8}\psi_{n-1}^{-\frac{3}{2 }})\phi_{2,\alpha_0}
  \\ \geq & 0 \quad \text{in}\quad \Omega\cap\{0\leq y\leq Y^*\}\cap\{z\leq
 \delta_1\} ,
  \end{split}
\end{align}
where we have used the fact that  for $z\in[0,
 \delta_1] $, if $\frac{2\partial_z u_n}{u_{n-1}\psi_{n-1}}\geq \frac{c_2(1-\alpha_0)}{8}\psi_{n-1}^{-\frac{3}{2 }},$
 then for $\alpha_0\ll 1,$ it holds
 \begin{align*}
 |\alpha_0\frac{\partial_zu_n}{u_{n-1}\psi_{n-1}}|\leq \alpha_0\frac{C_0}{u_{n-1}\psi_{n-1}}\leq \alpha_0 (\frac{16C_0}{c_2(1-\alpha_0)})^2\frac{C_0}{u_{n-1}^3}\leq
 \frac{c_0^2}{2u_{n-1}^3}.
 \end{align*}
Note that  $c_2$ is defined in \eqref{pphi} and $c_0, C_0$ are defined in \eqref{positivelbu}.
Moreover,  for  some positive constants $\lambda$ and $c,$
 \begin{align*}
    \phi_{2,\alpha_0} \psi_{n-1}^{-\frac{3}{2}}\geq  \lambda e^{-\frac{A}{1+x}} \psi_{n-1}^{\alpha_0-\frac{3}{2}}\geq ce^{-\frac{A}{1+x}} u_{n-1}^ {2\alpha_0-3}\quad \text{in}\quad \Omega\cap\{0\leq y\leq Y^*\}\cap\{z\leq
 \delta_1\}
 \end{align*} and $2\alpha_0-3<-2$ for $\alpha_0<\frac{1}{2}.$

 On the other hand, in $\Omega\cap\{0\leq y\leq Y^*\}\cap\{z\geq\delta_1\} $, by   \eqref{psin-1}, we have
 \begin{align*}
    |\nabla_{\psi}u_{n}\nabla_{\psi}\phi_{2,\alpha_0}|
    =&|\frac{\partial_zu_n}{u_{n-1}}\nabla_{\psi}\phi_{2,\alpha_0}|
    =|-\frac{9}{5}\frac{\psi_{n-1}}{x+1}\mu\frac{\partial_zu_n}{u_{n-1}}\phi_{2,\alpha_0}|\\
 \leq &C|\psi_{n-1}\partial_zu_n|\phi_{2,\alpha_0}
 \leq C\phi_{2,\alpha_0} \quad\text{in}\quad \Omega\cap\{0\leq y\leq Y^*\}\cap\{\frac{\psi_{n-1}}{\sqrt{x+1}}\geq N\}  ,
 \end{align*}and
   \begin{align*}
    |\nabla_{\psi}u_{n}\nabla_{\psi}\phi_{2,\alpha_0}|
    =&|\frac{\partial_zu_n}{u_{n-1}}\nabla_{\psi}\phi_{2,\alpha_0}|
 \leq C\phi_{2,\alpha_0} \,\,\text{in}\,\, \Omega\cap\{0\leq y\leq Y^*\} \cap\{z\geq\delta_1\}\cap\{\frac{\psi_{n-1}(x,y,z)}{\sqrt{x+1}}\leq N\} ,
 \end{align*}
  where we have used the facts that  $\nabla_\psi x=\frac{1}{u_{n-1}}\partial_z x=0$ and for some positive constants $c$ and $C,$
 $$u_{n-1}\geq c_0\delta_1,\quad |\partial_zu_n|\leq Ce^{-c\frac{\psi_{n-1}^2(x,y,z )}{x+1}\mu}\quad \text{in}\quad \Omega\cap\{0\leq y\leq Y^*\}\cap\{z\geq\delta_1\} ,$$
by \eqref{assumpn-2}, \eqref{defy} and  the relation $c\leq  \frac{\psi_{n-1}}{z+\epsilon_0}\leq C$ in $\Omega\cap\{0\leq y\leq Y^*\}\cap\{z\geq\delta_1\} .$ Then $|P_3\phi_{2,\alpha_0}|\leq C\phi_{2,\alpha_0}$ in $\Omega\cap\{0\leq y\leq Y^*\}\cap\{z\geq\delta_1\} .$

 Hence, by taking  $A\gg \frac{1}{\varepsilon^5}$ and by \eqref{426} and \eqref{pphi},
    \begin{align*}
 ( P_2+ P_3)(\pm(\nabla_{\eta,\xi}u_n-\nabla_{\tau_1}\bar{u})-C\varepsilon^6\epsilon_0^2\phi_{2,0} -\varepsilon^5 \phi_{2,\alpha_0})\leq 0,\end{align*} in $\Omega\cap\{0\leq y\leq Y^*\}\setminus\{\text{ridges of barrier functions}\}.$
 Then by a similar argument as in the previous proof,
  we obtain \eqref{0323-toimp}.

 \textit{Step 2} We will prove \eqref{20250206}. By  \eqref{0323-toimp}, we only need to prove it for $z$ large.

  By  \eqref{0323-toimp} and the prescribed boundary conditions on $\{x=0\}\cup\{y=0\},$
 $$\pm(\nabla_{\eta,\xi}u_n-\nabla_{\tau_1} \bar{u})-C\varepsilon^{5} \phi_{1,0}\leq 0\quad \text{on}\quad [0,X]\times[0,Y^*]\times[N,+\infty)\cap(\{x= 0\}\cup\{y=0\}\cup\{z= N\}),$$
 for small $\varepsilon.$
Next, by \eqref{426},
  we have
  \begin{align}\begin{split}
  P_2(\nabla_{\eta,\xi}u_n-\nabla_{\tau_1}\bar{u})  =\hat{R}_2
  \quad\text{in}\quad(0,X]\times(0,Y^*]\times(N,+\infty),
  \end{split}
\end{align}
where we note $|\hat{R}_2|\leq C\phi_{1,0}$ in $(0,X]\times(0,Y^*]\times(N,+\infty).$ Hence,  by \eqref{0323-p2phi} and taking $A>>\varepsilon^{-5},$ we have \begin{align}\begin{split}
  P_2(\nabla_{\eta,\xi}u_n-\nabla_{\tau_1}\bar{u}-C\varepsilon^{5} \phi_{1,0})  <0
  \quad\text{in}\quad(0,X]\times(0,Y^*]\times(N,+\infty).
  \end{split}
\end{align}
Then applying the maximum principle in the domain $ [0,X]\times[0,Y^*]\times[N,+\infty),$ we obtain \eqref{20250206}.
   \end{proof}

\begin{remark} The  growth rates near $z=0$  will be improved to those given in  \eqref{1116-2} later.
\end{remark}

         \section{Second  order derivative estimates}\label{s5}

         \subsection{Estimates on the 2nd order tangential vector field derivatives}

For the second order tangential vector field derivatives, we will first prove the following theorem.

\begin{theorem}\label{5.1}
	It holds that
\begin{align*}
|\nabla_{\eta}\nabla_{\xi}u_n-\nabla_{\tau_1}^2\bar{u}|\leq \frac{\varepsilon^2}{2}d_0^2\phi_{1,\frac{\alpha}{2}} \quad \text{in} \quad \Omega\cap\{0\leq y\leq Y^*\},
\end{align*}
where $d_0$ is defined in \eqref{assumpn-2}.
\end{theorem}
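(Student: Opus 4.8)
Write $g:=\nabla_{\eta}\nabla_{\xi}u_n-\nabla_{\tau_1}^2\bar u$; recall that on $\{0\le y\le Y^*\}$ the bootstrap bounds \eqref{assumpn-2} are in force, so $|g|\le d_0\varepsilon^2\phi_{1,\frac\alpha2}$, and the goal is to improve $d_0$ to $\tfrac12 d_0^2$. First I would subtract \eqref{021455} from \eqref{wtan2}; using $\partial_z w_n=\partial_z u_n$ the $f_{12}^{\bar u}$-structure of the background equation lines up with that of the approximate one, leaving
\[
P_1 g-\nabla_{\psi}\!\big(g\,\nabla_{\psi}u_n\big)+\Big(\nabla_{\eta}\tilde q-\tfrac{\nabla_{\xi}\tilde q}{1+\tilde q}\Big)g
=-(1+\tilde q)\big(\nabla_{\eta}K\big)\partial_z u_n-(1+\tilde q)K\,\nabla_\eta\partial_z u_n+\big(f_{12}^{\bar u}-f_{12}\big).
\]
By \eqref{Ksmallinfty}, \eqref{dzKsmallinfty} and \eqref{f12baru-u}, every term on the right except $-(1+\tilde q)(\nabla_{\eta}K)\partial_z u_n$ is $\le\varepsilon^2 C\,\bar u^{-(3-\alpha)}\phi_{1,0}$ (for $\nabla_\eta K=\partial_yK-\tfrac{\int_0^z\partial_yv_{n-1}}{v_{n-1}}\partial_zK$ one uses only the $\partial_zK$-component, controlled since $|\tfrac{\int_0^z\partial_yv_{n-1}}{v_{n-1}}|\le C\bar u$). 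The surviving term carries $\nabla_\eta K$, which by \eqref{yq} is an integral of $\nabla_\eta\nabla_\xi\!\big(\tfrac{\nabla_\eta\tilde q}{1+\tilde q}\big)$ — a third-order quantity not covered by \eqref{tildq0221} — and hence has \emph{no} pointwise bound. Unfolding $P_1$ via \eqref{p10106}, this is the rigorous form of the toy model $\partial_xf+\partial_yf=-\partial_yK+R$.

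The tame part of the right-hand side I would absorb into the barrier $\tfrac{\varepsilon^2}{2}d_0^2\phi_{1,\frac\alpha2}$: the inequality \eqref{1118-3} with $\alpha$ replaced by $\tfrac\alpha2$ gives $P_1\phi_{1,\frac\alpha2}\ge c_2\big(\tfrac{(\alpha/2)(1-\alpha/2)}{u_{n-1}(z+\epsilon_0)^2}+A\big)\phi_{1,\frac\alpha2}$ for small $z$ and $\ge c_2A\phi_{1,0}$ off the ridges, which — taking $A$ a large enough power of $\varepsilon^{-1}$ and then $\delta$ small depending only on $d_0$ — dominates $\varepsilon^2 C\,\bar u^{-(3-\alpha)}\phi_{1,0}$ together with the lower-order coefficient $\nabla_\eta\tilde q-\tfrac{\nabla_\xi\tilde q}{1+\tilde q}$ (of size $\varepsilon^2 C$) acting on the barrier. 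Thus $f:=g-\tfrac{\varepsilon^2}{2}d_0^2\phi_{1,\frac\alpha2}$ satisfies $P_1 f-\nabla_\psi(f\nabla_\psi u_n)+(\cdots)f\le-(1+\tilde q)(\nabla_\eta K)\partial_z u_n$ off the ridges, and by \eqref{bddata}, compatibility, the already-established decay, and $\varepsilon^6\ll\varepsilon^2 d_0^2$, one has $f\le 0$ on $\{x=0\}\cup\{y=0\}\cup\{z=0\}$ and for $z$ large.

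To handle the one surviving $\nabla_\eta K$ term I would argue by contradiction as indicated around \eqref{toy_1}. Let $y_*\in[0,Y^*)$ be maximal with $f\le0$ on $\{0\le y\le y_*\}\cap\Omega$, suppose $y_*<Y^*$, fix $y_2\in(y_*,Y^*)$, and set
\[
F=f+\gamma(y-y_*)\phi_{1,\frac\alpha2}-B+B\zeta(y)\qquad\text{on }[0,X]\times[y_*,y_2]\times[0,+\infty),
\]
with $\gamma>0$ arbitrarily small, $B$ a large constant with $|K|\le\varepsilon^2C_2\ll B$, and $\zeta$ as in \eqref{zeta} arranged so that $-B+B\zeta$ vanishes at $y=y_*$, is $\le0$, equals $1$ on an initial slab $[y_*,y_*+\rho]$, and (with $B$ large) is negative enough near $y_2$ that $F_+$ vanishes at $y=y_2$. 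Since $\phi_{1,\frac\alpha2}$ is independent of $y$ and $\zeta$ of $(x,z)$, $P_1$ and $\nabla_\psi$ produce from the extra terms a strictly favorable source $\gamma(1+\tilde q)\phi_{1,\frac\alpha2}+\gamma(y-y_*)P_1\phi_{1,\frac\alpha2}+\cdots$, so $F$ still obeys $P_1F-\nabla_\psi(F\nabla_\psi u_n)+(\cdots)F\le-(1+\tilde q)(\nabla_\eta K)\partial_z u_n$ off the ridges, with $F_+$ vanishing at $y=y_*,y_2$ and for $z$ large (so $F_+$ is compactly supported in $z$), and with the $z=0$ contribution controlled via compatibility.

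Then I would prove $\partial_x\big(\int_{y_*}^{y_2}\!\int_0^\infty F_+^2\big)\lesssim\int_{y_*}^{y_2}\!\int_0^\infty F_+^2$ as in \eqref{toy_1}: multiplying by $F_+$ and integrating, the $z$-diffusion yields after integration by parts in $z$ the crucial negative term $-\int\!\int\frac{u_n}{u_{n-1}^2}(\partial_zF_+)^2$; the $y$-transport yields $-\int(1+\tilde q)F_+^2|_{y_2}\le0$ plus $\int\!\int(\partial_y\tilde q)F_+^2\lesssim\int\!\int F_+^2$; and the dangerous term is treated by integrating by parts in $y$, $-2\int\!\int F_+(1+\tilde q)(\partial_zu_n)\nabla_\eta K=2\int\!\int K\,\partial_y\!\big(F_+(1+\tilde q)\partial_zu_n\big)+(\text{boundary}\le0)$, and bounded using $|K|\le\varepsilon^2C_2$ together with the monotonicity $\partial_yF_+\ge0$ on $(y_*,y_1)$, $\partial_yF_+\le0$ on $(y_1,y_2)$ — whence $\int_{y_*}^{y_2}|\partial_yF_+|\,dy=2F_+(x,y_1,z)$ — and the compact $z$-support; the existence of the single interior $y_1$ is obtained through a finite cover. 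Gronwall from the zero initial data at $x=0$ then forces $F_+\equiv0$; on the slab $[y_*,y_*+\rho]$ (where $\zeta\equiv1$) this says $f\le-\gamma(y-y_*)\phi_{1,\frac\alpha2}\le0$, and letting $\gamma\downarrow0$ gives $f\le0$ there, contradicting the maximality of $y_*$. The reverse bound $-g\le\tfrac{\varepsilon^2}{2}d_0^2\phi_{1,\frac\alpha2}$ is identical. I expect the main obstacle to be exactly the closing of this energy estimate: because $(\nabla_\eta K)\partial_zu_n$ has no pointwise bound, one must extract a bound by $\int\!\int F_+^2$ from the smallness $|K|\le\varepsilon^2C_2$, the $y$-integration by parts, and the engineered monotonicity of $F_+$ in $y$ — for which the $\alpha/2$-power in the barrier, the auxiliary slope $\gamma(y-y_*)\phi_{1,\frac\alpha2}$, and the cutoff $B\zeta(y)$ forcing $F_+$ to vanish at the $y$-endpoints are all essential.
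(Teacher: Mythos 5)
Your architecture coincides with the paper's: the same equation \eqref{1104-1} isolating the non-pointwise-bounded term $\partial_y K\,\partial_z u_n$ (the $\partial_zK$-part being controlled via \eqref{dzk}), the same barrier $\tfrac{\varepsilon^2}{2}d_0^2\phi_{1,\frac{\alpha}{2}}$ absorbed through \eqref{1118-3}, the same contradiction via a maximal $y_*$, the same auxiliary function $F$ with the slope $\gamma(y-y_*)\phi_{1,\frac\alpha2}$ and a $y$-cutoff, the same monotonicity of $F_+$ in $y$ obtained from a finite cover, and the same Gronwall target \eqref{toy_1}. The gap is at the decisive step, the treatment of the dangerous term inside the energy estimate. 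You integrate by parts so that $\partial_y$ falls on $F_+(1+\tilde q)\partial_z u_n$ and then invoke $|K|\le\varepsilon^2C_2$ together with $\int_{y_*}^{y_2}|\partial_yF_+|\,dy=2F_+(x,y_1,z)$, claiming the result is bounded by $\int\!\int F_+^2$. It is not: what you produce is a \emph{linear} slice quantity $\varepsilon^2C\int_0^{N_1}F_+(x,y_1,z)\,dz$, and a linear term cannot be dominated by the quadratic volume integral (think of $F_+$ small), nor by the retained negative barrier term, which is a $y$-integral rather than a value at $y=y_1$. If you instead try to absorb it into the favorable term generated by the cutoff, $-B\tfrac{1}{y_2-y_1}\int_{y_1}^{y_2}F_+\,dy\lesssim -c\,F_+^2(x,y_1,z)$ (since $F_+$ decreases with slope $\sim B/(y_2-y_1)$ and vanishes before $y_2$), Young's inequality leaves a strictly positive constant of size $\varepsilon^4$ per slice, and then Gronwall no longer yields $F_+\equiv0$, which is exactly what the contradiction requires.

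The missing idea is the paper's exact-cancellation mechanism. One does \emph{not} move the derivative off $K$; instead one adds an artificial constant inside the derivative, writing $\partial_y\big(K\partial_zu_n+(2-k_1)\varepsilon^2C_1\big)$ with $C_1\gg C_2$, so that $(1+\tilde q)\big(K\partial_zu_n+(2-k_1)\varepsilon^2C_1\big)$ is pinned in the narrow positive band \eqref{Kplus}. Integrating by parts so the derivative lands on $F_+$, the sign of $\partial_yF_+$ on $[y_*,y_1)$ and $(y_1,y_2]$ turns the dangerous term into $+(2-k_0)\varepsilon^2C_1F_+(y_1)$ plus a constant with coefficient $(2-k_2)$, while the transport of the cutoff $2\varepsilon^2C_1\zeta(y)$ — whose amplitude is deliberately matched to the pinned value, not taken to be a large absolute constant — produces $-(2-k_0)\varepsilon^2C_1F_+(y_1)$ plus a constant with coefficient $(1+k_3)(2-k_0)/2$; the slice terms cancel \emph{identically}, and the leftover constants have a favorable sign by \eqref{k3dyF}, \eqref{ddyzeta} and the choice \eqref{kcomb}, as carried out in \eqref{dyfkdzu}. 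Without this matched pinning-plus-cancellation (or some substitute for it), your estimate $\varepsilon^2C_2\int|\partial_yF_+|\lesssim\int F_+^2$ is false and the Gronwall step does not close, so the proposal as written does not prove the theorem.
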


         \subsubsection{Equations for $\nabla_{\eta}\nabla_{\xi}u_n$ and $\nabla_{\tau_1}^2 \bar{u}$ } In the following discussion, we some times use $w_n$ for  $u_n $  for clear presentation  of the structure of the
         equations.
 Applying $\nabla_{\eta}$ to \eqref{qn2}, we have

\begin{align}\label{wtan2}\begin{split}
    0=&P_1 \nabla_{\eta}\nabla_{\xi} w_n+(\nabla_{\eta}\tilde{q}-\frac{ \nabla_{\xi}\tilde{q}}{1+\tilde{q}} )\nabla_{\eta}\nabla_{\xi} w_n\\&+(1+\tilde{q})\nabla_{\eta}(K\partial_zw_n)-\frac{1}{u_{n-1}}\partial_z(\frac{\nabla_{\eta} \nabla_{\xi} u_{n}}{u_{n-1}}\partial_zw_{n} ) +f_{12},\end{split}
\end{align}
where
\begin{align}\begin{split}
 f_{12}=&K\partial_zw_n\nabla_{\eta}\tilde{q}-K\partial_z\nabla_{\xi} w_{n} -\nabla_{\eta}(\frac{ \nabla_{\xi}\tilde{q}}{1+\tilde{q}}) \nabla_{\xi} w_{n}\\&+\nabla_{\eta}\big(\frac{ \nabla_{\xi}\tilde{q}}{1+\tilde{q}}\frac{1}{u_{n-1}} \partial_z(\frac{u_n}{u_{n-1}}\partial_zw_{n})\big)
   -\frac{ \nabla_{\eta}\tilde{q}}{1+\tilde{q}} \frac{\partial_z(\frac{u_{n}}{u_{n-1}}\partial_z\nabla_{\xi} w_{n})}{u_{n-1}}
   \\& -\frac{1}{u_{n-1}}\partial_z(\frac{ \nabla_{\eta}\tilde{q}}{1+\tilde{q}}\frac{u_{n}}{u_{n-1}} \partial_z \nabla_{\xi} w_{n} )
   \\& -\frac{ \nabla_{\eta}\tilde{q}}{1+\tilde{q}} \frac{1}{u_{n-1}}\partial_z(\frac{\nabla_{\xi} u_{n}}{u_{n-1}}\partial_zw_{n} )  -\frac{1}{u_{n-1}}\partial_z(\frac{\nabla_{\xi}u_{n}}{u_{n-1}}\frac{ \nabla_{\eta}\tilde{q}}{1+\tilde{q}} \partial_z w_{n} )
    \\& -\frac{1}{u_{n-1}}\partial_z(\frac{\nabla_{\xi}u_{n}}{u_{n-1}}\partial_z\nabla_{\eta}  w_{n} )-\frac{1}{u_{n-1}}\partial_z(\frac{\nabla_{\eta}u_{n}}{u_{n-1}}\partial_z \nabla_{\xi} w_{n} ) .\end{split}
\end{align}
In particular, by \eqref{tildq0221},  it holds in $[0,Y^*],$
\begin{align}\label{alpha2}\begin{split}
&|-\frac{\nabla_{\eta} \nabla_{\xi}\tilde{q}}{1+\tilde{q}} \nabla_{\xi} w_{n}+\frac{ \nabla_{\eta}\nabla_{\xi}\tilde{q}}{1+\tilde{q}}\frac{1}{u_{n-1}} \partial_z(\frac{u_n}{u_{n-1}}\partial_zu_{n})|
\\=&|-\frac{\nabla_{\eta} \nabla_{\xi}\tilde{q}}{1+\tilde{q}} \nabla_{\xi} w_{n}+\frac{ \nabla_{\eta}\nabla_{\xi}\tilde{q}}{1+\tilde{q}}\nabla_{\psi}(u_n\nabla_{\psi}u_{n})
|\\=&|-\frac{\nabla_{\eta} \nabla_{\xi}\tilde{q}}{1+\tilde{q}} \nabla_{\xi} w_{n}+\frac{ \nabla_{\eta}\nabla_{\xi}\tilde{q}}{1+\tilde{q}}(\nabla_{\xi} u_n +(1+\tilde{q})\nabla_{\eta}u_n )|
  \leq C  \varepsilon^2\phi_{1,\frac{\alpha}{2}}.\end{split}
\end{align}

Since  $\nabla_{\tau_1} \bar{u}=\nabla_{\tau_2} \bar{u}$  and
\begin{align*}
 0= &\nabla_{\tau_1}   \nabla_{\tau_1}^2 \bar{u}+ \nabla_{\tau_2}   \nabla_{\tau_1}^2 \bar{u}-\nabla_{n}( \bar{u}\nabla_{n}  \nabla_{\tau_1}^2\bar{u})
  -\nabla_{n}(  \nabla_{\tau_1}^2\bar{u}\nabla_{n} \bar{u})
  -2\nabla_{n}(  \nabla_{\tau_1}\bar{u}\nabla_{n} \nabla_{\tau_1} \bar{u}),
\end{align*} we have \begin{align}\label{021455}\begin{split}
 0=&
  P_1 \nabla_{\tau_1}^2 \bar{u}+(\nabla_{\eta}\tilde{q}-\frac{ \nabla_{\xi}\tilde{q}}{1+\tilde{q}} )\nabla_{\tau_1}^2 \bar{u} -\frac{1}{u_{n-1}}\partial_z(\nabla_{\tau_1}^2 \bar{u}\frac{\partial_zu_{n}}{u_{n-1}} ) +f_{12}^{\bar{u}}
  ,\end{split}
\end{align} where
\begin{align*}
    f_{12}^{\bar{u}}=&(\nabla_{\tau_1}-\nabla_{\xi} ) \nabla_{\tau_1}^2 \bar{u} +(\nabla_{\tau_2}-(1+\tilde{q})\nabla_{\eta}) \nabla_{\tau_1}^2 \bar{u}-(\nabla_{\eta}\tilde{q}-\frac{ \nabla_{\xi}\tilde{q}}{1+\tilde{q}} )\nabla_{\tau_1}^2 \bar{u}\\
   & +(\nabla_{\psi}-\nabla_n)(u_{n}\nabla_{\psi} \nabla_{\tau_1}^2 \bar{u})
   +\nabla_n((u_{n}-\bar{u})\nabla_{\psi} \nabla_{\tau_1}^2 \bar{u})
   +\nabla_n(\bar{u}(\nabla_{\psi}-\nabla_n) \nabla_{\tau_1}^2 \bar{u})
   \\
   & +(\nabla_{\psi}-\nabla_n)(\nabla_{\tau_1}^2 \bar{u} \frac{\partial_z u_n}{u_{n-1}})
   +\nabla_n(\nabla_{\tau_1}^2 \bar{u}( \frac{\partial_z u_n}{u_{n-1}}- \frac{\partial_z \bar{u}}{\bar{u}}))  -2\nabla_{n}(  \nabla_{\tau_1}\bar{u}\nabla_{n} \nabla_{\tau_1} \bar{u}).
\end{align*}
Note that here the last term corresponds to the last two terms of $f_{12}.$

Next, we estimate
\begin{align}\label{02142025}
f_{12}^{u-\bar{u}}:=f_{12}- f_{12}^{\bar{u}}.
\end{align}
By \eqref{P1111},  $w=\nabla_{\eta,\xi }u_n$ satisfies
\begin{align}\label{5.5-1}\begin{split}
 \nabla_{\psi}^2 (w-\nabla_{\tau_1}\bar{u})=&\frac{1}{u_n }[\nabla_{\xi} (w-\nabla_{\tau_1}\bar{u}) +(1+\tilde{q})\nabla_{\eta}(w-\nabla_{\tau_1}\bar{u})\\& -2 \nabla_{\psi} (w-\nabla_{\tau_1}\bar{u})\nabla_{\psi} u_n
 -(w-\nabla_{\tau_1}\bar{u})\nabla_{\psi}^2 u_n-R_2]
  .
  \end{split}
\end{align}
Then by the definition of $Y^*$ in \eqref{defy}, \eqref{psi-n}, Theorem \ref{thm1} and Theorem \ref{dzunphi0}, we have
\begin{align}\label{5.5-2}\begin{split}
 &| \nabla_{\psi}^2 (w-\nabla_{\tau_1}\bar{u})|+|\nabla_{\psi} (\nabla_{\psi}w-\nabla_{n}\nabla_{\tau_1}\bar{u}) |\\ =& | \nabla_{\psi}^2 (w-\nabla_{\tau_1}\bar{u})|+ | \nabla_{\psi}^2 (w-\nabla_{\tau_1}\bar{u})+ \nabla_{\psi}( \nabla_{\psi}-\nabla_{n})\nabla_{\tau_1}\bar{u}|
 \\ \leq &\frac{C}{u_{n-1}^{3-\alpha}}\phi_{1,0},\end{split}
\end{align} where we have used the fact that for $z$ near $0,$
\begin{align}\label{1tder}
|w-\nabla_{\tau_1}\bar{u}|\leq C(z+{\epsilon_0})^{1+\alpha} e^{-\frac{A}{x+1}},
\end{align}
by the definition of $Y^*$ in \eqref{defy}.
By the induction assumption and  \eqref{alpha2}, we have $f_{12}^{u-\bar{u}}=f_{12}- f_{12}^{\bar{u}}$ satisfies
\begin{align}\label{f12baru-u}
|f_{12}^{u-\bar{u}}|=|f_{12}- f_{12}^{\bar{u}}|\leq \frac{\varepsilon^2 C\phi_{1,0}}{\bar{u}^{3-\alpha}}.
\end{align}

\subsubsection{Proof of Theorem \ref{5.1}}

\begin{proof}[Proof of Theorem \ref{5.1}]

\noindent\textit{Step 1  Equation of $\nabla_{\eta}\nabla_{\xi} u_n-\nabla_{\tau_1}^2\bar{u}$ }

By \eqref{wtan2} and \eqref{021455}, we have
\begin{align}\label{1104-1}\begin{split}
    0=&P_1( \nabla_{\eta}\nabla_{\xi} u_n-\nabla_{\tau_1}^2\bar{u})-\frac{\partial_zu_{n} }{u_{n-1}^2}\partial_z( \nabla_{\eta}\nabla_{\xi} u_n-\nabla_{\tau_1}^2\bar{u})+\tilde{c}( \nabla_{\eta}\nabla_{\xi} u_n-\nabla_{\tau_1}^2\bar{u})\\&+(1+\tilde{q})\partial_{y}(K\partial_zu_n) +f_{12}^u,\end{split}
\end{align}  where by \eqref{dzk},
\begin{align}\label{f12u}\begin{split}
f_{12}^u=&f_{12}^{u-\bar{u}}-(1+\tilde{q})\frac{\int_0^z \partial_{y}v_{n-1}dz'}{v_{n-1}}\partial_{z}(K\partial_zu_n)\\
=&f_{12}^{u-\bar{u}}-(1+\tilde{q})\frac{\int_0^z \partial_{y}v_{n-1}dz'}{v_{n-1}}K\partial_z^2u_n-(1+\tilde{q})\frac{\int_0^z \partial_{y}v_{n-1}dz'}{v_{n-1}}\partial_{z}K\partial_zu_n
\\
=&f_{12}^{u-\bar{u}}-(1+\tilde{q})\frac{\int_0^z \partial_{y}v_{n-1}dz'}{v_{n-1}}K\partial_z^2u_n
\\&-(1+\tilde{q})\frac{\int_0^z \partial_{y}v_{n-1}dz'}{v_{n-1}}\partial_zu_n(-\nabla_{\xi}(\frac{\nabla_{\eta}\tilde{q}}{1+\tilde{q}})
-K\frac{\partial_z u_{n-1}}{u_{n-1}}).\end{split}
\end{align}
Recall $f_{12}^{u-\bar{u}}$ in \eqref{02142025} and
\begin{align}\label{utan2c}\begin{split}
\tilde{c}=&(\nabla_{\eta}\tilde{q}-\frac{ \nabla_{\xi}\tilde{q}}{1+\tilde{q}} )-\frac{1}{u_{n-1}}\partial_z(\frac{\partial_zu_{n} }{u_{n-1}})\\
=&(\nabla_{\eta}\tilde{q}-\frac{ \nabla_{\xi}\tilde{q}}{1+\tilde{q}} )-\frac{\partial_z^2u_{n} }{u_{n-1}^2}+\frac{\partial_zu_{n} \partial_zu_{n-1} }{u_{n-1}^3}
\\:=&c_1+\frac{\partial_zu_{n} \partial_zu_{n-1} }{u_{n-1}^3}.\end{split}
\end{align}Then   by \eqref{Ksmallinfty}, we have
\begin{align}\label{cf12u}
|c_1+\frac{\partial_z^2u_{n} }{u_{n-1}^2}|\leq  \frac{C \phi_{1,0}}{u_{n-1}^2}, \quad |f_{12}^{u}|\leq C\varepsilon^2\phi_{1,0}+\frac{\varepsilon^2C\phi_{1,0}}{u_{n-1}^{3-\alpha}}.
\end{align}

\noindent\textit{Step 2  Auxiliary function $f$ and the goal}

For any small positive constant
\begin{align}\label{02072025}
\gamma\ll \varepsilon^3e^{-A}\epsilon_0\leq \varepsilon^3e^{-\frac{A}{x+1}}\epsilon_0,
\end{align}
 set
\begin{align}\label{fplus}
 f= ( \nabla_{\eta}\nabla_{\xi} u_n-\nabla_{\tau_1}^2\bar{u})-\frac{\varepsilon^2}{2}d_0^2\phi_{1,\frac{\alpha}{2}}-\gamma
 \quad
 \text{in} \quad[0,X]\times[0,Y]\times[0,+\infty).
\end{align}
 \textit{Goal}: We will prove
  \begin{align}\label{0323goalfneg}
    f\leq 0  \quad \text{in} \quad [0,X]\times[0,Y^*]\times[0,+\infty).
  \end{align}
  Then letting $\gamma$ go to $0$, we have
  \begin{align}\label{0324fremovegam}
 ( \nabla_{\eta}\nabla_{\xi} u_n-\nabla_{\tau_1}^2\bar{u})-\frac{\varepsilon^2}{2}d_0^2\phi_{1,\frac{\alpha}{2}}
 \leq 0
 \quad
 \text{in} \quad[0,X]\times[0,Y^*]\times[0,+\infty).
  \end{align}

 We will employ \textit{a proof by contradiction}. i.e. If \eqref{0323goalfneg} fails, then there exists a
maximum ``time"
\begin{align}\label{ystar0220}
y_*:=\max \{y_0\in[0,Y^*]|f(x,y,z)\leq 0,\quad \text{in} \quad [0,X]\times[0,y_0]\times[0,+\infty)\},
\end{align} such that
\begin{align}\label{0323-ymaxt}
y_*\in(0,Y^*),
\end{align}
 by the boundary condition. In the following, we will prove that this leads to a contradiction.

  As mentioned in the introduction, $\partial_y (K\partial_z u_n)$ in \eqref{1104-1} contains \textit{loss of tangential derivative which is one order higher than the induction assumptions.} Hence, instead of bounding $\partial_y K$, our analysis takes advantage of the sign of $\partial_y f$ as follows.

\noindent\textit{Step 3  The sign of  $f$ and $\partial_y f$ on specific domains}

Firstly, by the boundary condition, we have $f\leq -\gamma$ on $z=0$. By continuity, there exists a positive constant $z_0$ such that
\begin{align}\label{tldM0126}
    f\leq-\frac{\gamma}{2} \quad \text{in} \quad [0,X]\times[0,Y^*]\times[0,z_0].
\end{align}Moreover, since $\nabla_{\eta}\nabla_{\xi} u_n-\nabla_{\tau_1}^2\bar{u}\rightarrow 0$ as $z$ tends to infinity, there exists a large positive constant $N_1$ such that
\begin{align}\label{Ngamm}
f+\frac{\gamma}{2}<0\quad
 \text{in} \quad[0,X]\times[0,Y]\times[N_1,+\infty).
\end{align}



In addition, by the boundary condition,
 $$|\nabla_{\eta}\nabla_{\xi} u_n-\nabla_{\tau_1}^2\bar{u}|-\frac{\varepsilon^2}{8}d_0^2(\phi_{1,\frac{\alpha}{2}})^2\leq 0\quad \text{on}\quad \{x=0\}\times[0,Y]\times[0,N_1].$$

   Note
\begin{align*}
   \phi_{1,\frac{\alpha}{2}} (x,z) \geq \min_{[0,X]\times[0,N_1]}\phi_{1,\frac{\alpha}{2}} (x,z)>0, \quad (x,z)\in [0,X]\times[0,N_1].
\end{align*}
  Then for the positive constant $l=\min_{[0,X]\times[0,N_1]}\frac{\varepsilon^2}{16} d_0^2 \phi_{1,\frac{\alpha}{2}}>0,$ there exists a constant $x_0\in(0,X)$
such that $|\nabla_{\eta}\nabla_{\xi} u_n-\nabla_{\tau_1}^2\bar{u}|-\frac{\varepsilon^2}{8}d_0^2(\phi_{1,\frac{\alpha}{2}})^2<l$ in $[0,x_0]\times[0,Y]\times[0,N_1]. $ Hence,
$|\nabla_{\eta}\nabla_{\xi} u_n-\nabla_{\tau_1}^2\bar{u}|-\frac{\varepsilon^2}{2}d_0^2\phi_{1,\frac{\alpha}{2}}< l+\frac{\varepsilon^2}{8}d_0^2(\phi_{1,\frac{\alpha}{2}})^2-\frac{\varepsilon^2}{2}d_0^2\phi_{1,\frac{\alpha}{2}}\leq-\frac{\varepsilon^2}{4}d_0^2 \phi_{1,\frac{\alpha}{2}}$. Then
 \begin{align}\label{x0}
 f<-\frac{\varepsilon^2}{4}d_0^2 \phi_{1,\frac{\alpha}{2}}<0\quad \text{in}
 \quad [0,x_0]\times[0,Y]\times[0,N_1].
\end{align}

In this step, we will study the properties of the function $f$. First of all, we  consider the
signs of $f$ and $\partial_y f$ on the plane $\{y=y_*\}.$

  Set $$D_{y_*}=[x_0,X]\times\{y=y_*\}\times[z_0,N_1],$$
  where $z_0$ is defined in \eqref{tldM0126}.
Then by the definition of $y_*$ in \eqref{ystar0220}, we have
$$f(p)\leq0,\quad p\in D_{y_*}.$$

For every $p\in D_{y_*}$ , there are  two possibilities.

(i) If $f(p)<0,$ then there exists a positive constant $r_p<\min\{z_0,x_0\}$ such that  $$ f\leq \frac{f(p)}{2} <0 \quad \text{in}\quad B_{r_p}(p)\cap D_{y_*},$$ where $B_{r_p}(p)\subseteq \R^2_{x,z}\times\{y=y_*\}$ and $\R^2_{x,z}=\{(x,z)\in\R^2\}$.

(ii) If $f(p)=0,$ then $\partial_y f(p)\geq 0 $ by the definition of $y_*$ in \eqref{ystar0220}. Then there exists  a positive constant $r_p<\min\{z_0,x_0\}$ such that  $$\partial_y f\geq -\frac{1}{100} \gamma \min_{[0,X]\times[0,N_1]} \phi_{1,\frac{\alpha}{2}} \quad \text{in}\quad B_{r_p}(p)\cap D_{y_*},$$ where $B_{r_p}(p)\subseteq \R^2_{x,z}\times\{y=y_*\}$.

Since $\{B_{r_p}(p)\}_{p\in D_{y_*}}$ is an open covering  of the compact set $ D_{y_*},$  there exists a finite subcover  of $D_{y_*}$ such that $$D_{y_*}\subseteq\big(\cup_{i\in\{1,\cdots,n_0\}}B_{r_{p_i^-}}(p_i^-)\big)\bigcup\big(\cup_{i\in\{1,\cdots,n_1\}}B_{r_{p_i^+}}(p_i^+)\big),$$
  where $p_i^-$ denotes the point where $f(p_i^-)<0$ and $p_i^+$ denotes the point where $\partial_y f(p_i^+)\geq  0$.

  According to the choice of $r_{p},$ we have, for some $i_0\in\{1,\cdots,n_0\},$
\begin{align*}
f\leq \max_{i\in\{1,\cdots,n_0\}}\{\frac{f(p_i^-)}{2} \}=\frac{f(p_{i_0}^-)}{2}<0, \quad &  D_{y_*}\cap\big( \cup_{i\in\{1,\cdots,n_0\}}B_{r_{p_i^-}}(p_i^-)\big),\\
\partial_y f\geq -\frac{1}{100} \gamma  \min_{[0,X]\times[0,N_1]} \phi_{1,\frac{\alpha}{2}}, \quad  & D_{y_*}\cap\big(\cup_{i\in\{1,\cdots,n_1\}}B_{r_{p_i^+}}(p_i^+)\big).
\end{align*}
Then take a constant $y_2$ with $y_2-y_*$ small such that $0<y_2-y_*\ll \min\{1,Y^*-y_*\}.$ By the continuity,  \begin{align}\label{sginf}\begin{split}
f+\gamma  \phi_{1,\frac{\alpha}{2}}(y-y_*)\leq& \max\{ \frac{f(p_{i_0}^-)}{4},\,\,-\min_{[0,x_0]\times[0,N_1]}\frac{\varepsilon^2d_0^2}{4} \phi_{1,\frac{\alpha}{2}},\,\,-\frac{\gamma}{2}\}+\gamma  \phi_{1,\frac{\alpha}{2}}(y_2-y_*)
\\ <&0 \quad  \quad\quad\quad\quad \text{in} \quad D_1 \times[y_*,y_2],\\
\partial_y (f+\gamma  \phi_{1,\frac{\alpha}{2}}(y-y_*))\geq&0 \quad\quad\quad\quad\quad   \text{in} \quad D_2\times[y_*,y_2],\end{split}
\end{align} where  the negative upper bound of $f$ in  $[0,x_0]$ is given in \eqref{x0}, the  negative upper bound of $f$ in  $[0,z_0]$ is given in \eqref{tldM0126}. Here,
\begin{align*}
    D_1=&\big(D_{y_*}\cap\cup_{i\in\{1,\cdots,n_0\}}B_{r_{p_i^-}}(p_i^-)\big)\bigcup
    [0,x_0]\times\{y=y_*\}\times[0,N_1]\bigcup
    [0,X]\times\{y=y_*\}\times[0,z_0],
    \\D_2=&\big(D_{y_*}\cap\cup_{i\in\{1,\cdots,n_1\}}B_{r_{p_i^+}}(p_i^+)\big).
\end{align*}

Note that
\begin{align}\label{d1d2}
   D_1\cup D_2=[0,X]\times[0,N_1]\subseteq \R^2_{x,z},
\end{align} and $D_i \times[y_*,y_2],$ $i=1,2$ are the cylinders with  bottom on the plane $\{y=y_*\}$ and  height being $y_2-y_*.$

\noindent\textit{Step 4  Equation of  $f$}

We now turn to estimate
 $$P_1 f-\frac{\partial_zu_{n} }{u_{n-1}^2}\partial_zf+\frac{\partial_zu_{n} \partial_zu_{n-1} }{u_{n-1}^3}f+c_1f.$$
By the definition of $f$ in \eqref{fplus} and straightforward calculation, we have
\begin{align}\label{520}\begin{split}
    &P_1 f-\frac{\partial_zu_{n} }{u_{n-1}^2}\partial_zf+\frac{\partial_zu_{n} \partial_zu_{n-1} }{u_{n-1}^3}f+c_1f
    \\
    =& -(1+\tilde{q})\partial_{y}(K\partial_zu_n)
 -f_{12}^u-\gamma(\frac{\partial_zu_{n} \partial_zu_{n-1} }{u_{n-1}^3}+c_1)
 \\&-[P_1 \phi_{1,\frac{\alpha}{2}}-\frac{\partial_zu_{n} }{u_{n-1}^2}\partial_z\phi_{1,\frac{\alpha}{2}}+\frac{\partial_zu_{n} \partial_zu_{n-1} }{u_{n-1}^3}\phi_{1,\frac{\alpha}{2}}+c_1\phi_{1,\frac{\alpha}{2}}]\frac{\varepsilon^2}{2}d_0^2
    \\
    =& -(1+\tilde{q})\partial_{y}(K\partial_zu_n)
 -f_{12}^u-\gamma(\frac{\partial_zu_{n} \partial_zu_{n-1} }{u_{n-1}^3}+c_1)
 \\&-[P_1 \phi_{1,\frac{\alpha}{2}}-\frac{\partial_zu_{n} }{u_{n-1}^2}\partial_z\phi_{1,\frac{\alpha}{2}}+\frac{\partial_zu_{n} \partial_zu_{n-1} }{u_{n-1}^3}\phi_{1,\frac{\alpha}{2}}+c_1\phi_{1,\frac{\alpha}{2}}](\frac{\varepsilon^2}{4}+\frac{\varepsilon^2}{4})d_0^2
    ,\end{split}
\end{align}
 where $c_1$ is given in \eqref{utan2c}.
Taking $A$ sufficiently large and  the positive constant $\alpha$ sufficiently small such that
\begin{align*}
   0< \partial_z\phi_{1,\frac{\alpha}{2}}=\frac{\alpha}{2}\phi_{1,\frac{\alpha}{2}}\frac{1}{z+\epsilon_0} \leq \frac{ c_0 }{2u_{n-1}}\phi_{1,\frac{\alpha}{2}}\leq \frac{ \partial_z u_{n-1}}{2u_{n-1}}\phi_{1,\frac{\alpha}{2}}, \quad z\in[0,\delta-\epsilon_0],\end{align*}
where  $\delta$ and  $c_0$ are defined in  \eqref{phi1} and \eqref{positivelbu} respectively, it holds
\begin{align}\label{52302072025}\begin{split}
    &\frac{\varepsilon^2}{4}d_0^2[P_1 \phi_{1,\frac{\alpha}{2}}-\frac{\partial_zu_{n} }{u_{n-1}^2}\partial_z\phi_{1,\frac{\alpha}{2}}+\frac{\partial_zu_{n} \partial_zu_{n-1} }{u_{n-1}^3}\phi_{1,\frac{\alpha}{2}}+c_1\phi_{1,\frac{\alpha}{2}}
     ]
     \\ \geq&\varepsilon^2d_0^2(\frac{3\lambda}{u_{n-1}^3}+\frac{A}{5}c_2)\phi_{1,\frac{\alpha}{2}}
     >0,\quad z\in[0,+\infty),\end{split}
\end{align}where $\lambda$ is a constant with $0<3\lambda\leq \frac{c_0^2}{8}$, $c_2$ is defined in \eqref{pphi}. And we note that $\partial_z^-\phi_{1,\frac{\alpha}{2}}>\partial_z^+\phi_{1,\frac{\alpha}{2}}$ at the ridges.
  By \eqref{cf12u} and by taking $\delta_2$ small enough such that $\frac{\delta_2^{\frac{\alpha}{2}}}{\varepsilon }\ll d_0^2,$ we have,
 \begin{align}
|f_{12}^{u}|\leq \frac{\varepsilon^3Cd_0^2\phi_{1,0}}{u_{n-1}^{3-\frac{\alpha}{2}},}\quad z\in[0,\delta_2].
\end{align}

 Then for  $
\gamma\ll \varepsilon^3e^{-A}\epsilon_0\leq\varepsilon^3 e^{-\frac{A}{x+1}}Cu_{n-1},
$ $\varepsilon\ll d_0^2\ll1 $ and  $A\gg \frac{1}{\delta_2^{3}}$, we have
\begin{align}\label{7.14}\begin{split}
    &\partial_{x}f+(1+\tilde{q})\partial_{y}f
  -\frac{1}{u_{n-1}}\partial_{z} (\frac{u_n}{u_{n-1}}\partial_{z}f) \\&+(-\tilde{b}-\frac{\partial_zu_{n} }{u_{n-1}^2})\partial_zf+\frac{\partial_zu_{n} \partial_zu_{n-1} }{u_{n-1}^3}f+c_1f \\=&P_1 f-\frac{\partial_zu_{n} }{u_{n-1}^2}\partial_zf+\frac{\partial_zu_{n} \partial_zu_{n-1} }{u_{n-1}^3}f+c_1f \\
    \leq &-\varepsilon^2d_0^2(\frac{2\lambda}{u_{n-1}^3}+\frac{A}{6}c_2)\phi_{1,\frac{\alpha}{2}} -(1+\tilde{q})\partial_{y}(K\partial_zu_n+(2-k_1)\varepsilon^{2} e^{-\frac{A}{x+1}}C_1)
  \\&-[P_1 \phi_{1,\frac{\alpha}{2}}-\frac{\partial_zu_{n} }{u_{n-1}^2}\partial_z\phi_{1,\frac{\alpha}{2}}+\frac{\partial_zu_{n} \partial_zu_{n-1} }{u_{n-1}^3}\phi_{1,\frac{\alpha}{2}}+c_1\phi_{1,\frac{\alpha}{2}}]\frac{\varepsilon^2}{4}d_0^2
   \end{split}
\end{align}
 where the function $\tilde{b}$ is given in \eqref{linemethodb} and the two constants $C_1$
 and $k_1$ are chosen as follows.
 For $\varepsilon$ small enough, we have
 \begin{align}\label{k0}
 	2 (1+\tilde{q})\geq 2-k_0
 \end{align} for some small positive constant $k_0\ll 1$.
$C_1$ independent of $\varepsilon$ and $k_0< k_1\ll 1$ are two positive constants chosen such that $C_1\gg C_2$ where the constant $C_2$ is defined in \eqref{Ksmallinfty} and
 \begin{align}\label{Kplus}
 	(2-k_2)\varepsilon^{2}e^{-\frac{A}{x+1}}C_1< (1+\tilde{q})( K\partial_zu_n+(2-k_1)\varepsilon^{2}e^{-\frac{A}{x+1}}C_1)\leq\varepsilon^{2}e^{-\frac{A}{x+1}}C_1
 	(2-k_0),\end{align}  with $0<k_0< k_1<k_2\ll 1$. 
 \\

\noindent\textit{Step 5  Auxiliary function $F$ and proof of \eqref{toy_1}}

Set \begin{align}\label{fmod}\begin{split}
 F=& \nabla_{\eta}\nabla_{\xi} u_n-\nabla_{\tau_1}^2 \bar{u}-\frac{\varepsilon^2}{2}d_0^2\phi_{1,\frac{\alpha}{2}}-\gamma -2\varepsilon^{2}e^{-\frac{A}{x+1}}C_1+2 \varepsilon^{2}e^{-\frac{A}{x+1}}C_1\zeta(y)
 \\&+\gamma  \phi_{1,\frac{\alpha}{2}}(y-y_*), \end{split}
\end{align} in $[0,X]\times[y_*, y_2]\times[0,+\infty),$ where  \begin{equation}\label{zeta}\zeta(y)= \left\{
  \begin{aligned}
 1,\quad & y_*\leq y\leq y_1,\\
  1-\frac{y-y_1}{y_2-y_1},\quad & y_1< y\leq y_2.
\end{aligned}\right.\\
\end{equation}
 Here $ y_2<Y^*$ is defined in \eqref{sginf} and  $y_1$ will be  determined  as follows.

 Take  $y_1\in(y_*,y_2)$ to be a positive constant so that  $y_2-y_1$ is  sufficiently small to satisfy
\begin{align}\label{y2-y1}
   2 \varepsilon^{2}e^{-A}C_1\frac{1}{y_2-y_1}\gg\max_{[0,X]\times[0,Y^*]\times[0,N_1]} |\partial_y f|+\gamma |\phi_{1,\frac{\alpha}{2}}|.
\end{align}
This  implies
\begin{align}\label{k3dyF}
1-k_3\leq \frac{-\partial_y F}{d(x)}\leq 1+k_3\quad \text{in} \quad (y_1,y_2],
\end{align} by $e^{-\frac{A}{x+1}}\geq e^{-A},$  where
\begin{align}\label{ddyzeta}
d(x):=\frac{2 \varepsilon^{2}e^{-\frac{A}{x+1}}C_1}{y_2-y_1}.
\end{align}
 $k_3\ll 1$ is chosen small enough such that
\begin{align}\label{kcomb}
   \frac{(1+k_3)(2-k_0)}{2}\leq (1-k_3)(2-k_2).
\end{align}

 Moreover, since $|\phi_{1,\frac{\alpha}{2}}|$ is bounded, we have for small $y_2-y_*$,
  $$-2\varepsilon^{2} e^{-\frac{A}{x+1}}C_1+2 \varepsilon^{2} e^{-\frac{A}{x+1}}C_1\zeta(y)
 +\gamma  \phi_{1,\frac{\alpha}{2}}(y-y_*)\leq \frac{\gamma}{2},\quad y\in[y_*,y_2].$$ Thus, by \eqref{Ngamm} we have
 \begin{align}\label{capfn1}
 F<0 \quad \text{ in} \quad [0,X]\times[y_*,y_2]\times[N_1,+\infty).
 \end{align}
 In addition, by the definition of $y_*$ in \eqref{ystar0220},
there exists a point $(x^{1},y^{1},z^{1})\in \Omega$  such that $f(x^{1},y^{1},z^{1})>0$ with $y_*<y^{1}\leq y_1.$

In this step, our \textit{goal} is to  prove $F\leq 0$ in $[y_*,y_2]$ which implies
$$\nabla_{\eta}\nabla_{\xi} u_n-\nabla_{\tau_1}^2 \bar{u}-\frac{\varepsilon^2}{2}d_0^2\phi_{1,\frac{\alpha}{2}}-\gamma
 +\gamma  \phi_{1,\frac{\alpha}{2}}(y-y_*)\leq 0 \quad \text{in} \quad [0,X]\times[y_*,y_1]\times[0,N_1],$$
  because
  \begin{align*}
F=f+\gamma  \phi_{1,\frac{\alpha}{2}}(y-y_*) \quad   \text{in} \quad [0,X]\times[y_*,y_1]\times[0,N_1].
\end{align*}
 In particular, we have
 $\nabla_{\eta}\nabla_{\xi} u_n-\nabla_{\tau_1}^2 \bar{u}-\frac{\varepsilon^2}{2}d_0^2\phi_{1,\frac{\alpha}{2}}-\gamma
\leq 0$ in $[y_*,y_1]$ which leads to a contradiction to the definition of $y_*.$

Since $-2\varepsilon^2 e^{-\frac{A}{x+1}} C_1+2\varepsilon^2  e^{-\frac{A}{x+1}}C_1\zeta \leq 0,$ we have
by \eqref{sginf} and \eqref{y2-y1} that
\begin{align}\begin{split}
F<0 \quad & \text{in} \quad D_1 \times[y_*,y_2],\\
\partial_y F\geq0 \quad  & \text{in} \quad D_2\times[y_*,y_1),\\
\partial_y F<0\quad  & \text{in} \quad [0,X]\times(y_1,y_2]\times[0,N_1].\end{split}
\end{align}
Hence,  by \eqref{d1d2},
\begin{align}\label{capF}\begin{split}
\partial_y F_+\geq &0\quad  \text{in} \quad [0,X]\times[y_*,y_1)\times[0,N_1],\\
\partial_y F_+\leq &0\quad  \text{in} \quad [0,X]\times(y_1,y_2]\times[0,N_1].
\end{split}
\end{align}
By the definition of $F$ in \eqref{fmod},  using \eqref{52302072025} and \eqref{7.14}, we have, for $\varepsilon\ll d_0^2$   and $A\geq\frac{ CC_1}{d_0^2}$ that
\begin{align}\label{Feq}\begin{split}
    &\partial_{x}F+(1+\tilde{q})\partial_{y}F
  -\frac{1}{u_{n-1}}\partial_{z} (\frac{u_n}{u_{n-1}}\partial_{z}F) +(-\tilde{b}-\frac{\partial_zu_{n} }{u_{n-1}^2})\partial_zF+\frac{\partial_zu_{n} \partial_zu_{n-1} }{u_{n-1}^3}F+c_1F \\
    \leq &-\varepsilon^2d_0^2(\frac{2\lambda}{u_{n-1}^3}+\frac{A}{6}c_2)\phi_{1,\frac{\alpha}{2}} -(1+\tilde{q})\partial_{y}(K\partial_zu_n+(2-k_1)\varepsilon^2 e^{-\frac{A}{x+1}}C_1)
\\&-[P_1 \phi_{1,\frac{\alpha}{2}}-\frac{\partial_zu_{n} }{u_{n-1}^2}\partial_z\phi_{1,\frac{\alpha}{2}}+\frac{\partial_zu_{n} \partial_zu_{n-1} }{u_{n-1}^3}\phi_{1,\frac{\alpha}{2}}+c_1\phi_{1,\frac{\alpha}{2}}](\frac{\varepsilon^2}{4}d_0^2-\gamma  (y-y_*))
   \\&-2\varepsilon^2 e^{-\frac{A}{x+1}}C_1(1-\zeta(y))(\frac{\partial_zu_{n} \partial_zu_{n-1} }{u_{n-1}^3}+c_1)
+2\varepsilon^2 e^{-\frac{A}{x+1}}C_1(1+\tilde{q})\partial_y \zeta
\\&-2\varepsilon^2 e^{-\frac{A}{x+1}}C_1(1-\zeta(y))\frac{A}{(x+1)^2}
+(1+\tilde{q})\gamma  \phi_{1,\frac{\alpha}{2}}
\\
\leq &-\varepsilon^2d_0^2(\frac{\lambda}{u_{n-1}^3}+\frac{A}{7}c_2)\phi_{1,\frac{\alpha}{2}} -(1+\tilde{q})\partial_{y}(K\partial_zu_n+(2-k_1)\varepsilon^{2}e^{-\frac{A}{x+1}}C_1)
\\&
+(2-k_0)\varepsilon^{2}e^{-\frac{A}{x+1}}C_1\partial_y \zeta,
  \end{split}
\end{align}
where $d_0$ is defined in \eqref{assumpn-2} which is independent of $\varepsilon$ and
$\gamma\ll \varepsilon^3 e^{-A}\epsilon_0$ by \eqref{02072025}.
Note that
 \begin{align*}
    -2\varepsilon^2e^{-\frac{A}{x+1}} C_1(1-\zeta(y))\frac{\partial_zu_{n} \partial_zu_{n-1} }{u_{n-1}^3}\leq 0,
\end{align*} and
$\frac{\partial_zu_{n} \partial_zu_{n-1} }{u_{n-1}^3}+c_1>0$ for $z$ near $0$.  By \eqref{cf12u}, we have
\begin{align}\label{etaeta}
\frac{\partial_zu_{n} \partial_zu_{n-1} }{u_{n-1}^3}+c_1\geq -C e^{-\frac{(z+{\epsilon_0})^2}{x+1}\mu}\quad \text{in }\quad \Omega\cap\{0\leq y\leq Y^*\}.
\end{align}
 By the definition of $y_*,$ we have $F_+(x,y_*,z)=0.$ Then by \eqref{Kplus} and \eqref{capF}, i.e. $\partial_y F_+\geq 0$ in $[y_*,y_1)$ and $\partial_y F_+\leq 0$ in $(y_1,y_2],$ we have
\begin{align*}
   &\int_{y_*}^{y_2} (1+\tilde{q})\partial_{y}(K\partial_zu_n+(2-k_1)\varepsilon^{2}e^{-\frac{A}{x+1}}C_1) F_+ dy\\
   =&(1+\tilde{q})(K\partial_zu_n+(2-k_1)\varepsilon^{2}e^{-\frac{A}{x+1}}C_1) F_+(y_2) -\int_{y_*}^{y_2} \partial_{y}\tilde{q}(K\partial_zu_n+(2-k_1)\varepsilon^{2}e^{-\frac{A}{x+1}}C_1) F_+ dy\\&- \int_{y_*}^{y_1 } (1+\tilde{q})(K\partial_zu_n+(2-k_1)\varepsilon^{2}e^{-\frac{A}{x+1}}C_1) \partial_{y}F_+ dy- \int_{y_1}^{y_2 } (1+\tilde{q})(K\partial_zu_n+(2-k_1)\varepsilon^{2}e^{-\frac{A}{x+1}}C_1) \partial_{y}F_+ dy
   \\ \geq& -\int_{y_*}^{y_2} \varepsilon^4 C\phi_{1,\frac{\alpha}{2}} F_+dy- \int_{y_*}^{y_1 } (2-k_0)\varepsilon^{2}e^{-\frac{A}{x+1}}C_1 \partial_{y}F_+ dy- \int_{y_1}^{y_2 } (2-k_2)\varepsilon^{2}e^{-\frac{A}{x+1}}C_1\partial_{y}F_+ dy
   \\ \geq& -\int_{y_*}^{y_2} \varepsilon^4C \phi_{1,\frac{\alpha}{2}} F_+dy- (2-k_0)\varepsilon^{2}e^{-\frac{A}{x+1}}C_1 F_+ (y_1)+(1-k_3)(2-k_2)\varepsilon^{2}e^{-\frac{A}{x+1}}C_1\,d(x)\,(y_2 -y_1),
\end{align*}
where $k_0$, $k_1$,  $k_2$, $k_3$
and $d(x)$ are defined in \eqref{k0}, \eqref{Kplus}, \eqref{k3dyF} and  \eqref{ddyzeta}
respectively.
Hence, by \eqref{kcomb},
\begin{align}\label{dyfkdzu}\begin{split}
      &\int_{y_*}^{y_2} - \frac{A}{7}c_2d_0^2\varepsilon^2\phi_{1,\frac{\alpha}{2}} F_+-(1+\tilde{q})\partial_{y}F F_+dy
      \\&+\int_{y_*}^{y_2} [-(1+\tilde{q})\partial_{y}(K\partial_zu_n+(2-k_1)\varepsilon^{2}e^{-\frac{A}{x+1}}C_1) +(2-k_0)\varepsilon^{2}e^{-\frac{A}{x+1}}C_1\partial_y \zeta] F_+  dy
      \\ \leq  &
      C\int_{y_*}^{y_2} F_+ ^2 dy,\end{split}
\end{align} where we have used
\begin{align*}
   \int_{y_*}^{y_2}  -(1+\tilde{q})\partial_{y}FF_+dy
   =-\frac{1}{2} (1+\tilde{q})F_+^2(y_2)+\int_{y_*}^{y_2} \frac{1}{2} \partial_{y}\tilde{q}F_+^2dy
    \leq  C\int_{y_*}^{y_2} F_+^2dy,
\end{align*}
and
\begin{align*}
    \int_{y_*}^{y_2} (2-k_0)\varepsilon^{2}e^{-\frac{A}{x+1}}C_1\partial_y \zeta F_+  dy \leq &-(2-k_0)\varepsilon^{2}e^{-\frac{A}{x+1}}C_1
    \frac{1}{y_2-y_1}\int_{y_1}^{y_2}F_+ dy\\
   \leq &-(2-k_0)\varepsilon^{2}e^{-\frac{A}{x+1}}C_1
    \frac{1}{y_2-y_1}\int_{y_1}^{y_2}  (F_+(y_1)+\int_{y_1}^y\partial_yF_+ dy')dy
    \\
   \leq &-(2-k_0)\varepsilon^{2}e^{-\frac{A}{x+1}}C_1F_+(y_1)+(1+k_3)(2-k_0)\varepsilon^{2}e^{-\frac{A}{x+1}}C_1\,d(x)\,
    \frac{y_2-y_1}{2}
   .
\end{align*}
Moreover,  by straightforward calculation, we have
 \begin{align}\label{dz0n1}\begin{split}
 &\int_0^{N_1}[-\frac{1}{u_{n-1}}\partial_{z} (\frac{u_n}{u_{n-1}}\partial_{z}F) +(-\tilde{b}-\frac{\partial_zu_{n} }{u_{n-1}^2})\partial_zF+\frac{\partial_zu_{n} \partial_zu_{n-1} }{u_{n-1}^3}F+c_1F]F_+dz
 \\=& \int_0^\infty-\frac{1}{u_{n-1}}\partial_{z} (\frac{u_n}{u_{n-1}}\partial_{z}F) F_+ +\frac{1}{2}(-\tilde{b}-\frac{\partial_zu_{n} }{u_{n-1}^2})\partial_zF_+^2+(\frac{\partial_zu_{n} \partial_zu_{n-1} }{u_{n-1}^3}+c_1)F_+^2dz
 \\
 =&\int_0^\infty \frac{u_n}{u_{n-1}^2}(\partial_{z}F_+)^2dz-\int_0^\infty \frac{\partial_zu_{n-1}u_n}{2u_{n-1}^3}\partial_{z}(F_+^2)dz
 \\
 &+ \int_0^\infty\frac{1}{2}\partial_z(\tilde{b}+\frac{\partial_zu_{n} }{u_{n-1}^2})F_+^2+(\frac{\partial_zu_{n} \partial_zu_{n-1} }{u_{n-1}^3}+c_1)F_+^2dz
 \\ \geq & -C_{{\epsilon_0}}\int_0^{N_1}F_+^2dz,
\end{split}
 \end{align}
where $C_{{\epsilon_0}}$ is a positive constant depending on ${\epsilon_0}$ and  $N_1$
is defined in \eqref{Ngamm}.
Then combining \eqref{dyfkdzu} and \eqref{dz0n1}, we have
\begin{align*}
\partial_x\int_{y_*}^{y_2} \int_0^{N_1} F_+^2dzdy\leq C_{{\epsilon_0}} \int_{y_*}^{y_2} \int_0^{N_1} F_+^2dzdy \quad \text{in} \quad [0,X].
\end{align*}
 Therefore, we have $\int_{y_*}^{y_2} \int_0^{N_1} F_+^2dydz=0$ in $[0,X]$ because  $F_+(0,y,z)=0$ by Grownwall inequality.
 Therefore,  we prove the goal \eqref{0323goalfneg} and \eqref{0324fremovegam}. We can prove the other direction
\begin{align*}
 -( \nabla_{\eta}\nabla_{\xi} u_n-\nabla_{\tau_1}^2\bar{u})-\frac{\varepsilon^2}{2}d_0^2\phi_{1,\frac{\alpha}{2}}
 \leq 0
 \quad
 \text{in} \quad[0,X]\times[0,Y^*]\times[0,+\infty),
  \end{align*}in a same way.
Then we complete the proof of the theorem.\end{proof}

Similarly, we have the following theorem.

\begin{theorem}\label{1105}
\begin{align*}
 |\nabla_{\eta}^2u_n-\nabla_{\tau_1}^2\bar{u}|\leq &\frac{\varepsilon^2}{2}d_0^2\phi_{1,\frac{\alpha}{2}},\quad|\nabla_{\xi}\nabla_{\eta}u_n-\nabla_{\tau_1}^2\bar{u}|\leq \frac{\varepsilon^2}{2}d_0^2\phi_{1,\frac{\alpha}{2}},
\\|\nabla_{\xi}^2u_n-\nabla_{\tau_1}^2\bar{u}|\leq& \frac{\varepsilon^2}{2}d_0^2\phi_{1,\frac{\alpha}{2}} \quad \text{in} \quad \Omega\cap\{0\leq y\leq Y^*\}.
\end{align*}
\end{theorem}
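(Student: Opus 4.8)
The plan is to re-run the contradiction/barrier/Gronwall scheme from the proof of Theorem \ref{5.1}, once for each of $\nabla_\xi\nabla_\eta u_n-\nabla_{\tau_1}^2\bar u$, $\nabla_\eta^2 u_n-\nabla_{\tau_1}^2\bar u$ and $\nabla_\xi^2 u_n-\nabla_{\tau_1}^2\bar u$, after producing for each a governing equation of exactly the shape of \eqref{1104-1}. First I would derive these equations by applying $\nabla_\xi$ to \eqref{qn1} (for $\nabla_\xi\nabla_\eta u_n$), $\nabla_\eta$ to \eqref{qn1} (for $\nabla_\eta^2 u_n$), and $\nabla_\xi$ to \eqref{qn2} (for $\nabla_\xi^2 u_n$), commuting the outer vector field past $P_1$ and past the curvature term via $[\nabla_\xi,\nabla_\eta]=K\partial_z$, $[\nabla_\eta,\nabla_\xi]=-K\partial_z$, the relations \eqref{kh1}--\eqref{kh2}, and $[\nabla_\xi,\partial_z]=(\partial_z G)\partial_z$; then subtracting the equation for $\nabla_{\tau_1}^2\bar u$ obtained just as in \eqref{021455}. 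In each case, writing $w$ for the difference, one lands on $P_1 w-\frac{\partial_z u_n}{u_{n-1}^2}\partial_z w+\tilde c\, w+(\text{curvature source})+f_{12}^u=0$, with $\tilde c$ as in \eqref{utan2c} (so $|c_1|\le C\phi_{1,0}/u_{n-1}^2$) and $f_{12}^u$ obeying the bound in \eqref{cf12u}.

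The essential bookkeeping is to show that, modulo terms absorbable into $f_{12}^u$, the curvature source is always a single first-order \emph{tangential} derivative of $K\partial_z u_n$. Using the a priori bounds on $\partial_z\nabla_{\eta,\xi}u_n$ and $\partial_z^2 u_n$ in \eqref{assumpn-2}, on $K$ in \eqref{Ksmallinfty}, on $\partial_z K$ in \eqref{dzk}--\eqref{dzKsmallinfty}, and $|G|,\,|\tfrac{\int_0^z\partial_y v_{n-1}}{v_{n-1}}|\le C\min\{z,1\}$ as in \eqref{linemethodbsign}, every lower-order piece (those in which $K$ or $\partial_z K$ sits undifferentiated in the tangential direction, or carries an extra factor $G$ or $\tfrac{\int_0^z\partial_y v_{n-1}}{v_{n-1}}$) falls into $f_{12}^u$. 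One must not expand the surviving term, because a bare $\nabla_\xi K$ or $\nabla_\eta K$ would, through \eqref{yq}, bring in a third tangential derivative of $\tilde q$ beyond the control in \eqref{tildq0221}. The outcome: for $\nabla_\eta^2 u_n$ the source is a $\partial_y$-derivative of $K\partial_z u_n$, structurally identical to the term $(1+\tilde q)\partial_y(K\partial_z u_n)$ in \eqref{1104-1}; whereas for $\nabla_\xi\nabla_\eta u_n$ and $\nabla_\xi^2 u_n$ the source is a \emph{conservative $x$-derivative}, namely $-\partial_x(K\partial_z u_n)$ and $\partial_x\big((1+\tilde q)K\partial_z u_n\big)$ respectively, rather than a $y$-derivative.

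For $\nabla_\eta^2 u_n$ the proof of Theorem \ref{5.1} then applies almost verbatim (only the sign of the curvature term changes, which is absorbed by orienting the ridge $\zeta(y)$ of \eqref{zeta} accordingly): the same auxiliary $f$ with barrier $\tfrac{\varepsilon^2}{2}d_0^2\phi_{1,\frac\alpha2}$ from \eqref{phi1}, the same $F$ with the $\pm 2\varepsilon^2 C_1$ correction, the same finite-cover construction giving $y_1$ with $\partial_y F_+\ge 0$ on $[y_*,y_1)$ and $\le 0$ on $(y_1,y_2]$, and the same integration by parts in $y$ of $(1+\tilde q)\partial_y\big(K\partial_z u_n+(2-k_1)\varepsilon^2 C_1\big)F_+$ --- using only $|K|\le\varepsilon^2 C_2$, \eqref{Kplus}, $F_+(x,y_*,z)=0$ --- leading to $\partial_x\!\int_{y_*}^{y_2}\!\int_0^{N_1}F_+^2\lesssim\int_{y_*}^{y_2}\!\int_0^{N_1}F_+^2$, hence $F_+\equiv 0$. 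For $\nabla_\xi\nabla_\eta u_n$ and $\nabla_\xi^2 u_n$ I would run the mirror-image argument with $x$ and $y$ interchanged: phrase the contradiction through $x_*=\max\{x_0:f\le 0\ \text{on}\ [0,x_0]\times[0,Y^*]\times[0,+\infty)\}$, which is well-defined since $f<0$ on $\{x=0\}$ by \eqref{bddata}; build $F$ with a ridge $\zeta(x)$ and constants $\pm 2\varepsilon^2 C_1$; let the finite cover produce $x_1$ with $\partial_x F_+\ge 0$ on $[x_*,x_1)$ and $\le 0$ on $(x_1,x_2]$; and, after dividing the equation by $1+\tilde q>0$, estimate $\partial_y\!\int_{x_*}^{x_2}\!\int_0^{N_1}F_+^2$ --- the $x$-boundary term at $x=x_*$ vanishing because $F_+=0$ there and the one at $x=x_2$ carrying the favorable sign --- with the curvature source integrated by parts in $x$ exactly as before. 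The $\epsilon_0$-dependent constants generated by $\partial_x(1/(1+\tilde q))$ are harmless, since the Gronwall constant is already allowed to depend on $\epsilon_0$.

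I expect the main obstacle to be exactly the structural point above: for the two $\nabla_\xi$-type quantities the curvature contribution cannot be matched to the $y$-derivative around which Theorem \ref{5.1} is built, so one must (i) organize the commutators so that $K\partial_z u_n$ stays inside one honest $x$-derivative with no $\nabla_\xi K$ left over, and (ii) verify that the contradiction/finite-cover/weighted-$L^2$ machinery is genuinely symmetric under $x\leftrightarrow y$ despite the asymmetric coefficient $1+\tilde q$ in $P_1$ and the non-commutativity of $\nabla_\xi,\nabla_\eta,\nabla_\psi$. With the bounds on $\nabla_\eta^2 u_n$, $\nabla_\xi\nabla_\eta u_n$, $\nabla_\xi^2 u_n$ established, together with Theorem \ref{5.1} they furnish all four second-order tangential vector-field estimates.
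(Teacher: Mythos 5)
Your proposal misses the one genuinely new ingredient that distinguishes Theorem \ref{1105} from Theorem \ref{5.1}. You assert that, after commuting, ``every lower-order piece falls into $f_{12}^u$'' and that for $\nabla_\eta^2 u_n$ the proof of Theorem \ref{5.1} applies ``almost verbatim, only the sign of the curvature term changes.'' That is not the case: when you apply $\nabla_\eta$ to \eqref{qn1}, the source $\nabla_\eta f_1$ contains $\nabla_{\eta}\bigl(\tfrac{1}{u_{n-1}}\partial_z(\tfrac{\nabla_{\eta}\tilde{q}}{1+\tilde{q}}\tfrac{u_{n}}{u_{n-1}}\partial_z u_n)\bigr)$, whose leading part is $(\nabla_{\eta}\partial_z\nabla_{\eta}\tilde{q})\tfrac{1}{1+\tilde{q}}\tfrac{u_{n}}{u_{n-1}^2}\partial_z u_n$ --- a \emph{third} derivative of $\tilde q$ (two tangential, one normal). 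This is neither a curvature term nor controllable by the induction hypotheses \eqref{n-1assumption}, \eqref{tildq0221}, so it cannot be swept into $f_{12}^u$, and it is exactly the term the paper's proof is built around. The paper handles it by changing the auxiliary function to $F=u_n^2 f-2\varepsilon^{2}C_1+2\varepsilon^{2}C_1\zeta(y)+\gamma\phi_{1,\frac{\alpha}{2}}(y-y_*)$ (the weight $u_n^2$ is essential), rewriting both the curvature source and the new term as exact $y$-derivatives of bounded quantities, namely $\tfrac{1}{u_n^2}\partial_{y}(Ku_n^2\partial_zu_n)$ and $\tfrac{1}{u_n^2}\partial_y\bigl(\tfrac{\partial_z\nabla_{\eta}\tilde{q}}{1+\tilde{q}}\tfrac{u_{n}^3}{u_{n-1}^2}\partial_z u_n\bigr)$ with $\bigl|\tfrac{\partial_z\nabla_{\eta}\tilde{q}}{1+\tilde{q}}\tfrac{u_{n}^3}{u_{n-1}^2}\partial_z u_n\bigr|\le\varepsilon^2C$, and then running the same $y$-integration-by-parts/finite-cover machinery on both; this in turn requires the auxiliary estimate \eqref{sm3} on $\partial_z^2\nabla_{\eta}\tilde{q}$, derived from the equations rather than from the induction. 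None of this appears in your plan, so as written the argument for $\nabla_\eta^2 u_n$ (and likewise for $\nabla_\xi\nabla_\eta u_n$, where $\nabla_\xi f_1$ produces an analogous $\partial_z\nabla_\xi\nabla_\eta\tilde q$ term) has a real gap.

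A secondary point: your mirror-image scheme in $x$ (contradiction at $x_*$, ridge $\zeta(x)$, Gronwall in $y$) for the $\nabla_\xi$-type quantities is not what the paper does and would need independent verification --- the whole bootstrap is organized around $Y^*$ and the $y$-boundary data, the evolution coefficient in $y$ is $1+\tilde q$ rather than $1$, and the choice of which term is kept as an exact derivative must be made compatibly with the weighted auxiliary function described above. But the decisive defect is the unaddressed third-order derivative of $\tilde q$; without the $u_n^2$-weighting and the conservative rewriting of that term, your bookkeeping claim on which the whole proposal rests is false.
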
\begin{proof}

\noindent\textit{Step 1} First, we estimate $|\nabla_{\eta}^2u_n-\nabla_{\tau_1}^2\bar{u}|.$

Compared to the proof of Theorem \ref{5.1}, we only need to consider the term \begin{align}\label{03-26-30-1}
\nabla_{\eta}(\frac{1}{u_{n-1}}\partial_z(\frac{\nabla_{\eta}\tilde{q}}{1+\tilde{q}}\frac{u_{n}}{u_{n-1}}\partial_z u_n))
\end{align}
coming from $\nabla_{\eta}f_1$  when we estimate $|\nabla_{\eta}^2u_n-\nabla_{\tau_1}^2\bar{u}|,$ where $f_1$  is defined in \eqref{qn1}.  Since $$|\nabla_{\eta}(\frac{1}{u_{n-1}}\partial_z(\frac{\nabla_{\eta}\tilde{q}}{1+\tilde{q}}\frac{u_{n}}{u_{n-1}}\partial_z u_n)) -(\nabla_{\eta}\partial_z\nabla_{\eta}\tilde{q})\frac{1}{1+\tilde{q}}\frac{u_{n}}{u_{n-1}^2}\partial_z u_n |\leq \frac{C\phi_{1,0}}{u_{n-1}^{3-\frac{3\alpha}{2}}}.$$
We will handle the term
$(\nabla_{\eta}\partial_z\nabla_{\eta}\tilde{q})\frac{1}{1+\tilde{q}}\frac{u_{n}}{u_{n-1}^2}\partial_z u_n  $.

We consider this new structural term because by definition of $\nabla_\eta$, it contains the term
\begin{align}\label{26-03-30-1}
   \frac{ \partial_y\partial_z\nabla_{\eta}\tilde{q}}{1+\tilde{q}}\frac{u_{n}}{u_{n-1}^2}\partial_z u_n=\partial_y(\frac{\partial_z\nabla_{\eta}\tilde{q}}{1+\tilde{q}}\frac{u_{n}}{u_{n-1}^2}\partial_z u_n)+Remainder.
\end{align}The order of $\partial_y\partial_z\nabla_{\eta}\tilde{q}$ is one order higher than  the orders of derivatives in the  induction assumption. We will  handle it similarly as we handled the term $\partial_y(K\partial_z u_n)$ in Theorem  \ref{5.1}. However $$|\frac{\partial_z\nabla_{\eta}\tilde{q}}{1+\tilde{q}}\frac{u_{n}}{u_{n-1}^2}\partial_z u_n|\leq \frac{C}{u_{n-1}^2}\phi_{1,0}.$$
 An extra multiplier $\frac{C}{u_{n-1}^2}$ increasing the growth rate near $z=0$ should be taken into account. Then we modify the auxiliary function $F$  where an extra $u_n^2$ was multiplied to $f$. i.e.
Take $y_2-y_*$ small enough such that
\begin{align}\label{y2u2}
\gamma (1+\|\phi_{1,\frac{\alpha}{2}}\|_{L^\infty})(y_2-y_*)\leq \gamma u^2_{n},
\end{align} and
\begin{align}\label{26-06-14-1}
    f\leq \epsilon_0\quad\text{in}\quad[0,X]\times[y_*,y_2]\times[0,N_1],
\end{align} where we recall the definition of $y_*$ defined in \eqref{ystar0220}.
 Set
$$f=\nabla_{\eta}^2 u_n-\nabla_{\tau_1}^2 \bar{u}-\frac{\varepsilon^2}{2}d_0^2\phi_{1,\frac{\alpha}{2}}-\gamma\quad \text{in}\quad [0,X]\times[0,Y]\times[0,+\infty),$$  and in $[y_*,y_2]$,
 \begin{align}\label{26-03-29-2}
	\begin{split}
 F:=&u_n^2f -2\varepsilon^{2}e^{-\frac{A}{x+1}}C_1+2 \varepsilon^{2}e^{-\frac{A}{x+1}}C_1\zeta(y)
 +\gamma  \phi_{1,\frac{\alpha}{2}}(y-y_*),
 \end{split}
\end{align}where $\zeta(y)$ is defined in \eqref{zeta}.
Then
\begin{align*}
&P_1 F-\frac{\partial_zu_{n} }{u_{n-1}^2}\partial_zF+\frac{\partial_zu_{n} \partial_zu_{n-1} }{u_{n-1}^3}F+c_1F\\=&  \partial_y(Ku_n^2\partial_z u_n)+
\partial_y(\frac{\partial_z\nabla_{\eta}\tilde{q}}{1+\tilde{q}}\frac{u_{n}^3}{u_{n-1}^2}\partial_z u_n)+\cdots,
\end{align*} where the second term on the right hand side corresponds to \eqref{26-03-30-1} multiplied by $u_n^2$ and the first two terms on the right hand side can be estimated like $\partial_y (K\partial_z u_n)$ in the previous proof, since
 $$|Ku_n^2\partial_z u_n|+|\frac{\partial_z\nabla_{\eta}\tilde{q}}{1+\tilde{q}}\frac{u_{n}^3}{u_{n-1}^2}\partial_z u_n|\leq \varepsilon^2 e^{-\frac{A}{x+1}}C.$$
 Next we handle the new terms appearing in the equation of $F$ when we  use  $u_n^2f.$

\textit{Step 1.1 Equation of $f$}

Comparing \eqref{qn1} with \eqref{qn2}, similar to \eqref{520} we have 
\begin{align}\label{26-03-30-4}\begin{split}
    &P_1 f-\frac{\partial_zu_{n} }{u_{n-1}^2}\partial_zf+\frac{\partial_zu_{n} \partial_zu_{n-1} }{u_{n-1}^3}f+c_1f
    \\
    =&\frac{1}{u_n^2}\partial_{y}(Ku_n^2\partial_zu_n)+ \frac{1}{u_n^2}\partial_y( \frac{\partial_z\nabla_{\eta}\tilde{q}}{1+\tilde{q}}\frac{u_{n}^3}{u_{n-1}^2}\partial_z u_n)
 -f_{11}^u-\gamma(\frac{\partial_zu_{n} \partial_zu_{n-1} }{u_{n-1}^3}+c_1)
 \\&-[P_1 \phi_{1,\frac{\alpha}{2}}-\frac{\partial_zu_{n} }{u_{n-1}^2}\partial_z\phi_{1,\frac{\alpha}{2}}+\frac{\partial_zu_{n} \partial_zu_{n-1} }{u_{n-1}^3}\phi_{1,\frac{\alpha}{2}}+c_1\phi_{1,\frac{\alpha}{2}}]\frac{\varepsilon^2}{2}d_0^2
    ,\end{split}
\end{align}  with
\begin{align}
|c_1|\leq  \frac{C \phi_{1,0}}{u_{n-1}^2}, \quad |f_{11}^{u}|\leq C\varepsilon^2\phi_{1,0}+\frac{\varepsilon^2C\phi_{1,0}}{u_{n-1}^{3-\alpha}}.
\end{align}
Here we have used
\begin{align*}
\frac{1}{u_n^2}\partial_{y}(K\partial_zu_n)u_n^2=&
\frac{1}{u_n^2}\partial_{y}(Ku_n^2\partial_zu_n)-\frac{1}{u_n^2}K\partial_{y}u_n^2\partial_zu_n,\\
  \frac{1}{u_n^2}\frac{\partial_y\partial_z\nabla_{\eta}\tilde{q}}{1+\tilde{q}}\frac{u_{n}^3}{u_{n-1}^2}\partial_z u_n= & \frac{1}{u_n^2}\partial_y( \frac{\partial_z\nabla_{\eta}\tilde{q}}{1+\tilde{q}}\frac{u_{n}^3}{u_{n-1}^2}\partial_z u_n)-\frac{1}{u_n^2}\partial_z\nabla_{\eta}\tilde{q}\partial_y  ( \frac{u_{n}^3}{(1+\tilde{q})u_{n-1}^2}\partial_z u_n),
   \\ \nabla_{\eta}\partial_z\nabla_{\eta}\tilde{q}=&\partial_y\partial_z\nabla_{\eta}\tilde{q}
    -\frac{\int_0^z \partial_yv_{n-1}dz'}{v_{n-1}}\partial_z^2\nabla_{\eta}\tilde{q},
\end{align*}and
\begin{align}\label{sm3}
|\frac{\int_0^z \partial_yv_{n-1}dz'}{v_{n-1}}\partial_z^2\nabla_{\eta}\tilde{q}|\leq
\frac{C}{u_{n-1}^{1-\alpha}}\phi_{1,0} \quad\text{in}\quad \Omega\cap\{0\leq y\leq Y^*\},
\end{align} which imply $$|(\nabla_{\eta}\partial_z\nabla_{\eta}\tilde{q}-\partial_y\partial_z\nabla_{\eta}\tilde{q})\frac{1}{1+\tilde{q}}\frac{u_{n}}{u_{n-1}^2}\partial_z u_n  |\leq \frac{C}{u_{n-1}^{2-\alpha}}\phi_{1,0},$$
because $|\frac{1}{1+\tilde{q}}\frac{u_{n}}{u_{n-1}^2}\partial_z u_n|\leq \frac{C}{u_{n-1}}. $
\eqref{sm3} comes from   \begin{align}\label{26-03-17-1}
    |\partial_z^2\nabla_{\eta}\tilde{q}|= |\partial_z^2(\partial_y\tilde{q}-\frac{\int_0^z \partial_yv_{n-1}dz'}{v_{n-1}}\partial_z\tilde{q})|
     \leq \frac{C}{u_{n-1}^{2-\alpha}}\phi_{1,0},
\end{align} which can be derived from the equations of $u_{n-1},$ $v_{n-1}$
and  the induction assumption in \eqref{n-1assumption}.

\textit{Step 1.2 Equation of $g=u_n^2f$}

By definition of $P_1$ in \eqref{p10106}, we have $g=u_{n}^2f$ satisfies

\begin{align*}\begin{split}
    P_1g-\frac{\partial_zu_{n} }{u_{n-1}^2}\partial_zg =&f[P_1u_{n}^2 -\frac{\partial_zu_{n} }{u_{n-1}^2}\partial_zu_n^2] +u_n^2[P_1f-\frac{\partial_zu_{n} }{u_{n-1}^2}\partial_zf] \\& -2\frac{u_{n}}{u_{n-1}^2}\partial_{z}u_n^2\partial_{z}f\\
     =&f[\frac{-2u_{n}(\partial_zu_{n} )^2 }{u_{n-1}^2}-\frac{2u_n(\partial_zu_{n})^2 }{u_{n-1}^2}] +u_n^2[P_1f-\frac{\partial_zu_{n} }{u_{n-1}^2}\partial_zf] \\& -4\frac{u_{n}^2}{u_{n-1}^2}\partial_{z}u_n\partial_{z}f\\
     =&u_n^2[P_1f-\frac{\partial_zu_{n} }{u_{n-1}^2}\partial_zf] -fI_1 -\partial_{z}f I_2,\end{split}\end{align*} with
     \begin{equation*}
        0\leq I_1\leq C\frac{u_n}{u_{n-1}^2}\leq C\frac{1}{u_{n-1}},\quad
        0\leq I_2\leq C,
     \end{equation*}
     where we have used by \eqref{qn0},
      $$P_1 u_n^2=\frac{-2u_{n}(\partial_zu_{n} )^2 }{u_{n-1}^2} .$$
     Then
  \begin{align}\label{26-04-24-1}\begin{split}  &P_1 g-\frac{\partial_zu_{n} }{u_{n-1}^2}\partial_zg+\frac{\partial_zu_{n} \partial_zu_{n-1} }{u_{n-1}^3}g+c_1g\\
  \leq &u_n^2[P_1f-\frac{\partial_zu_{n} }{u_{n-1}^2}\partial_zf+\frac{\partial_zu_{n} \partial_zu_{n-1} }{u_{n-1}^3}f+c_1f]
 \\&  +I_3\partial_zF+I_4 F
   +Ce^{-\frac{(z+\epsilon_0)^2}{x+1}}+Cu_{n-1}^2\phi_{1,\frac{\alpha}{2}}(y-y_*)
    ,\end{split}\end{align} with $|I_3|+|\partial_zI_3|+|I_4|\leq C_{\epsilon_0}$
   Note that
      $$P_1f-\frac{\partial_zu_{n} }{u_{n-1}^2}\partial_zf+\frac{\partial_zu_{n} \partial_zu_{n-1} }{u_{n-1}^3}f+c_1f$$
   follows from  the equation of $f$ in \eqref{26-03-30-4}. 
   Take
$$\gamma\leq e^{-A}\epsilon_0^5,$$ which implies
$\gamma\leq e^{-A}u_{n-1}^5\ll u_{n-1}^5.$ Then by  the  definition of $F$ in \eqref{26-03-29-2}, the following inequalities hold in $[y_*,y_2]$,
\begin{align*}
f\geq &\frac{1}{u_n^2}F-\frac{1}{u_n^2}\gamma  \phi_{1,\frac{\alpha}{2}}(y-y_*)
   \geq \frac{1}{u_n^2}F-u_{n-1}^3\phi_{1,\frac{\alpha}{2}}(y-y_*),
   \\ \frac{u_n^2}{u_{n-1}^2}\partial_z f\geq &\frac{1}{u_{n-1}^2}[\partial_zF-f\partial_zu_n^2-\gamma  \partial_z\phi_{1,\frac{\alpha}{2}}(y-y_*)]\\
   \geq &\frac{1}{u_{n-1}^2}\partial_zF-f\frac{2u_n\partial_zu_n}{u_{n-1}^2}-u_{n-1}^2\phi_{1,\frac{\alpha}{2}}(y-y_*)
 \\
 \geq&\frac{1}{u_{n-1}^2}\partial_zF-\frac{\epsilon_0C}{u_{n-1}}e^{-\frac{(z+\epsilon_0)^2}{x+1}}-u_{n-1}^2\phi_{1,\frac{\alpha}{2}}(y-y_*)
 \\
 \geq&\frac{1}{u_{n-1}^2}\partial_zF-Ce^{-\frac{(z+\epsilon_0)^2}{x+1}}-u_{n-1}^2\phi_{1,\frac{\alpha}{2}}(y-y_*)
   \end{align*}
where we have used  $$-\partial_z\phi_{1,\frac{\alpha}{2}}\geq-\frac{\alpha}{2(z+{\epsilon_0})} \phi_{1,\frac{\alpha}{2}}\geq -\frac{C}{u_{n-1}}\phi_{1,\frac{\alpha}{2}},$$
 which can be derived by  the definition
of $\phi_{1, \frac{\alpha}{2}}$  in \eqref{phi1} and the following calculation,
\begin{equation}\partial_z\phi_{1,\frac{\alpha}{2}}= \left\{
  \begin{aligned}
 \frac{\alpha}{2(z+{\epsilon_0})} \phi_{1,\frac{\alpha}{2}},\quad & 0\leq -\frac{z+{\epsilon_0}}{\sqrt{x+1}}\leq \delta,\\
 0,\quad &  \delta\leq \frac{z+{\epsilon_0}}{\sqrt{x+1}}\leq N,\\
  -\frac{2(z+{\epsilon_0})}{x+1}\mu\phi_{1,\frac{\alpha}{2}} ,\quad\quad & \frac{z+{\epsilon_0}}{\sqrt{x+1}}\geq N .
\end{aligned}\right.\\
\end{equation}

Then the proof proceeds similarly as the proof of   Theorem \ref{5.1}.

\noindent\textit{Step 2} Next, we prove $|\nabla_{\xi}\nabla_{\eta,\xi}u_n-\nabla_{\tau_1}^2\bar{u}|\leq \frac{\varepsilon^2}{2}d_0^2\phi_{1,\frac{\alpha}{2}}$.

 Recall \eqref{qn1} and \eqref{qn2} for the equations of $\nabla_{\eta} u_n$ and $\nabla_{\xi } u_n$. Then the key term  $(1+\tilde{q})\partial_{y}(K\partial_zu_n) $ in \eqref{1104-1} is replaced by $ \partial_x(\mp  K\partial_z u_n )$ and $(1+\tilde{q})\partial_x(\pm K\partial_z u_n)$ when we estimate $\pm(\nabla_{\xi}\nabla_{\eta,\xi}u_n-\nabla_{\tau_1}^2\bar{u})$.
 The proof is similar to the proof of Theorem \ref{5.1} and Step 1. For example, we will estimate $\nabla_{\xi}^2u_n-\nabla_{\tau_1}^2\bar{u}$  where $(1+\tilde{q})\partial_{y}(K\partial_zu_n) $ is replaced by $(1+\tilde{q})\partial_x( K\partial_z u_n)$. 

Set \begin{align*}
 f= ( \nabla_{\xi}^2 u_n-\nabla_{\tau_1}^2\bar{u})-\frac{\varepsilon^2}{2}d_0^2\phi_{1,\frac{\alpha}{2}}-\gamma
 \quad
 \text{in} \quad[0,X]\times[0,Y]\times[0,+\infty).
\end{align*}
 As in the proof of Theorem \ref{5.1}, we apply  proof by contradiction.
 Instead of \eqref{ystar0220}, set $$x_*:=\max \{x_0\in[0,X]|f(x,y,z)\leq 0,\,\,\{0\leq x\leq x_0\} \times[0,Y^*]\times[0,+\infty)\}<X.$$
  By the boundary condition, for some positive constant $y_0<Y^*,$
  \begin{align}\label{d32025}
f<0\quad \text{in}\quad[0,X]\times[0,y_0]\times[0,N_1]\cup[0,X]\times[0,Y^*]\times[0,z_0].
\end{align} Similar
 estimates as in the proof of Theorem \ref{5.1} hold for the sign of $f$ and $\partial_x f$ on $$D_{x_*}=\{x=x_*\}\times[y_0,Y^*]\times[z_0,N_1].$$
Take a constant  $x_2$  such that  $x_2-x_*$ is sufficiently small  and $0<x_2-x_*\ll\min\{1,X-x_*\}$. Then by the continuity, for some domains $D_1$ and
$D_2$ with $ D_1\cup D_2=[0,Y^*]\times[0,N_1]$, \begin{align}\label{26-03-15-3}\begin{split}
f+\gamma  \phi_{1,\frac{\alpha}{2}}(x-x_*) <&0 \quad  \quad\quad\quad\quad \text{in} \quad [x_*,x_2]\times D_1 ,\\
\partial_x (f+\gamma  \phi_{1,\frac{\alpha}{2}}(x-x_*))\geq&0 \quad\quad\quad\quad\quad   \text{in} \quad [x_*,x_2]\times D_2,\end{split}
\end{align}
 with  noting $\partial_x \phi_{1,\frac{\alpha}{2}}\geq 0$. Furthermore, take  $x_2-x_*$   sufficiently small such that
 \begin{align}\label{2026-03-15-1}
    |\frac{ e^{-\frac{A}{x_2+1}}}{ e^{-\frac{A}{x_*+1}}}-1|\ll 1,
 \end{align} which implies
 \begin{align*}
  e^{-\frac{A}{x_2+1}}e^{-\frac{(z+{\epsilon_0})^2}{x+1}\mu}
\leq C\phi_{1,0}\quad \text{in}\quad[x_*,x_2]\times[0,Y^*]\times[0,+\infty),
 \end{align*}
 and instead of \eqref{y2-y1},
  \begin{align}\label{26-03-15-2}
   2 \varepsilon^{2}e^{-A}C_1\frac{1}{x_2-x_1}\gg\max_{[0,X]\times[0,Y^*]\times[0,N_1]} |\partial_x f|+\gamma |\partial_x\big(\phi_{1,\frac{\alpha}{2}}(x-x_*)\big)|.
\end{align} Next, instead of \eqref{fmod}, set$$F= f -2\varepsilon^{2}e^{-\frac{A}{x_2+1}}C_1(1-\zeta)
 +\gamma  \phi_{1,\frac{\alpha}{2}}(x-x_*),
 \quad\text{in} \quad[x_*,x_2]\times[0,Y^*]\times[0,+\infty),$$
where \begin{equation*}\zeta(x)= \left\{
  \begin{aligned}
 1,\quad & x_*\leq x\leq x_1,\\
  1-\frac{x-x_1}{x_2-x_1},\quad & x_1< x\leq x_2,
\end{aligned}\right.\\
\end{equation*}
By \eqref{26-03-15-2}, it holds
\begin{align}\label{26-03-15-4}
 1-k_3 \leq \frac{ -\partial_x F}{d}\leq 1+k_3\quad\text{in} \quad(x_1,x_2],
\end{align}
 with
\begin{align*}
    d=  \frac{2 \varepsilon^{2}e^{-\frac{A}{x_2+1}}C_1}{x_2-x_1},
\end{align*} which corresponds to \eqref{k3dyF}. 
Then by $1-\zeta\geq 0,$ \eqref{26-03-15-3} and \eqref{26-03-15-4}, we have \begin{align*}\begin{split}
\partial_x F_+\geq &0\quad  \text{in} \quad [x_*,x_1)\times[0,Y^*]\times[0,N_1],\\
\partial_x F_+\leq &0\quad  \text{in} \quad (x_1,x_2]\times[0,Y^*]\times[0,N_1].
\end{split}
\end{align*} Then instead of  \eqref{dyfkdzu}, by integrating in $x$ and using  a similar argument as in the proof of Theorem \ref{5.1}, we have \begin{align*}\begin{split}
      &\int_{x_*}^{x_2} - \frac{A}{7}c_2d_0^2\varepsilon^2\phi_{1,\frac{\alpha}{2}} F_+-\partial_{x}F F_+dx
      \\&+\int_{x_*}^{x_2} [-(1+\tilde{q})\partial_{x}(K\partial_zu_n+(2-k_1)\varepsilon^{2}e^{-\frac{A}{x_2+1}}C_1) +(2-k_0)\varepsilon^{2}e^{-\frac{A}{x_2+1}}C_1\partial_x \zeta] F_+  dx
      \\ \leq  &
      C\int_{x_*}^{x_2} F_+ ^2 dx.\end{split}
\end{align*}
Then we have
\begin{align*}
    \int_{x_*}^{x_2}\int_0^{N_1}(1+\tilde{q})\partial_{y}F F_+  dzdx\leq C_{\epsilon_0}\int_{x_*}^{x_2}\int_0^{N_1} F_+ ^2 dzdx
\end{align*}
similar to  \eqref{dz0n1}. Thus
 \begin{align*}\begin{split}
  \partial_{y} \int_{x_*}^{x_2}\int_0^{N_1}(1+\tilde{q})F_+^2dzdx  = & \int_{x_*}^{x_2}\int_0^{N_1}(1+\tilde{q})\partial_{y}F_+^2
  +\partial_{y}\tilde{q}\,F_+^2dzdx  \\ = &\int_{x_*}^{x_2}\int_0^{N_1}2(1+\tilde{q})\partial_{y}F F_+  +\partial_{y}\tilde{q}\,F_+^2dzdx
      \\ \leq  &
      C_{\epsilon_0}\int_{x_*}^{x_2}\int_0^{N_1} F_+ ^2 dzdx
      \\ \leq  &
      C_{\epsilon_0}\int_{x_*}^{x_2}\int_0^{N_1} (1+\tilde{q})F_+ ^2 dzdx\quad\text{in}\quad [0,Y^*].\end{split}
\end{align*}
Hence,  we have  the conclusion $\int_{x_*}^{x_2}\int_0^{N_1}(1+\tilde{q})F_+^2dzdx=0$ in $[0,Y^*]$ because $F_+(x,0,z)=0$ by Grownwall inequality.
This leads to a  contradiction to the definition of $x_*$ and the proof of the theorem is complete.
\end{proof}
\begin{Corollary}\label{d2tan}
It holds that
\begin{align}\label{15}
|\nabla_{\xi,\eta}\nabla_{\xi,\eta}u_n-\nabla_{\xi,\eta}\nabla_{\xi,\eta}\bar{u}|\leq \frac{\varepsilon^2}{2}d_0^2\phi_{1,\frac{\alpha}{2}} \quad \text{in} \quad \Omega\cap\{0\leq y\leq Y^*\}.
\end{align}
\end{Corollary}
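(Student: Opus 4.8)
The plan is to read Corollary \ref{d2tan} off directly from Theorem \ref{5.1} and Theorem \ref{1105}. Those two theorems supply, for each of the four compositions $\nabla_{\xi,\eta}\nabla_{\xi,\eta}\in\{\nabla_{\eta}\nabla_{\xi},\,\nabla_{\xi}\nabla_{\eta},\,\nabla_{\eta}^2,\,\nabla_{\xi}^2\}$, the estimate $|\nabla_{\xi,\eta}\nabla_{\xi,\eta}u_n-\nabla_{\tau_1}^2\bar u|\le\tfrac{\varepsilon^2}{2}d_0^2\phi_{1,\frac{\alpha}{2}}$ in $\Omega\cap\{0\le y\le Y^*\}$. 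Since $\bar u$ is the symmetric background with $\nabla_{\tau_1}\bar u=\nabla_{\tau_2}\bar u$, the second order tangential vector field derivative of $\bar u$ appearing in \eqref{15} is unambiguously $\nabla_{\tau_1}^2\bar u$, so the corollary is nothing but the union of the four estimates. The only situation in which something extra is needed is if one insists on evaluating $\nabla_{\xi,\eta}\nabla_{\xi,\eta}\bar u$ with the $u_{n-1}$-adapted fields $\nabla_\xi^{n-1},\nabla_\eta^{n-1}$ rather than the background fields $\nabla_{\tau_1},\nabla_{\tau_2}$ — this is the convenient form for the translation back to Euclidean derivatives done in Subsection \ref{66} — and I handle that next.

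For that comparison I would write $\nabla_\xi^{n-1}=\nabla_{\tau_1}+g_1\partial_z$, $\nabla_\eta^{n-1}=\nabla_{\tau_2}+g_2\partial_z$, with $g_1=\tfrac{\int_0^z\partial_x\bar u\,dz'}{\bar u}-\tfrac{\int_0^z\partial_x u_{n-1}\,dz'}{u_{n-1}}$ and $g_2=\tfrac{\int_0^z\partial_y\bar u\,dz'}{\bar u}-\tfrac{\int_0^z\partial_y v_{n-1}\,dz'}{v_{n-1}}$, expand the second order composition, and use $\nabla_{\tau_1}\bar u=\nabla_{\tau_2}\bar u$. Each $g_i$ is controlled by the induction assumption \eqref{n-1assumption} — via $|u_{n-1}-\bar u|\le\varepsilon^6\phi_{1,1+2\alpha}$, $|\partial_{x,y}u_{n-1}-\partial_x\bar u|\le\varepsilon^2\phi_{1,1}$ and the matching higher order bounds — so that it carries a spare power of $\varepsilon$ and vanishes at $z=0$; combining this with the growth and decay rates of $\bar u$ and its derivatives recorded in Subsection \ref{1027} and with the vector-field comparison estimates of Subsection \ref{tl} (the same ones already used in Lemma \ref{04160416e-v} and in the proofs of Theorem \ref{thm1} and Theorem \ref{dzunphi0}) gives $|\nabla_{\xi,\eta}\nabla_{\xi,\eta}\bar u-\nabla_{\tau_1}^2\bar u|\le C\varepsilon^2\phi_{1,\frac{\alpha}{2}}$, with the remainder in fact vanishing to strictly higher order than $\phi_{1,\frac{\alpha}{2}}$ near $z=0$ and decaying at the Gaussian rate for $z$ large. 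Feeding this into the triangle inequality and using the slack in the bootstrap constants ($\varepsilon\ll d_0\ll1$) yields \eqref{15} for all four operator choices.

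I do not expect a genuine obstacle: the whole substance of the second order tangential estimate lies in Theorem \ref{5.1} and Theorem \ref{1105}, and Corollary \ref{d2tan} only repackages them for later use. The sole point requiring care is the $z$-weight bookkeeping in the vector-field mismatch term — one must verify it is dominated by $\phi_{1,\frac{\alpha}{2}}$ uniformly, both where that weight degenerates (small $z$, where the mismatch vanishes to higher order because $g_1,g_2$ vanish at $z=0$) and in the Gaussian tail (large $z$, where the decay of $\phi_{1,\beta}$ is essentially uniform in $\beta$) — but this is exactly the type of tracking already carried out repeatedly in Section \ref{s4}, so no new mechanism is needed.
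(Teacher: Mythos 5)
There is a genuine gap, and it is quantitative rather than structural. The entire content of Corollary \ref{d2tan} is the replacement of $\nabla_{\tau_1}^2\bar u$ (the quantity controlled in Theorem \ref{5.1} and Theorem \ref{1105}) by $\nabla_{\xi,\eta}\nabla_{\xi,\eta}\bar u$, i.e. the $u_{n-1}$-adapted fields applied to $\bar u$, \emph{at the same constant} $\frac{\varepsilon^2}{2}d_0^2$. Your plan is to do this by the triangle inequality, estimating the mismatch $\nabla_{\xi,\eta}\nabla_{\xi,\eta}\bar u-\nabla_{\tau_1}^2\bar u$ by $C\varepsilon^2\phi_{1,\frac{\alpha}{2}}$ and appealing to ``the slack $\varepsilon\ll d_0\ll 1$''. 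But that slack points the wrong way: both the target $\frac{\varepsilon^2}{2}d_0^2\phi_{1,\frac{\alpha}{2}}$ and the mismatch are of order $\varepsilon^2\phi_{1,\frac{\alpha}{2}}$, and the constant $C$ in the mismatch is \emph{not} small in $d_0$. Indeed, writing $\nabla_\xi=\nabla_{\tau_1}+g_1\partial_z$, $\nabla_\eta=\nabla_{\tau_2}+g_2\partial_z$, the term $(\nabla_{\tau_2}g_1)\,\partial_z\bar u$ contains $\frac{1}{u_{n-1}}\int_0^z\big(\partial_y\partial_x u_{n-1}-\partial_x^2\bar u\big)dz'$, and the induction hypothesis \eqref{n-1assumption} only gives $|\partial_{x,y}\partial_{x,y}u_{n-1}-\partial_x^2\bar u|\le\varepsilon^2\phi_{1,\frac{\alpha}{2}}$ with constant exactly $\varepsilon^2$ (no $d_0$); so near $z=0$ this contribution is genuinely of size $C\varepsilon^2\phi_{1,\frac{\alpha}{2}}$, not of strictly higher order as you assert, and $\varepsilon\ll d_0$ does not make $C\varepsilon^2\le\frac{\varepsilon^2}{2}d_0^2$. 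The resulting bound $C\varepsilon^2\phi_{1,\frac{\alpha}{2}}$ is strictly weaker than \eqref{15}, and the weakening is fatal downstream: in Lemma \ref{11dj-3} the corollary is invoked precisely ``with $d_0$ in \eqref{15} being small'' to obtain \eqref{117} with bound $\varepsilon^2 d_0\phi_{1,\frac{\alpha}{2}}$ and then $|\partial_x^2u_n-\partial_x^2\bar u|\le Cd_0\varepsilon^2\phi_{1,\frac{\alpha}{2}}<\varepsilon^2\phi_{1,\frac{\alpha}{2}}$; with a $d_0$-independent constant the induction \eqref{10271} no longer closes.

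The paper avoids this loss by not using a pointwise triangle inequality at all. It writes the equation satisfied by $\nabla_\eta\nabla_\xi u_n-\nabla_\eta\nabla_\xi\bar u$ directly, moving the mismatch between $\nabla_{\tau_1}^2\bar u$ and $\nabla_\eta\nabla_\xi\bar u$ into a new source term $f_{12}^{new}$ estimated via \eqref{haoyw} by $\frac{C}{u_{n-1}^{2}}\phi_{1,0}+\frac{\varepsilon^2C\phi_{1,0}}{u_{n-1}^{3-\alpha}}$, and then reruns the argument of Theorem \ref{5.1}. There the barrier output $\varepsilon^2 d_0^2\big(\frac{\lambda}{u_{n-1}^3}+\frac{A}{7}c_2\big)\phi_{1,\frac{\alpha}{2}}$ carries the singular weight $u_{n-1}^{-3}$ and the large parameter $A$ (a negative power of $\varepsilon$), so it absorbs source terms that are not pointwise small relative to $\varepsilon^2 d_0^2$, and the constant $\frac{\varepsilon^2}{2}d_0^2$ survives. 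If you want to salvage your route, you would have to show the mismatch is bounded by $o(d_0^2)\varepsilon^2\phi_{1,\frac{\alpha}{2}}$, which the induction hypotheses do not provide; otherwise you must, as the paper does, absorb it through the equation and the barrier rather than add it after the fact.
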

\begin{proof}We will  prove \begin{align}\label{1106}
|\nabla_{\eta}\nabla_{\xi}u_n-\nabla_{\eta}\nabla_{\xi}\bar{u}|\leq \frac{\varepsilon^2}{2}d_0^2\phi_{1,\frac{\alpha}{2}} \quad \text{in} \quad \Omega\cap\{0\leq y\leq Y^*\},
\end{align} and the other inequalities  can be derived similarly.

By \eqref{1104-1},
\begin{align}\label{26-03-29-1}\begin{split}
    0=&P_1( \nabla_{\eta}\nabla_{\xi} u_n-\nabla_{\eta}\nabla_{\xi}\bar{u})-\frac{\partial_zu_{n} }{u_{n-1}^2}\partial_z( \nabla_{\eta}\nabla_{\xi} u_n-\nabla_{\eta}\nabla_{\xi}\bar{u})+\tilde{c}( \nabla_{\eta}\nabla_{\xi} u_n-\nabla_{\eta}\nabla_{\xi}\bar{u})\\&+(1+\tilde{q})\partial_{y}(K\partial_zu_n) +F^{new},\end{split}
\end{align} where
$$F^{new}:=-[P_1( \nabla_{\tau_1}^2\bar{u} -\nabla_{\eta}\nabla_{\xi}\bar{u})-\frac{\partial_zu_{n} }{u_{n-1}^2}\partial_z( \nabla_{\tau_1}^2\bar{u}-\nabla_{\eta}\nabla_{\xi}\bar{u})+\tilde{c}( \nabla_{\tau_1}^2\bar{u}-\nabla_{\eta}\nabla_{\xi}\bar{u})]+f_{12}^u,$$
where
$f_{12}^u$ and $\tilde{c}$ are the functions defined in \eqref{f12u} and \eqref{utan2c} respectively.

\noindent\textit{Step 1 Decompose $F^{new}$ into several structural components}

Based on the following claims, we have
\begin{align}\label{26-03-30-5}
    F^{new}:=\partial_yQ +I+f_{12}^u,
\end{align}with
    \begin{align}\label{26-03-30-6}
   |Q|+ |I|\leq \frac{C}{u_{n-1}^2}\phi_{1,0}\quad \text{in} \quad \Omega\cap\{0\leq y\leq Y^*\}.
\end{align}


\textit{Claim 1.}
The first two terms for  $P_1( \nabla_{\tau_1}^2\bar{u} -\nabla_{\eta}\nabla_{\xi}\bar{u})$ with $P_1$  defined in \eqref{p10106} satisfy\begin{align*}
   & \partial_x ( \nabla_{\tau_1}^2\bar{u} -\nabla_{\eta}\nabla_{\xi}\bar{u}) +(1+\tilde{q})\partial_{y}( \nabla_{\tau_1}^2\bar{u} -\nabla_{\eta}\nabla_{\xi}\bar{u})
   \\=&\partial_{y}( Q_1)\partial_z \bar{u}+(1+\tilde{q})\partial_{y}( Q_2)\partial_z \bar{u}+\tilde{I}_1
   \\=&\partial_{y}( Q_3)+\tilde{I}_2,
    \end{align*} where $Q_3=(Q_1+(1+\tilde{q}) Q_2)\partial_z \bar{u}$,
     \begin{align*}
     Q_1&=-\frac{\int_0^z \partial_{x}\partial_x\bar{u}dz'}{\bar{u}}+\frac{\int_0^z \partial_{x}\partial_xu_{n-1}dz'}{u_{n-1}},\\
     Q_2&=-\frac{\int_0^z \partial_{y}\partial_x\bar{u}dz'}{\bar{u}}+\frac{\int_0^z \partial_{y}\partial_xu_{n-1}dz'}{u_{n-1}},
\end{align*}
and $\tilde{I}_1, \,\tilde{I}_2$ do not contain  third-order tangential derivatives which are one order higher than the orders of derivatives in the induction assumption. In particular, by \eqref{n-1assumption}, $Q_3$ satisfies
   \begin{align*}
    |Q_3|\leq 2\varepsilon^2 e^{-\frac{A}{x+1}}C,\quad|\tilde{I}_2|\leq C\frac{1}{u_{n}^2}\phi_{1,0}  \quad \text{in} \quad \Omega\cap\{0\leq y\leq Y^*\},
   \end{align*} for some constant $C$ independent of $\varepsilon.$

\textit{Claim 2.}
The  fourth and fifth terms of
    $P_1( \nabla_{\tau_1}^2\bar{u} -\nabla_{\eta}\nabla_{\xi}\bar{u})$  are
     \begin{align}\begin{split}
   &-\frac{u_{n}}{u_{n-1}^2}\partial_{z}^2(( \nabla_{\tau_1}^2\bar{u} -\nabla_{\eta}\nabla_{\xi}\bar{u}))-\frac{1}{u_{n-1}}\partial_z(\frac{u_{n}}{u_{n-1}})\,\partial_{z} ( \nabla_{\tau_1}^2\bar{u} -\nabla_{\eta}\nabla_{\xi}\bar{u})\\
  =& \partial_y( Q_6)+I_6 ,\end{split}\end{align} with
    \begin{align*}
   |Q_6|+ |I_6|\leq \frac{C}{u_{n-1}^2}\phi_{1,0}\quad \text{in} \quad \Omega\cap\{0\leq y\leq Y^*\}.
\end{align*}

\textit{Claim 3.}
\begin{align*}
    -\frac{\partial_zu_{n} }{u_{n-1}^2}\partial_z( \nabla_{\tau_1}^2\bar{u}-\nabla_{\eta}\nabla_{\xi}\bar{u})+\tilde{c}( \nabla_{\tau_1}^2\bar{u}-\nabla_{\eta}\nabla_{\xi}\bar{u})
    = \partial_y( Q_9)+I_9,
\end{align*}with
    \begin{align*}
   |Q_9|+ |I_9|\leq \frac{C}{u_{n-1}^2}\phi_{1,0}\quad \text{in} \quad \Omega\cap\{0\leq y\leq Y^*\}.
\end{align*}

  \textit{Proof of the claims:}

  Growth rates of $\bar{u}$ and its derivatives are given in subsection \ref{1027}.

  For Claim 1,
    by the  definitions,
    \begin{align*}
      \nabla_{\eta}=\partial_y-\frac{\int_0^z \partial_yv_{n-1}dz'}{v_{n-1}}\partial_z,
      \quad \nabla_{\xi}&=\partial_x-\frac{\int_0^z \partial_xu_{n-1}dz'}{u_{n-1}}\partial_z.
    \end{align*}Since
    \begin{align*}\begin{split}
       \partial_{x,y}\partial_{y}(-\frac{\int_0^z \partial_xu_{n-1}dz'}{u_{n-1}})\partial_z \bar{u}=&
        \partial_{y}\partial_{x,y}(-\frac{\int_0^z \partial_xu_{n-1}dz'}{u_{n-1}})\partial_z \bar{u} \\
        :=& \partial_{y}( -\frac{\int_0^z \partial_{x,y}\partial_xu_{n-1}dz'}{u_{n-1}})\partial_z \bar{u}+I_0,\end{split}
    \end{align*} we have
      \begin{align}\label{26-03-15-5-1-1}\begin{split}
       \partial_{x,y}\nabla_{\eta}\nabla_{\xi}\bar{u}:=&
       \partial_{x,y} \partial_{y}\partial_x\bar{u}+\partial_{y}( -\frac{\int_0^z \partial_{x,y}\partial_xu_{n-1}dz'}{u_{n-1}})\partial_z \bar{u}+I_1
       ,\end{split}
    \end{align}
    where $I_0$ and $I_1$ do not contain the third-order tangential derivatives $\partial_{x,y}^3u_{n-1}.$
    By the definitions, $\nabla_{\tau_1} =  \partial_x-\frac{\int_0^z \partial_{x}\bar{u}dz'}{\bar{u}}\partial_z$ and $\nabla_{\tau_2} =  \partial_y-\frac{\int_0^z \partial_{y}\bar{u}dz'}{\bar{u}}\partial_z.$ By the symmetry of $\bar{u},$ we have $$\nabla_{\tau_1}^2\bar{u}=\nabla_{\tau_2}\nabla_{\tau_1}\bar{u}.$$
    Hence, similarly to \eqref{26-03-15-5-1-1}, it holds
    \begin{align*}
       \partial_{x,y} \nabla_{\tau_1}^2\bar{u} :=  \partial_{x,y}\partial_{y}\partial_x\bar{u}+\partial_{y}( -\frac{\int_0^z \partial_{x,y}\partial_x\bar{u}dz'}{\bar{u}})\partial_z \bar{u}+I_2,
    \end{align*} where $I_2$ does not contain the third-order tangential derivatives $\partial_{x,y}^3\bar{u}.$
    In summary,
    \begin{align*}
       \partial_{x,y} ( \nabla_{\tau_1}^2\bar{u} -\nabla_{\eta}\nabla_{\xi}\bar{u}):=  \partial_{y}( -\frac{\int_0^z \partial_{x,y}\partial_x\bar{u}dz'}{\bar{u}}+\frac{\int_0^z \partial_{x,y}\partial_xu_{n-1}dz'}{u_{n-1}})\partial_z \bar{u}+I_3,
    \end{align*}where $I_3$ does not contain the third-order tangential derivatives. Then we can prove  claim 1 by straight forward calculation and growth estimates.

    Similarly, for Claim 2, note that
    \begin{align*}\begin{split}
       \partial_{z}\partial_{y}(-\frac{\int_0^z \partial_xu_{n-1}dz'}{u_{n-1}})\partial_z \bar{u}=&
        \partial_{y}\partial_{z}(-\frac{\int_0^z \partial_xu_{n-1}dz'}{u_{n-1}})\partial_z \bar{u} \\
        =& \partial_{y}( -\frac{ \partial_xu_{n-1}}{u_{n-1}}+\frac{\int_0^z \partial_xu_{n-1}dz'}{u_{n-1}^2}\partial_zu_{n-1})\partial_z \bar{u}
        \\
        =& \partial_{y}Q_4-( -\frac{ \partial_xu_{n-1}}{u_{n-1}}+\frac{\int_0^z \partial_xu_{n-1}dz'}{u_{n-1}^2}\partial_zu_{n-1})\partial_z\partial_{y} \bar{u},
        \\
        \partial_{z}^2\partial_{y}(-\frac{\int_0^z \partial_xu_{n-1}dz'}{u_{n-1}})\partial_z \bar{u}
        =& \partial_{y}\partial_z( -\frac{ \partial_xu_{n-1}}{u_{n-1}}+\frac{\int_0^z \partial_xu_{n-1}dz'}{u_{n-1}^2}\partial_zu_{n-1})\partial_z \bar{u}\\
        =&\partial_{y} Q_5-\partial_z( -\frac{ \partial_xu_{n-1}}{u_{n-1}}+\frac{\int_0^z \partial_xu_{n-1}dz'}{u_{n-1}^2}\partial_zu_{n-1})\partial_z \partial_{y}\bar{u},\end{split}
    \end{align*} where
     \begin{align*}
       Q_4=& ( -\frac{ \partial_xu_{n-1}}{u_{n-1}}+\frac{\int_0^z \partial_xu_{n-1}dz'}{u_{n-1}^2}\partial_zu_{n-1})\partial_z \bar{u},\\
       Q_5=&\partial_z( -\frac{ \partial_xu_{n-1}}{u_{n-1}}+\frac{\int_0^z \partial_xu_{n-1}dz'}{u_{n-1}^2}\partial_zu_{n-1})\partial_z \bar{u}.
\end{align*}
Here,  the order of  differentiation in $\partial_{y}\partial_z \partial_xu_{n-1}$ is one order higher than the order of derivatives in the induction assumption. Then by straight forward calculation,  we have
      \begin{align}\label{26-03-15-5-1}\begin{split}
       \partial_{z}( \nabla_{\tau_1}^2\bar{u}-\nabla_{\eta}\nabla_{\xi}\bar{u}):=&\partial_y Q_4+I_4,\\
       \partial_{z}^2( \nabla_{\tau_1}^2\bar{u}-\nabla_{\eta}\nabla_{\xi}\bar{u}):=&\partial_y Q_5+I_5,\end{split}
    \end{align} with
    \begin{align}\label{26-03-30-2}
      |I_4|+u_{n-1}|I_5| \leq C\phi_{1,0},\quad|Q_4|\leq C\phi_{1,0},\quad |Q_5|\leq\frac{C}{u_{n-1}}\phi_{1,0}\quad \text{in} \quad \Omega\cap\{0\leq y\leq Y^*\},
    \end{align} by the induction assumptions and in particular, by noting $$|\partial_y u_{n-1}|\leq Cu_{n-1}.$$  Moreover, since $|\partial_y Q_4|\leq\frac{C}{u_{n-1}}\phi_{1,0}$ by induction assumption,
    \begin{align}\label{26-03-30-3}\begin{split}
       u_{n-1}|\partial_{z}( \nabla_{\tau_1}^2\bar{u}-\nabla_{\eta}\nabla_{\xi}\bar{u})|\leq&C\phi_{1,0},
   \quad \text{in} \quad \Omega\cap\{0\leq y\leq Y^*\}. \end{split}\end{align}

Then by \eqref{p10106} and \eqref{26-03-15-5-1}, the  fourth and fifth terms of
    $P_1( \nabla_{\tau_1}^2\bar{u} -\nabla_{\eta}\nabla_{\xi}\bar{u})$  are
     \begin{align}\begin{split}
   &-\frac{u_{n}}{u_{n-1}^2}\partial_{z}^2( \nabla_{\tau_1}^2\bar{u} -\nabla_{\eta}\nabla_{\xi}\bar{u})-\frac{1}{u_{n-1}}\partial_z(\frac{u_{n}}{u_{n-1}})\,\partial_{z} ( \nabla_{\tau_1}^2\bar{u} -\nabla_{\eta}\nabla_{\xi}\bar{u})\\
   =& \partial_y (-\frac{u_{n}}{u_{n-1}^2}Q_5)-Q_5\partial_y (-\frac{u_{n}}{u_{n-1}^2})+\partial_y[(\frac{u_{n}}{u_{n-1}^3}\partial_zu_{n-1}-\frac{\partial_zu_{n}}{u_{n-1}^2})Q_4]
   \\&-Q_4\partial_y(\frac{u_{n}}{u_{n-1}^3}\partial_zu_{n-1}-\frac{\partial_zu_{n}}{u_{n-1}^2})
   -\frac{u_{n}}{u_{n-1}^2}I_5+(\frac{u_{n}}{u_{n-1}^3}\partial_zu_{n-1}-\frac{\partial_zu_{n}}{u_{n-1}^2})I_4\\
  =& \partial_y( Q_6)+I_6 ,\end{split}\end{align} with
    \begin{align*}
   |Q_6|+ |I_6|\leq \frac{C}{u_{n-1}^2}\phi_{1,0}\quad \text{in} \quad \Omega\cap\{0\leq y\leq Y^*\},
\end{align*}
by the induction assumptions and in particular, by noting $|\partial_y u_{n-1}|\leq Cu_{n-1}.$

For Claim 3, by the same method in the proof of claim 2, we can decompose $-\frac{\partial_zu_{n} }{u_{n-1}^2}\partial_z( \nabla_{\tau_1}^2\bar{u}-\nabla_{\eta}\nabla_{\xi}\bar{u})$ as follows.
Now we decompose the term $\tilde{c}( \nabla_{\tau_1}^2\bar{u}-\nabla_{\eta}\nabla_{\xi}\bar{u}).$

Since  \begin{align*}\begin{split}
     \nabla_{\eta}\nabla_{\xi}\bar{u}=&
       \partial_{y}\partial_x\bar{u}+\partial_{y}( -\frac{\int_0^z \partial_xu_{n-1}dz'}{u_{n-1}}\partial_z \bar{u})
       \\&-\frac{\int_0^z \partial_yv_{n-1}dz'}{v_{n-1}}\partial_z\partial_x\bar{u}-\frac{\int_0^z \partial_yv_{n-1}dz'}{v_{n-1}}\partial_z( -\frac{\int_0^z \partial_xu_{n-1}dz'}{u_{n-1}}\partial_z \bar{u}),\end{split}
    \end{align*}it holds
       $$\nabla_{\tau_1}^2\bar{u}-\nabla_{\eta}\nabla_{\xi}\bar{u}=\partial_y( Q_7)+I_7,$$ where $Q_7= -\frac{\int_0^z \partial_x\bar{u}dz'}{\bar{u}}\partial_z \bar{u}+\frac{\int_0^z \partial_xu_{n-1}dz'}{u_{n-1}}\partial_z \bar{u}$ with
    \begin{align*}
      |Q_7|+ | I_7|\leq&Cu_{n-1}\phi_{1,0}
   \quad \text{in} \quad \Omega\cap\{0\leq y\leq Y^*\}.
    \end{align*}

Moreover, by the expression of $\tilde{c}$ in \eqref{utan2c}, the induction assumptions and in particular, by  noting $|\partial_y u_{n-1}|\leq Cu_{n-1},$ we have
\begin{align*}
    |\tilde{c}|+|\partial_y\tilde{c}|\leq \frac{C}{u_{n-1}^3}e^{-\frac{(z+{\epsilon_0})^2}{x+1}\mu}\quad \text{in} \quad \Omega\cap\{0\leq y\leq Y^*\}.
\end{align*} Then we have
\begin{align*}\begin{split}
\tilde{c}( \nabla_{\tau_1}^2\bar{u}-\nabla_{\eta}\nabla_{\xi}\bar{u})=&\partial_y( Q_7\tilde{c})+I_7\tilde{c}-\partial_y\tilde{c}Q_7\\
=&\partial_y( Q_8)+I_8
   ,\end{split}\end{align*} with
  \begin{align*}
      |Q_8|+ | I_8|\leq&\frac{C}{u_{n-1}^2}\phi_{1,0}
   \quad \text{in} \quad \Omega\cap\{0\leq y\leq Y^*\}.
    \end{align*}
      In summary, all the three claims hold.

\noindent\textit{Step 2  Reduction to the case similar to \eqref{26-03-30-4}}

Next, by \eqref{26-03-29-1} and \eqref{26-03-30-5}-\eqref{26-03-30-6},  $\nabla_{\eta}\nabla_{\xi} u_n-\nabla_{\eta}\nabla_{\xi}\bar{u}$
satisfies the following equation
\begin{align}\begin{split}
    0
    =&P_1( \nabla_{\eta}\nabla_{\xi} u_n-\nabla_{\eta}\nabla_{\xi}\bar{u})-\frac{\partial_zu_{n} }{u_{n-1}^2}\partial_z( \nabla_{\eta}\nabla_{\xi} u_n-\nabla_{\eta}\nabla_{\xi}\bar{u})+\tilde{c}( \nabla_{\eta}\nabla_{\xi} u_n-\nabla_{\eta}\nabla_{\xi}\bar{u})\\&+\frac{1}{u_n^2}\partial_{y}E +f_{12}^{new},\end{split}
\end{align} with
\begin{align*}
    E=[(1+\tilde{q})K\partial_zu_n+Q]u_n^2,
\end{align*}
and
\begin{align}\label{26-03-15-7}
  |E|\leq 2\varepsilon^2 e^{-\frac{A}{x+1}}C_1  \quad \text{in} \quad \Omega\cap\{0\leq y\leq Y^*\},
\end{align}
 where
\begin{align*}f_{12}^{new}:=&I-\partial_{y}\tilde{q}\,K\partial_zu_n
-2\frac{\partial_y u_n}{u_n}[(1+\tilde{q})K\partial_zu_n+Q]
+f_{12}^u.
 \end{align*}
 Then by $|\partial_y u_n|\leq Cu_n$, \eqref{26-03-30-6} and \eqref{cf12u}, $f_{12}^{new}$ satisfies
 \begin{align}
|f_{12}^{new}|\leq  \frac{C}{u_{n-1}^{2}}\phi_{1,0} +\frac{\varepsilon^2C\phi_{1,0}}{u_{n-1}^{3-\alpha}},
\end{align} which can be controlled by the barrier function. Then we derive \eqref{1106} by using  the same method as for the term \eqref{26-03-30-1} through the estimation on  $F=u_n^2 f+remainder$ defined in \eqref{26-03-29-2} in Theorem \ref{1105}.
\end{proof}

 \subsection{ Estimates on $\partial_z\nabla_{\eta,\xi}u_n$}
 First, we estimate in the domain $\{ z\in[0,\delta_3]\}\cap\Omega\cap\{0\leq y\leq Y^*\}$ for some small positive constant $\delta_3$ and then estimate in the domain $\{ z\in[\delta_3,+
 \infty)\}\cap\Omega\cap\{0\leq y\leq Y^*\}$.
 \begin{theorem}For a small positive constant $\delta_3$,  it holds
 \begin{align}\label{zh1}
    |\partial_z\nabla_{\eta,\xi}u_n-\partial_z\nabla_{\tau_1}\bar{u} |\leq \varepsilon^4  \phi_{1,\alpha} \quad \text{in} \quad \Omega\cap\{0\leq y\leq Y^*\}\cap\{0\leq z\leq \delta_3\}.
 \end{align}
 \end{theorem}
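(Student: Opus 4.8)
The plan is to prove \eqref{zh1} by a localized vector-field maximum principle on the thin slab $\Omega\cap\{0\le y\le Y^*\}\cap\{0\le z\le\delta_3\}$, comparing
$$h:=\partial_z\nabla_{\eta,\xi}u_n-\partial_z\nabla_{\tau_1}\bar u$$
against the barrier $\varepsilon^4\phi_{1,\alpha}$, with the ridge of $\phi_{1,\alpha}$ treated exactly as in Example~\ref{1118-4}. The smallness of $\delta_3$ (and of $\epsilon_0$) will be used in two ways: to keep the degenerate coefficients $\frac{\partial_z u_n}{u_{n-1}^2}$ and $\frac{1}{u_{n-1}}$ under control, and to guarantee that the boundary data of $h$ on the lateral faces $\{x=0\}\cup\{y=0\}$ is genuinely $\varepsilon^6$-small there, since the conversion terms $\partial_z G\,\partial_z u_n$ and $G\,\partial_z^2 u_n$ relating $\partial_z\nabla_{\eta,\xi}$ to $\partial_z\partial_{x,y}$ (with $G$ as in \eqref{K}) carry a vanishing factor $\sim z$ near $z=0$.

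First I would derive the equation for $h$. Differentiating \eqref{qn1} and \eqref{qn2} in the normal direction (equivalently, applying $\nabla_\psi$ and multiplying by $u_{n-1}$), and subtracting the corresponding normal derivative of the equation for $\nabla_{\tau_1}\bar u$, one obtains a second-order equation for $h$ whose principal part is again of $P_1$-type, with lower-order terms of the form $-\frac{\partial_z u_n}{u_{n-1}^2}\partial_z h+O(u_{n-1}^{-2}\phi_{1,0})h$. The source $\mathcal{R}$ collects: the normal derivative of $(1+\tilde q)K\partial_z u_n$, i.e.\ $K\partial_z^2 u_n+(\partial_z K)\partial_z u_n$ — here one invokes \eqref{Ksmallinfty} and, crucially, the representation \eqref{dzk}--\eqref{dzKsmallinfty} of $\partial_z K$ together with $|\nabla_{\eta,\xi}\nabla_{\eta,\xi}\tilde q|\le\varepsilon^2 C u_{n-1}^{-1}\phi_{1,\frac{\alpha}{2}}$ from \eqref{tildq0221}; the normal derivatives of $f_1,f_2$, of $\nabla_\eta\tilde q\,\nabla_\eta u_n$ and of $\nabla_\psi(\nabla_{\eta,\xi}u_n\,\nabla_\psi u_n)$, estimated through \eqref{assumpn}, \eqref{0221qn-1}, \eqref{tildq0221}, Theorems~\ref{thm1}, \ref{dzunphi0}, \ref{r2imp} and Corollary~\ref{d2tan}; and the commutator corrections generated by $[\nabla_\eta,\nabla_\psi]=\frac{\nabla_\eta\tilde q}{1+\tilde q}\nabla_\psi$. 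Using the sharp near-boundary decay of the tangential quantities, the genuinely singular part of $\mathcal{R}$ is bounded by $\varepsilon^2 C u_{n-1}^{-(2-\alpha)}\phi_{1,0}$ in the slab.

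Next I would close the argument by the bounded-domain maximum principle, Lemma~\ref{bdMP}, generalized to allow the ridge. By \eqref{1118-3}, $P_1(\varepsilon^4\phi_{1,\alpha})\ge c_2\varepsilon^4\big(\frac{\alpha(1-\alpha)}{u_{n-1}(z+\epsilon_0)^2}+A\big)\phi_{1,\alpha}$, whose leading part near $z=0$ is of order $\varepsilon^4 u_{n-1}^{-(3-\alpha)}\phi_{1,0}$; taking $\delta_3\lesssim\varepsilon^2$ small and $A$ large makes this dominate the singular part of $\mathcal{R}$, while the singular first-order $\psi$-coefficient and the $O(u_{n-1}^{-2})$ zeroth-order coefficient are absorbed exactly as the operator $P_3$ is handled in the proof of Theorem~\ref{r2imp}, using $-\nabla_\psi^2 u_n\ge\frac{c_0^2}{2u_{n-1}^3}$ near $z=0$. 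The boundary data of $h$ on $\{z=0\}$ and on $\{x=0\}\cup\{y=0\}$ restricted to $z\le\delta_3$ satisfies $|h|\le\varepsilon^4\phi_{1,\alpha}$ by the (approximate, $\epsilon_0$-dependent) compatibility conditions behind \eqref{bddata} together with the vanishing of the conversion terms at $z=0$; the top-face value at $z=\delta_3$, where $u_{n-1}$ is bounded below, is obtained from the already-proven estimate \eqref{0323-toimp} on $\nabla_{\eta,\xi}u_n-\nabla_{\tau_1}\bar u$ after a preliminary coarse bound on $h$ in terms of $\phi_{2,\beta}$ (a $P_2$-barrier argument of the kind used in Theorem~\ref{thm1}), which at this fixed height already yields the required bound. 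Applying the maximum principle to $\pm h-\varepsilon^4\phi_{1,\alpha}$ gives \eqref{zh1}.

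The step I expect to be the main obstacle is the control of the curvature-induced source. After one normal derivative the term $K\partial_z u_n$ produces $(\partial_z K)\partial_z u_n$, and the only available bound on $\partial_z K$, from \eqref{dzKsmallinfty}, degenerates like $\varepsilon^2 u_{n-1}^{-1}\phi_{1,0}$ near $z=0$; matching this (and the $\partial_z^2\nabla_\eta\tilde q$ contributions entering through $\partial_z K$ via \eqref{dzk}) against $P_1(\varepsilon^4\phi_{1,\alpha})$ is precisely why the estimate must be confined to the slab $\{z\le\delta_3\}$, and why the sharp representation \eqref{yq} of $K$ — which guarantees that $\partial_z K$ only ever appears paired with a compensating power of $u_{n-1}$ — is indispensable. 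A secondary delicate point is securing the top-face data at $z=\delta_3$ without circularity, which forces the two-step structure (coarse $\phi_{2,\beta}$-bound, then refinement to $\phi_{1,\alpha}$), paralleling Section~\ref{s4}.
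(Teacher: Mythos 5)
Your proposal replaces the paper's argument by a slab maximum principle for $h=\partial_z\nabla_{\eta,\xi}u_n-\partial_z\nabla_{\tau_1}\bar u$ with barrier $\varepsilon^4\phi_{1,\alpha}$, and there are two concrete gaps in it. First, the boundary data on the top face $\{z=\delta_3\}$: to compare $\pm h$ with $\varepsilon^4\phi_{1,\alpha}$ on $[0,\delta_3]$ you must know $|h|\le\varepsilon^4\phi_{1,\alpha}$ at $z=\delta_3$, but at this stage of the bootstrap the only available information there is the defining bound of $Y^*$ in \eqref{assumpn-2}, namely $|h|\le d_0\varepsilon^2\phi_{1,\alpha}$, which is two powers of $\varepsilon$ too weak; the estimate \eqref{0323-toimp} bounds $\nabla_{\eta,\xi}u_n-\nabla_{\tau_1}\bar u$ itself and gives no pointwise control of its $z$-derivative at a fixed height, and the proposed ``preliminary coarse $\phi_{2,\beta}$-bound on $h$'' is exactly the kind of statement that would itself require an equation for $h$ with controlled sources and boundary data in $z$ — i.e.\ it begs the question. (In the paper the logical order is the reverse: the slab estimate is proved first, without any data at $z=\delta_3$, and only then is it used as the bottom datum for the region $\{z\ge\delta_3\}$ in \eqref{1116-1}.) Second, the source bound $|\mathcal{R}|\lesssim\varepsilon^2u_{n-1}^{-(2-\alpha)}\phi_{1,0}$ is not substantiated and is in general false for this route: differentiating \eqref{P1111} in $z$ produces, among other things, $\partial_z R_2$, and the only available bound \eqref{r2sheng} is $|R_2|\le Cu_{n-1}^{-2}\phi_{1,0}$ with $C$ \emph{not} small in $\varepsilon$, so the differentiated remainder is of size $Cu_{n-1}^{-3}\phi_{1,0}$; the barrier's degenerate gain $c_2\varepsilon^4\frac{\alpha(1-\alpha)}{u_{n-1}(z+\epsilon_0)^2}\phi_{1,\alpha}\sim\varepsilon^4u_{n-1}^{\alpha-3}$ from \eqref{1118-3} can never dominate an $O(1)\cdot u_{n-1}^{-3}$ term, no matter how small $\delta_3$ or how large $A$ is taken. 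Absorbing it would require exhibiting cancellations that your derivation does not provide.

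The paper avoids both problems by never differentiating the equation in $z$ and never invoking a maximum principle in the slab. It works with $u_n^2$: writing $\frac12(\partial_z\nabla_\xi u_n^2-\partial_z\nabla_{\tau_1}\bar u^2)$ in terms of $G=\nabla_\xi u_n-\nabla_{\tau_1}\bar u$, it uses the equation \eqref{ppun2} to convert $u_n\nabla_\psi^2\nabla_\xi u_n^2-\bar u\nabla_n^2\nabla_{\tau_1}\bar u^2$ into second-order \emph{tangential} quantities already controlled by Theorem \ref{5.1} and the bootstrap, which yields a bound on $\partial_z\bigl(\frac{1}{u_{n-1}}\partial_z\nabla_\xi u_n^2-\frac1{\bar u}\partial_z\nabla_{\tau_1}\bar u^2\bigr)$; this is then integrated in $z$ from the compatible data at $z=0$ (twice, in effect), and the smallness in the slab comes from the elementary observation that $\bar u^{2}\le\varepsilon^5\bar u^{1+\alpha}$ once $\delta_3=\varepsilon^{\frac{5}{1-\alpha}}$ as in \eqref{delta3}. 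In short, the key idea you are missing is the reduction of the normal-derivative estimate to already-proven tangential estimates through the equation for $u_n^2$, followed by integration from $z=0$; the maximum principle only enters later, for $z\ge\delta_3$, with your slab estimate serving as its boundary datum rather than the other way around.
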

\begin{proof} To estimate $|\partial_z\nabla_{\xi}u_n-\partial_z\nabla_{\tau_1}\bar{u} |,$
	firstly by straightforward  calculation, we have
 \begin{align*}
    \frac{1}{2}(\partial_z\nabla_{\xi}u_{n}^2 - \partial_z \nabla_{\tau_1} \bar{u}^2)=&\partial_zu_{n}\nabla_{\xi}u_{n}+u_{n}\partial_z\nabla_{\xi}u_{n}
    -\partial_z\bar{u}\nabla_{\tau_1}\bar{u}-\bar{u}\partial_z\nabla_{\tau_1}\bar{u}
 \\
 =&\partial_zu_{n}G+u_{n}\partial_z G
  +\nabla_{\tau_1}\bar{u}(\partial_zu_{n}-\partial_z\bar{u}) +(u_{n}-\bar{u})\partial_z\nabla_{\tau_1}\bar{u},
\end{align*}
 where $G=\nabla_{\xi}u_{n}-\nabla_{\tau_1}\bar{u}$.

Next, we \textit{claim} that
  \begin{align*}
    |\partial_z\nabla_{\xi}u_{n}^2 - \partial_z \nabla_{\tau_1} \bar{u}^2|\leq \varepsilon^5C\phi_{1,2\alpha}u_n+ e^{-\frac{A}{x+1}} C\bar{u}^2 \quad \text{in}\quad \{z\leq \varepsilon^3\}\cap\Omega\cap\{y\leq Y^*\}.
 \end{align*}
If the above claim holds, then we have
\begin{align}\label{ungback}
\pm(\partial_zu_{n}G+u_{n}\partial_z G)=\pm\partial_z(u_{n} G) \leq \varepsilon^5C\phi_{1,\alpha}u_n+ e^{-\frac{A}{x+1}} C\bar{u}^2.
\end{align}
Then
for $z$ near $0,$ by  $|G|\leq  e^{-\frac{A}{x+1}} C{\epsilon_0}^2$ on $z=0$, we have\begin{align*}
   | G| \leq \varepsilon^5C\phi_{1,\alpha}u_n+ e^{-\frac{A}{x+1}} C\bar{u}^2.
\end{align*}
Substituting this to \eqref{ungback},
   for sufficiently small  positive constant $\delta_3$ such that
   \begin{align}\label{delta3}
   \delta_3=\varepsilon^{\frac{5}{1-\alpha}},
   \end{align}
we have
\begin{align}\label{zeta-2025-05-23}
 |\partial_z\nabla_{\xi}u_{n} - \partial_z \nabla_{\tau_1} \bar{u}|\leq
 \varepsilon^5 e^{-\frac{A}{x+1}} C\bar{u}^{\alpha}\quad\text{in}\quad\{ z\in[0,\delta_3]\}\cap\Omega\cap\{0\leq y\leq Y^*\}.
\end{align}Similarly, we can prove
\begin{align}
 |\partial_z\nabla_{\eta}u_{n} - \partial_z \nabla_{\tau_1} \bar{u}|\leq
 \varepsilon^5 e^{-\frac{A}{x+1}} C\bar{u}^{\alpha}\quad\text{in}\quad\{ z\in[0,\delta_3]\}\cap\Omega\cap\{0\leq y\leq Y^*\}.
\end{align}

\textit{Proof of the claim}

  By \eqref{0823}, we have
\begin{align}\label{ntau1}
\nabla_{n}^2\bar{u}^2=4\nabla_{\tau_1} \bar{u},\quad \nabla_{n}^2\nabla_{\tau_1}\bar{u}^2=
\nabla_{\tau_1}\nabla_{n}^2\bar{u}^2=4\nabla_{\tau_1} ^2\bar{u}.
\end{align}
By \eqref{ppun2},
straightforward calculation yields
\begin{align}\label{xz5}\begin{split}
 I:=u_{n}\nabla_{\psi}^2\nabla_{\xi}u_{n}^2 - \bar{u}\nabla_{n}^2 \nabla_{\tau_1} \bar{u}^2= &\nabla_{\xi} ^2u_{n}^2 +\nabla_{\xi}\tilde{q}\nabla_{\eta}u_{n}^2+(1+\tilde{q})\nabla_{\xi}\nabla_{\eta}u_{n}^2  - 2\nabla_{\tau_1} ^2   \bar{u}^2\\
    &
    -\nabla_{\xi}u_{n}\,\nabla_{\psi}^2u_{n}^2+\nabla_{\tau_1}\bar{u}\nabla_{n}^2  \bar{u}^2 .\end{split}\end{align}

   Step 1 We will prove \eqref{26-03-31-1}.

    Note by \eqref{ntau1},
    \begin{align*}
       | \nabla_{\xi}u_{n}\,\nabla_{\psi}^2u_{n}^2-\nabla_{\tau_1}\bar{u}\nabla_{n}^2  \bar{u}^2|\leq& | (\nabla_{\xi}u_{n}-\nabla_{\tau_1}\bar{u})\nabla_{n}^2  \bar{u}^2|+|\nabla_{\xi}u_{n}(
       \nabla_{\psi}^2u_{n}^2-\nabla_{n}^2  \bar{u}^2)|
       \\ \leq &C\bar{u} u_{n-1}\phi_{1,0}.
    \end{align*}
    In addition,
    \begin{align*}
        |\nabla_{\xi}\tilde{q}\nabla_{\eta}u_{n}^2
        +\tilde{q}\nabla_{\xi}\nabla_{\eta}u_{n}^2 |=&
        |2u_{n}\nabla_{\xi}\tilde{q}\nabla_{\eta}u_{n}
        +2\tilde{q}(u_{n}\nabla_{\xi}\nabla_{\eta}u_{n} +
        \nabla_{\xi}u_{n}\nabla_{\eta}u_{n} )|\leq \varepsilon^2Cu_{n-1}^{1+\frac{\alpha}{2}}\phi_{1,0},
    \end{align*}
     and \begin{align*}
        |\nabla_{\xi} ^2u_{n}^2 +\nabla_{\xi}\nabla_{\eta}u_{n}^2  -2\nabla_{\tau_1} ^2   \bar{u}^2|
        =& |2(\nabla_{\xi} u_{n})^2+2 u_{n}\nabla_{\xi}^2 u_{n} +2\nabla_{\xi} u_{n}\nabla_{\eta} u_{n}+2 u_{n}\nabla_{\xi}\nabla_{\eta} u_{n}
         \\&-4(\nabla_{\tau_1} \bar{u})^2 -4\bar{u}\nabla_{\tau_1}^2\bar{u}|
        \\ \leq & C\varepsilon^2 d_0^2\bar{u}^{\frac{\alpha}{2}} u_{n-1} \phi_{1,0}+ C\bar{u}u_{n-1}\phi_{1,0}.
      \end{align*} Therefore, by \eqref{xz5},
we have
   \begin{align*}
    |I|=|u_{n}\nabla_{\psi}^2\nabla_{\xi}u_{n}^2 - \bar{u}\nabla_{n}^2 \nabla_{\tau_1} \bar{u}^2| \leq   C\varepsilon^2  u_{n-1}^{1+\frac{\alpha}{2}} \phi_{1,0}+ C\bar{u}u_{n-1}\phi_{1,0}.
   \end{align*}
On the other hand,
\begin{align}\label{26-03-31-2}\begin{split}
I=& u_{n}\nabla_{\psi}^2\nabla_{\xi}u_{n}^2 - \bar{u}\nabla_{n}^2 \nabla_{\tau_1} \bar{u}^2\\= & u_{n}\nabla_{\psi}(\nabla_{\psi}\nabla_{\xi}u_{n}^2 - \nabla_{n} \nabla_{\tau_1} \bar{u}^2)+ (u_{n}\nabla_{\psi}- \bar{u} \nabla_{n}) \nabla_{n} \nabla_{\tau_1} \bar{u}^2\\= &
  u_{n}\nabla_{\psi}(\nabla_{\psi}\nabla_{\xi}u_{n}^2 - \nabla_{n} \nabla_{\tau_1} \bar{u}^2)+\bar{u}(\frac{u_{n}}{u_{n-1}}-1)\nabla_{n}^2 \nabla_{\tau_1} \bar{u}^2
  \\= &
 \frac{ u_{n}}{u_{n-1}}\partial_z(\frac{1}{u_{n-1}}\partial_z\nabla_{\xi}u_{n}^2 - \frac{1}{\bar{u}}\partial_z \nabla_{\tau_1} \bar{u}^2)+\bar{u}(\frac{u_{n}}{u_{n-1}}-1)\nabla_{n}^2 \nabla_{\tau_1} \bar{u}^2,\end{split}
 \end{align}
 where we have used that
 \begin{align*}
    (u_{n}\nabla_{\psi}- \bar{u} \nabla_{n}) \nabla_{n} \nabla_{\tau_1} \bar{u}^2=&  (\frac{u_{n}}{u_{n-1}}-1)\partial_z\nabla_{n} \nabla_{\tau_1} \bar{u}^2\\
    =&\bar{u}(\frac{u_{n}}{u_{n-1}}-1)\nabla_{n}^2 \nabla_{\tau_1} \bar{u}^2.
 \end{align*}
Then by  \eqref{ntau1} and \eqref{26-03-31-2}, we have
 \begin{align}\label{26-03-31-1}
    |\partial_z(\frac{1}{u_{n-1}}\partial_z\nabla_{\xi}u_{n}^2 - \frac{1}{\bar{u}}\partial_z \nabla_{\tau_1} \bar{u}^2) | \leq  C\varepsilon^2  u_{n-1}^{1+\frac{\alpha}{2}} \phi_{1,0}+ C\bar{u}u_{n-1}\phi_{1,0}.
 \end{align}

    Step 2 We will prove
 $ |\frac{1}{u_{n-1}}\partial_z\nabla_{\xi}u_{n}^2 - \frac{1}{\bar{u}}\partial_z \nabla_{\tau_1} \bar{u}^2|\leq \varepsilon^2  e^{-\frac{A}{x+1}}C{\epsilon_0}$ at $z=0$, through the compatible  boundary condition.

   By the definition of $\nabla_{\psi}$ and \eqref{kh1}, we have
 \begin{align}\label{26-03-30-7}\begin{split}
     \frac{1}{u_{n-1}}\partial_z\nabla_{\xi}u_{n}^2 - \frac{1}{\bar{u}}\partial_z \nabla_{\tau_1} \bar{u}^2=&\nabla_{\psi}\nabla_{\xi}u_{n}^2 - \nabla_{n} \nabla_{\tau_1} \bar{u}^2\\= & \nabla_{\xi}\nabla_{\psi}u_{n}^2 -   \nabla_{\tau_1} \nabla_{n} \bar{u}^2\\
     = & \nabla_{\xi}(2\frac{u_n}{u_{n-1}}\partial_zu_{n}) -    \nabla_{\tau_1} (2\partial_z\bar{u}).\end{split}
 \end{align}
And
 by  the definition of $ \nabla_{\xi}$, we have when $z=0,$
\begin{align}\label{xz6}\begin{split}
\nabla_{\xi}(\frac{u_n}{u_{n-1}}\partial_zu_{n})=&\partial_x(\frac{u_n}{u_{n-1}}\partial_zu_{n})
-\frac{\int_0^z \partial_{x}u_{n-1}dz'}{u_{n-1}}\partial_z(\frac{u_n}{u_{n-1}}\partial_zu_{n})
\\=&\partial_x(\frac{u_n}{u_{n-1}}\partial_zu_{n})\end{split}\end{align}
because  $\frac{\int_0^z \partial_{x}u_{n-1}dz'}{u_{n-1}}=0$ when $z=0.$
So does $\nabla_{\tau_1}\partial_z \bar{u}=\partial_{x}\partial_z \bar{u}$ at $z=0.$ Substituting back to \eqref{26-03-30-7}, we complete step 2 by the compatible boundary condition.

 Then combining step 1 and step 2, by multiplying $\bar{u}$ to the integral of \eqref{26-03-31-1},
 we have
 \begin{align*}
    |\frac{\bar{u}}{u_{n-1}}\partial_z\nabla_{\xi}u_{n}^2 - \partial_z \nabla_{\tau_1} \bar{u}^2|\leq  C\bar{u}^2(\varepsilon^2  u_{n-1}^{1+\frac{\alpha}{2}} \phi_{1,0}+ C\bar{u}u_{n-1}\phi_{1,0})+  e^{-\frac{A}{x+1}} C\bar{u}^2.
 \end{align*} Since
 $|\frac{\bar{u}}{u_{n-1}}-1|\leq \varepsilon^5\phi_{1,2\alpha}$
  and
  \begin{align*}
  |\partial_z\nabla_{\xi}u_{n}^2|=&|2\partial_z(u_{n}\nabla_{\xi}u_{n})|
  =|2\partial_zu_{n}\nabla_{\xi}u_{n}+2u_{n}\partial_z\nabla_{\xi}u_{n}|
  \leq  Cu_ne^{-\frac{(z+{\epsilon_0})^2}{x+1}\mu},
  \end{align*}
we  have
  \begin{align*}
    |\partial_z\nabla_{\xi}u_{n}^2 - \partial_z \nabla_{\tau_1} \bar{u}^2|\leq \varepsilon^5C\phi_{1,2\alpha}u_n+  e^{-\frac{A}{x+1}}C\bar{u}^2\quad \text{in}\quad \{z\leq \varepsilon^3\}\cap\Omega\cap\{y\leq Y^*\}
 \end{align*}
which gives the claim. And then we complete the proof of theorem.
 \end{proof}

\begin{remark}\label{yw3}
	By \eqref{zh1} and Lemma \ref{xz1104} in Appendix, we have for $\varepsilon$ being  sufficiently small compared to $\alpha,$ it holds
	\begin{align}\label{zh2}
		|\partial_z\nabla_{\eta,\xi}u_n-\partial_z\nabla_{\eta,\xi}\bar{u} |\leq  \frac{ \varepsilon^2}{2+\frac{\alpha}{3}} \phi_{1,\alpha} \quad \text{in} \quad \Omega\cap\{0\leq y\leq Y^*\}\cap\{0\leq z\leq \delta_\varepsilon\},
	\end{align}where $\delta_\varepsilon=\min\{\varepsilon^6 ,\delta_3\}.$
\end{remark}

Now we estimate the derivatives in the domain away from $z=0.$
\begin{theorem} It holds
 \begin{align}\label{1116-1}
    |\partial_z\nabla_{\eta,\xi}u_n-\partial_z\nabla_{\tau_1}\bar{u} |\leq \varepsilon^4  \phi_{1,\alpha} \quad \text{in} \quad \Omega\cap\{0\leq y\leq Y^*\},
 \end{align} which implies
  \begin{align}\label{1116-2}
    |\nabla_{\eta,\xi}u_n-\nabla_{\tau_1}\bar{u} |\leq \varepsilon^3 \phi_{1,1} \quad \text{in} \quad \Omega\cap\{0\leq y\leq Y^*\}.
 \end{align}
 \end{theorem}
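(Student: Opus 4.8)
By the preceding theorem, \eqref{1116-1} already holds on $\Omega\cap\{0\le y\le Y^*\}\cap\{0\le z\le\delta_3\}$ with $\delta_3$ as in \eqref{delta3}; this is the slab where $u_{n-1}$ degenerates and the delicate estimate there has been carried out. It therefore remains to establish \eqref{1116-1} on $\{z\ge\delta_3\}$, where $u_{n-1}\ge c_0\delta_3>0$ by \eqref{positivelbu}. The plan is to run the same barrier and maximum-principle scheme used for the lower order quantities, now applied to $g:=\partial_z\nabla_{\eta,\xi}u_n-\partial_z\nabla_{\tau_1}\bar{u}$, and then to deduce \eqref{1116-2} by integrating in $z$ from $z=0$.

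First I would write down the equation satisfied by $g$ on $\{z\ge\delta_3\}$. Applying $\nabla_\psi$ to the transport equations \eqref{qn1} for $\nabla_\eta u_n$ and \eqref{qn2} for $\nabla_\xi u_n$, using the commutator identities $[\nabla_\xi,\nabla_\psi]=0$ and $[\nabla_\eta,\nabla_\psi]=\frac{\nabla_\eta\tilde{q}}{1+\tilde{q}}\nabla_\psi$ from \eqref{kh1}--\eqref{kh2}, and subtracting the $\nabla_\psi$-differentiated background equation for $\nabla_n\nabla_{\tau_1}\bar{u}$, one obtains, after multiplication by $u_{n-1}$ and absorption of the harmless mismatch $(\nabla_\psi-\nabla_n)\nabla_{\tau_1}\bar{u}$, an equation of the form
\begin{align*}
P_1 g+\bar{b}\,\nabla_\psi g+\bar{c}\,g=R\qquad\text{in }(0,X]\times(0,Y^*]\times(\delta_3,+\infty),
\end{align*}
where $\bar{b},\bar{c}$ are bounded on $\{z\ge\delta_3\}$ and $R$ collects: the commutator contributions $\pm K\partial_z^2(\cdot)$ and $\pm\partial_z K\,\partial_z(\cdot)$ generated by $[\nabla_\xi,\nabla_\eta]$, as in \eqref{1104-1}--\eqref{f12u}; the vector-field mismatches $\nabla_\psi-\nabla_n$ and $\nabla_{\eta,\xi}-\nabla_{\tau_1}$ acting on $\bar{u}$ and its derivatives; the quadratic term $\nabla_\psi(\nabla_{\eta,\xi}u_n\,\nabla_\psi u_n)$ differentiated once more together with its background analogue; and the $\tilde{q}$-commutator terms $\nabla_\psi\big(\frac{\nabla_\eta\tilde{q}}{1+\tilde{q}}u_n\nabla_\psi u_n\big)$, $\nabla_\psi(\nabla_\eta\tilde{q}\,\nabla_\eta u_n)$, and so on.

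The main step is the bound $|R|\le C\phi_{1,0}$ on $\{z\ge\delta_3\}$. This uses the second order tangential estimates of Theorem~\ref{5.1}, Theorem~\ref{1105} and Corollary~\ref{d2tan}; the bounds \eqref{Ksmallinfty}, \eqref{dzKsmallinfty} on $K$ and $\partial_z K$ (the factor $1/u_{n-1}$ in \eqref{dzKsmallinfty} being absorbed by $u_{n-1}\ge c_0\delta_3$); the first order estimates \eqref{changchun0106}, \eqref{1012} and the monotonicity \eqref{mon0315dj}; the induction assumption \eqref{n-1assumption} together with the $\tilde{q}$-estimates; and the growth and decay bounds for $\bar{u}$. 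Since on $\{z\ge\delta_3\}$ every negative power of $u_{n-1}$ appearing in $R$ is $O(1)$, no degeneracy enters. With $|R|\le C\phi_{1,0}$ in hand I would take $A\gg\varepsilon^{-4}$ and invoke \eqref{1118-3}, together with the ridge version of the maximum principle, to get $P_1(\pm g-\varepsilon^4\phi_{1,\alpha})+\bar{b}\,\nabla_\psi(\cdot)+\bar{c}\,(\cdot)<0$ off the ridges; the boundary values $\pm g\le\varepsilon^4\phi_{1,\alpha}$ hold on $\{z=\delta_3\}$ by the already-proved \eqref{zh1}, on $\{x=0\}\cup\{y=0\}$ by \eqref{bddata} and the compatibility conditions (the barrier $\varepsilon^4\phi_{1,\alpha}$ dominating the data there), and $g\to 0$ as $z\to+\infty$. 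Then Lemma~\ref{bdMP} and Lemma~\ref{infMPx}, splitting at a large $z=N_1$ and using $\phi_{1,0}$ as the outer barrier for $z\ge N_1$ as in the proof of Theorem~\ref{r2imp}, yield \eqref{1116-1} on $\{z\ge\delta_3\}$, hence on all of $\Omega\cap\{0\le y\le Y^*\}$.

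Finally, \eqref{1116-2} follows by integration in $z$: at $z=0$ the nonlocal parts of $\nabla_\xi,\nabla_\eta,\nabla_{\tau_1}$ vanish, so $\nabla_{\eta,\xi}u_n-\nabla_{\tau_1}\bar{u}=\partial_{x,y}u_n-\partial_x\bar{u}$ there, which is bounded by $\varepsilon^6 C\phi_{1,1}$ via the boundary data, cf.\ \eqref{bddata}; hence
\begin{align*}
|\nabla_{\eta,\xi}u_n-\nabla_{\tau_1}\bar{u}|\le\varepsilon^6 C\phi_{1,1}+\int_0^z|\partial_z\nabla_{\eta,\xi}u_n-\partial_z\nabla_{\tau_1}\bar{u}|\,dz'\le\varepsilon^6 C\phi_{1,1}+\varepsilon^4 C\phi_{1,1+\alpha}\le\varepsilon^3\phi_{1,1},
\end{align*}
using \eqref{1116-1}, the elementary inequality $\int_0^z\phi_{1,\alpha}\,dz'\le C\phi_{1,1+\alpha}$ with $C$ independent of $\varepsilon$, the bound $\phi_{1,1+\alpha}\le C\phi_{1,1}$ (the two weights in \eqref{phi1} differing by a bounded factor), and $\varepsilon$ small. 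I expect the genuine obstacle to be the estimate $|R|\le C\phi_{1,0}$: the terms $\partial_z K\,\partial_z u_n$ and the mismatches $(\nabla_\psi^2-\nabla_n^2)$, $(\nabla_\psi^3-\nabla_n^3)$ applied to derivatives of $\bar{u}$ carry inverse powers of $u_{n-1}$, and one must verify carefully that restricting to $\{z\ge\delta_3\}$ — where the subtle estimate near $z=0$ has already been secured — really does make all of them $O(\phi_{1,0})$, so that $A\gg\varepsilon^{-4}$ closes the argument; the remainder is essentially a repetition of the scheme already carried out for $\nabla_{\eta,\xi}u_n-\nabla_{\tau_1}\bar{u}$ in Theorem~\ref{r2imp}.
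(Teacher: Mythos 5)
Your treatment of \eqref{1116-1} follows essentially the paper's strategy: the preceding theorem settles $\{0\le z\le\delta_3\}$, and on $\{z\ge\delta_3\}$ one writes an equation for a normal derivative of $H=\nabla_{\eta,\xi}u_n-\nabla_{\tau_1}\bar u$, bounds the remainder by a multiple of $\phi_{1,0}$, and closes with a barrier and the maximum principle, with \eqref{zh1}/\eqref{zeta-2025-05-23} supplying the data at $z=\delta_3$. The paper implements this a bit more economically: rather than re-deriving an equation for $\partial_z H$ with all the $K$, $\partial_z K$ contributions, it applies $\nabla_\psi$ to the already established equation \eqref{P1111}, obtains $P_2\nabla_\psi H=R_3$, then $P_2(u_n\nabla_\psi H)=R_4$ using $P_2u_n=(\nabla_\psi u_n)^2$, and uses the barrier $\varepsilon^{\frac{9}{2}}\phi_{1,0}$. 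One correction to your version: the assertion that on $\{z\ge\delta_3\}$ ``every negative power of $u_{n-1}$ appearing in $R$ is $O(1)$'' is true only with constants depending on $\delta_3=\varepsilon^{\frac{5}{1-\alpha}}$, hence on $\varepsilon$; the correct bound is $|R|\le C_{\delta_3}\phi_{1,0}$, so $A\gg\varepsilon^{-4}$ need not suffice and one must take $A\gg C_{\delta_3}$ (harmless, since $A$ is allowed to depend on $\varepsilon$, and this is exactly what the paper does).

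The genuine gap is in your deduction of \eqref{1116-2}. The inequality $\int_0^z\phi_{1,\alpha}\,dz'\le C\phi_{1,1+\alpha}$ fails outside the inner region: for $\frac{z+\epsilon_0}{\sqrt{x+1}}\ge N$ the left-hand side tends to a positive constant times $e^{Ax}$, while $\phi_{1,1+\alpha}$ and $\phi_{1,1}$ decay like $e^{-\mu\frac{(z+\epsilon_0)^2}{x+1}}$, so no constant independent of $z$ can close your chain of inequalities, and pure integration of \eqref{1116-1} cannot produce the Gaussian decay encoded in $\varepsilon^3\phi_{1,1}$. Integration only upgrades the growth rate where $\frac{z+\epsilon_0}{\sqrt{x+1}}\le\delta$; for $\frac{z+\epsilon_0}{\sqrt{x+1}}\ge\delta$ you should instead invoke \eqref{20250206} from Theorem \ref{r2imp}, which gives $|\nabla_{\eta,\xi}u_n-\nabla_{\tau_1}\bar u|\le C\varepsilon^5\phi_{1,0}=C\varepsilon^5\delta^{-1}\phi_{1,1}\le\varepsilon^3\phi_{1,1}$ there, since $\phi_{1,1}=\delta\,\phi_{1,0}$ off the inner region. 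This is precisely the point of the remark following Theorem \ref{r2imp}: only the rate near $z=0$ is improved to \eqref{1116-2}, while the far-field decay is inherited from the earlier estimate rather than obtained by integrating \eqref{1116-1}.
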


\begin{proof}
Set $$H=\nabla_{\eta,\xi}u_n-\nabla_{\tau_1}\bar{u}.$$ We estimate  $\nabla_{\psi }H$ in $(0,X]\times(0,Y^*]\times[\delta_3,+\infty).$
By \eqref{P1111},
\begin{align}\label{h0}
    P_1H-\nabla_{\psi}H\nabla_{\psi}u_{n}-H\nabla_{\psi}^2u_{n}=R_2.
\end{align}
Observe that by \eqref{dpsi3un} and the bootstrap assumption, it holds
\begin{align}\label{h1}\begin{split}&|\nabla_{\psi}^2u_{n}|+|\nabla_{\psi}^3u_{n}|
\leq C_{\delta_3}e^{-\frac{(z+{\epsilon_0})^2}{x+1}\mu},
\\&|\nabla_{\psi }\nabla_{\xi }H|+|\nabla_{\psi }\nabla_{\eta }H|+|\nabla_{\psi }H|+|\nabla_{\xi }H|+|\nabla_{\eta }H|+|H| \leq C_{\delta_3}\phi_{1,0},
\\&
\text{in} \quad
[0,X]\times[0,Y^*]\times[\delta_3,+\infty).\end{split}
\end{align} Then  by \eqref{h0}, we have
\begin{align}\label{h2}
  |\nabla_{\psi }^2H|\leq& C_{\delta_3}\phi_{1,0}\quad
\text{in} \quad
[0,X]\times[0,Y^*]\times[\delta_3,+\infty).
\end{align}
 By \eqref{26-03-17-1}, we have
 \begin{align}\label{h21}
  |\nabla_{\psi }^2\nabla_{\eta}\tilde{q}|\leq& C_{\delta_3}\phi_{1,0}\quad
\text{in} \quad
[0,X]\times[0,Y^*]\times[\delta_3,+\infty).
\end{align}
Combining \eqref{h1}-\eqref{h21} and \eqref{dzKsmallinfty}, we have
 \begin{align}\label{h3}
  |\nabla_{\psi }R_2|\leq& C_{\delta_3}\phi_{1,0}\quad
\text{in} \quad
[0,X]\times[0,Y^*]\times[\delta_3,+\infty).
\end{align}
Therefore,  by \eqref{h0},
\begin{align}\label{h4}
    P_2\nabla_{\psi}H=R_3,
\end{align}with
\begin{align}\label{h5}
  |R_3|\leq& C_{\delta_3}\phi_{1,0}\quad
\text{in} \quad
[0,X]\times[0,Y^*]\times[\delta_3,+\infty),
\end{align}
where we have used
\begin{align*}
  \nabla_{\psi}\nabla_{\eta}=\nabla_{\eta} \nabla_{\psi}- \frac{ \nabla_{\eta}\tilde{q}}{1+\tilde{q}}\nabla_{\psi},
\end{align*}
 which is derived by
$(1+\tilde{q})\nabla_{\tilde{\psi}}=\nabla_{\psi}$ in \eqref{kh1}.
Moreover, by \eqref{qn0},
$P_2 u_n=( \nabla_{\psi}u_n)^2.$ Then
\begin{align}\label{565}
    P_2(u_n\nabla_{\psi}H)=R_4,
\end{align}with
\begin{align}\label{h10}\begin{split}
  R_4:=&u_nR_3+( \nabla_{\psi}u_n)^2\nabla_{\psi}H-2u_n\nabla_{\psi}u_n\nabla_{\psi}^2H,\\
  |R_4|\leq& C_{\delta_3}\phi_{1,0}\quad
\text{in} \quad
[0,X]\times[0,Y^*]\times[\delta_3,+\infty),\end{split}
\end{align}where we have used \eqref{h1}-\eqref{h2} and \eqref{h5}.
Set $$g=\pm u_n\nabla_{\psi}H-\varepsilon^{\frac{9}{2}}\phi_{1,0}.$$
Then by \eqref{zeta-2025-05-23}, we have $g<0$ in $z\in[0,\delta_3].$ Take $A\gg C_{\delta_{3}}.$ Then
$P_2 g<0$ in $(0,X]\times(0,Y^*]\times[\delta_3,+\infty) $. Then we have \eqref{1116-1} by the boundary condition and the maximum principle.
\end{proof}

Similarly, we have the following lemma.
\begin{lemma}\label{65} It holds
 \begin{align*}
    |\partial_z\nabla_{\eta,\xi}u_n-\partial_z\nabla_{\eta,\xi}\bar{u} |\leq \varepsilon^4  \phi_{1,\alpha} \quad \text{in} \quad \Omega\cap\{0\leq y\leq Y^*\}\cap\{z\geq\delta_\varepsilon\},
 \end{align*}
where $\delta_\varepsilon$ is defined in Remark \ref{yw3}.
 \end{lemma}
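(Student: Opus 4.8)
The plan is to deduce the lemma from the estimate \eqref{1116-1}, which already controls $\partial_z\nabla_{\eta,\xi}u_n-\partial_z\nabla_{\tau_1}\bar u$ on all of $\Omega\cap\{0\le y\le Y^*\}$, by trading the background vector field $\nabla_{\tau_1}$ for the iteration vector fields $\nabla_{\eta,\xi}=\nabla^{n-1}_{\eta,\xi}$ in the non‑degenerate region $\{z\ge\delta_\varepsilon\}$. Writing
\[
\partial_z\nabla_{\eta,\xi}u_n-\partial_z\nabla_{\eta,\xi}\bar u
=\big(\partial_z\nabla_{\eta,\xi}u_n-\partial_z\nabla_{\tau_1}\bar u\big)
+\partial_z\big((\nabla_{\tau_1}-\nabla_{\eta,\xi})\bar u\big),
\]
the first bracket is $\le\varepsilon^4\phi_{1,\alpha}$ by \eqref{1116-1}, so everything reduces to bounding $\partial_z\big((\nabla_{\tau_1}-\nabla_{\eta,\xi})\bar u\big)$ on $\{z\ge\delta_\varepsilon\}$. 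This region is precisely the complement of the one treated in Remark \ref{yw3} through Lemma \ref{xz1104}, so the same comparison of vector fields applies, now with the easier estimates available once $z$ is bounded away from $0$.

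For the discrepancy term recall $(\nabla_{\tau_1}-\nabla_\xi)\bar u=\big(\tfrac{\int_0^z\partial_x u_{n-1}dz'}{u_{n-1}}-\tfrac{\int_0^z\partial_x\bar u\,dz'}{\bar u}\big)\partial_z\bar u$, and, using $\nabla_{\tau_1}\bar u=\nabla_{\tau_2}\bar u$, $(\nabla_{\tau_1}-\nabla_\eta)\bar u=\big(\tfrac{\int_0^z\partial_y v_{n-1}dz'}{v_{n-1}}-\tfrac{\int_0^z\partial_y\bar u\,dz'}{\bar u}\big)\partial_z\bar u$. Expanding the coefficients in terms of $u_{n-1}-\bar u$, $q_{n-1}=v_{n-1}-u_{n-1}$, their $\partial_{x,y}$‑antiderivatives and their $\partial_z$‑derivatives, differentiating once more in $z$, and invoking the bounds \eqref{n-1assumption}, \eqref{assumpn}, \eqref{0221qn-1}, Lemma \ref{04160416e-v}, the growth rates of $\bar u$ from Subsection \ref{1027}, together with the lower bounds $u_{n-1},v_{n-1},\bar u\gtrsim\delta_\varepsilon$ on $\{z\ge\delta_\varepsilon\}$ supplied by \eqref{positivelbu} and the bootstrap assumption, one absorbs the finitely many negative powers of the denominators and obtains $\big|\partial_z\big((\nabla_{\tau_1}-\nabla_{\eta,\xi})\bar u\big)\big|\le\varepsilon^4\phi_{1,\alpha}$ there. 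Equivalently, one may rerun the maximum‑principle argument behind \eqref{1116-1} with $H=\nabla_{\eta,\xi}u_n-\nabla_{\tau_1}\bar u$ replaced by $\widetilde H=\nabla_{\eta,\xi}u_n-\nabla_{\eta,\xi}\bar u$: derive $P_2(u_n\nabla_\psi\widetilde H)=\widetilde R_4$ with $|\widetilde R_4|\le C_{\delta_\varepsilon}\phi_{1,0}$, use \eqref{zh2} of Remark \ref{yw3} to control $\widetilde H$ on $\{0\le z\le\delta_\varepsilon\}$, choose $A\gg C_{\delta_\varepsilon}$, and apply Lemma \ref{infMPx} on $[0,X]\times[0,Y^*]\times[\delta_\varepsilon,+\infty)$ to the barrier $\pm u_n\nabla_\psi\widetilde H-\varepsilon^{9/2}\phi_{1,0}$.

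I expect the main obstacle to be the weight bookkeeping when commuting $\partial_z$ past the difference of vector fields: one has to check that each singular factor $u_{n-1}^{-k}$ created along the way is harmless on $\{z\ge\delta_\varepsilon\}$, i.e. that the loss $\delta_\varepsilon^{-k}=\varepsilon^{-6k}$ is always dominated by the gain in powers of $\varepsilon$ coming from $u_{n-1}-\bar u$, $v_{n-1}-\bar u$, $q_{n-1}$ and their derivatives, and that the final estimate comes out with the weight $\phi_{1,\alpha}$ rather than merely $\phi_{1,0}$ — which is exactly why the threshold $\delta_\varepsilon=\min\{\varepsilon^6,\delta_3\}$ and the freedom to pick $A$ large depending on $\delta_\varepsilon$ enter. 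The corresponding bound for $v_n$ and $\partial_z\nabla_{\eta,\xi}v_n$ follows by the same computation, which closes this step of the bootstrap.
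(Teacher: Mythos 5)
There is a genuine gap, in both of the routes you sketch. Your primary route (triangle inequality: \eqref{1116-1} plus a pointwise bound $|\partial_z\big((\nabla_{\tau_1}-\nabla_{\eta,\xi})\bar u\big)|\le \varepsilon^4\phi_{1,\alpha}$ on $\{z\ge\delta_\varepsilon\}$) cannot work, because the induction hypotheses only make this discrepancy of size $\varepsilon^2$, not $\varepsilon^4$. From \eqref{xitau}, $\partial_z\big((\nabla_{\xi}-\nabla_{\tau_1})\bar u\big)$ contains the term $\frac{(\partial_x\bar u-\partial_x u_{n-1})\,\partial_z\bar u}{u_{n-1}}$, and the only available input is $|\partial_{x}u_{n-1}-\partial_x\bar u|\le\varepsilon^2\phi_{1,1}$ from \eqref{n-1assumption}, which yields at best $C\varepsilon^2\phi_{1,0}$; the cancellation of Lemma \ref{tl2}/Lemma \ref{6969} improves the $z$-weight but never the power of $\varepsilon$. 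This is exactly why Remark \ref{yw3} (via Lemma \ref{xz1104}) asserts only $\tfrac{\varepsilon^2}{2+\alpha/3}\phi_{1,\alpha}$ for the very same quantity on $\{z\le\delta_\varepsilon\}$. Since $\phi_{1,\alpha}\le\delta^{\alpha}\phi_{1,0}$, a bound of order $\varepsilon^2\phi_{1,0}$ can never be absorbed into $\varepsilon^4\phi_{1,\alpha}$, no matter how you book-keep the factors $u_{n-1}^{-k}\lesssim\delta_\varepsilon^{-k}$. In the paper the $\varepsilon^4$ (in fact $\varepsilon^{9/2}$) smallness is not inherited pointwise at all: it is manufactured by the barrier, using $P_2\phi_{1,0}\ge c_2A\phi_{1,0}$ and the freedom to take $A\gg \varepsilon^{-9/2}C_{\delta_\varepsilon}$ to dominate a source which is merely $O(C_{\delta_\varepsilon}\phi_{1,0})$.

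Your second route is structurally the paper's proof (same quantity $u_n\nabla_\psi(\cdot)$, same equation, same barrier $\varepsilon^{9/2}\phi_{1,0}$, same choice $A\gg C_{\delta_\varepsilon}$), but your treatment of the bottom boundary does not close. Applying the maximum principle on $[0,X]\times[0,Y^*]\times[\delta_\varepsilon,+\infty)$ requires $\pm u_n\nabla_\psi\widetilde H-\varepsilon^{9/2}\phi_{1,0}\le 0$ on $\{z=\delta_\varepsilon\}$, and the only control available there, \eqref{zh2}, gives $|u_n\nabla_\psi\widetilde H|=\tfrac{u_n}{u_{n-1}}|\partial_z\widetilde H|\lesssim \varepsilon^2\phi_{1,\alpha}(\delta_\varepsilon)\sim \varepsilon^{2+6\alpha}e^{Ax}$ (recall $\delta_\varepsilon\approx\varepsilon^6$), which for the small $\alpha$ used throughout the scheme is far larger than $\varepsilon^{9/2}e^{Ax}$; the comparison would need $2+6\alpha\ge\tfrac92$, i.e.\ $\alpha\ge\tfrac5{12}$, whereas the final conversion $\varepsilon^{9/2}\phi_{1,0}\le\varepsilon^4\phi_{1,\alpha}$ on $\{z\ge\delta_\varepsilon\}$ forces roughly $6\alpha\le\tfrac12$. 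The paper sidesteps this by localizing first: it works with $\hat H=(\nabla_{\eta,\xi}u_n-\nabla_{\eta,\xi}\bar u)\varphi_{\delta_\varepsilon}$, which vanishes identically for $z\le\tfrac{\delta_\varepsilon}{2}$, so no data at the interior height $z=\delta_\varepsilon$ is ever prescribed; the cutoff commutators and the term $P_2\big(\tfrac{u_n}{u_{n-1}}\partial_z(\nabla_{\tau_1}\bar u-\nabla_{\eta,\xi}\bar u)\varphi_{\delta_\varepsilon}\big)$ are simply absorbed into $C_{\delta_\varepsilon}$ and then beaten by $A$. That cutoff localization is the missing ingredient in your write-up; without it neither of your two routes yields the stated $\varepsilon^4\phi_{1,\alpha}$ bound.
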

 \begin{proof}Set $$\hat{H}=(\nabla_{\eta,\xi}u_n-\nabla_{\eta,\xi}\bar{u})\varphi_{\delta_\varepsilon},$$
where $ \varphi_{\delta_\varepsilon}(z)$ is a smooth cutoff function such that
$0\leq \varphi_{\delta_\varepsilon}\leq 1$, $\varphi_{\delta_\varepsilon}=1$ for $z\geq \delta_\varepsilon$ and $\varphi_{\delta_\varepsilon}=0$ for $0\leq z\leq  \frac{\delta_\varepsilon}{2},$ $|\partial^j_z\varphi_{\delta_\varepsilon}|\leq C_{\delta_\varepsilon},$ $j=1,2,3$.  Here,  $C_{\delta_\varepsilon}$ is a positive constant depending on $\delta_\varepsilon.$
 Then  by \eqref{565}, \begin{align}
    P_2(u_n\nabla_{\psi}\hat{H})=&R_5,\end{align}
    where
\begin{align*}
    R_5=&R_4\varphi_{\delta_\varepsilon}+P_2(\varphi_{\delta_\varepsilon})
    u_n\nabla_{\psi}H-2u_n\nabla_{\psi}\varphi_{\delta_\varepsilon}\nabla_{\psi}(u_n\nabla_{\psi}H)
    \\& + P_2\big(\frac{u_n}{u_{n-1}}\partial_z(\nabla_{\tau_1}\bar{u}-\nabla_{\eta,\xi}\bar{u})\varphi_{\delta_\varepsilon}\big)
     \\&+P_2(u_n(\nabla_{\eta,\xi}u_n-\nabla_{\eta,\xi}\bar{u})\nabla_\psi\varphi_{\delta_\varepsilon}).\end{align*}
Then
\begin{align*}
    |R_5|\leq& C_{\delta_\varepsilon}\phi_{1,0}\quad
\text{in} \quad
[0,X]\times[0,Y^*]\times[\delta_\varepsilon,+\infty).
\end{align*}
 Set $$g=\pm u_n\nabla_{\psi}\hat{H}-\varepsilon^{\frac{9}{2}}\phi_{1,0}.$$
 By taking $A\gg C_{\delta_{\varepsilon}},$ we have
$P_2 g<0$ in $(0,X]\times(0,Y^*]\times[\delta_\varepsilon,+\infty) $. Then by the boundary data and  the  maximum principle, we complete the proof of the theorem.
 \end{proof}

\subsection{Estimate on $\partial_z^2 (u_n-\bar{u})$}
Finally,  we note that the second to last inequality in \eqref{assumpn-2} is the direct consequence of the other inequalities in \eqref{assumpn-2}. In fact, we have the following theorem.
\begin{theorem}
It holds that
\begin{align*}
|\partial_{z}^2(u_n-\bar{u})|\leq \frac{\varepsilon e^{-\frac{A}{x+1}}}{1+\frac{\alpha}{6}}\min\{1,(z+ \frac{\bar{u}(x,y,0)}{\partial_z\bar{u}(x,y,0)})^\alpha\}
\quad \text{in} \quad[0,X]\times[0,Y^*]\times[0,+\infty).
\end{align*}
\end{theorem}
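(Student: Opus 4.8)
The plan is to observe that $\partial_z^2 u_n$ is not an independent unknown: by the equation \eqref{appeqeuclidean}, equivalently by dividing $P_1 w_n=P_1 u_n=0$ by $u_n$ and writing $\nabla_\psi^2 u_n=\tfrac{1}{u_{n-1}^2}\partial_z^2 u_n-\tfrac{\partial_z u_{n-1}}{u_{n-1}^3}\partial_z u_n$ as in \eqref{dpsi3un}, one has
\begin{align*}
\partial_z^2 u_n=\frac{u_{n-1}^2}{u_n}\big(\nabla_\xi u_n+(1+\tilde q)\nabla_\eta u_n\big)-\frac{(\partial_z u_n)^2}{u_n}+\frac{\partial_z u_{n-1}}{u_{n-1}}\,\partial_z u_n ,
\end{align*}
while \eqref{0823} (together with $\nabla_{\tau_1}\bar u=\nabla_{\tau_2}\bar u$) gives $\partial_z^2\bar u=2\bar u\,\nabla_{\tau_1}\bar u$. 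Subtracting, the quadratic–in–$\partial_z$ pieces combine and
\begin{align*}
\partial_z^2(u_n-\bar u)=\underbrace{\Big[\tfrac{u_{n-1}^2}{u_n}\big(\nabla_\xi u_n+(1+\tilde q)\nabla_\eta u_n\big)-2\bar u\,\nabla_{\tau_1}\bar u\Big]}_{I}+\underbrace{\partial_z u_n\,\Big(\tfrac{\partial_z u_{n-1}}{u_{n-1}}-\tfrac{\partial_z u_n}{u_n}\Big)}_{II} .
\end{align*}

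I would then bound $I$ and $II$ term by term. Inserting $2\tfrac{u_{n-1}^2}{u_n}\nabla_{\tau_1}\bar u$,
\begin{align*}
I=\tfrac{u_{n-1}^2}{u_n}\big[(\nabla_\xi u_n-\nabla_{\tau_1}\bar u)+(\nabla_\eta u_n-\nabla_{\tau_1}\bar u)+\tilde q\,\nabla_\eta u_n\big]+2\nabla_{\tau_1}\bar u\;\frac{u_{n-1}(u_{n-1}-\bar u)+\bar u(u_{n-1}-u_n)}{u_n} ,
\end{align*}
and for $II$, writing $\tfrac{\partial_z u_{n-1}}{u_{n-1}}-\tfrac{\partial_z u_n}{u_n}=\tfrac{\partial_z(u_{n-1}-u_n)}{u_{n-1}}-\tfrac{(u_{n-1}-u_n)\,\partial_z u_n}{u_{n-1}u_n}$. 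Every difference occurring here — $\nabla_{\xi,\eta}u_n-\nabla_{\tau_1}\bar u$ (Theorem \ref{r2imp}, \eqref{1116-2}), $\tfrac{u_n}{u_{n-1}}\partial_z u_n-\partial_z\bar u$ in the sharp form \eqref{changchun0106} of Theorem \ref{dzunphi0}, $u_n-\bar u$ and $u_{n-1}-\bar u$ (Theorem \ref{thm1}, \eqref{0821jl}, \eqref{n-1assumption}), hence $u_{n-1}-u_n$, and $\tilde q$ (\eqref{n-1assumption}, \eqref{tildq0221}) — carries at least one extra power of $\varepsilon$ beyond $\varepsilon$ and a $z$-weight that vanishes at $z=0$ at an order strictly larger than the degeneracy $u_n^{-1}+u_{n-1}^{-1}\lesssim(\epsilon_0+z)^{-1}$ it is multiplied by, where I use $u_{n-1},u_n\ge c_0(\epsilon_0+z)$ for $z$ small and $u_{n-1},u_n\ge c_0\phi_{1,0}^{3/2}$ for $z$ large from \eqref{positivelbu} and \eqref{mon0315dj}. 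Summing, all contributions are dominated by $\varepsilon^{3/2}\min\{1,(z+\tfrac{\bar u(x,y,0)}{\partial_z\bar u(x,y,0)})^\alpha\}$, which for $\varepsilon$ small lies below $\tfrac{\varepsilon}{1+\alpha/6}$ times the same weight; the region $z$ large is routine since $\phi_{1,\alpha},\phi_{1,0}$ and all derivatives of $\bar u$ decay like $e^{-\mu z^2/(x+1)}$, which controls the constant $1$ in the minimum.

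The main obstacle is the bookkeeping near $z=0$, where the factors $u_n^{-1},u_{n-1}^{-1}$ are only $O((\epsilon_0+z)^{-1})$ and a careless use of the a priori bootstrap bounds $|\partial_z u_n-\partial_z\bar u|\lesssim\varepsilon^5\phi_{1,\alpha}$, $|u_n-\bar u|\lesssim\varepsilon^6\phi_{1,1+2\alpha}$ would generate terms of size $\varepsilon^5(z+\epsilon_0)^{\alpha-1}$, diverging as $z\to0$. This is handled by three ingredients: (i) the genuinely sharp near-boundary estimate of Theorem \ref{dzunphi0}, in which the error carries either the vanishing weight $\phi_{2,1}\sim\psi_{n-1}\sim z(z+\epsilon_0)$ or the compensating factor $\epsilon_0\phi_{2,0}$; (ii) the fact that the approximate boundary data is independent of $n$, so $u_n-u_{n-1}$ vanishes on $\{z=0\}\cup\{x=0\}\cup\{y=0\}$, whence $|u_n-u_{n-1}|(x,y,z)\le z\sup_{[0,z]}|\partial_z(u_n-u_{n-1})|$, with $|\partial_z(u_n-u_{n-1})|_{z=0}$ as small as the prescribed data allows; and (iii) the compatibility relations $\partial_z^2\bar u|_{z=0}=2\bar u(x,y,0)\nabla_{\tau_1}\bar u(x,y,0)=O(\epsilon_0^2)$ and the analogous identity for $u_n$ read off from \eqref{appeqeuclidean} at $z=0$. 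Feeding the a priori bound on $\partial_z^2(u_n-u_{n-1})$ (from \eqref{n-1assumption} and the bootstrap assumption) into an integration in $z$ to control $\partial_z(u_n-u_{n-1})$, all powers of $\epsilon_0$ cancel and the estimate stays $\lesssim\varepsilon^{3/2}(z+\tfrac{\bar u(x,y,0)}{\partial_z\bar u(x,y,0)})^\alpha$ uniformly down to $z=0$, which completes the proof. The same argument applies verbatim to $v_n$.
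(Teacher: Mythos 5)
Your algebraic starting point is sound and is essentially a differentiated version of what the paper does: the paper first integrates \eqref{appeqeuclidean} in $z$ (see \eqref{68}--\eqref{1116-3-exp}), so its dangerous term carries the boundary prefactor $(\frac{u_n}{u_{n-1}}\partial_zu_n)|_{z=0}$, while yours carries the interior prefactor; your term $I$ is handled exactly as the paper's claim \eqref{1029}. The genuine gap is in your treatment of $II=\partial_zu_n\,\bigl(\frac{\partial_zu_{n-1}}{u_{n-1}}-\frac{\partial_zu_n}{u_n}\bigr)$ near $z=0$. Your summary claim that all contributions are dominated by $\varepsilon^{3/2}\min\{1,(z+\tilde c)^\alpha\}$ is false for this term: in the region $z+\epsilon_0\lesssim\varepsilon^{4}$ (nonempty since $\epsilon_0\to0$ independently of $\varepsilon$) the only usable control of $\partial_z(u_{n-1}-u_n)$ and $u_{n-1}-u_n$ comes from integrating the second-derivative bounds $|\partial_z^2(u_{n-1}-\bar u)|,\,|\partial_z^2(u_n-\bar u)|\le\frac{\varepsilon}{1+\alpha/7}(z+\epsilon_0)^\alpha$ (the first-derivative bootstrap bounds $\varepsilon^5\phi_{1,\alpha}$, divided by $u_{n-1}\sim z+\epsilon_0$, are useless there, as you yourself observed), and this produces a bound of size $C\varepsilon(z+\tilde c)^\alpha$ --- the same order $\varepsilon$ as the target, not smaller. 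Since the statement, and the closure of the bootstrap whose hypothesis carries the constant $\frac{1}{1+\alpha/7}$, require the strictly better constant $\frac{1}{1+\alpha/6}<1$, the entire difficulty is the constant, and your step ``integrate $\partial_z^2(u_n-u_{n-1})$ in $z$'' loses it: $\int_0^z 2\varepsilon(z'+\tilde c)^\alpha dz'\le\frac{2\varepsilon}{1+\alpha}(z+\tilde c)^{1+\alpha}$, and after dividing by $u_{n-1}$ and multiplying by $\partial_zu_n$ one gets at best a constant of order $\frac{2}{1+\alpha}\cdot\frac{C_0}{c_0}>1$.

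What is missing is the cancellation the paper isolates in Lemma \ref{tl2}, Corollary \ref{cr} and Lemma \ref{dzun-1un}: with $f=\partial_z(u_{n-1}-u_n)$, the combination appearing in $\partial_z(\frac{u_{n-1}}{u_n})$ is $f-\frac{\partial_zu_n}{u_n}\int_0^z f\,dz'$ (up to tiny boundary values), and since $\frac{\partial_zu_n}{u_n}=(1+o(1))\frac{1}{z+\tilde c}$ with $\tilde c=\frac{u_n(x,y,0)}{\partial_zu_n(x,y,0)}$ --- a relation much sharper than the two-sided bounds $c_0,C_0$ you invoke --- the quantity $(z+\tilde c)f-\int_0^zf\,dz'$ has $z$-derivative $(z+\tilde c)\partial_zf$, which integrates to $\frac{2\varepsilon(z+\tilde c)^{2+\alpha}}{2+\alpha}$ rather than $\frac{2\varepsilon(z+\tilde c)^{2+\alpha}}{1+\alpha}$; this gain, combined with the $O(\varepsilon^2\phi_{1,1})$ estimate of $I$ and the $O(\epsilon_0)$ corrections, is exactly what yields $\frac{\varepsilon}{1+\alpha/5}$ in Lemma \ref{dzun-1un} and hence $\frac{\varepsilon}{1+\alpha/6}$ in the theorem. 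Without it your outline can only deliver $C\varepsilon(z+\tilde c)^\alpha$ with $C>1$, which neither proves the stated inequality nor closes the induction. Two smaller inaccuracies: the approximate boundary data at step $n$ is not independent of $n$ (it is built from $u_{n-1},v_{n-1}$ through $F_1$ and the characteristics), and the paper only uses the smallness $|\partial_z(u_{n-1}-u_n)|_{z=0}|\lesssim\varepsilon^7\epsilon_0^2$; also \eqref{mon0315dj} is a lower bound on $\partial_zu_n$, not on $u_n$.
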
 \begin{proof}By \eqref{appeqeuclidean}, we have
\begin{align}\label{68}\begin{split}
\partial_{z}u_n^2= &[\int_0^z2u_{n-1}(\nabla_{\xi} u_n +(1+\tilde{q})\nabla_{\eta} u_n )dz']u_{n-1}+(\frac{1}{u_{n-1}}\partial_{z}u_n^2)|_{z=0}u_{n-1} \quad \text{in} \quad \Omega.
    \end{split}
\end{align} Then
\begin{align}\begin{split}
\partial_{z}u_n= &[\int_0^zu_{n-1}(\nabla_{\xi} u_n +(1+\tilde{q})\nabla_{\eta} u_n )dz']\frac{u_{n-1}}{u_n}+(\frac{u_n}{u_{n-1}}\partial_{z}u_n)|_{z=0}\frac{u_{n-1}}{u_n} \quad \text{in} \quad \Omega.\end{split}
\end{align}
This implies
\begin{align}\label{xzx1}\begin{split}
\partial_{z}^2u_n= &\partial_{z}\{[\int_0^zu_{n-1}(\nabla_{\xi} u_n +(1+\tilde{q})\nabla_{\eta} u_n )dz']\frac{u_{n-1}}{u_n}\}\\&+(\frac{u_n}{u_{n-1}}\partial_{z}u_n)|_{z=0}\partial_{z}(\frac{u_{n-1}}{u_n} ),\\
\partial_{z}^2\bar{u}= &2\bar{u}\nabla_{\tau_1} \bar{u}.
    \end{split}
\end{align}
Therefore,
\begin{align}\label{1116-3-exp}\begin{split}
\partial_{z}^2(u_n-\bar{u})= &\partial_{z}\{[\int_0^zu_{n-1}(\nabla_{\xi} u_n +(1+\tilde{q})\nabla_{\eta} u_n )dz']\frac{u_{n-1}}{u_n}\}-2\bar{u}\nabla_{\tau_1} \bar{u}
\\&+(\frac{u_n}{u_{n-1}}\partial_{z}u_n)|_{z=0}\partial_{z}(\frac{u_{n-1}}{u_n} ).
    \end{split}
\end{align}
We claim that the first term for $\partial_{z}^2(u_n-\bar{u})$ in \eqref{1116-3-exp} satisfies
\begin{align}\label{1029}
    |\partial_{z}\{[\int_0^zu_{n-1}(\nabla_{\xi} u_n +(1+\tilde{q})\nabla_{\eta} u_n )dz']\frac{u_{n-1}}{u_n}\}-2\bar{u}\nabla_{\tau_1} \bar{u}|\leq \varepsilon^2 C\phi_{1,1}.
\end{align}

We prove the claim \eqref{1029} as follows.
By straightforward calculation, we have
\begin{align*}
    &\partial_{z}[\int_0^zu_{n-1}\nabla_{\xi} u_ndz'\frac{u_{n-1}}{u_n}]-\bar{u}\nabla_{\tau_1} \bar{u}
   \\ =&u_{n-1}\nabla_{\xi} u_n\frac{u_{n-1}}{u_n}-\bar{u}\nabla_{\tau_1} \bar{u}+\int_0^zu_{n-1}\nabla_{\xi} u_ndz'
    (\frac{\partial_z(u_{n-1}-u_n)}{u_n}-\frac{\partial_zu_n(u_{n-1}-u_n)}{u_n^2})
 \\ =&u_{n-1}\nabla_{\xi} u_n\frac{u_{n-1}-u_n}{u_n}+u_{n-1}\nabla_{\xi} u_n-\bar{u}\nabla_{\tau_1} \bar{u}+\int_0^zu_{n-1}\nabla_{\xi} u_ndz'
    (\frac{\partial_z(u_{n-1}-u_n)}{u_n}-\frac{\partial_zu_n(u_{n-1}-u_n)}{u_n^2})
    \\ =&u_{n-1}\nabla_{\xi} u_n\frac{u_{n-1}-u_n}{u_n}+(u_{n-1}-\bar{u})\nabla_{\xi} u_n+\bar{u}(\nabla_{\xi} u_n-\nabla_{\tau_1} \bar{u})
    \\&+\int_0^zu_{n-1}\nabla_{\xi} u_ndz'
    (\frac{\partial_z(u_{n-1}-u_n)}{u_n}-\frac{\partial_zu_n(u_{n-1}-u_n)}{u_n^2}).
\end{align*} Since $|\int_0^zu_{n-1}\nabla_{\xi} u_ndz'|\leq C\bar{u}^2,$ by \eqref{0821jl}, \eqref{1116-2}
and induction assumption, we have
\begin{align}\label{1117-1}
|\partial_{z}[\int_0^zu_{n-1}\nabla_{\xi} u_ndz'\frac{u_{n-1}}{u_n}]-\bar{u}\nabla_{\tau_1} \bar{u}|\leq \varepsilon^2 C\phi_{1,1}\quad\text{in}\quad \Omega\cap\{0\leq y\leq Y^*\} .
\end{align}
We can estimate the other terms in \eqref{1029} similarly by using  $|\tilde{q}|\leq\varepsilon^2 C\phi_{1,\alpha}$ and the induction assumption. Then the claim \eqref{1029} holds.

Next, we estimate the other terms  for  $\partial_{z}^2(u_n-\bar{u})$ in \eqref{1116-3-exp}.
By Lemma \ref{dzun-1un} in Appendix,  we have, for a small positive constant $\delta_5$ independent of $\varepsilon,$
\begin{align}\label{1117-2}
\big|(\frac{u_n}{u_{n-1}}\partial_{z}u_n)|_{z=0}\partial_{z}(\frac{u_{n-1}}{u_n} )\big|\leq \frac{\varepsilon e^{-\frac{A}{x+1}}(z+\frac{u_n(x,y,0)}{\partial_zu_n(x,y,0)})^{\alpha}}
{1+\frac{\alpha}{5}}\quad \text{in}\quad \Omega\cap\{0\leq y\leq Y^*\}\cap\{z\leq \delta_5\}.
\end{align}
Combining \eqref{1117-1} and \eqref{1117-2}, we have,
\begin{align*}
|\partial_{z}^2(u_n-\bar{u})|\leq \frac{\varepsilon e^{-\frac{A}{x+1}}}{1+\frac{\alpha}{6}}\min\{1,(z+ \frac{\bar{u}(x,y,0)}{\partial_z\bar{u}(x,y,0)})^\alpha\}
\quad \text{in} \quad[0,X]\times[0,Y^*]\times[0,\delta_5].
\end{align*}
On the other hand, by $\partial_z \bar{u}>0$,  \eqref{n-1assumption} and \eqref{assumpn-2}, we have $u_{n-1}, u_n\geq c_0\delta_5$ in $[0,X]\times[0,Y^*]\times[\delta_5,+\infty).$ Then by \eqref{0821jl} and \eqref{1012},
 \begin{align}
    |\partial_z (\frac{u_{n-1}}{u_n})|=|\partial_z (\frac{u_n-u_{n-1}}{u_n})|\leq \varepsilon^3 \phi_{1,\alpha} \quad \text{in} \quad [0,X]\times[0,Y^*]\times[\delta_5,+\infty).
 \end{align}
Then
$|\partial_{z}^2(u_n-\bar{u})|\leq \frac{\varepsilon e^{-\frac{A}{x+1}}}{1+\frac{\alpha}{6}}(z+ \frac{\bar{u}(x,y,0)}{\partial_z\bar{u}(x,y,0)})^\alpha$ in $[0,X]\times[0,Y^*]\times[\delta_5,+\infty)$ because $$|(\frac{u_n}{u_{n-1}}\partial_{z}u_n)|_{z=0}|\leq C\quad \text{in} \quad[0,X]\times[0,Y^*].$$
In summary, $|\partial_{z}^2(u_n-\bar{u})|\leq \frac{\varepsilon e^{-\frac{A}{x+1}}}{1+\frac{\alpha}{6}}\min\{1,(z+ \frac{\bar{u}(x,y,0)}{\partial_z\bar{u}(x,y,0)})^\alpha\}$  in $[0,X]\times[0,Y^*]\times[0,+\infty).$
And this completes the proof of the theorem.
\end{proof}
By $\frac{\bar{u}}{\partial_z \bar{u}}|_{z=0}=\epsilon_0(1+o(\epsilon_0)), $ we have $|\partial_{z}^2u_{n}-\partial_{z}^2\bar{u}|\leq \frac{\varepsilon e^{-\frac{A}{x+1}}}{1+\frac{2\alpha}{13}}\min\{1,(z+\min_{[0,X]\times[0,Y]}\frac{\bar{u}(x,y,0)}{\partial_z\bar{u}(x,y,0)})^\alpha\}
.$
Then we \textit{complete  the proof of  the bootstrap argument}. Furthermore, we can derive the $z$-infinity decay estimate of $\partial_z^2(u_n-\bar{u})$ through the equation.

\begin{lemma} \label{58}It holds that
 \begin{align*}
   | \partial_{z}^2(u_n-\bar{u})|\leq C\varepsilon^{2.3} \phi_{1,\alpha}\quad\text{in}\quad \Omega\cap \{z\geq (\varepsilon d_0)^{\frac{1}{1-\alpha}}\},
 \end{align*}
where $d_0$ is defined in \eqref{assumpn-2}.
\end{lemma}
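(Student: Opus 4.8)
Here is a plan for proving Lemma~\ref{58}.

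\medskip

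The plan is to read $\partial_z^2(u_n-\bar u)$ off the equation in its \emph{local} (differential) form and then estimate it term by term using the sharp $\varepsilon$-powers already available for $z$ bounded away from $z=0$. Rewriting $P_1u_n=0$ through the formula for $\nabla_\psi^2u_n$ in \eqref{dpsi3un} (equivalently, from \eqref{appeqeuclidean}) one obtains the identity
\begin{align*}
\partial_z^2u_n=\frac{\partial_zu_{n-1}\,\partial_zu_n}{u_{n-1}}-\frac{(\partial_zu_n)^2}{u_n}+\frac{u_{n-1}^2}{u_n}\big(\nabla_\xi u_n+(1+\tilde q)\nabla_\eta u_n\big),
\end{align*}
while the background satisfies $\partial_z^2\bar u=2\bar u\nabla_{\tau_1}\bar u$ (see \eqref{xzx1}). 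Subtracting, it remains to bound the resulting three difference terms on the set $\{z\ge\kappa\}$, where $\kappa:=(\varepsilon d_0)^{1/(1-\alpha)}$.

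First I would record two structural facts that make this region harmless. By the monotonicity \eqref{mon0315dj} together with $u_n\to1$ one has $u_{n-1},u_n\gtrsim\min\{z+\epsilon_0,1\}\gtrsim\kappa$ on $\{z\ge\kappa\}$, so every factor $1/u_n$, $1/u_{n-1}$ costs only a fixed negative power of $\varepsilon$; moreover $u_{n-1}^2/u_n\le C$ since in addition $u_{n-1}\lesssim\min\{z+\epsilon_0,1\}$. And on exactly this set one has the refined decay $|\partial_zu_n-\partial_z\bar u|,\ |\partial_zu_{n-1}-\partial_zu_n|\le C\varepsilon^4\phi_{1,0}$ from \eqref{1026} (Remark \ref{jili}). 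I would also list the elementary consequences of \eqref{phi1} to be used freely: $\phi_{1,0}^2\le C\phi_{1,0}$, $\phi_{1,\beta}\le C\phi_{1,\alpha}$ for $\beta\ge\alpha$, $\phi_{1,1+2\alpha}/(z+\epsilon_0)\le C\phi_{1,2\alpha}$, and $\phi_{1,0}/(z+\epsilon_0)\le C\kappa^{-(1+\alpha)}\phi_{1,\alpha}$ on $\{z\ge\kappa\}$.

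The three terms are then handled as follows. For the first two I would write
\begin{align*}
\frac{\partial_zu_{n-1}\,\partial_zu_n}{u_{n-1}}-\frac{(\partial_zu_n)^2}{u_n}=\partial_zu_n\cdot\frac{(\partial_zu_{n-1}-\partial_zu_n)u_n-\partial_zu_n(u_{n-1}-u_n)}{u_{n-1}u_n},
\end{align*}
and insert $|\partial_zu_n|\le C\phi_{1,0}$, the decay bound above, and $|u_{n-1}-u_n|\le C\varepsilon^6\phi_{1,1+2\alpha}$ (from \eqref{0821jl} and \eqref{n-1assumption}), which yields a contribution $\le C\varepsilon^4\kappa^{-(1+\alpha)}\phi_{1,\alpha}+C\varepsilon^5d_0^{-1}\phi_{1,\alpha}$. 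For the leading term I would use the splitting $\nabla_\xi u_n+(1+\tilde q)\nabla_\eta u_n=(2+\tilde q)\nabla_{\tau_1}\bar u+(\nabla_\xi u_n-\nabla_{\tau_1}\bar u)+(1+\tilde q)(\nabla_\eta u_n-\nabla_{\tau_1}\bar u)$: the $2\nabla_{\tau_1}\bar u$ piece combines with $-2\bar u\nabla_{\tau_1}\bar u$ into $2\nabla_{\tau_1}\bar u\cdot\frac{u_{n-1}^2-\bar u u_n}{u_n}$, estimated via $|u_{n-1}^2-\bar u u_n|\le C\varepsilon^6\phi_{1,1+2\alpha}$ and $|\nabla_{\tau_1}\bar u|\le C\phi_{1,1}$, while the remaining pieces use $|\nabla_{\eta,\xi}u_n-\nabla_{\tau_1}\bar u|\le\varepsilon^3\phi_{1,1}$ from \eqref{1116-2}, $|\tilde q|\le C\varepsilon^6\phi_{1,2\alpha}$ and $u_{n-1}^2/u_n\le C$; these total $\le C\varepsilon^3\phi_{1,\alpha}$. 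Since $4-\tfrac{1+\alpha}{1-\alpha}>2.3$ for $\alpha$ small and $\varepsilon\ll d_0$, summing all contributions gives $|\partial_z^2(u_n-\bar u)|\le C\varepsilon^{2.3}\phi_{1,\alpha}$ on $\{z\ge\kappa\}$, uniformly in $n$ and $\epsilon_0$.

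The hard part will be the bookkeeping of $\varepsilon$-powers rather than any new idea: since the favorable decay factors degenerate as $z\downarrow\kappa$, one must use the sharp powers $\varepsilon^4,\varepsilon^6$ of \eqref{1026} and \eqref{0821jl} (not the cruder bootstrap bounds), and then verify that after paying the price $\kappa^{-(1+\alpha)}=(\varepsilon d_0)^{-(1+\alpha)/(1-\alpha)}$ for the small denominators the net exponent stays strictly above $2.3$. This is exactly why the threshold is chosen to be the $\varepsilon$-dependent $(\varepsilon d_0)^{1/(1-\alpha)}$ rather than a fixed $\delta$; away from this matching the computation is routine.
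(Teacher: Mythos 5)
Your proposal is correct and follows essentially the same route as the paper: both read $\partial_z^2 u_n$ off the iteration equation \eqref{appeqeuclidean} (your identity is the paper's formula with the product rule expanded), subtract $\partial_z^2\bar u=2\bar u\nabla_{\tau_1}\bar u$, and close by combining the refined bounds on $u_n-\bar u$, $\partial_z u_n-\partial_z\bar u$ and $\nabla_{\eta,\xi}u_n-\nabla_{\tau_1}\bar u$ with the lower bound $u_{n-1},u_n,\bar u\gtrsim(\varepsilon d_0)^{1/(1-\alpha)}$ on the region, tracking the $\varepsilon$-powers so the net exponent stays above $2.3$ for $\alpha$ small. The only cosmetic caveat is that the bound $|\partial_z u_{n-1}-\partial_z\bar u|\lesssim\varepsilon^4\phi_{1,\alpha}$ is not literally \eqref{1026} (which concerns $u_n$) but its analogue at the previous iteration step, which is exactly how the paper invokes it.
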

\begin{proof}
By Theorem \ref{thm1} and  Theorem \ref{dzunphi0}, we can improve the estimates for $u_{n-1}-\bar{u}, v_{n-1}-\bar{u}, \partial_z(u_{n-1}-\bar{u})$ as follows.
\begin{align}
    | u_{n-1}-\bar{u} |+| v_{n-1}-\bar{u} |\leq Cd_0\varepsilon^6 \phi_{1,0},\quad
    |\partial_z u_{n-1}-\partial_z\bar{u}|\leq \frac{d_0\varepsilon^4}{4} \phi_{1,\alpha}\quad
 \text{in} \quad \Omega.
 \end{align} Then for a small positive constant $\alpha,$
 \begin{align}\label{ubdep}\begin{split}
 |\frac{u_{n-1}}{u_{n}} -1|=&\frac{|u_{n-1}-u_{n}|}{u_{n}}\leq C\varepsilon^{4.5} \phi_{1,0} ,\quad\frac{|u_{n-1}-u_{n}|}{u_{n-1}^2}\leq C\varepsilon^3 \phi_{1,0},\\ \frac{ |\partial_z u_n-\partial_zu_{n-1}|}{u_{n-1}}\leq & C\varepsilon^{2.5} \phi_{1,\alpha}, \quad |\tilde{q}|=|\frac{q_{n-1}}{u_{n-1}}|\leq  C\varepsilon^4 \phi_{1,0}\quad\text{in}\quad \Omega\cap \{z\geq (\varepsilon d_0)^{\frac{1}{1-\alpha}}\},\end{split}
\end{align}
because   $d_0$ is independent of $\varepsilon$ and
\begin{align}\label{lbdep}
u_{n-1}, u_{n}, \bar{u}\geq c_0(\varepsilon d_0)^{\frac{1}{1-\alpha}}\quad\text{in}\quad \Omega\cap \{z\geq (\varepsilon d_0)^{\frac{1}{1-\alpha}}\}.
\end{align}
Recall $c_0$  in \eqref{positivelbu}.
Then
\begin{align}\label{1104}\begin{split}
|\partial_z (\frac{ u_n}{u_{n-1}})|=&|\partial_z (\frac{ u_n-u_{n-1}}{u_{n-1}})|\\ \leq &\frac{ |\partial_z u_n-\partial_zu_{n-1}|}{u_{n-1}}+\frac{\partial_zu_{n-1}|u_{n-1}-u_{n}|}{u_{n-1}^2}
\\ \leq& C\varepsilon^{2.5} \phi_{1,0}\quad\text{in}\quad \Omega\cap \{z\geq (\varepsilon d_0)^{\frac{1}{1-\alpha}}\}.\end{split}
\end{align}
Next, by \eqref{appeqeuclidean}, we have
\begin{align}\begin{split}
\partial_{z}^2u_n= &[u_{n-1}(\nabla_{\xi} u_n +(1+\tilde{q})\nabla_{\eta} u_n )-\partial_{z}u_n \partial_{z}(\frac{u_n}{u_{n-1}} )]\frac{u_{n-1}}{u_{n}},\\
\partial_{z}^2\bar{u}= &2\bar{u}\nabla_{\tau_1} \bar{u} .
    \end{split}
\end{align}
Then by Theorem \ref{r2imp}, i.e.
$|\nabla_{\eta,\xi}u_n-\nabla_{\tau_1}\bar{u} |\leq C\varepsilon^5 \phi_{1,0} $ in $ \Omega,$
 \eqref{ubdep} and \eqref{1104}, we have
 \begin{align*}
   | \partial_{z}^2(u_n-\bar{u})|\leq C\varepsilon^{2.5} \phi_{1,0}\quad\text{in}\quad \Omega\cap \{z\geq (\varepsilon d_0)^{\frac{1}{1-\alpha}}\}.
 \end{align*}
 In particular,  for $\varepsilon$ small enough, we have
 \begin{align*}
  C\varepsilon^{0.2} \phi_{1,0} \leq\phi_{1,\alpha},\quad (\varepsilon d_0)^{\frac{1}{1-\alpha}}\leq z\leq \delta,
 \end{align*}
  because  for 
  $\frac{\alpha}{1-\alpha}\leq 0.1,$ it holds
 $$C\varepsilon^{0.2}\leq \frac{1}{2}(\frac{1}{\sqrt{X+1}})^\alpha(\varepsilon d_0)^{\frac{\alpha}{1-\alpha}} \leq (\frac{z+\epsilon_0}{\sqrt{x+1}})^\alpha,\quad (\varepsilon d_0)^{\frac{1}{1-\alpha}}\leq z\leq \delta,$$
  where  $\delta$ is defined in \eqref{phi1}.
 And this completes the proof of the lemma.
\end{proof}

\section{Appendix}
In the Appendix, we will give some details of the calculations, derivation and the change of variables together with the transformation between derivatives, the approximation of the boundary data, growth rates of the background flow, commutator $K$, the barrier functions and some basic estimates used in the proof.
\subsection{Self-similar change of  variables}\label{ssv}
By straightforward  calculation,  for any positive constants $a,b ,k$ satisfying $b^2=ak,$ we have, \begin{itemize} \item { For 2D, if $(u,w)$ is the solution to \eqref{eq:Prandtl}, then
\begin{align*}
    (\tilde{u}(x,z),\tilde{w}(x,z))=(ku(ax,bz),bw(ax,bz))
\end{align*} is also a solution to \eqref{eq:Prandtl} with $\tilde{U}=kU$ replacing $U.$ }\item {  For 3D, if $(u,v,w)$ is the solution to \eqref{eq:3DPrandtl}, then
\begin{align*}
    (\tilde{u}(x,y,z),\tilde{v}(x,y,z),\tilde{w}(x,y,z))=(ku(ax,ay,bz),kv(ax,ay,bz),bw(ax,ay,bz))
\end{align*} is also a solution to \eqref{eq:3DPrandtl} with $\tilde{U}=kU$ replacing $U.$}
\end{itemize}

In particular, letting $(u,v,w) $ and $(u_B,v_B,w_B) $  be the profiles in Theorem \ref{1121-250628} and taking $$a=\frac{1}{\theta}(X+1),\quad b=\sqrt{\frac{3\mu}{2m_0}}$$ for any small positive constants $\theta$ and $\mu$ independent of $\varepsilon$, then $(\tilde{u},\tilde{v},\tilde{w})$ and $(\tilde{u}_B,\tilde{v}_B,\tilde{w}_B)$ defined by  the self-similar change \begin{align}\label{26-02-08-sscg}
    (\tilde{u}(x,y,z),\tilde{v}(x,y,z),\tilde{w}(x,y,z))=(ku(ax,ay,bz),kv(ax,ay,bz),bw(ax,ay,bz))
\end{align}  are solutions to \eqref{eq:3DPrandtl} defined on
\begin{align*}
[0,\frac{X}{X+1}\theta]\times[0,\frac{Y}{X+1}\theta]\times[0,\infty)
\subset[0,\theta]\times[0,Y]\times[0,\infty)
\end{align*}with
  \begin{align*}
	\partial_z \tilde{u}_B&\gtrsim e^{-\frac{3}{2}\mu z^2}\quad\text{for}\quad z>0,
\\
	|\partial_x^{n_1} \partial_y^{n_2} \partial_z^{n_3}\tilde{u}_B| \lesssim e^{-\mu z^2}&\lesssim e^{-\mu \frac{z^2}{1+x}}\quad\text{as}\quad z\rightarrow+\infty.
\end{align*}
\subsection{Approximation of  boundary data}\label{appbddata}

Set
\begin{align}\label{1118-2}
\bar{u}(x,y,z)=u_B(x,y,z+{\epsilon_0}),\quad
u_{bd}^{\epsilon_0}(x,y,z)=u_{bd}(x,y,z+{\epsilon_0}),
\end{align}
and define
\begin{align*}
   h:=u_{bd}^{\epsilon_0} -\bar{u}\quad\text{on}\quad (\{x=0\}\cup\{y=0\})\cap([0,X]\times[0,Y]\times[0,\epsilon_0]).
\end{align*}
By extension theorem such as  Whitney extension theorem,
we extend  $h$
to a smooth function $\tilde{h}$ defined on  $[0,X]\times[0,Y]\times[0,\epsilon_0]$ preserving the growth rates with respect to $\epsilon_0$. Take
\begin{align}\label{appbdry}\begin{split}
    u_n(x,y,0):=\bar{u}(x,y,0)+\tilde{h}(x,y,0)&\quad(x,y) \in[0,X]\times[0,Y],\\
    u_n(x,0,z):=u_{bd}^{\epsilon_0}(x,0,z)&\quad (x,z) \in[0,X]\times[0,+\infty),\\ u_n(0,y,z):=u_{bd}^{\epsilon_0}(0,y,z)&\quad (y,z) \in[0,Y]\times[0,+\infty).\end{split}
\end{align}
For the boundary value on $z=0,$ by
 $
 |u_{bd}-u_B|\leq \varepsilon^8 \Phi_2
 $ in \eqref{bddata} and \eqref{appbdry}, it holds
\begin{align}\label{tria0106}
 |u_{n}-\bar{u}|(x,y,0)\leq \varepsilon^8 e^{-\frac{A}{x+1}}C\epsilon_0^2,
\end{align} which  leads to the non-degenerate boundary value on $z=0,$ i.e.
\begin{align}\label{26-06-10-ncg}
  u_n(x,y,0)\sim  \epsilon_0 \quad\text{on}\quad z=0,
\end{align}since $\bar{u}(x,y,0)=u_B(x,y,\epsilon_0)\sim \epsilon_0$ by noting $\partial_z u_B|_{z=0}>0$ and $u_B|_{z=0}=0.$
By \eqref{tria0106} and \eqref{26-06-10-ncg}, we have
  \begin{align}\label{flw0106}
 |u_{n}^2-\bar{u}^2|(x,y,0)\leq \varepsilon^8 e^{-\frac{A}{x+1}} C\epsilon_0^3.
\end{align}
Moreover, by \eqref{bddata},
 \begin{align}\label{dzcompat}
 |\partial_zu_{bd}-\partial_zu_B|\leq \varepsilon^7 \Phi_2.
 \end{align}
 Then by \eqref{dzcompat},
\begin{align*}
  |u_{bd}-u_B|=&|\int_0^z\partial_z u_{bd}-\partial_z u_B dz'|\\
  \leq &\varepsilon^7 e^{-\frac{A}{x+1}} C\min\{z^3,1\}.
\end{align*}Hence, it holds
\begin{align}\label{z0ep3}
  |u_{n}-\bar{u}|(x,y,0)  \leq \varepsilon^7  e^{-\frac{A}{x+1}}C\epsilon_0^3.
\end{align}

On the other hand, noting that the boundary data such as $\partial_xu_n|_{x=0}$ and $\partial_yu_n|_{y=0}$ are obtained by compatibility as shown in \eqref{code}, we  can obtain the growth rates of derivatives of $u_n-\bar{u}$
 on the boundary by \eqref{bddata}.

Furthermore, we can derive the growth rates of the vector field derivatives
 of  boundary data based on subsection \ref{1027} as follows.
\begin{align*}\begin{split}
| \nabla _{\eta,\xi} (u_{n}-\bar{u})|\leq
\varepsilon^6 C\bar{u}\phi_{1,1}\quad \text{on}\quad &(\{x=0\}\cup\{y=0\}\cup\{z=0\})\cap\overline{\Omega},\\
| \partial_z\nabla _{\eta,\xi} (u_{n}-\bar{u})|\leq
\varepsilon^6 C\phi_{1,1}\quad \text{on}\quad &(\{x=0\}\cup\{y=0\}\cup\{z=0\})\cap\overline{\Omega},\\
| \nabla _{\eta,\xi}  \nabla _{\eta,\xi}(u_{n}-\bar{u})|\leq
\varepsilon^6 C\phi_{1,1}\quad \text{on}\quad &(\{x=0\}\cup\{y=0\}\cup\{z=0\})\cap\overline{\Omega},\\
| \nabla _{\eta,\xi} \bar{u}-\nabla_{\tau_1}\bar{u}|\leq
\varepsilon^6 C\bar{u}\phi_{1,1}\quad \text{on}\quad &(\{x=0\}\cup\{y=0\}\cup\{z=0\})\cap\overline{\Omega},\\
| \partial_z(\nabla _{\eta,\xi} \bar{u}-\nabla_{\tau_1}\bar{u})|\leq
\varepsilon^6 C\phi_{1,1}\quad \text{on}\quad &(\{x=0\}\cup\{y=0\}\cup\{z=0\})\cap\overline{\Omega},\\
| \nabla _{\eta,\xi} \nabla _{\eta,\xi}\bar{u}-\nabla_{\tau_1}^2\bar{u}|\leq
\varepsilon^6 C\phi_{1,1}\quad \text{on}\quad &(\{x=0\}\cup\{y=0\}\cup\{z=0\})\cap\overline{\Omega},
\end{split}\end{align*}
In fact, by \eqref{26-03-08-def-der}, we have
\begin{align*}
  \partial_z\nabla _{\eta} (u_{n}-\bar{u})=  & \partial_z\partial_y(u_{n}-\bar{u})-\partial_z( \frac{\int_0^z \partial_{y}v_{n-1}dz'}{v_{n-1}})\partial_z(u_{n}-\bar{u})
  \\&- \frac{\int_0^z \partial_{y}v_{n-1}dz'}{v_{n-1}}\partial_z^2(u_{n}-\bar{u}).
\end{align*}
Then by the compatible boundary conditions, it holds \begin{align*}
| \partial_z\nabla _{\eta} (u_{n}-\bar{u})|\leq
\varepsilon^6 C\phi_{1,1}\quad \text{on}\quad (\{x=0\}\cup\{y=0\}\cup\{z=0\})\cap\overline{\Omega},
\end{align*} which implies
\begin{align*}
| \nabla _{\eta} (u_{n}-\bar{u})|\leq
\varepsilon^6 C\bar{u}\phi_{1,1}\quad \text{on}\quad (\{x=0\}\cup\{y=0\}\cup\{z=0\})\cap\overline{\Omega}.
\end{align*}

By \eqref{26-03-08-def-der}, we have
\begin{align*}\begin{split}
\nabla _{\eta} \nabla _{\xi}(u_{n}-\bar{u})=&\partial_{yx}^2(u_{n}-\bar{u})- \frac{\int_0^z \partial_{xy}^2u_{n-1}dz'}{u_{n-1}}\partial_z (u_{n}-\bar{u})
\\&+ \frac{\partial_yu_{n-1}\int_0^z \partial_{x}u_{n-1}dz'}{u_{n-1}^2}\partial_z (u_{n}-\bar{u})
\\&- \frac{\int_0^z \partial_{x}u_{n-1}dz'}{u_{n-1}}\partial_{zy}^2 (u_{n}-\bar{u})
- \frac{\int_0^z \partial_{y}v_{n-1}dz'}{v_{n-1}}\partial_z\nabla _{\xi}(u_{n}-\bar{u}).
\end{split}\end{align*}
Then by the compatible boundary conditions, it holds
\begin{align*}
| \nabla _{\eta}\nabla _{\xi} (u_{n}-\bar{u})|\leq
\varepsilon^6 C\phi_{1,1}\quad \text{on}\quad (\{x=0\}\cup\{y=0\}\cup\{z=0\})\cap\overline{\Omega}.
\end{align*}
Next, since \begin{align*}
     \partial_z\nabla_{\xi}\bar{u}=&
       \partial_z\partial_x\bar{u}+\partial_{z}( -\frac{\int_0^z \partial_xu_{n-1}dz'}{u_{n-1}}\partial_z \bar{u}) ,
\end{align*} and $$|\partial_x(u_{n-1}-\bar{u})|\leq \varepsilon^6 C\bar{u}\phi_{1,1}$$ which is derived from
$|\partial_z\partial_x(u_{bd}-u_B)|\leq \varepsilon^6 C\Phi_{1}$ in \eqref{bddata},  by the compatible boundary conditions, it holds
\begin{align*}
       |\partial_z(\nabla_{\tau_1}\bar{u}-\nabla_{\xi}\bar{u})|\leq
\varepsilon^6 C\phi_{1,1}\quad \text{on}\quad (\{x=0\}\cup\{y=0\}\cup\{z=0\})\cap\overline{\Omega}.
       \end{align*}
which implies\begin{align*}
       |\nabla_{\tau_1}\bar{u}-\nabla_{\xi}\bar{u}|\leq
\varepsilon^6 C\bar{u}\phi_{1,1}\quad \text{on}\quad (\{x=0\}\cup\{y=0\}\cup\{z=0\})\cap\overline{\Omega}.
       \end{align*}
Based on this, by
\begin{align*}\begin{split}
     \nabla_{\eta}\nabla_{\xi}\bar{u}=&
       \partial_{y}\partial_x\bar{u}+\partial_{y}( -\frac{\int_0^z \partial_xu_{n-1}dz'}{u_{n-1}}\partial_z \bar{u})
       \\&-\frac{\int_0^z \partial_yv_{n-1}dz'}{v_{n-1}}\partial_z\partial_x\bar{u}-\frac{\int_0^z \partial_yv_{n-1}dz'}{v_{n-1}}\partial_z( -\frac{\int_0^z \partial_xu_{n-1}dz'}{u_{n-1}}\partial_z \bar{u}),\end{split}
    \end{align*}and the compatible boundary conditions, it holds
       \begin{align*}
       |\nabla_{\tau_1}^2\bar{u}-\nabla_{\eta}\nabla_{\xi}\bar{u}|=|\nabla_{\tau_2}\nabla_{\tau_1}\bar{u}-\nabla_{\eta}\nabla_{\xi}\bar{u}|
\leq
\varepsilon^6 C\phi_{1,1}\quad \text{on}\quad (\{x=0\}\cup\{y=0\}\cup\{z=0\})\cap\overline{\Omega}.
       \end{align*}
Similarly, we can derive the other inequalities.
\subsection{Growth rates of $\bar{u}$ and its derivatives}\label{1027}
Here are the growth rates of the smooth function
 $\bar{u}$ and its derivatives which are  used in the proofs.
By the properties of the background profile $u_B$,  we have for any $k,m \in \Z_+,$ $l\in\N,$
\begin{align*}
|\partial_x^k\bar{u}|\leq C_k\bar{u}e^{-\frac{(z+{\epsilon_0})^2}{x+1}\mu}, \quad |\partial_x^l\partial_z^m\bar{u}|\leq C_{l,m}e^{-\frac{(z+{\epsilon_0})^2}{x+1}\mu}.
\end{align*}
This  implies
\begin{align}\label{nnbaru1}\begin{split}
&|\nabla_{\tau_1}^k \bar{u}|\leq C\bar{u}e^{-\frac{(z+{\epsilon_0})^2}{x+1}\mu},\quad   |\nabla_{n}^2 \bar{u}|\leq \frac{C}{\bar{u}^3}e^{-\frac{(z+{\epsilon_0})^2}{x+1}\mu},\quad  |\nabla_{\tau_1}\nabla_{n}^2 \bar{u}|\leq \frac{C}{\bar{u}^3}e^{-\frac{(z+{\epsilon_0})^2}{x+1}\mu},
\\&
|\partial_z\nabla_{\tau_1}\nabla_{n}^2 \bar{u}|\leq \frac{C}{\bar{u}^4}e^{-\frac{(z+{\epsilon_0})^2}{x+1}\mu},\end{split}
\end{align}
where we have used $\nabla_{n}(gh)=\frac{1}{\bar{u}}\partial_z(gh)=h\nabla_{n}g+g\nabla_{n}h$,
and for some positive constant   $\bar{\delta},$
$
  0<2c_0\leq \partial_z \bar{u}\leq 2C_0$ for $ z\in[0,\bar{\delta}].
$ Moreover,  since \begin{align}\label{taunu}
 0= &\nabla_{\tau_1}    \bar{u}+   \nabla_{\tau_1} \bar{u}-\nabla_{n}( \bar{u}\nabla_{n}  \bar{u}),
 \end{align}
 and
 \begin{align*}
 \nabla_{n} \bar{u}=&\frac{\partial_z\bar{u}}{\bar{u}}, \quad
 \nabla_{n}( \bar{u}\nabla_{n}  \bar{u})=\frac{1}{\bar{u}}\partial_z^2\bar{u},
 \quad\partial_z^3\bar{u}|_{z=0}=0,
\end{align*} we have
\begin{align}\label{nnbaru}\begin{split}
  &|\partial_z^3\bar{u}|\leq C\bar{u}e^{-\frac{(z+{\epsilon_0})^2}{x+1}\mu},\quad  |\partial_z^2\bar{u}|\leq C\bar{u}^2e^{-\frac{(z+{\epsilon_0})^2}{x+1}\mu},\quad |\nabla_{n}^2 \bar{u}^2|\leq C\bar{u}e^{-\frac{(z+{\epsilon_0})^2}{x+1}\mu},\\& |\nabla_{n}^2 \bar{u}|\leq \frac{C}{\bar{u}^3}e^{-\frac{(z+{\epsilon_0})^2}{x+1}\mu},\quad |\partial_z\nabla_{n}^2 \bar{u}^2|\leq Ce^{-\frac{(z+{\epsilon_0})^2}{x+1}\mu},\end{split}
\end{align} and $4\bar{u}\nabla_{\tau_1}    \bar{u}=\partial_z ( \nabla_{n} \bar{u}^2)$ so that
\begin{align}\label{znbaru}
  |\partial_z  \nabla_{n} \bar{u}^2|\leq C\bar{u}^2e^{-\frac{(z+{\epsilon_0})^2}{x+1}\mu}.
\end{align}

\subsection{Test equation of commutator $K$}\label{AppK}
Suppose that  $f$ is a smooth function. By  \eqref{kh1}, \eqref{kh2} and \eqref{K}, we have the following calculation.

\subsubsection{Tangential vector field derivatives}
\begin{align*}
  \nabla_{\xi}  ( \nabla_{\xi} f+(1+\tilde{q})\nabla_{\eta}f)
  =&  \nabla_{\xi}^2f+  \nabla_{\xi}\tilde{q}\,\nabla_{\eta}f
  +(1+\tilde{q})\nabla_{\xi}\nabla_{\eta}f,\\
   \nabla_{\eta}  ( \nabla_{\xi} f+(1+\tilde{q})\nabla_{\eta}f)
  =&  \nabla_{\eta}\nabla_{\xi}f+  \nabla_{\eta}\tilde{q}\,\nabla_{\eta}f
  +(1+\tilde{q})\nabla_{\eta}^2f.
\end{align*}
By \eqref{K}, we have
\begin{align}\label{communicator}
\nabla_{\xi}\nabla_{\eta}  =\nabla_{\eta}  \nabla_{\xi}+K\partial_z.
\end{align}
 Then
\begin{align*}
    &\nabla_{\eta}  \nabla_{\xi}^2f  +(1+\tilde{q})\nabla_{\eta}\nabla_{\xi}\nabla_{\eta}f
    \\=&(\nabla_{\xi}\nabla_{\eta} -K\partial_z)\nabla_{\xi}f
     +(1+\tilde{q})\nabla_{\eta}(\nabla_{\eta}\nabla_{\xi}+K\partial_z)f\\
     =&\nabla_{\xi}\nabla_{\eta}\nabla_{\xi}f-K\partial_z\nabla_{\xi}f
     +(1+\tilde{q})\nabla_{\eta}(\nabla_{\eta}\nabla_{\xi}f)
     \\&+(1+\tilde{q})\nabla_{\eta}(K\partial_zf),
\end{align*}and
\begin{align*}
    &\nabla_{\xi} \nabla_{\eta}\nabla_{\xi}f  +(1+\tilde{q})\nabla_{\xi}\nabla_{\eta}^2f\\
    =&\nabla_{\xi} (\nabla_{\xi}\nabla_{\eta}-K\partial_z)f  +(1+\tilde{q})(\nabla_{\eta}\nabla_{\xi}+K\partial_z)\nabla_{\eta}f
    \\=&\nabla_{\xi} (\nabla_{\xi}\nabla_{\eta}f)-\nabla_{\xi}(K\partial_zf)
    +(1+\tilde{q})\nabla_{\eta}(\nabla_{\xi}\nabla_{\eta}f)
    +(1+\tilde{q})K\partial_z\nabla_{\eta}f.
\end{align*}
This implies
\begin{align*}
  &\nabla_{\eta}  \nabla_{\xi}  ( \nabla_{\xi} f+(1+\tilde{q})\nabla_{\eta}f)
\\ =& \nabla_{\eta}  \nabla_{\xi}^2f  +(1+\tilde{q})\nabla_{\eta}\nabla_{\xi}\nabla_{\eta}f
 +  \nabla_{\eta} \nabla_{\xi}\tilde{q}\,\nabla_{\eta}f+ \nabla_{\xi}\tilde{q}\,\nabla_{\eta}^2f +\nabla_{\eta}\tilde{q}\nabla_{\xi}\nabla_{\eta}f
 \\ =&\nabla_{\xi}\nabla_{\eta}\nabla_{\xi}f-K\partial_z\nabla_{\xi}f
     +(1+\tilde{q})\nabla_{\eta}(\nabla_{\eta}\nabla_{\xi}f)
     +(1+\tilde{q})\nabla_{\eta}(K\partial_zf)
     \\&+  \nabla_{\eta} \nabla_{\xi}\tilde{q}\,\nabla_{\eta}f+ \nabla_{\xi}\tilde{q}\,\nabla_{\eta}^2f +\nabla_{\eta}\tilde{q}\nabla_{\xi}\nabla_{\eta}f,
     \end{align*}and
     \begin{align*}
     &  \nabla_{\xi} \nabla_{\eta} ( \nabla_{\xi} f+(1+\tilde{q})\nabla_{\eta}f)
\\ =& \nabla_{\xi} \nabla_{\eta}\nabla_{\xi}f  +(1+\tilde{q})\nabla_{\xi}\nabla_{\eta}^2f+  \nabla_{\xi} \nabla_{\eta}\tilde{q}\,\nabla_{\eta}f+\nabla_{\eta}\tilde{q}\, \nabla_{\xi} \nabla_{\eta}f+\nabla_{\xi} \tilde{q}\, \nabla_{\eta}^2f
\\=&\nabla_{\xi} (\nabla_{\xi}\nabla_{\eta}f)-\nabla_{\xi}(K\partial_zf)
    +(1+\tilde{q})\nabla_{\eta}(\nabla_{\xi}\nabla_{\eta}f)
    +(1+\tilde{q})K\partial_z\nabla_{\eta}f
\\&+  \nabla_{\xi} \nabla_{\eta}\tilde{q}\,\nabla_{\eta}f+\nabla_{\eta}\tilde{q}\, \nabla_{\xi} \nabla_{\eta}f+\nabla_{\xi} \tilde{q}\, \nabla_{\eta}^2f.
\end{align*}
Hence,
 \begin{align*}
     &\nabla_{\xi} \nabla_{\eta} ( \nabla_{\xi} f+(1+\tilde{q})\nabla_{\eta}f)- \nabla_{\eta}  \nabla_{\xi}  ( \nabla_{\xi} f+(1+\tilde{q})\nabla_{\eta}f)
\\=&\nabla_{\xi} (\nabla_{\xi}\nabla_{\eta}f-\nabla_{\eta}\nabla_{\xi}f)
    +(1+\tilde{q})\nabla_{\eta}(\nabla_{\xi}\nabla_{\eta}f-\nabla_{\eta}\nabla_{\xi}f)
   -(1+\tilde{q})\nabla_{\eta}(K\partial_zf)-\nabla_{\xi}(K\partial_zf)
  \\& +(1+\tilde{q})K\partial_z\nabla_{\eta}f+K\partial_z\nabla_{\xi}f
+  (\nabla_{\xi} \nabla_{\eta}\tilde{q}-\nabla_{\eta} \nabla_{\xi}\tilde{q})\,\nabla_{\eta}f.
\end{align*}By \eqref{communicator}, the first line on the right hand side is equal to $0$ so that
 \begin{align*}
     &\nabla_{\xi} \nabla_{\eta} ( \nabla_{\xi} f+(1+\tilde{q})\nabla_{\eta}f)- \nabla_{\eta}  \nabla_{\xi}  ( \nabla_{\xi} f+(1+\tilde{q})\nabla_{\eta}f)
\\=& (1+\tilde{q})K\partial_z\nabla_{\eta}f+K\partial_z\nabla_{\xi}f
+  K\partial_z\tilde{q}\,\,\nabla_{\eta}f.
\end{align*}
By direct calculation, we have
\begin{align}\label{tanpsipsi}\begin{split}
\nabla_{\xi}(u_{n-1}\nabla_{\psi}^2f)=&
\nabla_{\xi}u_{n-1}\,\nabla_{\psi}^2f+u_{n-1}\nabla_{\psi}^2\nabla_{\xi}f,\\
\nabla_{\eta}(u_{n-1}\nabla_{\psi}^2f)=&
\nabla_{\eta}u_{n-1}\,\nabla_{\psi}^2f
+u_{n-1}\nabla_{\eta}\big((1+\tilde{q})\nabla_{\tilde{\psi}}\nabla_{\psi}f\big)\\
=&
\nabla_{\eta}u_{n-1}\,\nabla_{\psi}^2f+u_{n-1}\frac{\nabla_{\eta}\tilde{q}}{1+\tilde{q}}\,\,\nabla_{\psi}^2f
+u_{n-1}\nabla_{\psi}\nabla_{\eta}\big((1+\tilde{q})\nabla_{\tilde{\psi}}f\big)
\\=&\nabla_{\eta}u_{n-1}\,\nabla_{\psi}^2f+u_{n-1}\nabla_{\psi}^2\nabla_{\eta}f
+u_{n-1}\frac{\nabla_{\eta}\tilde{q}}{1+\tilde{q}}\,\,\nabla_{\psi}^2f
\\&+u_{n-1}\nabla_{\psi}\big(\frac{\nabla_{\eta}\tilde{q}}{1+\tilde{q}}\nabla_{\psi}f\big)
,\end{split}
\end{align} and
\begin{align*}
  & \nabla_{\xi}u_{n-1}
\nabla_{\psi}\nabla_{\eta}\nabla_{\psi}f
+u_{n-1}\nabla_{\psi}\nabla_{\eta}\nabla_{\psi}\nabla_{\xi}f\\
=&\nabla_{\xi}u_{n-1}
\nabla_{\psi}\nabla_{\eta}\big((1+\tilde{q})\nabla_{\tilde{\psi}}f\big)
+u_{n-1}\nabla_{\psi}\nabla_{\eta}\big((1+\tilde{q})\nabla_{\tilde{\psi}}\nabla_{\xi}f
\big)
\\=&\nabla_{\xi}u_{n-1}
\nabla_{\psi}^2\nabla_{\eta}f
+u_{n-1}\nabla_{\psi}^2\nabla_{\eta}\nabla_{\xi}f
\\&+\nabla_{\xi}u_{n-1}
\nabla_{\psi}(\frac{\nabla_{\eta}\tilde{q}}{1+\tilde{q}}\nabla_{\psi}f)
+u_{n-1}\nabla_{\psi}(\frac{\nabla_{\eta}\tilde{q}}{1+\tilde{q}}\nabla_{\psi}\nabla_{\xi}f
\big).\end{align*}
Then we have
\begin{align*}
&\nabla_{\eta}\nabla_{\xi}(u_{n-1}\nabla_{\psi}^2f)
\\=&
\nabla_{\eta}\nabla_{\xi}u_{n-1}\,\nabla_{\psi}^2f+\nabla_{\xi}u_{n-1}
\nabla_{\eta}\big((1+\tilde{q})\nabla_{\tilde{\psi}}\nabla_{\psi}f\big)
\\&+\nabla_{\eta}u_{n-1}\nabla_{\psi}^2\nabla_{\xi}f
+u_{n-1}\nabla_{\eta}\big((1+\tilde{q})\nabla_{\tilde{\psi}}\nabla_{\psi}\nabla_{\xi}f\big)
\\=&
\nabla_{\eta}\nabla_{\xi}u_{n-1}\,\nabla_{\psi}^2f
+\nabla_{\eta}u_{n-1}\nabla_{\psi}^2\nabla_{\xi}f
+ \nabla_{\xi}u_{n-1}
\nabla_{\psi}\nabla_{\eta}\nabla_{\psi}f
+u_{n-1}\nabla_{\psi}\nabla_{\eta}\nabla_{\psi}\nabla_{\xi}f
\\&+\nabla_{\xi}u_{n-1}
\frac{\nabla_{\eta}\tilde{q}}{1+\tilde{q}}\nabla_{\psi}^2f
+u_{n-1}\frac{\nabla_{\eta}\tilde{q}}{1+\tilde{q}}\,\,\nabla_{\psi}^2\nabla_{\xi}f
\\=&\nabla_{\xi}u_{n-1}
\nabla_{\psi}^2\nabla_{\eta}f
+u_{n-1}\nabla_{\psi}^2\nabla_{\eta}\nabla_{\xi}f
+\nabla_{\eta}\nabla_{\xi}u_{n-1}\,\nabla_{\psi}^2f
+\nabla_{\eta}u_{n-1}\nabla_{\psi}^2\nabla_{\xi}f
\\&+\nabla_{\xi}u_{n-1}
\nabla_{\psi}(\frac{\nabla_{\eta}\tilde{q}}{1+\tilde{q}}\nabla_{\psi}f)
+u_{n-1}\nabla_{\psi}(\frac{\nabla_{\eta}\tilde{q}}{1+\tilde{q}}\nabla_{\psi}\nabla_{\xi}f
\big)\\&+\nabla_{\xi}u_{n-1}
\frac{\nabla_{\eta}\tilde{q}}{1+\tilde{q}}\nabla_{\psi}^2f
+u_{n-1}\frac{\nabla_{\eta}\tilde{q}}{1+\tilde{q}}\,\,\nabla_{\psi}^2\nabla_{\xi}f,
\end{align*}
and\begin{align*}
 \nabla_{\xi} \nabla_{\eta}(u_{n-1}\nabla_{\psi}^2f)=& \nabla_{\xi} \nabla_{\eta}u_{n-1}\,\nabla_{\psi}^2f+ \nabla_{\eta}u_{n-1}\,\nabla_{\psi}^2\nabla_{\xi} f
 +\nabla_{\xi} u_{n-1}\,\,\nabla_{\psi}^2\nabla_{\eta}f+ u_{n-1}\,\,\nabla_{\psi}^2\nabla_{\xi} \nabla_{\eta}f
\\&+\nabla_{\xi}u_{n-1}\frac{\nabla_{\eta}\tilde{q}}{1+\tilde{q}}\,\,\nabla_{\psi}^2f
+u_{n-1}\nabla_{\xi}(\frac{\nabla_{\eta}\tilde{q}}{1+\tilde{q}})\,\,\nabla_{\psi}^2f
+u_{n-1}\frac{\nabla_{\eta}\tilde{q}}{1+\tilde{q}}\,\,\nabla_{\psi}^2\nabla_{\xi}f
\\&+\nabla_{\xi}u_{n-1}\nabla_{\psi}\big(\frac{\nabla_{\eta}\tilde{q}}{1+\tilde{q}}\nabla_{\psi}f\big)
+u_{n-1}\nabla_{\psi}\big(\frac{\nabla_{\eta}\tilde{q}}{1+\tilde{q}}\nabla_{\psi}\nabla_{\xi}f\big)
+u_{n-1}\nabla_{\psi}\big(\nabla_{\xi}(\frac{\nabla_{\eta}\tilde{q}}{1+\tilde{q}})\nabla_{\psi}f\big)
.
\end{align*}Hence,
\begin{align*}
    \nabla_{\xi} \nabla_{\eta}(u_{n-1}\nabla_{\psi}^2f)-\nabla_{\eta}\nabla_{\xi} (u_{n-1}\nabla_{\psi}^2f)=& (\nabla_{\xi} \nabla_{\eta}u_{n-1}-\nabla_{\eta}\nabla_{\xi} u_{n-1})\nabla_{\psi}^2f
    \\&+ u_{n-1}\nabla_{\psi}^2(\nabla_{\xi} \nabla_{\eta}f- \nabla_{\eta}\nabla_{\xi}f)\\
    &+u_{n-1}\nabla_{\xi}(\frac{\nabla_{\eta}\tilde{q}}{1+\tilde{q}})\,\,\nabla_{\psi}^2f
+u_{n-1}\nabla_{\psi}\big(\nabla_{\xi}(\frac{\nabla_{\eta}\tilde{q}}{1+\tilde{q}})\nabla_{\psi}f\big).
\end{align*}
In summary, \begin{align}\begin{split}
&K\partial_z(\nabla_{\xi} f+(1+\tilde{q})\nabla_{\eta}f-u_{n-1}\nabla_{\psi}^2f)\\ =  &( \nabla_{\xi} \nabla_{\eta}-\nabla_{\eta}\nabla_{\xi} ) (\nabla_{\xi} f+(1+\tilde{q})\nabla_{\eta}f-u_{n-1}\nabla_{\psi}^2f)\\
   =&- u_{n-1}\nabla_{\psi}^2(\nabla_{\xi} \nabla_{\eta}f- \nabla_{\eta}\nabla_{\xi}f)\\&- (\nabla_{\xi} \nabla_{\eta}u_{n-1}-\nabla_{\eta}\nabla_{\xi} u_{n-1})\nabla_{\psi}^2f
    \\&+ (1+\tilde{q})K\partial_z\nabla_{\eta}f+K\partial_z\nabla_{\xi}f
+  K\partial_z\tilde{q}\,\,\nabla_{\eta}f\\
    &-2u_{n-1}\nabla_{\xi}(\frac{\nabla_{\eta}\tilde{q}}{1+\tilde{q}})\,\,\nabla_{\psi}^2f
-u_{n-1}\big(\nabla_{\psi}\nabla_{\xi}(\frac{\nabla_{\eta}\tilde{q}}{1+\tilde{q}})\big)\nabla_{\psi}f
,\end{split}
\end{align} and\begin{align}\label{symcore}\begin{split}
K\partial_z(-u_{n-1}\nabla_{\psi}^2f)=  &- u_{n-1}\nabla_{\psi}^2(K\partial_zf)\\&- (\nabla_{\xi} \nabla_{\eta}u_{n-1}-\nabla_{\eta}\nabla_{\xi} u_{n-1})\nabla_{\psi}^2f
    \\
    &-2u_{n-1}\nabla_{\xi}(\frac{\nabla_{\eta}\tilde{q}}{1+\tilde{q}})\,\,\nabla_{\psi}^2f
-u_{n-1}\big(\nabla_{\psi}\nabla_{\xi}(\frac{\nabla_{\eta}\tilde{q}}{1+\tilde{q}})\big)\nabla_{\psi}f
.\end{split}
\end{align}
\subsection{Transformation between derivatives}\label{66}
In this subsection, we will prove the following theorem.
\begin{theorem}\label{lm6}If \eqref{assumpn-2} holds in $\Omega$, then
\begin{align}\label{10271}\begin{split}
&|\partial_{x,y}u_{n}-\partial_{x}\bar{u}|,\,|\partial_{x,y}v_{n}-\partial_{x}\bar{u}|\leq \varepsilon^2 \phi_{1,1},
\\&
|\partial_z\partial_{x,y}u_{n}-\partial_z\partial_{x}\bar{u}|,\,|\partial_z\partial_{x,y}v_{n}-\partial_z\partial_{x}\bar{u}|\leq \varepsilon^2 \phi_{1,\alpha},\quad
\\&|\partial_{x,y}\partial_{x,y}u_{n}-\partial_{x}^2\bar{u}|,\,
|\partial_{x,y}\partial_{x,y}v_{n}-\partial_{x}^2\bar{u}|\leq\varepsilon^2 \phi_{1,\frac{\alpha}{2}}\quad \text{in }\quad\Omega
.\end{split}
\end{align}
\end{theorem}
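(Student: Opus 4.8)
The plan is to convert the vector-field estimates in \eqref{assumpn-2} back into Euclidean-derivative estimates by inverting the relations in \eqref{rlt}. The key point is that each Euclidean derivative of $u_n$ differs from the corresponding combination of vector-field derivatives only by terms that involve the ``drift coefficients'' $\frac{\int_0^z\partial_x u_{n-1}\,dz'}{u_{n-1}}$ and $\frac{\int_0^z\partial_y v_{n-1}\,dz'}{v_{n-1}}$, which are under control by the induction hypothesis \eqref{n-1assumption} (cf.\ \eqref{250416commongr}: these coefficients and their $z$-derivatives are bounded, with the first-integral coefficients bounded by $C\bar u$). First I would handle $\partial_{x,y}u_n-\partial_x\bar u$: by the first two lines of \eqref{rlt} applied to $f=u_n$ and the analogous identity $\partial_x\bar u=\nabla_{\tau_1}\bar u+\frac{\int_0^z\partial_x\bar u\,dz'}{\bar u}\partial_z\bar u$, the difference $\partial_{x,y}u_n-\partial_x\bar u$ equals $(\nabla_{\eta,\xi}u_n-\nabla_{\tau_1}\bar u)$ plus drift-coefficient terms times $\partial_z u_n$ plus $\big(\text{drift of }u_{n-1}\text{ minus drift of }\bar u\big)\partial_z\bar u$. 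The first piece is bounded by $d_0\varepsilon^2\phi_{1,1}$ from the third line of \eqref{assumpn-2}; the drift-difference piece uses \eqref{250416-nabla1}-type estimates and the bound $|\partial_z\bar u|\le C\phi_{1,0}$ from Subsection \ref{1027}; hence the total is $\le\varepsilon^2\phi_{1,1}$ provided $\varepsilon\ll d_0$, which is the standing assumption.

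Next I would treat the mixed derivative $\partial_z\partial_{x,y}u_n-\partial_z\partial_x\bar u$ using the last two lines of \eqref{rlt}. Writing $\partial_z\partial_y u_n=\partial_z\nabla_\eta u_n+\partial_z\!\big(\tfrac{\int_0^z\partial_y v_{n-1}dz'}{v_{n-1}}\big)\partial_z u_n+\tfrac{\int_0^z\partial_y v_{n-1}dz'}{v_{n-1}}\partial_z^2 u_n$ and subtracting the corresponding expansion for $\partial_z\partial_x\bar u$, the leading term $\partial_z\nabla_{\eta,\xi}u_n-\partial_z\nabla_{\tau_1}\bar u$ is bounded by $d_0\varepsilon^2\phi_{1,\alpha}$ by the fourth line of \eqref{assumpn-2}; the remaining terms pair a bounded (or $O(\bar u)$) coefficient difference against $\partial_z u_n$, $\partial_z^2 u_n$, $\partial_z\bar u$, $\partial_z^2\bar u$, all of which are controlled by \eqref{assumpn-2} (last two lines) and \eqref{nnbaru}. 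For the pure tangential second derivative $\partial_{x,y}\partial_{x,y}u_n-\partial_x^2\bar u$, I would iterate: write $\partial_{x,y}\partial_{x,y}u_n$ as $\nabla_{\eta,\xi}\nabla_{\eta,\xi}u_n$ plus a sum of terms each of which is either a drift coefficient times $\partial_z\nabla_{\eta,\xi}u_n$ (controlled above), or a $z$-derivative of a drift coefficient times $\nabla_{\eta,\xi}u_n$, or a product of two drift coefficients times $\partial_z^2 u_n$; this is precisely the structure already exhibited in the proof of Lemma \ref{04160416e-v}, Step 3. The leading term is bounded by $d_0\varepsilon^2\phi_{1,\alpha/2}$ by the fifth line of \eqref{assumpn-2}, and since $\phi_{1,1},\phi_{1,\alpha}\le C\phi_{1,\alpha/2}$, all lower-order contributions are absorbed, giving $\le\varepsilon^2\phi_{1,\alpha/2}$.

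The analysis for $v_n$ is identical after swapping the roles of the two tangential directions, so nothing new is needed there. The main obstacle is bookkeeping rather than anything deep: one must check that every error term generated by the non-commutativity of $\nabla_\xi,\nabla_\eta,\nabla_\psi$ carries a factor that is either small in $\varepsilon$ (coming from $\tilde q$, $K$, or $q_{n-1}$-type quantities) or carries the correct power of $\phi_{1,\cdot}$ and the correct power of $\bar u$ (or $z+\epsilon_0$) so that the degenerate weight near $z=0$ and the Gaussian decay for $z$ large both match the claimed bound. The precise weight accounting near $z=0$ — in particular ensuring that products like (drift coefficient) $\times\,\partial_z^2 u_n$, which a priori only satisfies the weak bound $|\partial_z^2 u_n-\partial_z^2\bar u|\le\varepsilon\min\{1,(z+\epsilon_0)^\alpha\}$, still produces a term dominated by $\varepsilon^2\phi_{1,\alpha/2}$ — is the one spot that requires genuine care, and it is handled exactly as in Lemma \ref{04160416e-v} using $|\int_0^z\partial_x u_{n-1}dz'/u_{n-1}|\le C\bar u^{\alpha/2}$ to supply the missing power of the weight.
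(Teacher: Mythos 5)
Your overall plan---invert \eqref{rlt} and feed in \eqref{assumpn-2}---is how the paper itself begins (Lemmas \ref{11dj-1}--\ref{11dj-3}), but the two places you set aside as bookkeeping are exactly where the argument cannot be closed from \eqref{assumpn-2} alone, and your proposed fixes do not close them. First, the cross term $\frac{\int_0^z\partial_x u_{n-1}\,dz'}{u_{n-1}}\,\partial_z^2(u_n-\bar u)$: the only control \eqref{assumpn-2} gives on $\partial_z^2(u_n-\bar u)$ is $\varepsilon\min\{1,(z+\epsilon_0)^\alpha\}$ (cf. \eqref{1119-1}), with a single power of $\varepsilon$ and no decay in $z$. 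In the region $z\simeq 1$ (and for large $z$) the drift coefficient is of order one, so this term is of size $C\varepsilon$, which cannot be dominated by $\varepsilon^2\phi_{1,\alpha}$ for small $\varepsilon$. The bound $C\bar u^{\alpha/2}$ you invoke is the bound in \eqref{250416commongr} for $\int_0^z\partial_{xy}^2u_{n-1}dz'/u_{n-1}$, not for this coefficient, and in any case it supplies no smallness away from $z=0$ where $\bar u\sim 1$. The paper gets around this by splitting at $\delta_\varepsilon$ and invoking Lemma \ref{58}, an equation-based improvement $|\partial_z^2(u_n-\bar u)|\le C\varepsilon^{2.3}\phi_{1,\alpha}$ for $z\ge(\varepsilon d_0)^{\frac{1}{1-\alpha}}$ derived from \eqref{appeqeuclidean} and the refined first-order estimates (Theorems \ref{thm1}, \ref{dzunphi0}, \ref{r2imp}); it likewise uses \eqref{zh1}--\eqref{zh2}, Lemma \ref{65} and Lemma \ref{lm65} rather than the raw bounds in \eqref{assumpn-2}.

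Second, the ``drift-difference'' piece, i.e. replacing $\nabla_{\tau_1}\bar u$ by $\nabla_{\xi,\eta}\bar u$: term by term its worst contribution is $\partial_z\big(\tfrac{1}{u_{n-1}}\int_0^z(\partial_x\bar u-\partial_xu_{n-1})dz'\big)\partial_z\bar u$, which from \eqref{n-1assumption} is only $O(C\varepsilon^2\phi_{1,0})$: it is missing the weight $(z+\epsilon_0)^\alpha$ near $z=0$ and carries an absolute constant $C$. That is fatal here because \eqref{10271} must hold with coefficient exactly $\varepsilon^2$ (it has to reproduce \eqref{n-1assumption} at step $n$, otherwise the induction constants grow), so arguments of the form ``$\le C\varepsilon^2\phi$, absorb since $\varepsilon\ll d_0$'' do not help when the error is itself $C\varepsilon^2\phi$ with $C$ not small; Lemma \ref{04160416e-v}, which you cite as the template, is allowed to lose such constants, while Theorem \ref{lm6} is not. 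The paper recovers both the weight and constants strictly below one by exploiting a cancellation of averaging type, $f-\frac{1}{z+c}\int_0^z f\,dz'$, combined with boundary compatibility at $z=0$ (Lemma \ref{tl2}, Lemma \ref{6969}, Lemma \ref{xz1104}, Corollary \ref{cr}, Lemma \ref{dzun-1un}), and it orders the proof differently: the mixed derivative bound comes first (Lemma \ref{11dj-1}), the first-order bound is then obtained near $z=0$ by integrating it in $z$ (gaining the factor $\frac{1}{1+\alpha}$ and the correct weight) and away from $z=0$ by a separate maximum-principle estimate (Lemma \ref{lm65}), and only then is the tangential--tangential case treated (Lemma \ref{11dj-3}). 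Without these ingredients your conversion argument fails in the intermediate and large-$z$ regions and does not close the induction near $z=0$.
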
 For brevity, we will prove the following three lemmas and  Theorem \ref{lm6} follows.
\begin{lemma}\label{11dj-1}If \eqref{assumpn-2} holds in $\Omega$, then\begin{align}\label{618}\begin{split}
|\partial_z\partial_{x}u_{n}-\partial_z\partial_{x}\bar{u}|\leq \varepsilon^2 \phi_{1,\alpha}\quad \text{in }\quad\Omega
.\end{split}
\end{align}
\end{lemma}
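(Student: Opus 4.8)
The plan is to deduce the Euclidean estimate \eqref{618} from the vector-field estimates \eqref{assumpn-2} by means of the transformation identity \eqref{rlt}. Set $G:=\frac{\int_0^z\partial_x u_{n-1}\,dz'}{u_{n-1}}$, the coefficient of $\partial_z$ in $\nabla_\xi=\nabla_\xi^{n-1}$, and $\bar{G}:=\frac{\int_0^z\partial_x\bar{u}\,dz'}{\bar{u}}$, the coefficient of $\partial_z$ in $\nabla_{\tau_1}$. Applying the fourth line of \eqref{rlt} to $u_n$ and to $\bar{u}$ and subtracting gives
\begin{align*}
\partial_z\partial_x(u_n-\bar{u})=&\big(\partial_z\nabla_\xi u_n-\partial_z\nabla_{\tau_1}\bar{u}\big)\\
&+\big[\partial_z G\,\partial_z(u_n-\bar{u})+\partial_z(G-\bar{G})\,\partial_z\bar{u}\big]+\big[G\,\partial_z^2(u_n-\bar{u})+(G-\bar{G})\,\partial_z^2\bar{u}\big].
\end{align*}
The first group is controlled directly by the fourth line of \eqref{assumpn-2}: $|\partial_z\nabla_\xi u_n-\partial_z\nabla_{\tau_1}\bar{u}|\le d_0\varepsilon^2\phi_{1,\alpha}\le\tfrac12\varepsilon^2\phi_{1,\alpha}$ for $d_0$ small.

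I would first collect the auxiliary bounds. From the first line of \eqref{n-1assumption}, the growth rates of $\bar{u}$ in Subsection \ref{1027}, the non-degeneracy \eqref{positivelbu}, \eqref{lb0126g} and the smallness of the boundary data on $\{z=0\}$ of the type \eqref{z0ep3}, one gets $|G|,|\bar{G}|\le C\min\{z,1\}$ and $|\partial_z G|,|\partial_z\bar{G}|\le C$ (exactly as in \eqref{linemethodbsign}); and from \eqref{assumpn-2} and Subsection \ref{1027}, $|\partial_z^2\bar{u}|\le C\phi_{1,0}$, $|\partial_z^2(u_n-\bar{u})|\le C\phi_{1,0}$ (using also the available $z$-infinity decay), $|\partial_z(u_n-\bar{u})|\le d_0\varepsilon^5\phi_{1,\alpha}$, together with $|G-\bar{G}|\le\varepsilon^2 C\min\{z,1\}$ from the first and third lines of \eqref{n-1assumption}. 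The solution-difference pieces $\partial_z G\,\partial_z(u_n-\bar{u})$, $G\,\partial_z^2(u_n-\bar{u})$ and $(G-\bar{G})\partial_z^2\bar{u}$ are then admissible at once, each carrying either a spare power of $\varepsilon$ or the vanishing factor $\min\{z,1\}$ against $\varepsilon^2\phi_{1,\alpha}$. The genuinely delicate term is $\partial_z(G-\bar{G})\,\partial_z\bar{u}$; here I would write $\partial_x(u_{n-1}-\bar{u})(x,y,z)=\partial_x(u_{n-1}-\bar{u})(x,y,0)+\int_0^z\partial_z\partial_x(u_{n-1}-\bar{u})\,dz'$, bound the first summand by the $\{z=0\}$ boundary-data smallness and the second by the induction bound $|\partial_z\partial_x(u_{n-1}-\bar{u})|\le\varepsilon^2\phi_{1,\alpha}$, and divide by $u_{n-1}\gtrsim z+\epsilon_0$; the integral contributes an extra power of $(z+\epsilon_0)$, so that $|\partial_z(G-\bar{G})|\le\tfrac{C\varepsilon^2}{1+\alpha}\phi_{1,\alpha}$ plus strictly smaller terms. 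Choosing $\alpha$ small, then $\delta$ small (independently of $\varepsilon$), and finally $\varepsilon\ll d_0$, this contribution is $\le\tfrac14\varepsilon^2\phi_{1,\alpha}$: on $\{z+\epsilon_0\le\delta\sqrt{x+1}\}$ because of the extra $(z+\epsilon_0)^{1-\alpha}$ gain, and on $\{z+\epsilon_0\ge\delta\sqrt{x+1}\}$ because the loss $\phi_{1,0}=\delta^{-\alpha}\phi_{1,\alpha}$ is absorbed into a positive power of $\varepsilon$.

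Summing the three contributions yields \eqref{618}. The main obstacle is exactly this growth-rate bookkeeping near $z=0$ for the coefficient-difference terms: in contrast to the terms carrying $\partial_z$ or $\partial_z^2$ of $u_n-\bar{u}$, the factor $\partial_z(G-\bar{G})$ does not vanish as $z\to0$ on its own, so one has to use both the compatibility-forced near-equality of $\partial_x u_{n-1}$ and $\partial_x\bar{u}$ on $\{z=0\}$ (which makes $\partial_z(G-\bar{G})$ much smaller than $O(\varepsilon^2)$ there) and the already-proved $\phi_{1,\alpha}$-bound for the index $n-1$, uniformly in $\epsilon_0$ and $n$. The remaining inequalities of Theorem \ref{lm6}, and Lemma \ref{11dj-1} itself, follow by running this same three-step scheme ($\nabla$-derivative difference, coefficient-difference remainder, solution-difference remainder) on the corresponding identity in \eqref{rlt}.
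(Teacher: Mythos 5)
Your decomposition is the same identity the paper uses (the paper applies $\partial_z\partial_x=\partial_z\nabla_\xi+\partial_z G\,\partial_z+G\,\partial_z^2$ to $u_n-\bar u$, so your explicit coefficient-difference terms $\partial_z(G-\bar G)\partial_z\bar u$ and $(G-\bar G)\partial_z^2\bar u$ appear there as $\partial_z(\nabla_{\tau_1}-\nabla_\xi)\bar u$, handled by Lemma \ref{xz1104} via Lemma \ref{6969}), and your use of the fourth line of \eqref{assumpn-2} for the first group is legitimate. The genuine gap is in your treatment of the solution-difference term $G\,\partial_z^2(u_n-\bar u)$ away from $z=0$. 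From \eqref{assumpn-2} the only available bound is $|\partial_z^2(u_n-\bar u)|\le\frac{\varepsilon}{1+\alpha/7}\min\{1,(z+c)^\alpha\}$: a single power of $\varepsilon$ and no decay in $z$. Your claimed bound $|\partial_z^2(u_n-\bar u)|\le C\phi_{1,0}$ is not a consequence of \eqref{assumpn-2}, and even if granted it does not suffice: for $z\gtrsim\varepsilon$ the factor $|G|\le C\min\{z,1\}$ no longer supplies the missing power of $\varepsilon$, so at $z$ of order one the product is of size $\varepsilon$ (or $O(\phi_{1,0})=O(\delta^{-\alpha}\phi_{1,\alpha})$), far larger than $\varepsilon^2\phi_{1,\alpha}$; and for large $z$, $\phi_{1,\alpha}$ decays like $e^{-\mu z^2/(x+1)}$ while $\varepsilon\min\{1,\cdot\}$ does not decay at all. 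This is exactly why the paper splits at $z=\delta_\varepsilon\le\varepsilon^3$ and, in the outer region, invokes the refined estimate of Lemma \ref{58}, $|\partial_z^2(u_n-\bar u)|\le C\varepsilon^{2.3}\phi_{1,\alpha}$ for $z\ge(\varepsilon d_0)^{1/(1-\alpha)}$, together with $|G|\le Cd_0\varepsilon$ on $\{z\le\varepsilon d_0\}$ (see \eqref{b1-17}--\eqref{b1}); in the inner region $z\le\delta_\varepsilon$ the factor $z\le\varepsilon^3$ does the work. Without Lemma \ref{58} (or an equivalent improved second-derivative decay) your "admissible at once" step fails.

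A secondary point: for the delicate coefficient term $\partial_z(G-\bar G)\,\partial_z\bar u$ near $z=0$, bounding $\partial_x(u_{n-1}-\bar u)$ by its value at $z=0$ plus $\int_0^z\partial_z\partial_x(u_{n-1}-\bar u)$ and then treating the two pieces of $\partial_z\big(\tfrac{\int_0^z f}{u_{n-1}}\big)$ separately produces a constant of order $\tfrac{1}{1+\alpha}+\tfrac{1}{2+\alpha}>1$ in front of $\varepsilon^2\phi_{1,\alpha}$, and this constant is not reduced by taking $\alpha$ or $\delta$ small, contrary to your claim of a $\tfrac14\varepsilon^2\phi_{1,\alpha}$ bound. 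The paper avoids this by estimating the combination $f-\tfrac{1}{z+\tilde c}\int_0^z f$ as a whole (Lemma \ref{tl2}, exploited in Lemma \ref{6969} and \ref{xz1104}), which yields the factor $\tfrac{1}{2+\alpha/2}$ and hence, together with the refined outer estimates, the final bound $\tfrac{\varepsilon^2}{2}\phi_{1,\alpha}$ in \eqref{b4}--\eqref{b5}. You should incorporate both ingredients (the cancellation lemma and Lemma \ref{58} with the $\delta_\varepsilon$-splitting) to close the argument.
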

\begin{lemma}\label{11dj-2}If \eqref{assumpn-2} holds in $\Omega$, then\begin{align}\begin{split}
|\partial_{x}u_{n}-\partial_{x}\bar{u}|\leq \varepsilon^2 \phi_{1,1}\quad \text{in }\quad\Omega
.\end{split}
\end{align}
\end{lemma}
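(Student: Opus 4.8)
The whole point is to convert the vector--field estimates of \eqref{assumpn-2} into Euclidean estimates via the transformation formula \eqref{rlt}. Applying the first line of \eqref{rlt} once to $u_n$ (so that $\nabla_\xi=\nabla_\xi^{n-1}$ with coefficient $\tfrac{\int_0^z\partial_x u_{n-1}dz'}{u_{n-1}}$) and once to $\bar u$ (with $\nabla_{\tau_1}$ and coefficient $\tfrac{\int_0^z\partial_x\bar u\,dz'}{\bar u}$), and subtracting, gives
\begin{align*}
\partial_x u_n-\partial_x\bar u
=\big(\nabla_\xi u_n-\nabla_{\tau_1}\bar u\big)
+\Big(\tfrac{\int_0^z\partial_x u_{n-1}dz'}{u_{n-1}}-\tfrac{\int_0^z\partial_x\bar u\,dz'}{\bar u}\Big)\partial_z u_n
+\tfrac{\int_0^z\partial_x\bar u\,dz'}{\bar u}\,\partial_z(u_n-\bar u).
\end{align*}
The plan is to bound each of the three terms on the right by a small multiple of $\phi_{1,1}$ and to sum, using that $\varepsilon\ll d_0\ll1$ and $\delta\ll1$ so that all non-leading contributions are negligible against $\varepsilon^2\phi_{1,1}$.

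\textbf{The three terms.} The first term is handled directly by the third line of \eqref{assumpn-2}: $|\nabla_\xi u_n-\nabla_{\tau_1}\bar u|\le d_0\varepsilon^2\phi_{1,1}$. For the last term I would use the growth rates of the background from Subsection \ref{1027}: since $\partial_x\bar u$ vanishes linearly at the wall, $\int_0^z\partial_x\bar u\,dz'=O(z(z+\epsilon_0))$, while $\bar u\gtrsim c_0(z+\epsilon_0)$ near $z=0$ by \eqref{positivelbu}, so $\big|\tfrac{\int_0^z\partial_x\bar u\,dz'}{\bar u}\big|\le C(z+\epsilon_0)$ (and $\le C\bar u$ globally); combined with $|\partial_z(u_n-\bar u)|\le d_0\varepsilon^5\phi_{1,\alpha}$ from the second line of \eqref{assumpn-2} and the elementary comparisons $(z+\epsilon_0)^\alpha\phi_{1,1}\le C\phi_{1,1}$ and $\bar u\,\phi_{1,\alpha}\le C\phi_{1,0}$ for $z$ large, this term is $\le C d_0\varepsilon^5\phi_{1,1}$. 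For the middle term I would split
\begin{align*}
\tfrac{\int_0^z\partial_x u_{n-1}dz'}{u_{n-1}}-\tfrac{\int_0^z\partial_x\bar u\,dz'}{\bar u}
=\tfrac{\int_0^z\partial_x(u_{n-1}-\bar u)dz'}{u_{n-1}}
+\tfrac{\bar u-u_{n-1}}{u_{n-1}\bar u}\int_0^z\partial_x\bar u\,dz',
\end{align*}
and invoke the induction assumption \eqref{n-1assumption} ($|\partial_{x,y}u_{n-1}-\partial_x\bar u|\le\varepsilon^2\phi_{1,1}$, $|u_{n-1}-\bar u|\le\varepsilon^6C\phi_{1,1+2\alpha}$) together with $u_{n-1}\gtrsim\bar u\gtrsim c_0(z+\epsilon_0)$ and Subsection \ref{1027}; integrating $\phi_{1,1}$ and dividing by $u_{n-1}$ produces a gain of $O(z+\epsilon_0)$, so this difference is $\le\varepsilon^2C\phi_{1,1+\alpha}/\bar u$ — the coefficient--difference estimate recorded in Subsection \ref{tl}. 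Multiplying by $|\partial_z u_n|\le C\phi_{1,0}$ (from the second line of \eqref{assumpn-2} and $|\partial_z\bar u|\le C\phi_{1,0}$) and using $\phi_{1,1+\alpha}\le C(z+\epsilon_0)^\alpha\phi_{1,1}$ yields $\le\varepsilon^2C\delta^\alpha\phi_{1,1}$. Adding the three bounds gives $\le\varepsilon^2\phi_{1,1}$ in $\Omega$.

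\textbf{Main obstacle.} The delicate point is the weight bookkeeping. Every $\phi_{1,\beta}$ carries the same exponential $e^{Ax}$, so a crude product of two of them is not controlled by a single $\phi$; the remedy is that each transport coefficient, and each difference of transport coefficients, is built purely from $u_{n-1},v_{n-1},\bar u$ and is therefore bounded \emph{without} an $e^{Ax}$ factor and, moreover, vanishes like $z+\epsilon_0$ at the wall because $\partial_x\bar u$ and $\partial_x u_{n-1}$ do. Extracting these $O(z+\epsilon_0)$ (equivalently $\delta^\alpha$, $\epsilon_0$) gains from the two non-leading terms is precisely what keeps the final constant equal to $\varepsilon^2$ rather than $C\varepsilon^2$. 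The companion Lemma \ref{11dj-1} for $\partial_z\partial_x$ is obtained in the same fashion from the fourth line of \eqref{rlt}, now also using the $\partial_z$ of the coefficients (bounded by $C$) and $|\partial_z^2(u_n-\bar u)|$ from the penultimate line of \eqref{assumpn-2}.
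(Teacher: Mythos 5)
There is a genuine gap, and it sits exactly at the point you flag as the ``delicate point''. The conclusion of the lemma must reproduce the induction constant \emph{exactly} (the bound $\varepsilon^2\phi_{1,1}$ with constant one feeds back into \eqref{n-1assumption} at step $n$), so no uncontrolled constant $C$ may survive. In your decomposition the first and third terms are indeed harmless ($d_0\varepsilon^2\phi_{1,1}$ and $O(\varepsilon^5)\phi_{1,1}$ up to $\delta$-dependent constants), but the coefficient-difference term is not. Its dominant piece is $\frac{\int_0^z\partial_x(u_{n-1}-\bar u)\,dz'}{u_{n-1}}\,\partial_z u_n$, and the only input available at level $n-1$ is $|\partial_x u_{n-1}-\partial_x\bar u|\le\varepsilon^2\phi_{1,1}$; integrating in $z$ and dividing by $u_{n-1}\gtrsim c_0(z+\epsilon_0)$ gives a quantity of size $C\varepsilon^2\phi_{1,1}$ near the wall, i.e.\ of the \emph{same order} as the target, with a constant involving ratios such as $C_0/c_0$ and $\sqrt{1+X}$ that your argument does not control below $1-d_0$ (and in the intermediate region $\delta\le\frac{z+\epsilon_0}{\sqrt{x+1}}\le N$ the crude bound even produces a factor $C/\delta$). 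The $\delta^\alpha$ gain you invoke does not come out of the computation you describe — integrating $\phi_{1,1}$ and dividing by $\bar u$ yields $\sim\phi_{1,1}$, not $\phi_{1,1+\alpha}/\bar u$, and the step ``$\phi_{1,1+\alpha}\le C(z+\epsilon_0)^\alpha\phi_{1,1}$ hence $\le C\delta^\alpha\varepsilon^2\phi_{1,1}$'' mixes weights that are larger, not smaller, than $\phi_{1,1}$ near the wall — and in any case $\delta^\alpha$ is not small when $\alpha\ll1$ unless $\delta$ were tied to $\alpha$, which it is not. One could in principle rescue a term of this type only by the refined $(1+o(1))$ bookkeeping of the type in \eqref{1117-5} and Lemma \ref{dzun-1un} (exploiting that $\partial_z u_n/u_{n-1}=(1+o(1))/(z+\tilde c)$ so the slope constants cancel), but that is not what you do.

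The paper avoids the problem by a different route, which is why your ordering of the two lemmas is also reversed. Near the wall ($z+\epsilon_0\le\delta$) it first proves Lemma \ref{11dj-1}, $|\partial_z\partial_x(u_n-\bar u)|\le\varepsilon^2\phi_{1,\alpha}$, and then \emph{integrates in $z$} from the boundary value at $z=0$ (which is $O(\varepsilon^6)$ by the compatible data); the antiderivative produces the clean prefactor $\frac{\sqrt{x+1}}{1+\alpha}\phi_{1,1+\alpha}=\frac{\sqrt{x+1}}{1+\alpha}\bigl(\tfrac{z+\epsilon_0}{\sqrt{x+1}}\bigr)^{\alpha}\phi_{1,1}<\phi_{1,1}$, strictly below one because $\tfrac{z+\epsilon_0}{\sqrt{x+1}}\le\delta<1$ and $\sqrt{1+X}<1+\alpha$ (here $X\ll\alpha$ is used), so no constant is lost. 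Away from the wall ($z+\epsilon_0\ge\delta$) it writes $\partial_x=\nabla_\xi+\frac{\int_0^z\partial_x u_{n-1}dz'}{u_{n-1}}\partial_z$ and applies the \emph{same} vector field $\nabla_\xi$ to both $u_n$ and $\bar u$, so no coefficient-difference term appears at all; the price is the extra Lemma \ref{lm65} giving $|\nabla_\xi(u_n-\bar u)|\le\varepsilon^4\phi_{1,0}$ there, and the surplus powers of $\varepsilon$ (via $\varepsilon\phi_{1,0}\le\phi_{1,1}$ in that region) absorb every constant. You would need to either adopt this two-region strategy or carry out the sharp $(1+o(1))$ cancellation in the coefficient-difference term; as written, the proposal does not close the induction.
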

\begin{lemma}\label{11dj-3}If \eqref{assumpn-2} holds in $\Omega$, then\begin{align}\begin{split}
|\partial_{x}^2u_{n}-\partial_{x}^2\bar{u}|\leq\varepsilon^2 \phi_{1,\frac{\alpha}{2}}\quad \text{in }\quad\Omega.\end{split}
\end{align}
\end{lemma}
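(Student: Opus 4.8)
The plan is to deduce the Euclidean bound on $\partial_x^2(u_n-\bar{u})$ from the vector-field estimates in \eqref{assumpn-2} by iterating the transformation rule \eqref{rlt}. Write $\partial_x=\nabla_\xi+G\,\partial_z$ with $G=\frac{\int_0^z\partial_xu_{n-1}\,dz'}{u_{n-1}}$ and $\nabla_\xi=\nabla^{n-1}_\xi$; applying $\partial_x$ once more and using \eqref{rlt} to rewrite the $\partial_z\partial_x$ term that appears, one gets the pointwise identity, valid for every smooth $f$,
\[
\partial_x^2 f=\nabla_\xi^2 f+2G\,\partial_z\nabla_\xi f+\big(\partial_xG+G\,\partial_zG\big)\,\partial_z f+G^2\,\partial_z^2 f .
\]
This is purely algebraic, so using the \emph{same} $\nabla_\xi$ and $G$ for $f=u_n$ and $f=\bar{u}$ and subtracting gives
\[
\partial_x^2(u_n-\bar{u})=\nabla_\xi^2(u_n-\bar{u})+2G\,\partial_z\nabla_\xi(u_n-\bar{u})+\big(\partial_xG+G\,\partial_zG\big)\,\partial_z(u_n-\bar{u})+G^2\,\partial_z^2(u_n-\bar{u}),
\]
so the lemma reduces to bounding these four terms, using that under the hypothesis \eqref{assumpn-2} all the estimates of Sections \ref{s4}--\ref{s5} hold throughout $\Omega$.

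The first three terms are straightforward. By Corollary \ref{d2tan}, $|\nabla_\xi^2(u_n-\bar{u})|=|\nabla_\xi\nabla_\xi u_n-\nabla_\xi\nabla_\xi\bar{u}|\le\frac{\varepsilon^2}{2}d_0^2\phi_{1,\frac{\alpha}{2}}$. For the second term, $|G|\le C\bar{u}$ by \eqref{250416commongr}, \eqref{assumpn}, while $|\partial_z\nabla_\xi(u_n-\bar{u})|\le\frac{\varepsilon^2}{2+\alpha/3}\phi_{1,\alpha}$ for $z\le\delta_\varepsilon$ by Remark \ref{yw3} and $\le C\varepsilon^4\phi_{1,\alpha}$ for $z\ge\delta_\varepsilon$ by Lemma \ref{65}; since $\bar{u}\,\phi_{1,\alpha}\le C\delta^{\alpha/2}\phi_{1,\frac{\alpha}{2}}$ in each of the three defining regimes of $\phi$, this term is $\ll\varepsilon^2\phi_{1,\frac{\alpha}{2}}$ once $\delta$ is small. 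For the third term, $|\partial_xG|,|\partial_zG|,|G|\le C$ by the growth rates of $u_{n-1}$ (Subsection \ref{1027}) and $|\partial_z(u_n-\bar{u})|\le d_0\varepsilon^5\phi_{1,\alpha}$ by \eqref{assumpn-2}, so it is $O(\varepsilon^5\phi_{1,\alpha})\ll\varepsilon^2\phi_{1,\frac{\alpha}{2}}$.

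The one delicate term is $G^2\,\partial_z^2(u_n-\bar{u})$, because the only bound on $\partial_z^2(u_n-\bar{u})$ valid near $z=0$ is the crude one $|\partial_z^2(u_n-\bar{u})|\le\varepsilon\min\{1,(z+\epsilon_0)^\alpha\}$ (from \eqref{assumpn-2} together with \eqref{1118-1}), which is of size $O(\varepsilon)$ and too weak where $\phi_{1,\frac{\alpha}{2}}$ decays. I would split $\Omega$ at the threshold $z=(\varepsilon d_0)^{\frac{1}{1-\alpha}}$ of Lemma \ref{58}. Below it, $z+\epsilon_0\le C(\varepsilon d_0)^{\frac{1}{1-\alpha}}$ (recall $\epsilon_0$ is taken arbitrarily small), so $|G|^2\le C\bar{u}^2\le C(z+\epsilon_0)^2$ supplies so much extra vanishing that $|G^2\partial_z^2(u_n-\bar{u})|\le C\varepsilon(z+\epsilon_0)^{2+\frac{\alpha}{2}}\phi_{1,\frac{\alpha}{2}}\ll\varepsilon^2\phi_{1,\frac{\alpha}{2}}$. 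Above it, the sharp decay of Lemma \ref{58}, $|\partial_z^2(u_n-\bar{u})|\le C\varepsilon^{2.3}\phi_{1,\alpha}$, gives $|G^2\partial_z^2(u_n-\bar{u})|\le C\varepsilon^{2.3}\bar{u}^2\phi_{1,\alpha}\le C\varepsilon^{2.3}\delta^{\alpha/2}\phi_{1,\frac{\alpha}{2}}\ll\varepsilon^2\phi_{1,\frac{\alpha}{2}}$. Summing the four contributions yields $|\partial_x^2(u_n-\bar{u})|\le\big(\frac{d_0^2}{2}+o(1)\big)\varepsilon^2\phi_{1,\frac{\alpha}{2}}\le\varepsilon^2\phi_{1,\frac{\alpha}{2}}$ since $d_0\ll1$. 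The main obstacle is precisely this $G^2\partial_z^2$ term and, more broadly, the uniform-in-$z$ and uniform-in-$\epsilon_0$ weighted bookkeeping: converting the $\phi_{1,\alpha}$- and $O(\varepsilon)$-type factors into the target weight $\phi_{1,\frac{\alpha}{2}}$ buys the necessary smallness only after the gains from $d_0$, from $\delta$, and from the extra powers of $\varepsilon$ in the remainders are spent, and this has to be checked separately in each $z$-regime, with the crude near-$z=0$ bound matched to the decay estimate far out. The estimates for $v_n$, for $\partial_z\partial_xu_n$ (Lemma \ref{11dj-1}) and for the remaining mixed second derivatives in Theorem \ref{lm6} follow from the same scheme applied to the corresponding lines of \eqref{rlt}.
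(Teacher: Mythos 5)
Your identity $\partial_x^2 f=\nabla_\xi^2 f+2G\,\partial_z\nabla_\xi f+(\partial_xG+G\partial_zG)\partial_z f+G^2\partial_z^2 f$ with $G=\frac{\int_0^z\partial_xu_{n-1}\,dz'}{u_{n-1}}$ is correct, and your overall strategy is sound but organized differently from the paper. The paper does not expand $\partial_x^2$ completely: in \eqref{1105-3} it keeps the mixed term $G\,\partial_z\partial_x$, groups it with $G\,\partial_z\nabla_\xi$, and controls the pair by the already-proved Lemma \ref{11dj-1} (i.e.\ \eqref{618}) together with \eqref{dz1}, \eqref{zh2}, Lemma \ref{65} and Lemma \ref{58}, splitting the domain at $z=d_0$ so that $|G|\le C\min\{z,1\}\le Cd_0$ supplies the smallness inside and an extra $\varepsilon^{0.1}$ does so outside; the remaining terms are handled as in \eqref{117}. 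You instead eliminate $\partial_z\partial_x$ and face $G^2\partial_z^2(u_n-\bar u)$ head on, treating it with exactly the two ingredients the paper uses inside Lemmas \ref{11dj-1} and \ref{58}: the crude $\varepsilon(z+\epsilon_0)^{\alpha}$ bound from \eqref{assumpn-2} near $z=0$, compensated by the quadratic vanishing of $G^2$, and the refined decay of Lemma \ref{58} above $z=(\varepsilon d_0)^{\frac{1}{1-\alpha}}$. That part is fine and in effect re-derives Lemma \ref{11dj-1} inline rather than quoting it; the trade-off is only that you must verify $|\partial_xG|\le C$ directly from the induction assumption, which the paper's grouping avoids.

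The one step that does not work as written is the smallness claim for $2G\,\partial_z\nabla_\xi(u_n-\bar u)$ in $\{z\le\delta_\varepsilon\}$: you convert $\bar u\,\phi_{1,\alpha}\le C\delta^{\alpha/2}\phi_{1,\frac{\alpha}{2}}$ and assert the result is $\ll\varepsilon^2\phi_{1,\frac{\alpha}{2}}$ ``once $\delta$ is small.'' But $\delta$ and $\alpha$ are fixed constants of the scheme, and since $\alpha\ll1$ one has $\delta^{\alpha/2}\approx 1$ no matter how small $\delta$ is, while the constant $C$ in $|G|\le C\bar u$ is not at your disposal; the bound $\frac{\varepsilon^2}{2+\alpha/3}\phi_{1,\alpha}$ from Remark \ref{yw3} carries no spare power of $\varepsilon$, and the target inequality has constant exactly $1$, so a factor $C\delta^{\alpha/2}$ of order one cannot be absorbed. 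The repair is already contained in your own estimates: in $\{z\le\delta_\varepsilon\}$ one has $|G|\le C\bar u\le C(z+\epsilon_0)\le C(\delta_\varepsilon+\epsilon_0)\ll1$, which is a genuine smallness factor (alternatively, follow the paper and split at $z=d_0$, using $|G|\le Cd_0$ there and the $\varepsilon^4$ bound of Lemma \ref{65} for $z\ge\delta_\varepsilon$). With that adjustment your bookkeeping closes as $(\tfrac{d_0^2}{2}+o(1))\varepsilon^2\phi_{1,\frac{\alpha}{2}}$, matching the paper's conclusion $Cd_0\varepsilon^2\phi_{1,\frac{\alpha}{2}}<\varepsilon^2\phi_{1,\frac{\alpha}{2}}$.
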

\begin{proof}[Proof of Lemma \ref{11dj-1}]
Based on the transformation formula in \eqref{rlt}, we have
\begin{align}\label{1105-1}
    \partial_{z}\partial_{x}=&  \partial_{z}\nabla _{\xi} +\partial_{z}(\frac{\int_0^z \partial_{x}u_{n-1}dz'}{u_{n-1}})\partial_z
    +\frac{\int_0^z \partial_{x}u_{n-1}dz'}{u_{n-1}}\partial_z^2.
\end{align}By induction assumption, we have
\begin{align}\label{dz1}
  |\partial_z(\frac{\int_0^z \partial_{x}u_{n-1}dz'}{u_{n-1}})|\leq C, \quad |\frac{\int_0^z \partial_{x}u_{n-1}dz'}{u_{n-1}}|\leq C\min\{z,1\}.
\end{align} Then by  \eqref{assumpn-2} and \eqref{1119-1}, we have
\begin{align*}
    |\frac{\int_0^z \partial_{x}u_{n-1}dz'}{u_{n-1}}\partial_z^2(u_n-\bar{u})|\leq&
    Cz\varepsilon e^{-\frac{A}{x+1}}(z+\epsilon_0)^{\alpha}\\ \leq& C\varepsilon^4 e^{-\frac{A}{x+1}}(z+\epsilon_0)^{\alpha} \quad \text{in} \quad \Omega\cap\{0\leq z\leq \varepsilon^3\}.
\end{align*}

We now consider the estimate in two regions.

\textit{Step 1} Estimate in $\Omega\cap\{0\leq z\leq \delta_\varepsilon\}.$  Here,  $\delta_{\varepsilon}$ is defined in Remark \ref{yw3} with $\delta_{\varepsilon}\leq \varepsilon^3.$ Firstly, by the above estimates,
\begin{align*}
    |\partial_{z}(\frac{\int_0^z \partial_{x}u_{n-1}dz'}{u_{n-1}})\partial_z(u_n-\bar{u})
    +\frac{\int_0^z \partial_{x}u_{n-1}dz'}{u_{n-1}}\partial_z^2(u_n-\bar{u})|\leq \varepsilon^{3}\phi_{1,\alpha} \quad \text{in} \quad \Omega\cap\{0\leq z\leq \delta_\varepsilon\}.
\end{align*}
Next, by
 \eqref{zh2}, we have
 \begin{align}
    |\partial_z\nabla_{\eta,\xi}u_n-\partial_z\nabla_{\eta,\xi}\bar{u} |\leq  \frac{ \varepsilon^2}{2+\frac{\alpha}{3}} \phi_{1,\alpha} \quad \text{in} \quad \Omega\cap\{0\leq z\leq \delta_\varepsilon\}.
 \end{align}
 By \eqref{1105-1}, we have
\begin{align}\label{b4}
    |\partial_z\partial_x(u_n-\bar{u}) |\leq  \frac{ \varepsilon^2}{2} \phi_{1,\alpha} \quad \text{in} \quad \Omega\cap\{0\leq z\leq \delta_\varepsilon\}.
 \end{align}

 \textit{Step 2} Estimate in $\Omega\cap\{z\geq \delta_\varepsilon\}.$
In this region, by \eqref{dz1},  we have
\begin{align}
  |\frac{\int_0^z \partial_{x}u_{n-1}dz'}{u_{n-1}}|\leq Cd_0\varepsilon\quad
  \text{in }\quad \Omega\cap \{0\leq z\leq\varepsilon d_0\}.
\end{align}Then by \eqref{assumpn-2}, we have for small $d_0$ independent of $\varepsilon,$
\begin{align}\label{b1-17}
    |\frac{\int_0^z \partial_{x}u_{n-1}dz'}{u_{n-1}}\partial_z^2(u_n-\bar{u}) |
    \leq Cd_0\varepsilon^2\phi_{1,\alpha}\leq \frac{\varepsilon^2}{3}\phi_{1,\alpha}\quad
  \text{in }\quad \Omega\cap \{0\leq z\leq\varepsilon d_0\}.
\end{align}
By Lemma \ref{58}, we have
 \begin{align}\label{26-04-02-01}
   | \partial_{z}^2(u_n-\bar{u})|\leq C\varepsilon^{2.3} \phi_{1,\alpha}\quad\text{in}\quad \Omega\cap \{z\geq \varepsilon d_0\},
 \end{align}
because $d_0\varepsilon<1$ so that  $\varepsilon d_0>(\varepsilon d_0)^{\frac{1}{1-\alpha}}.$ Then by \eqref{dz1}, \eqref{b1-17} and  \eqref{26-04-02-01}, we obtain
 \begin{align}\label{b1}
    |\frac{\int_0^z \partial_{x}u_{n-1}dz'}{u_{n-1}}\partial_z^2(u_n-\bar{u}) |
    \leq Cd_0\varepsilon^2\phi_{1,\alpha}\leq \frac{\varepsilon^2}{3}\phi_{1,\alpha}\quad
  \text{in }\quad \Omega\cap \{z\geq \delta_\varepsilon\}.
\end{align}
Moreover, by Lemma \ref{65} and  \eqref{assumpn-2}, we have
 \begin{align}\label{b2}
    |\partial_z\nabla_{\eta,\xi}u_n-\partial_z\nabla_{\eta,\xi}\bar{u} |\leq \varepsilon^4  \phi_{1,\alpha} \quad \text{in} \quad \Omega\cap\{z\geq\delta_\varepsilon\},
 \end{align} and
 \begin{align}\label{b3}
    |\partial_{z}(\frac{\int_0^z \partial_{x}u_{n-1}dz'}{u_{n-1}})\partial_z(u_n-\bar{u})|\leq \varepsilon^3\phi_{1,\alpha}\quad \text{in} \quad \Omega.
 \end{align}
Finally,  by \eqref{1105-1}, we have
\begin{align}\label{b5}
    |\partial_z\partial_x(u_n-\bar{u}) |\leq  \frac{ \varepsilon^2}{2} \phi_{1,\alpha} \quad \text{in} \quad \Omega\cap\{ z\geq \delta_\varepsilon\}.
 \end{align} Combining \eqref{b4} and \eqref{b5} completes the proof of the lemma.
\end{proof}
\begin{remark}\label{26-04-02-222}By \eqref{b4} and \eqref{b5} in the proof of Lemma \ref{11dj-1}, we actually have
\begin{align*}
    |\partial_z\partial_x(u_n-\bar{u}) |\leq  \frac{ \varepsilon^2}{2} \phi_{1,\alpha} \quad \text{in} \quad \Omega.
 \end{align*}\end{remark}
Before the proof of Lemma \ref{11dj-2}, we prove the following estimate.
\begin{lemma}\label{lm65} It holds \begin{align*}
    |\nabla_{\xi}u_n-\nabla_{\xi}\bar{u} |\leq \varepsilon^4 \phi_{1,0}\quad \text{in} \quad \Omega\cap\{z+\epsilon_0\geq \delta\}.
 \end{align*}
\end{lemma}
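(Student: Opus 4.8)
The plan is to transfer the already-established vector-field estimate
$|\nabla_{\eta,\xi}u_n-\nabla_{\tau_1}\bar{u}|\le C\varepsilon^5\phi_{1,0}$ from Theorem \ref{r2imp}, together with the decay estimate $|u_n-\bar u|\le \varepsilon^{6.5}\phi_{1,0}$ obtained in Lemma \ref{xz40316}, into an estimate on $\nabla_\xi u_n-\nabla_\xi\bar u$ in the region $z+\epsilon_0\ge\delta$, i.e. away from the degenerate layer near $z=0$. The point is that in this region $u_{n-1},u_n,\bar u$ are all uniformly bounded below by a constant depending only on $\delta$ (by \eqref{positivelbu} and the monotonicity in \eqref{assumpn-2}), so all the weights $1/u_{n-1}$ appearing in the definitions of the vector fields are harmless, and one expects to gain back the power that was lost in passing between $\phi_{1,1}$-type and $\phi_{1,0}$-type weights.

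First I would write
$\nabla_\xi u_n-\nabla_\xi\bar u
=(\nabla_\xi u_n-\nabla_{\tau_1}\bar u)+(\nabla_{\tau_1}-\nabla_\xi)\bar u$,
so that the first term is controlled by \eqref{20250206}. For the second term, using the definitions
$\nabla_\xi=\partial_x-\frac{\int_0^z\partial_x u_{n-1}\,dz'}{u_{n-1}}\partial_z$ and
$\nabla_{\tau_1}=\partial_x-\frac{\int_0^z\partial_x\bar u\,dz'}{\bar u}\partial_z$,
one has
$(\nabla_{\tau_1}-\nabla_\xi)\bar u
=\Bigl(\frac{\int_0^z\partial_x u_{n-1}\,dz'}{u_{n-1}}-\frac{\int_0^z\partial_x\bar u\,dz'}{\bar u}\Bigr)\partial_z\bar u$.
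The bracket is then expanded as
$\frac{\int_0^z\partial_x(u_{n-1}-\bar u)\,dz'}{u_{n-1}}
+\int_0^z\partial_x\bar u\,dz'\bigl(\tfrac1{u_{n-1}}-\tfrac1{\bar u}\bigr)$,
and both pieces are estimated using the induction/bootstrap bounds: $|\partial_x(u_{n-1}-\bar u)|\le\varepsilon^2\phi_{1,1}$ and the refined decay bound $|u_{n-1}-\bar u|\le Cd_0\varepsilon^6\phi_{1,0}$ (from Theorem \ref{thm1} and Lemma \ref{xz40316}, as in the proof of Lemma \ref{58}), while in the region $z+\epsilon_0\ge\delta$ we have $u_{n-1},\bar u\ge c_0\delta$. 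Since $|\partial_z\bar u|\le C\phi_{1,0}$ and $|\int_0^z\partial_x\bar u\,dz'|\le C\bar u$, each term is bounded by $\varepsilon^4\phi_{1,0}$ for $\varepsilon$ small (the worst power is $\varepsilon^5$ from \eqref{20250206}, which is $\le\varepsilon^4$), giving the claim. Here $\delta$ is the constant in \eqref{phi1}.

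The main obstacle I anticipate is purely bookkeeping of weights: one must verify that the product of a $\phi_{1,k_1}$ and a $\phi_{1,k_2}$ appearing in the pointwise products is dominated by $\phi_{1,0}$ in the stated region (which is true precisely because $z+\epsilon_0\ge\delta$ makes the polynomial part $\gtrsim\delta^{k}$ bounded below and the Gaussian tails multiply correctly), and that all the negative powers of $u_{n-1}$ that could blow up near $z=0$ are in fact absent here. There is no new analytic difficulty — no maximum principle and no commutator is needed — so the proof is a short computation invoking \eqref{20250206}, \eqref{rlt}, the bootstrap bounds \eqref{assumpn-2}, and the lower bound \eqref{positivelbu}; one simply has to be careful that the constant $C$ and the smallness of $\varepsilon$ are chosen after $\delta$ and $d_0$ are fixed.
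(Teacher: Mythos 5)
Your decomposition $\nabla_\xi u_n-\nabla_\xi\bar u=(\nabla_\xi u_n-\nabla_{\tau_1}\bar u)+(\nabla_{\tau_1}-\nabla_\xi)\bar u$ handles the first term correctly via \eqref{20250206}, but the second term cannot be bounded by $\varepsilon^4\phi_{1,0}$ with the estimates you invoke, and this is a genuine gap rather than bookkeeping. Indeed $(\nabla_{\tau_1}-\nabla_\xi)\bar u=g_1\partial_z\bar u$ with $g_1$ as in \eqref{xitau}, and its dominant piece is $\frac{\int_0^z\partial_x(\bar u-u_{n-1})\,dz'}{u_{n-1}}\,\partial_z\bar u$. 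The only available control of $\partial_x(u_{n-1}-\bar u)$ is the induction bound $\varepsilon^2\phi_{1,1}$ (Theorem \ref{lm6} at the previous step); integrating in $z$ and dividing by $u_{n-1}\ge c\delta$ gives at best $C_\delta\varepsilon^2\phi_{1,0}$, consistent with \eqref{gg}, which records $|g_1|\le C\varepsilon^2(z+\epsilon_0)$. So at heights $z+\epsilon_0\sim\delta$ (with $\delta$ fixed independently of $\varepsilon$) this term is of size $\varepsilon^2$, and your claim that ``the worst power is $\varepsilon^5$ from \eqref{20250206}'' is false: the triangle-inequality argument falls short of the target by a factor $\varepsilon^2$, and no choice of constants after fixing $\delta,d_0$ repairs it. (A smaller point: in the products you must use the $A$-independent Gaussian decay of $\partial_z\bar u$ from \eqref{tail-1-0316}, since pairing two bounds that each carry $e^{Ax}$ produces $e^{2Ax}$, which $\phi_{1,0}$ does not dominate.)

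The paper's proof is structurally different precisely because of this obstruction: it does not estimate $(\nabla_{\tau_1}-\nabla_\xi)\bar u$ pointwise at all. Instead it writes the equation $(P_2+P_3)(\nabla_\xi u_n-\nabla_\xi\bar u)=R$ from \eqref{426}, with $R=R_2+(P_2+P_3)(\nabla_{\tau_1}\bar u-\nabla_\xi\bar u)$ bounded merely by a constant $C_\delta$ on $\{z+\epsilon_0\ge\delta\}$ (using $u_n,u_{n-1},\bar u\ge c_0\delta$ there), and then runs the vector-field maximum principle with the barrier $\varepsilon^4\phi_{1,0}$: since $P_2\phi_{1,0}\gtrsim A\phi_{1,0}$ and $A$ may be taken large depending on $C_\delta$ and on negative powers of $\varepsilon$ (recall $X\le A^{-1}$), the product $\varepsilon^4A$ absorbs $C_\delta$. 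In other words, the $\varepsilon^4$ gain is manufactured by the largeness of $A$ in the barrier, not by smallness of the vector-field discrepancy, which is why a purely algebraic transfer of \eqref{20250206} cannot reproduce the lemma. To fix your argument you would have to either run this PDE/maximum-principle step yourself or establish an improved (order $\varepsilon^4$ or better) bound on $\int_0^z\partial_x(u_{n-1}-\bar u)\,dz'$, which the induction hypotheses do not provide.
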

\begin{proof}By \eqref{426}, we have
 \begin{align}\begin{split}
 ( P_2+ P_3)(\nabla_{\xi}u_n-\nabla_{\xi}\bar{u} )  =R
  ,
  \end{split}
\end{align} with
\begin{align*}
   R=&R_2+( P_2+ P_3)( \nabla_{\tau_1}\bar{u}-\nabla_{\xi}\bar{u} ),\\
   |R|\leq &C_{\delta}\phi_{1,0} \quad \text{in} \quad \Omega\cap \{z\geq \delta\},
\end{align*} by \eqref{xitau} where $C_\delta$ is a positive constant depending on $\delta$. Since  $u_n, u_{n-1}, \bar{u}\geq c_0 \delta$ in $\Omega\cap \{z\geq \delta\}$ and the properties of the background profile $\bar{u}$ are known,
by taking $A$ large  depending on $C_{\delta}$ and applying the maximum principle as  in the proof of Theorem \ref{r2imp}, we have the estimate stated in the lemma.
\end{proof}

We are now ready to prove Lemmas \ref{11dj-2}-\ref{11dj-3} as follows.

\begin{proof}[Proof of Lemma \ref{11dj-2}]
By Remark \ref{26-04-02-222}, 
we have
\begin{align*}
|\partial_x(u_n-\bar{u}) |\leq \frac{\varepsilon^2}{2(1+\alpha)}\sqrt{x+1} \phi_{1,\alpha+1}+\epsilon_0^2<\varepsilon^2\phi_{1,1},\quad z+\epsilon_0\leq \delta,
\end{align*}
because  $x\leq X\leq \theta$ with $\theta$ small.

For $\varepsilon$ small, we have
\begin{align}\label{yw5}
\varepsilon\phi_{1,0}\leq \phi_{1,1},\quad z+\epsilon_0\geq \delta.
\end{align} Then by \begin{align}\label{1105-2}
    \partial_{x}=&  \nabla _{\xi} +\frac{\int_0^z \partial_{x}u_{n-1}dz'}{u_{n-1}}\partial_z
\end{align}from  \eqref{rlt}, Lemma \ref{lm65}, \eqref{yw5}, \eqref{dz1} and \eqref{assumpn-2}, we have
\begin{align*}
|\partial_x(u_n-\bar{u}) |\leq \varepsilon^2 \phi_{1,1},\quad z+\epsilon_0\geq \delta.
\end{align*}

\end{proof}
\begin{proof}[Proof of Lemma \ref{11dj-3}]
By  \eqref{rlt}, we have
 \begin{align}\label{1105-3} \begin{split}
   \partial_{x}^2=& (\nabla _{\xi} +\frac{\int_0^z \partial_{x}u_{n-1}dz'}{u_{n-1}}\partial_z)\partial_{x}\\
   =& \nabla _{\xi} \partial_{x}+\frac{\int_0^z \partial_{x}u_{n-1}dz'}{u_{n-1}}\partial_z\partial_{x}\\
   =& \nabla _{\xi}  (\nabla _{\xi} +\frac{\int_0^z \partial_{x}u_{n-1}dz'}{u_{n-1}}\partial_z)+\frac{\int_0^z \partial_{x}u_{n-1}dz'}{u_{n-1}}\partial_z\partial_{x}\\
    =& \nabla _{\xi}^2 +\nabla _{\xi}(\frac{\int_0^z \partial_{x}u_{n-1}dz'}{u_{n-1}})\partial_z+\frac{\int_0^z \partial_{x}u_{n-1}dz'}{u_{n-1}}\nabla _{\xi}\partial_z+\frac{\int_0^z \partial_{x}u_{n-1}dz'}{u_{n-1}}\partial_z\partial_{x}\\
     =& \nabla _{\xi}^2 +\nabla _{\xi}(\frac{\int_0^z \partial_{x}u_{n-1}dz'}{u_{n-1}})\partial_z+\frac{\int_0^z \partial_{x}u_{n-1}dz'}{u_{n-1}}\partial_z\partial_{x}
    \\& +\frac{\int_0^z \partial_{x}u_{n-1}dz'}{u_{n-1}}[(\frac{\nabla _{\xi}u_{n-1}}{u_{n-1}})\partial_z+\partial_z\nabla _{\xi}],
 \end{split}\end{align}
 where we have used
 \begin{align*}
    \nabla _{\xi}\partial_z=\nabla _{\xi}(u_{n-1}\nabla _{\psi})
    =\nabla _{\xi}u_{n-1}\nabla _{\psi}+u_{n-1}\nabla _{\psi}\nabla _{\xi}
  =(\frac{\nabla _{\xi}u_{n-1}}{u_{n-1}})\partial_z+\partial_z\nabla _{\xi}.
 \end{align*}
 By the induction assumption, it holds
 $$|\nabla _{\xi}(\frac{\int_0^z \partial_{x}u_{n-1}dz'}{u_{n-1}})|\leq C\min\{(z+\epsilon_0)^{\frac{\alpha}{2}},1\},\quad |\frac{\nabla _{\xi}u_{n-1}}{u_{n-1}}|\leq C,$$
 and
 \begin{align*}
   |\frac{\int_0^z \partial_{x}u_{n-1}dz'\nabla _{\xi}u_{n-1}}{u_{n-1}^2}|\leq C\min\{z,1\}.
 \end{align*}
Then by Corollary \ref{d2tan} with  $d_0$ in
\eqref{15} being small
and \eqref{assumpn-2}, we have
\begin{align}\label{117}\begin{split}
    &|\nabla _{\xi}^2(u_n-\bar{u}) +\nabla _{\xi}(\frac{\int_0^z \partial_{x}u_{n-1}dz'}{u_{n-1}})\partial_z(u_n-\bar{u})
     +\frac{\int_0^z \partial_{x}u_{n-1}dz'}{u_{n-1}}(\frac{\nabla _{\xi}u_{n-1}}{u_{n-1}})\partial_z(u_n-\bar{u})|
 \\    \leq&\varepsilon^2d_0\phi_{1,\frac{\alpha}{2}}\quad \text{in }\quad \Omega. \end{split} \end{align} By \eqref{dz1}, \eqref{zh2}, Lemma \ref{65} and \eqref{618}, we have
\begin{align*}
    |\frac{\int_0^z \partial_{x}u_{n-1}dz'}{u_{n-1}}\big(\partial_z\partial_{x}(u_n-\bar{u})
    +\partial_z\nabla _{\xi}(u_n-\bar{u})\big)|\leq d_0C\varepsilon^2 \phi_{1,\alpha}\quad \text{in }\quad \Omega\cap\{z\leq  d_0\}.
\end{align*}
 On the other hand, by  \eqref{1105-1}, \eqref{dz1}, \eqref{assumpn-2} and Lemma \ref{58},\begin{align}\begin{split}
    &|\partial_{z}\partial_{x}(u_n-\bar{u})-  \partial_{z}\nabla _{\xi} (u_n-\bar{u})|
  \\  =& |\partial_{z}\big(\frac{\int_0^z \partial_{x}u_{n-1}dz'}{u_{n-1}}\partial_{z}(u_n-\bar{u})\big)|\\
    =& |\frac{\int_0^z \partial_{x}u_{n-1}dz'}{u_{n-1}}\partial_{z}^2(u_n-\bar{u})|+
    |\partial_{z}(\frac{\int_0^z \partial_{x}u_{n-1}dz'}{u_{n-1}})\partial_{z}(u_n-\bar{u})|\\
    \leq&\varepsilon^{2.1}\phi_{1,\alpha} \quad\Omega\cap\{z\geq d_0\}.
\end{split}\end{align}
Then by Lemma \ref{65},
\begin{align*}
    |\frac{\int_0^z \partial_{x}u_{n-1}dz'}{u_{n-1}}\big(\partial_z\partial_{x}(u_n-\bar{u})
    +\partial_z\nabla _{\xi}(u_n-\bar{u})\big)|\leq C\varepsilon^{2.1} \phi_{1,\alpha}\quad \text{in }\quad \Omega\cap\{z\geq  d_0\}.
\end{align*}
In summary, we have
\begin{align*}
    |\frac{\int_0^z \partial_{x}u_{n-1}dz'}{u_{n-1}}\big(\partial_z\partial_{x}(u_n-\bar{u})
    +\partial_z\nabla _{\xi}(u_n-\bar{u})\big)|\leq d_0C\varepsilon^2 \phi_{1,\alpha}\quad \text{in }\quad \Omega.
\end{align*}
Combining this with \eqref{117} and \eqref{1105-3},
we have \begin{align}\begin{split}
|\partial_{x}^2u_{n}-\partial_{x}^2\bar{u}|\leq C d_0\varepsilon^2 \phi_{1,\frac{\alpha}{2}}<\varepsilon^2 \phi_{1,\frac{\alpha}{2}}\quad \text{in }\quad\Omega.\end{split}
\end{align}
 \end{proof}

\subsection{Barrier functions for $P_1$ and $P_2$}\label{bf}
Recall $P_1$ and $P_2$ defined in \eqref{p10106}-\eqref{p20106}.
In this subsection, we will consider functions  in $ [0,X]\times[0,Y^*]\times[0,+\infty)$. In particular, by the definition of $Y^*$ and the induction assumption,
\begin{align}\label{bfassp}\begin{split}
   &|\partial_zu_{n-1}-\partial_zu_{n}|\leq \varepsilon e^{-\frac{A}{x+1}}, \quad
 |-u_{n-1}  +u_{n}|\leq \varepsilon \phi_{1,1},\\&
 \frac{1}{2}c_0e^{-\frac{3}{2}\mu(z+{\epsilon_0})^2}\leq \partial_zu_{n},\partial_zu_{n-1}\leq 2C_0e^{-\frac{(z+{\epsilon_0})^2}{x+1}\mu},\\
 &\text{ and \eqref{n-1assumption} holds in}\quad [0,X]\times[0,Y^*]\times[0,+\infty).\end{split}
\end{align}

\subsubsection{ Barrier functions near $z=0$ }

 \begin{lemma}\label{Lq} For $\alpha\in(0,1),$ there exist some small positive constants $\delta$ and $\lambda$ which are independent of $\varepsilon$ such that
\begin{align}\begin{split}
   P_1(1+z^{\alpha}) =P_1z^{\alpha}\geq& \lambda \frac{z^{\alpha-2} }{u_{n-1}}  \quad\text{in}\quad(0,X]\times(0,Y^*]\times(0,\delta).
  \end{split} \end{align}
\end{lemma}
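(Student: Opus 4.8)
\textbf{Proof proposal for Lemma \ref{Lq}.} The plan is to compute $P_1 z^\alpha$ directly from the formula \eqref{p10106} for $P_1$ and to identify the leading singular term near $z=0$, which will be the term coming from $-\frac{u_n}{u_{n-1}^2}\partial_z^2(z^\alpha)$. Since $z^\alpha$ depends only on $z$, the tangential pieces $\partial_x(z^\alpha)$ and $(1+\tilde q)\partial_y(z^\alpha)$ vanish, so
\[
P_1 z^\alpha = -\tilde b\,\partial_z(z^\alpha) - \frac{u_n}{u_{n-1}^2}\partial_z^2(z^\alpha) - \frac{1}{u_{n-1}}\partial_z\!\Big(\frac{u_n}{u_{n-1}}\Big)\partial_z(z^\alpha).
\]
With $\partial_z(z^\alpha)=\alpha z^{\alpha-1}$ and $\partial_z^2(z^\alpha)=\alpha(\alpha-1)z^{\alpha-2}$, the middle term equals $-\frac{u_n}{u_{n-1}^2}\alpha(\alpha-1)z^{\alpha-2} = \frac{u_n}{u_{n-1}^2}\alpha(1-\alpha)z^{\alpha-2}$, which is positive and of size $\sim z^{\alpha-2}/u_{n-1}$ since $u_n/u_{n-1}$ is bounded above and below near $z=0$ by \eqref{bfassp} (more precisely $u_n, u_{n-1} \sim c_0 z$ for small $z$, so $u_n/u_{n-1}^2 \sim 1/u_{n-1}$ up to constants). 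The plan is to show this term dominates the other two for $z$ sufficiently small.

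First I would bound the remaining two terms. For the term $-\tilde b\,\alpha z^{\alpha-1}$: by \eqref{linemethodbsign} we have $|\tilde b|\le C\min\{z,1\}\le Cz$ near $z=0$, so $|\tilde b\,\alpha z^{\alpha-1}|\le C\alpha z^{\alpha}$, which is $o(z^{\alpha-2}/u_{n-1})$ since $z^{\alpha}=z^2\cdot z^{\alpha-2}$ and $u_{n-1}\sim c_0 z$, giving a ratio of size $z^3\to 0$. For the term $-\frac{1}{u_{n-1}}\partial_z(\frac{u_n}{u_{n-1}})\alpha z^{\alpha-1}$: using the induction/bootstrap estimates, $\partial_z(\frac{u_n}{u_{n-1}})=\partial_z(\frac{u_n-u_{n-1}}{u_{n-1}})$ is bounded (the difference $u_n-u_{n-1}$ and its $z$-derivative are small, controlled as in Step 1 of the proof of Lemma \ref{04160416e-v}), so this term is bounded by $\frac{C}{u_{n-1}}\alpha z^{\alpha-1}=\frac{C\alpha z^{\alpha-1}}{u_{n-1}}$. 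Comparing with the main term $\frac{u_n}{u_{n-1}^2}\alpha(1-\alpha)z^{\alpha-2}\sim \frac{\alpha(1-\alpha)}{u_{n-1}}z^{\alpha-2}$, the ratio is $\sim \frac{C z}{(1-\alpha)}\to 0$. Hence for $z\le\delta$ with $\delta$ small (depending only on the constants $c_0, C_0, C$ in \eqref{bfassp} and \eqref{linemethodbsign}, all independent of $\varepsilon$), the main term absorbs twice the sum of the error terms, yielding $P_1 z^\alpha \ge \frac12\cdot\frac{u_n}{u_{n-1}^2}\alpha(1-\alpha)z^{\alpha-2}\ge \lambda\frac{z^{\alpha-2}}{u_{n-1}}$ for a suitable small $\lambda$ (e.g. $\lambda=\frac{c_0\alpha(1-\alpha)}{4}$ using $u_n\ge \frac12 u_{n-1}$ near $z=0$).

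The step I expect to require the most care is making the comparison $u_n, u_{n-1}\sim c_0 z$ rigorous near $z=0$ together with the bound on $\partial_z(u_n/u_{n-1})$: one must invoke \eqref{positivelbu} (or \eqref{bfassp}) to get $c_0\le \partial_z u_{n-1}\le C_0$ on $\{z\le 1\}$, hence $c_0 z \le u_{n-1}(x,y,z)\le C_0 z$ there, and similarly for $u_n$, and then control $|\partial_z(\frac{u_n}{u_{n-1}})|\le \frac{|\partial_z u_n-\partial_z u_{n-1}|}{u_{n-1}}+\frac{\partial_z u_{n-1}|u_n-u_{n-1}|}{u_{n-1}^2}$, each of which is bounded by a constant (not necessarily small) near $z=0$ using \eqref{bfassp}. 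Since the constant only needs to be finite — the smallness is purchased by the factor $z$ in the comparison — this is routine, but it is where the bookkeeping lives. Everything else is elementary differentiation and an inequality in one variable $z$, uniform in $(x,y)\in[0,X]\times[0,Y^*]$.
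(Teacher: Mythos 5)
Your computation of $P_1 z^\alpha$, the vanishing of the tangential pieces, the identification of the positive leading term $\frac{u_n}{u_{n-1}^2}\alpha(1-\alpha)z^{\alpha-2}$, and the treatment of the $\tilde b$-term all coincide with the paper's proof. The gap is the claimed uniform bound $|\partial_z(\frac{u_n}{u_{n-1}})|\le C$ near $z=0$. From \eqref{bfassp} one only gets $|\partial_z(\frac{u_n}{u_{n-1}})|\le \frac{|\partial_z u_n-\partial_z u_{n-1}|}{u_{n-1}}+\frac{\partial_z u_{n-1}\,|u_n-u_{n-1}|}{u_{n-1}^2}\le \frac{C\varepsilon}{u_{n-1}}$, and this is \emph{not} bounded by a constant: the iterates are built on the shifted profile, so $u_{n-1}(x,y,0)\approx \bar u(x,y,0)\sim \epsilon_0$, and $\epsilon_0$ is an independent regularization parameter that is eventually sent to $0$; all estimates must be uniform in $\epsilon_0$, so $1/u_{n-1}$ cannot be treated as bounded near $z=0$. (Your comparison $u_n,u_{n-1}\sim c_0 z$ has the same defect, though you only use the harmless direction $u_{n-1}\gtrsim z$ and $u_n/u_{n-1}\approx 1$.) With the correct bound, the first-order term is of size $\frac{C\varepsilon\,\alpha z^{\alpha-1}}{u_{n-1}^2}$, and its ratio to the main term is $\frac{C\varepsilon z}{(1-\alpha)u_n}\lesssim \frac{C\varepsilon}{1-\alpha}$, which does \emph{not} tend to $0$ as $z\to 0$. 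So the absorption cannot be purchased by shrinking $\delta$ alone, contrary to your assertion that $\delta$ depends only on $c_0,C_0,C$ and is independent of $\varepsilon$: the smallness here must come from $\varepsilon$.

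The repair is exactly what the paper does: write the coefficient as $-\frac{\partial_z u_n}{u_{n-1}^2}+\frac{u_n\partial_z u_{n-1}}{u_{n-1}^3}=\frac{(\partial_z u_{n-1}-\partial_z u_n)u_{n-1}+(u_n-u_{n-1})\partial_z u_{n-1}}{u_{n-1}^3}$ and bound it by $\frac{C\varepsilon}{u_{n-1}^2}$ via \eqref{bfassp}; this term is then absorbed into half of the main term using the smallness of $\varepsilon$ (since $u_n\gtrsim z$ makes the ratio $O(\varepsilon)$), the $\tilde b$-contribution $C\alpha z^{\alpha}$ is absorbed by taking $\delta$ small as you did, and $\frac{u_n}{u_{n-1}^2}\ge \frac{\lambda'}{u_{n-1}}$ follows from $|u_n-u_{n-1}|\le \varepsilon\phi_{1,1}$. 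With that one correction your argument becomes the paper's argument.
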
\begin{proof}
 By \eqref{bfassp}, for small $\varepsilon,$ $\alpha\in(0,1)$ and some small positive constants $\lambda$ and $\delta$, we have
\begin{align}\begin{split}
   P_1 z^{\alpha}=& -\tilde{b}\alpha z^{\alpha-1}
   +\alpha(-\frac{\partial_zu_{n}}{u_{n-1}^2}
   +\frac{u_{n}\partial_zu_{n-1}}{u_{n-1}^3})z^{\alpha-1}
   -\frac{u_n}{u_{n-1}^2}\alpha(\alpha-1)z^{\alpha-2}\\
   \geq& -C\alpha z^\alpha +\frac{1}{2}\frac{u_n}{u_{n-1}^2}\alpha(-\alpha+1)z^{\alpha-2}
   \\ \geq &\frac{\lambda}{u_{n-1}}\alpha(-\alpha+1)z^{\alpha-2}, \quad z\leq \delta,
\end{split}\end{align}
where we have used
\begin{align*}
    |\frac{u_n}{u_{n-1}^2}-\frac{1}{u_{n-1}}|= |\frac{u_n-u_{n-1}}{u_{n-1}^2}|\leq \frac{\varepsilon  e^{-\frac{A}{x+1}} C}{u_{n-1}},
\end{align*}
and
\begin{align*}
  |  -\frac{\partial_zu_{n}}{u_{n-1}^2}
   +\frac{u_{n}\partial_zu_{n-1}}{u_{n-1}^3}|=&|
   \frac{-\partial_zu_{n}u_{n-1}+u_{n}\partial_zu_{n-1}}{u_{n-1}^3}|\\
   =&| \frac{(\partial_zu_{n-1}-\partial_zu_{n})u_{n-1}
 +(-u_{n-1}  +u_{n})\partial_zu_{n-1}}{u_{n-1}^3}|\\
 \leq & \frac{\varepsilon  e^{-\frac{A}{x+1}}C}{u_{n-1}^2},
\end{align*}by \eqref{bfassp}.
 \end{proof}

\begin{lemma}\label{L2}  For some small positive constants $\delta$ and $\lambda$  which are independent of $\varepsilon$, we have
\begin{align}\begin{split}
    P_2\psi_{n-1}^{\alpha}\geq& \lambda\psi_{n-1}^{\alpha-\frac{3}{2}}   \quad\text{in}\quad(0,X]\times(0,Y^*]\times(0,\delta),\\
     P_2(\psi_{n-1}-\psi_{n-1}^{1+\alpha})\geq & \lambda\psi_{n-1}^{\alpha-\frac{1}{2}} \quad\text{in}\quad(0,X]\times(0,Y^*]\times(0,\delta),
  \end{split} \end{align}  where $\alpha\in(0,1)$ and $\psi_{n-1}$ is defined in \eqref{psi0125}.
\end{lemma}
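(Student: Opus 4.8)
The plan is to prove both inequalities in Lemma \ref{L2} by direct computation, exploiting the fact that $\nabla_\psi\psi_{n-1}=1$ and $\nabla_\psi^2\psi_{n-1}=0$ established in \eqref{psin-1}, so that the operator $P_2$ acts on powers of $\psi_{n-1}$ almost like a one-dimensional ODE operator with the ``good'' sign coming from the $-u_n\nabla_\psi^2$ term. First I would record that, since $\nabla_\psi\psi_{n-1}=1$, we have $\nabla_\psi\psi_{n-1}^\beta=\beta\psi_{n-1}^{\beta-1}$ and $\nabla_\psi^2\psi_{n-1}^\beta=\beta(\beta-1)\psi_{n-1}^{\beta-2}$ for any real $\beta$; hence
\begin{align*}
P_2\psi_{n-1}^\beta=\nabla_\xi\psi_{n-1}^\beta+(1+\tilde q)\nabla_\eta\psi_{n-1}^\beta-\beta(\beta-1)u_n\psi_{n-1}^{\beta-2}.
\end{align*}
Using $\nabla_\xi\psi_{n-1}=0$ and the computation of $\nabla_\eta\psi_{n-1}$ from \eqref{psin-1} together with the smallness $|\tilde q|,|\int_0^z\partial_y q_{n-1}dz'|\ll\varepsilon$ from \eqref{n-1assumption}, the first two (``transport'') terms are bounded by $C\beta\varepsilon\,\psi_{n-1}^{\beta-1}$ in absolute value. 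The third term is the dominant one near $z=0$: since $u_n=\partial_z\psi_{n-1}/\,(u_{n-1}/u_n)$ and $u_{n-1}\sim c_0 z$, $\psi_{n-1}\sim \tfrac12 c_0 z^2$ for $z$ small, we have $u_n\psi_{n-1}^{\beta-2}\gtrsim \psi_{n-1}^{\beta-3/2}$ with a definite positive constant, as noted right after \eqref{4.2}. Taking $\beta=\alpha\in(0,1)$ makes $-\alpha(\alpha-1)=\alpha(1-\alpha)>0$, so for $\psi_{n-1}\le\delta$ (equivalently $z$ small) the positive term $\alpha(1-\alpha)u_n\psi_{n-1}^{\alpha-2}\gtrsim \alpha(1-\alpha)\psi_{n-1}^{\alpha-3/2}$ dominates the $O(\varepsilon\psi_{n-1}^{\alpha-1})$ transport error because $\psi_{n-1}^{\alpha-3/2}/\psi_{n-1}^{\alpha-1}=\psi_{n-1}^{-1/2}\to+\infty$; choosing $\delta$ small (depending on $\alpha$ and the $L^\infty$ bounds in \eqref{n-1assumption}) gives $P_2\psi_{n-1}^\alpha\ge\lambda\psi_{n-1}^{\alpha-3/2}$.

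For the second inequality I would apply $P_2$ to $\psi_{n-1}-\psi_{n-1}^{1+\alpha}$ term by term. The part $P_2\psi_{n-1}=\nabla_\xi\psi_{n-1}+(1+\tilde q)\nabla_\eta\psi_{n-1}-0=(1+\tilde q)\nabla_\eta\psi_{n-1}$, which is $O(\varepsilon)$ and in particular $o(\psi_{n-1}^{\alpha-1/2})$ for $\psi_{n-1}\le\delta$; the part $-P_2\psi_{n-1}^{1+\alpha}$ has dominant term $+(1+\alpha)\alpha\,u_n\psi_{n-1}^{\alpha-1}\gtrsim \alpha(1+\alpha)\psi_{n-1}^{\alpha-1/2}$, again beating the $O(\varepsilon\psi_{n-1}^\alpha)$ transport remainder since $\psi_{n-1}^{\alpha-1/2}/\psi_{n-1}^{\alpha}=\psi_{n-1}^{-1/2}\to+\infty$. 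So for $\delta$ small we get $P_2(\psi_{n-1}-\psi_{n-1}^{1+\alpha})\ge\lambda\psi_{n-1}^{\alpha-1/2}$, which is the claim. Throughout, the bounds $c_0\le u_{n-1},u_n\le C_0$ near $z=0$ from \eqref{positivelbu} and $|\nabla_\eta\psi_{n-1}|\le C\varepsilon$, $|\nabla_\psi\tilde q|,|\nabla_\eta\tilde q|$ small from \eqref{n-1assumption}/\eqref{tildq0221} are what I would invoke to absorb all error terms.

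The main obstacle is purely bookkeeping: one has to be careful that when $\alpha\in(0,1)$ the exponents $\alpha-2$ and $\alpha-1$ are negative, so $\psi_{n-1}^{\alpha-2}$ blows up at $z=0$, and one must verify that the transport terms $\nabla_\xi,\nabla_\eta$ acting on $\psi_{n-1}^\alpha$ really do produce only the milder singularity $\psi_{n-1}^{\alpha-1}$ (times a small factor) and no $\psi_{n-1}^{\alpha-2}$ term with a bad sign. This follows because $\nabla_\xi\psi_{n-1}=0$ exactly and $\nabla_\eta\psi_{n-1}$ is an explicit bounded quantity with a factor $\tilde q$ or $\int_0^z\partial_y q_{n-1}$, so $\nabla_\eta\psi_{n-1}^\alpha=\alpha\psi_{n-1}^{\alpha-1}\nabla_\eta\psi_{n-1}$ is indeed $O(\varepsilon\psi_{n-1}^{\alpha-1})$; the key structural point is that the vector fields $\nabla_\xi,\nabla_\eta$ differentiate $\psi_{n-1}$ tangentially and hence gain, rather than lose, a power of $\psi_{n-1}$ relative to the diffusion term. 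Once this is isolated, choosing $\delta=\delta(\alpha,C_0,c_0,\|\tilde q\|,\dots)$ and $\lambda=\lambda(\alpha,c_0)$ is routine. This lemma is then exactly the ingredient feeding into \eqref{1118-3} and the barrier-function estimates \eqref{pphi} used in Sections \ref{s4}--\ref{s5}.
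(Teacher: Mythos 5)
Your proposal is correct and follows essentially the same route as the paper's own proof: it rests on the exact identities $\nabla_\xi\psi_{n-1}=0$, $\nabla_\psi\psi_{n-1}=1$, $\nabla_\psi^2\psi_{n-1}=0$ (the computation \eqref{psin-1}--\eqref{4.2}), the lower bound $u_n\psi_{n-1}^{\beta-2}\gtrsim\psi_{n-1}^{\beta-\frac{3}{2}}$ for small $z$, and the smallness of $\tilde q$ and $\int_0^z\partial_y q_{n-1}\,dz'$ to absorb the tangential transport terms. The only slightly loose point is the claim that an $O(\varepsilon)$ bound on $(1+\tilde q)\nabla_\eta\psi_{n-1}$ is automatically $o(\psi_{n-1}^{\alpha-\frac{1}{2}})$, which is immediate only for $\alpha<\frac12$ (the regime $\alpha\ll1$ actually used); for $\alpha\in[\frac12,1)$ one should additionally note that this coefficient is a difference of integrals from $0$ to $z$ and hence vanishes like $z\lesssim\psi_{n-1}^{\frac{1}{2}}$ near $z=0$, which is implicit in the paper's display as well.
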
\begin{proof}
Since   $u_{n-1}\psi_{n-1}^{\alpha-2}\geq \lambda_0 \psi_{n-1}^{\alpha-\frac{3}{2}}$, $z\leq \delta_0$ for some small positive constants $\lambda_0 $ and $\delta_0$, by  \eqref{bfassp},  \eqref{4.2} and \eqref{psin-1x1}, for $\alpha\in(0,1),$ it holds
\begin{align}\begin{split}
  P_2\psi_{n-1}^{\alpha}=  &\nabla_\xi \psi_{n-1}^\alpha +(1+\tilde{q})\nabla_{\eta}  \psi_{n-1}^\alpha-u_{n}\nabla_{\psi}^2\psi_{n-1}^\alpha
 \\ =&\alpha(\tilde{q}\int_0^z \partial_yu_{n-1}dz'- \int_0^z \partial_yq_{n-1}dz')\psi_{n-1}^{\alpha-1}+\alpha(-\alpha+1)u_{n}\psi_{n-1}^{\alpha-2}
\\ \geq &\lambda\psi_{n-1}^{\alpha-\frac{3}{2}} ,\\
 P_2(\psi_{n-1}-\psi_{n-1}^{1+\alpha})=&-\nabla_\xi \psi_{n-1}^{1+\alpha} +(1+\tilde{q})\nabla_{\eta}  (\psi_{n-1}-\psi_{n-1}^{1+\alpha})+u_{n}\nabla_{\psi}^2\psi_{n-1}^{1+\alpha}
 \\ =&(\tilde{q}\int_0^z \partial_yu_{n-1}dz'- \int_0^z \partial_yq_{n-1}dz')(1-(1+\alpha)\psi_{n-1}^{\alpha})\\&+\alpha(\alpha+1)u_{n}\psi_{n-1}^{\alpha-1}
\\ \geq &\lambda\psi_{n-1}^{\alpha-\frac{1}{2}} ,
\end{split}\end{align}  for $z\leq \delta$  such that $\psi_{n-1}\ll1.$ \end{proof}

\subsubsection{ Barrier functions for large $z$}
\begin{lemma}\label{infgrowthz}For a small positive constant $\mu$ and a big positive constant $N$  which are independent of $\varepsilon,$
\begin{align}\begin{split}
    P_1e^{-\frac{z^2}{x+1}\mu}\geq\frac{1}{2}e^{-\frac{z^2}{x+1}\mu}
    \frac{z^2}{(x+1)^2}\mu>0 \quad\text{in}\quad(0,X]\times(0,Y^*]\times(N,\infty),\\
    P_2e^{-\frac{z^2}{x+1}\mu}\geq\frac{1}{4}e^{-\frac{z^2}{x+1}\mu}
    \frac{z^2}{(x+1)^2}\mu>0 \quad\text{in}\quad(0,X]\times(0,Y^*]\times(N,\infty).\end{split}
   \end{align}
\end{lemma}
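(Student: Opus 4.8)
\textbf{Proof proposal for Lemma~\ref{infgrowthz}.}
The plan is to compute $P_1$ and $P_2$ applied to the Gaussian tail $g=e^{-\frac{z^2}{x+1}\mu}$ directly from the definitions \eqref{p10106}--\eqref{p20106} and to show that for $z$ large the dominant contribution comes from the $-u_n\nabla_\psi^2 g$ (respectively $-\nabla_\psi(u_n\nabla_\psi g)$) term, which is positive and of order $g\cdot\frac{z^2}{(x+1)^2}\mu$, while all remaining terms are of lower order in $z$ and are absorbed by choosing $N$ large and $\mu$ small. First I would record the elementary derivatives
\begin{align*}
\partial_x g=\frac{z^2}{(x+1)^2}\mu\, g,\qquad
\partial_z g=-\frac{2z}{x+1}\mu\, g,\qquad
\partial_z^2 g=\Big(\frac{4z^2}{(x+1)^2}\mu^2-\frac{2}{x+1}\mu\Big)g,
\end{align*}
so that $\nabla_\psi g=\frac{1}{u_{n-1}}\partial_z g$ is $O(z\mu)g/u_{n-1}$ and $\nabla_\psi^2 g=\frac{1}{u_{n-1}}\partial_z\big(\frac{1}{u_{n-1}}\partial_z g\big)$ produces a leading term $\frac{4z^2}{(x+1)^2}\mu^2 g/u_{n-1}^2$ together with lower order pieces controlled by \eqref{bfassp} (recall $u_{n-1}\to 1$, $\tfrac12 c_0(\phi_{1,0})^{3/2}\le\partial_z u_{n-1}\le 2C_0\phi_{1,0}$). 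Since $\partial_z\phi_{1,0}$ is itself $O(z\mu)\phi_{1,0}$ for $z/\sqrt{x+1}\ge N$, every occurrence of $\partial_z u_{n-1}$ and $\partial_z(u_n/u_{n-1})$ in $P_1 g$ carries a harmless factor and can be dominated by the main term.

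Next I would plug these into $P_2 g=\nabla_\xi g+(1+\tilde q)\nabla_\eta g-u_n\nabla_\psi^2 g$. Using \eqref{vf}, $\nabla_\xi g=\partial_x g-\frac{\int_0^z\partial_x u_{n-1}dz'}{u_{n-1}}\partial_z g$ and $\nabla_\eta g=\partial_y g-\frac{\int_0^z\partial_y v_{n-1}dz'}{v_{n-1}}\partial_z g$; here $\partial_y g=0$, $\partial_x g=\frac{z^2}{(x+1)^2}\mu\,g>0$ adds to the good term, and the two non-local coefficients are $O(z)$ by \eqref{linemethodbsign} (more precisely $\le C\min\{z,1\}$), so $\tilde b\,\partial_z g=O(z)\cdot O(z\mu)g=O(z^2\mu)g$ — the same order as the main term, but with an extra factor $\mu$ available from $-u_n\nabla_\psi^2 g\sim \frac{4z^2}{(x+1)^2}\mu^2 g$. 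The key inequality is therefore: for $N$ large enough that $z^2/(x+1)\ge N^2$ on the region, the positive term $\frac{4\mu^2 z^2}{(x+1)^2}u_n g/u_{n-1}^2\ge 3\mu^2\frac{z^2}{(x+1)^2}g$ dominates the $O(\mu z^2/(x+1)^2)g$ error coming from $\tilde b\,\partial_z g$ plus the genuinely lower-order $O(\mu/(x+1))g$ term from $\partial_z^2 g$ and the $O(\mu z/(x+1))g$ terms from $\partial_z u_{n-1}\cdot\partial_z g$; collecting constants and shrinking $\mu$ (equivalently enlarging $N$, since only $\mu N^2$ matters by the self-similar scaling of Subsection~\ref{ssv}) yields $P_2 g\ge \frac14 g\frac{z^2}{(x+1)^2}\mu$. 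For $P_1 g$ the computation is identical except that $-\nabla_\psi(u_n\nabla_\psi g)=-u_n\nabla_\psi^2 g-\frac{1}{u_{n-1}}\partial_z(\frac{u_n}{u_{n-1}})\partial_z g$; the extra term is $O(\varepsilon)\cdot O(z\mu)g/u_{n-1}$ by \eqref{bfassp} (since $|u_n-u_{n-1}|\le\varepsilon\phi_{1,1}$ and $|\partial_z u_n-\partial_z u_{n-1}|\le\varepsilon$), hence negligible, and one gets the slightly larger constant $\frac12$.

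The main obstacle, and the only place needing care, is bookkeeping the competition at order $z^2/(x+1)^2$: both the good term $-u_n\nabla_\psi^2 g$ and the bad term $-\tilde b\,\partial_z g$ scale like $z^2$, so a crude bound is not enough — one must exploit that the good term carries $\mu^2$ while $\tilde b\,\partial_z g$ carries only $\mu^1$ (with $\tilde b$ bounded by a constant times $\min\{z,1\}$, hence on $z\ge 1$ by a constant), so that for $\mu$ small the good term wins with room to spare; equivalently, after the self-similar normalization one chooses $N$ so that $\mu N^2$ is as small as desired. All other terms are strictly lower order in $z$ (they carry an extra power of $(x+1)^{-1}$ relative to $z^2$, or a factor $\varepsilon$, or a factor $\phi_{1,0}^{1/2}$ which is bounded), so they are absorbed into the $\frac14$ versus $\frac12$ slack. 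Once this ordering is made explicit the inequalities follow by collecting terms, and I would then simply remark that $g>0$ everywhere so the stated strict positivity is immediate.
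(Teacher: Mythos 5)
There is a genuine gap: you have identified the wrong term as the dominant positive contribution. In the Gaussian tail, with $g=e^{-\frac{z^2}{x+1}\mu}$ and $z>N$, one has $\partial_z^2 g=\big(\tfrac{4\mu^2z^2}{(x+1)^2}-\tfrac{2\mu}{x+1}\big)g>0$, so the diffusion term $-u_n\nabla_{\psi}^2 g$ is \emph{negative} at leading order, of size $-\tfrac{u_n}{u_{n-1}^2}\tfrac{4\mu^2z^2}{(x+1)^2}g$; it is a bad term, not the source of positivity. The quantity you display as ``the positive term $\tfrac{4\mu^2z^2}{(x+1)^2}u_ng/u_{n-1}^2$'' enters with the opposite sign. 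Moreover, your absorption mechanism is inverted: you propose to beat an $O(\mu z^2/(x+1)^2)g$ error by a $\mu^2 z^2/(x+1)^2$ ``good'' term while \emph{shrinking} $\mu$, but for small $\mu$ the $\mu^2$ term is the smaller one, so this step cannot close (nor does enlarging $N$ help, since both terms scale like $z^2$). Your estimate of the convective term is also off: by \eqref{linemethodbsign}, $|\tilde b|\le C$ (it is $\le C\min\{z,1\}$, not $O(z)$), so $\tilde b\,\partial_z g=O\big(\tfrac{\mu z}{x+1}\big)g$ is genuinely lower order in $z$; if it really were $O(\mu z^2)g$ as you claim, no choice of $\mu$ or $N$ would rescue the argument.

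The correct mechanism, which is what the paper's proof uses, is the opposite distribution of roles: the dominant positive term is the convection term $\partial_x g=\tfrac{\mu z^2}{(x+1)^2}g$ (produced by the $x$-dependence of the self-similar exponent), which you mention only in passing as ``adds to the good term.'' One then bounds
\begin{align*}
P_1 g\ \ge\ g\Big[\tfrac{\mu z^2}{(x+1)^2}-C\tfrac{2\mu z}{x+1}-\tfrac{u_n}{u_{n-1}^2}\Big(\tfrac{4\mu^2z^2}{(x+1)^2}-\tfrac{2\mu}{x+1}\Big)\Big],
\end{align*}
and closes by taking $\mu$ small so that the negative $\mu^2 z^2/(x+1)^2$ diffusion contribution is a small fraction of the positive $\mu z^2/(x+1)^2$ term (here smallness of $\mu$ helps precisely because the bad term carries the extra power of $\mu$), and $N$ large so that the $O(\mu z/(x+1))$ and $O(\mu/(x+1))$ remainders — including those from $\partial_z u_{n-1}$, $\partial_z u_n$ via \eqref{bfassp} — are absorbed; this gives the factor $\tfrac12$. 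The bound for $P_2=P_1+\tfrac{\partial_z u_n}{u_{n-1}^2}\partial_z$ then follows since the extra term costs only another $O(\mu z/(x+1))g$, whence the factor $\tfrac14$. Your write-up has the right list of elementary derivatives, but until the roles of $\partial_x g$ and $-u_n\nabla_\psi^2 g$ are interchanged and the $\mu$ versus $\mu^2$ bookkeeping is corrected, the proof does not go through.
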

\begin{proof} By \eqref{linemethodb} and \eqref{bfassp}, for $\mu $ small enough and
	for $z$ large enough, it holds
\begin{align*}
P_1e^{-\frac{z^2}{x+1}\mu}=&\partial_x e^{-\frac{z^2}{x+1}\mu} -\tilde{b}\partial_{z} e^{-\frac{z^2}{x+1}\mu} -\frac{u_n}{u_{n-1}^2}\partial_{z}^2e^{-\frac{z^2}{x+1}\mu}
+(-\frac{\partial_zu_{n}}{u_{n-1}^2}
   +\frac{u_{n}\partial_zu_{n-1}}{u_{n-1}^3}) (-\frac{2z}{x+1}\mu)e^{-\frac{z^2}{x+1}\mu}
     \\  \geq  &e^{-\frac{z^2}{x+1}\mu}
    [\frac{z^2}{(x+1)^2}\mu -C\frac{2z}{x+1}\mu -\frac{u_n}{u_{n-1}^2}(\frac{4z^2}{(x+1)^2}\mu^2-\frac{2}{x+1}\mu)
  ]\\ \geq& \frac{1}{2}e^{-\frac{z^2}{x+1}\mu}
    \frac{z^2}{(x+1)^2}\mu \geq    0.
\end{align*} 
Next, by \eqref{p10106}-\eqref{linemethodbsign},
$$P_2=P_1+\nabla_{\psi}u_n\nabla_{\psi}=P_1+\frac{\partial_zu_n}{u_{n-1}^2}\partial_z.$$ Then for $z$ large enough, it holds\begin{align*}
P_2e^{-\frac{z^2}{x+1}\mu}=&P_1e^{-\frac{z^2}{x+1}\mu}
+\frac{\partial_zu_n}{u_{n-1}^2}\partial_ze^{-\frac{z^2}{x+1}\mu}\\ \geq& e^{-\frac{z^2}{x+1}\mu}
   (\frac{1}{2} \frac{z^2}{(x+1)^2}\mu-C\frac{z}{x+1}\mu)
    \\ \geq &\frac{1}{4}e^{-\frac{z^2}{x+1}\mu} \frac{z^2}{(x+1)^2}\mu >  0.
\end{align*}
\end{proof}
\begin{lemma}\label{infgrowthpsi} For a small positive constant $\mu$ and a big positive constant $N$ which are independent of $\varepsilon,$
\begin{align}
    P_2e^{-\frac{\psi_{n-1}^2(x,y,z)}{x+1}\mu}\geq0 \quad\text{in}\quad(0,X]\times(0,Y^*]\times(N,\infty),
   \end{align} where  $\psi_{n-1}$ is defined in \eqref{psi0125}.
\end{lemma}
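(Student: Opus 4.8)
The plan is to mimic the computation in Lemma \ref{infgrowthz} for $P_1 e^{-\frac{z^2}{x+1}\mu}$, but now with the argument $\psi_{n-1}(x,y,z)$ in place of $z$, taking full advantage of the defining properties $\nabla_\psi\psi_{n-1}=1$, $\nabla_\psi^2\psi_{n-1}=0$, $\nabla_\xi\psi_{n-1}=0$ from \eqref{psin-1}, and the smallness of $\nabla_\eta\psi_{n-1}$ established in \eqref{psin-1} (namely $|\nabla_\eta\psi_{n-1}|\ll\varepsilon$ via the bounds on $|\tilde q|$ and $|\int_0^z\partial_y q_{n-1}\,dz'|$). Writing $g=e^{-\frac{\psi_{n-1}^2}{x+1}\mu}$ and recalling $P_2 w=\nabla_\xi w+(1+\tilde q)\nabla_\eta w-u_n\nabla_\psi^2 w$, I would first apply each vector field to $g$ using the chain rule: $\nabla_\psi g = -\frac{2\psi_{n-1}}{x+1}\mu\, g$ (since $\nabla_\psi\psi_{n-1}=1$), hence $\nabla_\psi^2 g = \big(\frac{4\psi_{n-1}^2}{(x+1)^2}\mu^2 - \frac{2}{x+1}\mu\big)g$ using $\nabla_\psi^2\psi_{n-1}=0$; next $\nabla_\xi g = -\frac{\psi_{n-1}^2}{(x+1)^2}(-\mu)\, g \cdot(\partial_x\text{-part})$—more precisely $\nabla_\xi g = \big(\frac{\psi_{n-1}^2}{(x+1)^2}\mu\big)g$ because $\nabla_\xi x=1$, $\nabla_\xi(x+1)=1$ and $\nabla_\xi\psi_{n-1}=0$; and $\nabla_\eta g = -\frac{2\psi_{n-1}}{x+1}\mu (\nabla_\eta\psi_{n-1})\,g$, which is a lower-order term since $|\nabla_\eta\psi_{n-1}|$ is tiny.

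Assembling these,
\begin{align*}
P_2 g = \Big[\frac{\psi_{n-1}^2}{(x+1)^2}\mu - (1+\tilde q)\frac{2\psi_{n-1}}{x+1}\mu(\nabla_\eta\psi_{n-1}) - u_n\Big(\frac{4\psi_{n-1}^2}{(x+1)^2}\mu^2 - \frac{2}{x+1}\mu\Big)\Big] g.
\end{align*}
In the region $\frac{\psi_{n-1}(x,y,z)}{\sqrt{x+1}}\geq N$ with $N$ large and $\mu$ small, the dominant positive term is $\frac{\psi_{n-1}^2}{(x+1)^2}\mu$; the term $u_n\cdot\frac{4\psi_{n-1}^2}{(x+1)^2}\mu^2$ is smaller by a factor $4u_n\mu$ (and $u_n\le C$ bounded by \eqref{assumpn}, so choosing $\mu$ small enough makes $4u_n\mu<\tfrac12$); the term $u_n\cdot\frac{2}{x+1}\mu$ is negative inside the bracket (it contributes $+\frac{2u_n}{x+1}\mu g>0$), so it only helps; and the $\nabla_\eta\psi_{n-1}$ term is bounded by $C\varepsilon\frac{\psi_{n-1}}{x+1}\mu$, which for $\psi_{n-1}/\sqrt{x+1}\ge N$ is dominated by $\frac12\frac{\psi_{n-1}^2}{(x+1)^2}\mu$ once $N\gtrsim 1/\varepsilon$ — or more cleanly, since $|\nabla_\eta\psi_{n-1}|\ll\varepsilon$ independent of $z$ while $\psi_{n-1}\to\infty$, the ratio $\frac{2\psi_{n-1}}{x+1}\mu|\nabla_\eta\psi_{n-1}| \big/ \frac{\psi_{n-1}^2}{(x+1)^2}\mu = \frac{2(x+1)|\nabla_\eta\psi_{n-1}|}{\psi_{n-1}}\to 0$ for $z$ large. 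Hence $P_2 g\ge 0$ in $(0,X]\times(0,Y^*]\times(N,\infty)$ for $N$ large and $\mu$ small, both independent of $\varepsilon$, which is the claim.

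The one technical point deserving care — and the only plausible obstacle — is controlling the cross term $(1+\tilde q)\nabla_\eta g$: one must verify that $|\nabla_\eta\psi_{n-1}|$ really is uniformly small (not growing in $z$), which is exactly what the identity in \eqref{psin-1} provides, since $\nabla_\eta\psi_{n-1}=\frac{\tilde q}{1+\tilde q}\int_0^z\partial_y u_{n-1}\,dz' - \frac{\int_0^z\partial_y q_{n-1}\,dz'}{1+\tilde q}$ with $|\tilde q|\ll\varepsilon$ by \eqref{tildq0221}, $|\int_0^z \partial_y q_{n-1}\,dz'|$ small by the decay in \eqref{assumpn}, and $\int_0^z\partial_y u_{n-1}\,dz'$ bounded. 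I would also note, exactly as in the remark after \eqref{phi4} and in Serrin \cite{Serrin}, that although $\phi_{2,\beta}$ (and $\phi_{2,0}$) have ridges at $\frac{\psi_{n-1}}{\sqrt{x+1}}=\delta$ and $=N$, the estimate here is needed and valid only in the smooth region $\{\frac{\psi_{n-1}}{\sqrt{x+1}}>N\}$, so no ridge analysis is needed within this lemma itself. Everything else is the same bounded-coefficient bookkeeping already used in Lemma \ref{infgrowthz}, so the proof is short.
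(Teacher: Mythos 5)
Your proposal is correct and follows essentially the same route as the paper's proof: compute $\nabla_\xi$, $\nabla_\eta$, $\nabla_\psi^2$ of $e^{-\frac{\psi_{n-1}^2}{x+1}\mu}$ using $\nabla_\xi\psi_{n-1}=0$, $\nabla_\psi\psi_{n-1}=1$, $\nabla_\psi^2\psi_{n-1}=0$ and the smallness of $(1+\tilde q)\nabla_\eta\psi_{n-1}=\tilde q\int_0^z\partial_y u_{n-1}\,dz'-\int_0^z\partial_y q_{n-1}\,dz'$ from \eqref{psin-1}, then let the positive term $\frac{\psi_{n-1}^2}{(x+1)^2}\mu$ dominate for $z>N$ with $\mu$ small (the paper fixes $\|\mu u_n\|_{L^\infty}\le\frac18$, matching your $4u_n\mu<\frac12$). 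Your extra bookkeeping on the cross term only makes explicit what the paper leaves implicit, so nothing further is needed.
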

\begin{proof}By  \eqref{bfassp} and \eqref{psin-1}, for $\mu $ independent of $\varepsilon$  and small enough such that  $\|\mu u_n\|_{L^\infty}\leq \frac{1}{8}$,  when $z$ is large, we have
\begin{align*}
    P_2 e^{-\frac{\psi_{n-1}^2(x,y,z)}{x+1}\mu}=&\nabla_\xi e^{-\frac{\psi_{n-1}^2(x,y,z)}{x+1}\mu} +(1+\tilde{q})\nabla_{\eta} e^{-\frac{\psi_{n-1}^2(x,y,z)}{x+1}\mu} -u_{n}\nabla_{\psi}^2e^{-\frac{\psi_{n-1}^2(x,y,z)}{x+1}\mu} \\
    =&e^{-\frac{\psi_{n-1}^2(x,y,z)}{x+1}\mu}
    [
    -(\tilde{q}\int_0^z \partial_yu_{n-1}dz'-\int_0^z \partial_yq_{n-1}dz')\frac{2\psi_{n-1}(x,y,z)}{x+1}\mu
     ]\\
   &+e^{-\frac{\psi_{n-1}^2(x,y,z)}{x+1}\mu}
    [\frac{\psi_{n-1}^2(x,y,z)}{(x+1)^2}\mu -u_{n}(\frac{4\psi_{n-1}^2(x,y,z)}{(x+1)^2}\mu^2-\frac{2}{x+1}\mu)
  ]\\ \geq& 0.
\end{align*}
\end{proof}
\begin{lemma}\label{mon-3-15}For a small positive constant $\mu$ and a big positive constant $N$ which are independent of $\varepsilon,$
\begin{align*}
    P_1e^{-\frac{3}{2}\mu z^2}\leq -e^{-\frac{3}{2}\mu z^2}<0
 \quad\text{in}\quad(0,X]\times(0,Y^*]\times[N,\infty).
   \end{align*}
\end{lemma}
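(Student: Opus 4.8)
\textbf{Proof proposal for Lemma \ref{mon-3-15}.}

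The plan is to compute $P_1$ acting on the explicit barrier $g(x,z)=e^{-\frac{3}{2}\mu\frac{z^2}{1+X}}$ and show that for $z$ large the dominant negative contribution comes from the second-order term $-\frac{u_n}{u_{n-1}^2}\partial_z^2 g$. First I would recall that by \eqref{p10106},
\begin{align*}
P_1 g=\partial_x g+(1+\tilde q)\partial_y g-\tilde b\,\partial_z g-\frac{u_n}{u_{n-1}^2}\partial_z^2 g-\frac{1}{u_{n-1}}\partial_z\!\Big(\frac{u_n}{u_{n-1}}\Big)\partial_z g,
\end{align*}
and here $\partial_y g=0$ since $g$ is independent of $y$. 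Next I would note the elementary derivative identities: $\partial_z g=-\frac{3\mu z}{1+X}g$, $\partial_z^2 g=\big(\frac{9\mu^2 z^2}{(1+X)^2}-\frac{3\mu}{1+X}\big)g$, and $\partial_x g=\frac{3\mu z^2}{2(1+X)}\cdot\frac{1}{?}$ --- more precisely, since $g$ depends on $x$ only through the fixed constant $X$ in the denominator, $\partial_x g=0$; if instead one keeps the $x$-dependence as in $\phi_{1,0}$, the $\partial_x$ contribution is lower order and I would absorb it. The key point is that $-\frac{u_n}{u_{n-1}^2}\partial_z^2 g=-\frac{u_n}{u_{n-1}^2}\big(\frac{9\mu^2 z^2}{(1+X)^2}-\frac{3\mu}{1+X}\big)g$, and for $z\ge N_1$ with $N_1$ large this is $\le -\frac{1}{2}\frac{u_n}{u_{n-1}^2}\frac{9\mu^2 z^2}{(1+X)^2}g$, which is a large negative multiple of $g$.

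The remaining terms I would bound as lower order. By \eqref{linemethodbsign}, $|\tilde b|\le C$, so $|\tilde b\,\partial_z g|\le C\frac{3\mu z}{1+X}g$, which grows only linearly in $z$ and is dominated by the quadratic-in-$z$ main term once $N_1$ is large enough (depending on $\mu$ and the constant $C$). By \eqref{bfassp}, $\big|\frac{1}{u_{n-1}}\partial_z(\frac{u_n}{u_{n-1}})\big|$ is bounded for $z$ in any compact set and, using the tail estimates of the background profile together with Theorem \ref{dzunphi0}, stays bounded as $z\to+\infty$ as well (since $u_n,u_{n-1}\to 1$ and their $z$-derivatives decay); hence $\big|\frac{1}{u_{n-1}}\partial_z(\frac{u_n}{u_{n-1}})\partial_z g\big|\le C\frac{3\mu z}{1+X}g$, again linear in $z$. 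Also $u_n/u_{n-1}^2\ge \frac12$ for $z$ large by \eqref{bfassp}. Collecting these, for $z\ge N_1$,
\begin{align*}
P_1 g\le\Big(-\tfrac{1}{2}\cdot\tfrac{9\mu^2 z^2}{(1+X)^2}\cdot\tfrac12+C\tfrac{6\mu z}{1+X}+\tfrac{3\mu}{1+X}\Big)g,
\end{align*}
and choosing $N_1$ large (depending only on $\mu$ and the fixed constants) makes the bracket $\le -1$, giving $P_1 g\le -g<0$ as claimed.

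I do not expect a serious obstacle here; this is a direct computation with an explicit function. The only mild care needed is to make sure all the coefficient bounds ($|\tilde b|\le C$, boundedness of $\frac{1}{u_{n-1}}\partial_z(\frac{u_n}{u_{n-1}})$, and $u_n/u_{n-1}^2\ge\frac12$) hold uniformly on $(0,X]\times(0,Y^*]\times[N_1,\infty)$, which follows from \eqref{bfassp} and the already-established estimates on $u_n,u_{n-1}$; then $N_1$ is chosen \emph{after} these constants are fixed so that the quadratic term in $z$ dominates. One should also double-check the sign convention $X<\frac15<1$ so that $1+X$ is a harmless order-one denominator. The statement is the analogue, with the reversed inequality, of the lower bounds in Lemma \ref{infgrowthz}, and the proof mirrors that one with the roles of the signs interchanged.
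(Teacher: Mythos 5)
Your proposal is correct and follows essentially the same route as the paper: expand $P_1 e^{-\frac{3}{2}\mu\frac{z^2}{1+X}}$ explicitly, observe that the second-order term produces the dominant negative contribution $\sim -\frac{u_n}{u_{n-1}^2}\frac{9\mu^2 z^2}{(1+X)^2}$ times the exponential, and absorb the first-order terms (bounded coefficients via \eqref{linemethodbsign} and \eqref{bfassp}), which are only linear in $\mu z$, by taking $N_1$ large relative to $1/\mu$, both independent of $\varepsilon$. The only cosmetic difference is that you note $\partial_x g=\partial_y g=0$ explicitly while the paper carries the (vanishing) $\partial_x$ term formally; the estimates and the choice of constants are the same.
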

\begin{proof}By \eqref{linemethodb} and \eqref{bfassp}, for $\mu $ small enough and $z$ big enough such that $z\mu\gg1$, it holds
\begin{align*}
P_1e^{-\frac{3}{2}\mu z^2}=& -\tilde{b}\partial_{z} e^{-\frac{3}{2}\mu z^2} -\frac{u_n}{u_{n-1}^2}\partial_{z}^2e^{-\frac{3}{2}\mu z^2}
\\&+(-\frac{\partial_zu_{n}}{u_{n-1}^2}
   +\frac{u_{n}\partial_zu_{n-1}}{u_{n-1}^3}) (-3\mu z)e^{-\frac{3}{2}\mu z^2}
     \\  \leq  &e^{-\frac{3}{2}\mu z^2}
    [ Cz\mu -\frac{u_n}{u_{n-1}^2}(9\mu^2 z^2-3\mu)
  ]\\ \leq& -e^{-\frac{3}{2}\mu z^2}<    0.
\end{align*}
\end{proof}
\subsubsection{Barrier functions on $\Omega$}
For $\alpha, \beta\in(0,1),$ take $\phi_{1,\beta},\phi_{2,\beta},\phi_{2,1}$ to be the  barrier functions with ridges which are defined in \eqref{phi1}, \eqref{phi2beta} and  \eqref{phi4} respectively.

Note
\begin{align*}
    \partial_x e^{-\frac{A}{x+1}}=\frac{A}{(x+1)^2}e^{-\frac{A}{x+1}}.
\end{align*}

By taking $A$ large enough, we have
\begin{align*}
   \partial_x( e^{-\frac{A}{x+1}}(x+1)^{-\frac{\alpha}{2}})= & e^{-\frac{A}{x+1}}(x+1)^{-2-\frac{\alpha}{2}}(A-\frac{\alpha}{2}(x+1))\\
   \geq &e^{-\frac{A}{x+1}}(X+1)^{-2-\frac{\alpha}{2}}(\frac{99}{100}A).
\end{align*} Then similar to the proofs of Lemma \ref{Lq}-\ref{infgrowthpsi}, we can show that for a positive constant $c_2$ independent of $\alpha, \beta\in(0,1)$, two small positive constants $\delta$ and $\delta_0$, a big positive constant $N$  and a large positive constant $A$ depending on $\delta_0$,
\begin{align}\label{pphi}\begin{split}
   & P_1 \phi_{1,\beta}\geq c_2(\frac{\beta(1-\beta)}{u_{n-1} (z+\epsilon_0)^2}+A)\phi_{1,\beta},
     \,\, P_2 \phi_{2,\beta}\geq c_2(\beta(1-\beta)\psi_{n-1}^{-\frac{3}{2 } }+A)\phi_{2,\beta},\\
     & P_2 \phi_{2,1}\geq c_2(\alpha\psi_{n-1}^{\alpha-\frac{3}{2 } }+A)\phi_{2,1},  \quad \text{in} \quad(0,X]\times(0,Y^*]\times[0,\delta_0],\\
    &  P_1 \phi_{1,\beta}\geq c_2A\phi_{1,0},  P_2 \phi_{2,\beta}, P_2 \phi_{2,1}\geq c_2A\phi_{2,0},  \\& \text{in} \quad(0,X]\times(0,Y^*]\times[0,+\infty)\setminus\{\text{ridges of the barrier functions}\},
\end{split}\end{align} where  $\phi_{1,0}$ is defined in \eqref{phi3} and by \eqref{26-02-08-xtheta},
\begin{align}\label{0323-p2phi}
P_2\phi_{1,0}\geq \frac{1}{2}A\phi_{1,0} \quad\text{in}\quad(0,X]\times(0,Y^*]\times(N,\infty) . \end{align}

\subsection{Some basic lemmas}\label{tl}
To measure the difference between vector fields,
we list the following equalities that have been used frequently. By  \eqref{kh1}, \eqref{kh2} and \eqref{K},  we have
\begin{align}\label{psi-n}
    \nabla_{\psi}-\nabla_{n}=(\frac{1}{u_{n-1}}-\frac{1}{\bar{u}})\partial_z
=(\frac{\bar{u}-u_{n-1}}{u_{n-1}\bar{u}})\partial_z,
\end{align}
and
 \begin{align}\label{xitau}\begin{split}
\nabla_{\xi}-\nabla_{\tau_1}=&(\frac{\int_0^z\partial_x\bar{u}dz'}{\bar{u}}-\frac{\int_0^z\partial_x u_{n-1}dz'}{u_{n-1}})\partial_z
\\
=&\frac{(u_{n-1}-\bar{u})\int_0^z\partial_x\bar{u}dz'+\bar{u}\int_0^z\partial_x\bar{u}-\partial_xu_{n-1}dz'}{u_{n-1}\bar{u}}\partial_z
\\:=&g_1\partial_z,\\
\nabla_{\eta}-\nabla_{\tau_2}=&(\frac{\int_0^z\partial_y\bar{u}dz'}{\bar{u}}-\frac{\int_0^z\partial_y v_{n-1}dz'}{v_{n-1}})\partial_z
\\
=&\frac{(v_{n-1}-\bar{u})\int_0^z\partial_y\bar{u}dz'+\bar{u}\int_0^z\partial_y\bar{u}-\partial_yv_{n-1}dz'}{v_{n-1}\bar{u}}\partial_z
\\:=&g_2\partial_z.\end{split}\end{align}
 Under the induction assumption, when $z$ is small, it holds that
 \begin{align}\label{gg}\begin{split}
|g_1|+ |g_2|\leq  |\frac{(z+{\epsilon_0})^3\varepsilon^2 e^{-\frac{A}{x+1}} C}{u_{n-1}\bar{u}}|
 \leq (z+{\epsilon_0})\varepsilon^2 e^{-\frac{A}{x+1}}C.\end{split}\end{align}

In addition, we have
\begin{align}\label{cy}\begin{split}(u_n\nabla_{\psi}^2- \bar{u}\nabla_{n}^2)=&
    \frac{u_n}{u_{n-1}}\partial_z( \frac{1}{u_{n-1}}\partial_z \cdot)-\partial_z(\frac{1}{\bar{u}}\partial_z \cdot)
    \\=&
    (\frac{u_n}{u_{n-1}}-1)\partial_z( \frac{1}{\bar{u}} \partial_z\cdot)+\frac{u_n}{u_{n-1}}\partial_z((- \frac{1}{\bar{u}} + \frac{1}{u_{n-1}})\partial_z\cdot)
    \\=&
    (\frac{u_n}{u_{n-1}}-1)\partial_z \nabla_{n}+\frac{u_n}{u_{n-1}}\partial_z( \frac{\bar{u}-u_{n-1}}{u_{n-1}})\nabla_{n}+\frac{u_n}{u_{n-1}}( \frac{\bar{u}-u_{n-1}}{u_{n-1}})\partial_z\nabla_{n}.
\end{split}\end{align}
And the auxiliary   function $\psi_{n-1}$ has the following  growth rates in $z$ near $z=0$,
 \begin{align}\label{psin-1un-1}\begin{split}
        \psi_{n-1}(x,y,z)=&\int_0^{z} u_{n-1}(x,y,z')dz'
        \leq \int_0^{z}  C{\epsilon_0}+2C_0z' dz'
        \\
        \leq & C{\epsilon_0} z+C_0z^2
         \leq  Cu_{n-1}z.\end{split}
      \end{align}

      A direct consequence of the above calculation is the following  lemma, which is used to estimate the remainder $R_0$ in the equation \eqref{u-baru} for $u_n^2-\bar{u}^2.$
      \begin{lemma}\label{6100205} It holds that
\begin{align}\label{cy-1}\begin{split}|(u_n\nabla_{\psi}^2- \bar{u}\nabla_{n}^2)\bar{u}^2|\leq C\phi_{1,0} \quad \text{in} \quad \Omega\cap\{0\leq y\leq Y^*\},
\end{split}\end{align}
and \begin{align}\label{xz2}
u_{n}|\nabla_\psi (u_{n}^2-\bar{u}^2)|\psi_{n-1}^{-\alpha-1}\leq \varepsilon e^{-\frac{A}{x+1}}C \psi_{n-1}^{-\alpha}u_{n-1}^2\psi_{n-1}^{-1} \quad \text{in} \quad \Omega\cap\{0\leq y\leq Y^*\}.
\end{align}
\end{lemma}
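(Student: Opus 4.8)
The plan is to deduce both inequalities from the algebraic identities recorded in \eqref{cy}, combined with the first-order estimates already proved in Theorem \ref{thm1} and Theorem \ref{dzunphi0}, the growth rates of $\bar u$ collected in Subsection \ref{1027}, and the degeneracy bound $u_{n-1}\gtrsim\epsilon_0$ from \eqref{positivelbu}. In both cases the difference of the two second-order (resp.\ first-order) operators built out of $u_{n-1},u_n,\bar u$ becomes, after using \eqref{cy}, a lower-order operator whose coefficients are multiples of $u_n-u_{n-1}$, $u_{n-1}-\bar u$ and their $z$-derivatives; these are small, and — crucially — they vanish fast enough as $z\to 0$ to absorb the powers of $u_{n-1}^{-1}$ produced by $\nabla_\psi$ and $\partial_z\nabla_\psi$ hitting $\bar u^2$.

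For \eqref{cy-1} I would apply the second identity in \eqref{cy} to $g=\bar u^2$, so that $(\bar u\nabla_n^2-u_n\nabla_\psi^2)\bar u^2=(1-\tfrac{u_n}{u_{n-1}})\partial_z\nabla_\psi\bar u^2+\partial_z(\tfrac{u_{n-1}-\bar u}{\bar u})\nabla_\psi\bar u^2+(\tfrac{u_{n-1}-\bar u}{\bar u})\partial_z\nabla_\psi\bar u^2$, and estimate the three terms. Writing $\nabla_\psi\bar u^2=\nabla_n\bar u^2+\tfrac{\bar u-u_{n-1}}{u_{n-1}}\nabla_n\bar u^2$ reduces $\partial_z\nabla_\psi\bar u^2$ to $2\partial_z^2\bar u$ plus errors, so \eqref{nnbaru}--\eqref{znbaru} give $|\nabla_\psi\bar u^2|,|\partial_z\nabla_\psi\bar u^2|\le Cu_{n-1}^{-1}\phi_{1,0}^2$; meanwhile the coefficients $1-\tfrac{u_n}{u_{n-1}}$, $\partial_z(\tfrac{u_{n-1}-\bar u}{\bar u})$ and $\tfrac{u_{n-1}-\bar u}{\bar u}$ are controlled by $\varepsilon C$ times an appropriate power of $z+\epsilon_0$ using \eqref{bfassp}, the refined closeness bounds \eqref{0821jl}, \eqref{1012} and $|u_{n-1}-\bar u|\le\varepsilon^6\phi_{1,1+2\alpha}$. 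Matching the powers and invoking $u_{n-1}\gtrsim\epsilon_0$, $\bar u\sim u_{n-1}$ then yields $\le C\phi_{1,0}$ for each term near $z=0$; for large $z$ the exponential decay of $\bar u$ and $\phi_{1,0}$ makes the bound immediate.

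For \eqref{xz2} I would use the decomposition $\nabla_\psi(u_n^2-\bar u^2)=2\bigl(\tfrac{u_n}{u_{n-1}}\partial_z u_n-\partial_z\bar u\bigr)+2\partial_z\bar u\cdot\tfrac{u_{n-1}-\bar u}{u_{n-1}}$, bounding the first bracket by the two estimates in \eqref{changchun0106} — the $\phi_{2,1}$-type growth near $z=0$ and the global $\phi_{1,0}$-bound — and the second by $|\partial_z\bar u|\le C\phi_{1,0}$ together with the refined difference bound $|u_{n-1}^2-\bar u^2|\le\varepsilon^8C\epsilon_0^3\phi_{2,0}+\varepsilon^{6.5}\phi_{2,1}\psi_{n-2}^\alpha$ (Theorem \ref{thm1}, Lemma \ref{xz40316}, Remark \ref{rk4.2}) divided by $u_{n-1}(u_{n-1}+\bar u)$. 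Multiplying by $u_n\sim u_{n-1}$ and using $\psi_{n-1}\le Cu_{n-1}z\le Cu_{n-1}^2$ to re-absorb the weight, this gives exactly $u_n|\nabla_\psi(u_n^2-\bar u^2)|\psi_{n-1}^{-\alpha-1}\le\varepsilon C\psi_{n-1}^{-\alpha}u_{n-1}^2\psi_{n-1}^{-1}$. The main obstacle is the near-$z=0$ bookkeeping in both parts: one has to split into the regimes $z\lesssim\epsilon_0$ and $z\gtrsim\epsilon_0$, keep precise track of the powers of $z+\epsilon_0$ and $\psi_{n-1}$ so that the singular factors $u_{n-1}^{-k}$ are cancelled, and check that the $\varepsilon$-gain is not destroyed — which is where the extra powers in the boundary-data estimates (e.g.\ the $\epsilon_0^3$ at $z=0$) and in the refined interior estimates of Theorems \ref{thm1}--\ref{dzunphi0} are used.
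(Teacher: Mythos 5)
Your overall strategy (use the operator identities \eqref{cy} for \eqref{cy-1}, and a decomposition of $\nabla_\psi(u_n^2-\bar u^2)$ for \eqref{xz2}) is the same as the paper's, but the tools you invoke create a circular argument. You prove the lemma from Theorem \ref{thm1}, Theorem \ref{dzunphi0}, Lemma \ref{xz40316}, Remark \ref{rk4.2} and the estimates \eqref{0821jl}, \eqref{1012}, \eqref{changchun0106}. However, \eqref{cy-1} is exactly what is used to bound $R_0$ in the proof of Lemma \ref{xz4}, and \eqref{xz2} is used inside the maximum-principle computation at the point $p_0$ in the proof of Theorem \ref{thm1}; Theorem \ref{dzunphi0} in turn builds on Theorem \ref{thm1}. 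So the results you rely on are downstream of this lemma. The paper avoids this by proving the lemma only from (i) the bootstrap assumption that \eqref{assumpn-2} holds on $\{y\le Y^*\}$ (in particular $|\partial_z^2(u_n-\bar u)|\le\varepsilon$ there), (ii) the induction assumption \eqref{n-1assumption}, (iii) the compatible boundary data at $z=0$ (giving $|u_n-\bar u|\le\varepsilon C\epsilon_0^2$ and $|\partial_z(u_n-\bar u)|\le\varepsilon C\epsilon_0$ at $z=0$), and (iv) the growth rates of $\bar u$ — then integrating in $z$ to get $|\partial_z(u_n-\bar u)|\le\varepsilon C(z+\epsilon_0)$ and $|u_n-\bar u|\le\varepsilon C(z+\epsilon_0)^2$.

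Beyond the logical ordering, the quantitative bookkeeping in your treatment of \eqref{xz2} does not close near $z=0$ uniformly in $\epsilon_0$. The target is $|\nabla_\psi(u_n^2-\bar u^2)|\le\varepsilon C\,u_{n-1}$, i.e. essentially $\varepsilon C(z+\epsilon_0)$ near the boundary. For your second term $2\partial_z\bar u\,\frac{u_{n-1}-\bar u}{u_{n-1}}$, the bounds you cite give at best $|u_{n-1}-\bar u|\lesssim\varepsilon^6\phi_{1,1+2\alpha}$ or $|u_{n-1}^2-\bar u^2|\lesssim\varepsilon^8\epsilon_0^3\phi_{2,0}+\varepsilon^{6.5}\phi_{2,1}\psi^{\alpha}$; dividing by $u_{n-1}$ (resp.\ $u_{n-1}(u_{n-1}+\bar u)\sim(z+\epsilon_0)^2$) leaves a contribution of size $\varepsilon^{6}(z+\epsilon_0)^{2\alpha}$ (resp.\ $\varepsilon^{6.5}(z+\epsilon_0)^{2\alpha}$), and since $2\alpha<1$ this is \emph{not} $\le\varepsilon C(z+\epsilon_0)$ once $z+\epsilon_0\lesssim\varepsilon^{5/(1-2\alpha)}$ — a regime that occurs because $\epsilon_0\to0$ independently of $\varepsilon$ and the constants must be uniform in $\epsilon_0$. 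What is really needed is the genuinely quadratic vanishing $|u_{n-1}-\bar u|,\,|u_n-\bar u|\le\varepsilon C(z+\epsilon_0)^2$, which comes only from the boundary compatibility at $z=0$ combined with the $\partial_z^2$ bounds in \eqref{n-1assumption}/\eqref{assumpn-2}, integrated in $z$; the paper then applies the same ``bound the $z$-derivative plus the boundary value'' argument directly to $\frac{1}{u_{n-1}}(u_n\partial_z u_n-\bar u\partial_z\bar u)$ to conclude. Your proof of \eqref{cy-1} can be repaired simply by dropping the citations of \eqref{0821jl} and \eqref{1012} and using these elementary integrated bounds instead; for \eqref{xz2} the repair is to replace the Theorem \ref{thm1}/\ref{dzunphi0} inputs by the same boundary-compatibility-plus-integration mechanism.
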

\begin{proof}
	First, we consider the estimates  near $z=0.$ By
$|u_{n}-\bar{u}|\leq \varepsilon  e^{-\frac{A}{x+1}}C{\epsilon_0}^2,\,\,|\partial_z (u_{n}-\bar{u})|\leq \varepsilon  e^{-\frac{A}{x+1}}C{\epsilon_0}$ at $z=0$ and $|\partial_z^2 u_{n}-\partial_z^2 \bar{u}|\leq \varepsilon  e^{-\frac{A}{x+1}}$  in $\Omega\cap\{0\leq y\leq Y^*\}$, we have
\begin{align}\label{26-06-13-1}
    |\partial_z (u_{n}-\bar{u})|\leq \varepsilon  e^{-\frac{A}{x+1}}C(z+{\epsilon_0}), \quad   | u_{n}-\bar{u}|\leq \varepsilon  e^{-\frac{A}{x+1}} C(z+{\epsilon_0})^2\quad \text{in}\quad \Omega\cap\{0\leq y\leq Y^*\}.
\end{align} Then we have, in $\Omega\cap\{0\leq y\leq Y^*\},$
\begin{align}\begin{split}
|\nabla_{n}\bar{u}^2|=|2\partial_z\bar{u}|\leq C, \quad |\partial_z\nabla_{n}\bar{u}^2|=|2\partial_z^2\bar{u}|\leq &C,\\
 |\frac{ \partial_z(\bar{u}-u_{n-1})}{u_{n-1}}- \frac{ \partial_zu_{n-1}(\bar{u}-u_{n-1})}{u_{n-1}^2}|\leq& \varepsilon  e^{-\frac{A}{x+1}}C ,
\end{split}\end{align}
which gives \eqref{cy-1} by \eqref{cy}, the decay rates of derivatives of  $\bar{u}$ for large $z$ and the definition of $Y^*$.

By the  definition of $\nabla_\psi,$ we have
\begin{align}\label{sm2}\begin{split}
\nabla_\psi (u_{n}^2-\bar{u}^2)=\frac{1}{u_{n-1}}\partial_z (u_{n}^2-\bar{u}^2)
=\frac{2}{u_{n-1}}(u_{n}\partial_z u_{n}-\bar{u}\partial_z \bar{u}).
\end{split}\end{align} By \eqref{26-06-13-1}, we have
\begin{align}\label{xinzeng1}
|2(\frac{u_{n}}{u_{n-1}}\partial_z u_{n}-\frac{\bar{u}}{u_{n-1}}\partial_z \bar{u})|\leq \varepsilon  e^{-\frac{A}{x+1}}C{\epsilon_0},\quad \text{at} \quad z=0.
\end{align}
Now we estimate
\begin{align}\label{xinzeng}\begin{split}
   \partial_z\big( \frac{1}{u_{n-1}}(u_{n}\partial_z u_{n}-\bar{u}\partial_z \bar{u})\big)=&-\frac{\partial_zu_{n-1}}{u_{n-1}^2}(u_{n}\partial_z u_{n}-\bar{u}\partial_z \bar{u})\\&+\frac{1}{u_{n-1}}((\partial_z u_{n})^2-(\partial_z \bar{u})^2+u_{n}\partial_z^2 u_{n}-\bar{u}\partial_z^2\bar{u}).\end{split}
\end{align} Again, since
$|u_{n}-\bar{u}|\leq \varepsilon  e^{-\frac{A}{x+1}}C{\epsilon_0}^2, |\partial_z (u_{n}-\bar{u})|\leq \varepsilon  e^{-\frac{A}{x+1}}C{\epsilon_0}$ at $z=0$ and $|\partial_z^2 u_{n}-\partial_z^2 \bar{u}|\leq \varepsilon e^{-\frac{A}{x+1}}$  in $\Omega\cap\{0\leq y\leq Y^*\}$, we have
\begin{align*}
   | u_{n}\partial_z u_{n}-\bar{u}\partial_z \bar{u}|=&
    |u_{n}(\partial_z u_{n}-\partial_z \bar{u})+(u_{n}-\bar{u})\partial_z \bar{u}|
    \leq \varepsilon  e^{-\frac{A}{x+1}}C(z+{\epsilon_0})^2 ,
\end{align*}
and
\begin{align*}
    |(\partial_z u_{n})^2-(\partial_z \bar{u})^2+u_{n}\partial_z^2 u_{n}-\bar{u}\partial_z^2\bar{u}|\leq \varepsilon  e^{-\frac{A}{x+1}}C(z+{\epsilon_0}).
\end{align*} Hence, by \eqref{xinzeng}, we have
$|\partial_z \big(\frac{1}{u_{n-1}}(u_{n}\partial_z u_{n}-\bar{u}\partial_z \bar{u})\big)|\leq \varepsilon  e^{-\frac{A}{x+1}}C.$ Combining this with \eqref{xinzeng1}, we have
\begin{align}
|\frac{u_{n}}{u_{n-1}}\partial_z u_{n}-\frac{\bar{u}}{u_{n-1}}\partial_z \bar{u}|\leq \varepsilon  e^{-\frac{A}{x+1}}C(z+{\epsilon_0}),\quad \text{in}\quad \Omega\cap\{0\leq y\leq Y^*\}.
\end{align}
Substituting this  back to \eqref{sm2}, we obtain \eqref{xz2}.
\end{proof}

The following  lemmas are about the growth rates which have been used to obtain the estimate on $\partial_z^2(u_n-\bar{u}).$

\begin{lemma}\label{tl2}
Assume $f\in C^\infty(\R)$ and $|\partial_z f|\leq2 \varepsilon (z+c)^\alpha$ where $c$ is a non-negative constant. Then
\begin{align*}
    |f-\frac{\int_0^zf dz'}{z+c}|\leq \frac{\varepsilon(z+c)^{1+\alpha}}{1+\frac{\alpha}{2}}+|f(0)|\quad \text{for}\quad z>0.
\end{align*}
\end{lemma}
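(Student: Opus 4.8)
The plan is to reduce this to a one-variable integration by introducing the auxiliary function
\[
h(z)=(z+c)f(z)-\int_0^z f(z')\,dz',
\]
which is exactly $(z+c)$ times the quantity to be estimated, since $f(z)-\frac{1}{z+c}\int_0^z f(z')\,dz'=\frac{h(z)}{z+c}$ for $z>0$. The point of this choice is that upon differentiating, the undifferentiated $f(z)$ terms telescope: $h'(z)=f(z)+(z+c)f'(z)-f(z)=(z+c)f'(z)$, so the hypothesis $|\partial_z f|\le 2\varepsilon (z+c)^\alpha$ gives directly $|h'(z)|\le 2\varepsilon (z+c)^{1+\alpha}$, while $h(0)=cf(0)$.

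First I would integrate this derivative bound from $0$ to $z$:
\[
|h(z)|\le |h(0)|+\int_0^z |h'(s)|\,ds\le c|f(0)|+\frac{2\varepsilon}{2+\alpha}\big((z+c)^{2+\alpha}-c^{2+\alpha}\big)\le c|f(0)|+\frac{2\varepsilon}{2+\alpha}(z+c)^{2+\alpha},
\]
where in the last step I simply discard the nonpositive term $-c^{2+\alpha}$. Then I would divide by $z+c>0$ and use the two elementary inequalities $\frac{c}{z+c}\le 1$ and $\frac{(z+c)^{2+\alpha}}{z+c}=(z+c)^{1+\alpha}$ together with $\frac{2\varepsilon}{2+\alpha}=\frac{\varepsilon}{1+\frac{\alpha}{2}}$ to conclude
\[
\Big|f(z)-\frac{\int_0^z f\,dz'}{z+c}\Big|=\frac{|h(z)|}{z+c}\le |f(0)|+\frac{\varepsilon(z+c)^{1+\alpha}}{1+\frac{\alpha}{2}},
\]
which is the assertion.

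I do not expect any genuine obstacle here: the only idea is recognizing the telescoping choice of $h$, after which the argument is one integration of a power plus two trivial comparisons, and it works uniformly for every $z>0$ and every constant $c\ge 0$ (when $c=0$ the term $|f(0)|$ on the right is not even needed). The mild care needed is only to keep track of the constant $\frac{2}{2+\alpha}$ so that it matches the stated $\frac{1}{1+\alpha/2}$.
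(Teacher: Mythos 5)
Your proposal is correct and coincides with the paper's own argument: the paper likewise sets $G=(z+c)f-\int_0^z f\,dz'$, notes $G(0)=cf(0)$ and $\partial_z G=(z+c)\partial_z f$, integrates the bound $|\partial_z G|\le 2\varepsilon(z+c)^{1+\alpha}$, and divides by $z+c$. Your bookkeeping of the constant $\frac{2}{2+\alpha}=\frac{1}{1+\alpha/2}$ and the trivial bound $\frac{c}{z+c}\le 1$ matches the paper's conclusion exactly.
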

\begin{proof}
Set
$G=(z+c)f-\int_0^zf dz'.$ Then $G(0)=cf(0)$, $\partial_z G=(z+c)\partial_zf$ and $$|\partial_z G|\leq 2\varepsilon (z+c)^{1+\alpha}, \quad z>0, $$ which implies $$|G|\leq\frac{\varepsilon (z+c)^{2+\alpha}}{1+\frac{\alpha}{2}}+c|f(0)| \quad \text{ for}\quad z>0.$$ Hence, $$|\frac{G}{z+c}|\leq\frac{\varepsilon (z+c)^{1+\alpha}}{1+\frac{\alpha}{2}}+\frac{c}{z+c}|f(0)|\quad \text{ for}\quad z>0.$$
\end{proof}
\begin{remark}\label{1117-6}
In general, if $|\partial_z f|\leq b(z+c)^\alpha$, then \begin{align*}
    |f-\frac{\int_0^zf dz'}{z+c}|\leq \frac{b(z+c)^{1+\alpha}}{2+\alpha}+|f(0)|\quad \text{for}\quad z>0.
\end{align*}
\end{remark}
To apply Lemma \ref{tl2}, we firstly give the following rough bound estimate.
Since $\partial_z \bar{u}>0$, by   the compatible approximate boundary data, we have, for small $\varepsilon\ll \alpha,$
\begin{align*}
&\frac{\varepsilon}{1+\frac{\alpha}{7}}
\min\{1,(z+ \frac{\bar{u}(x,y,0)}{\partial_z\bar{u}(x,y,0)})^\alpha\}
\\ \leq &
\varepsilon
\min\{1,(z+\frac{u_n(x,y,0)}{\partial_zu_n(x,y,0)})^\alpha\}\quad \text{in} \quad[0,X]\times[0,Y^*]\times[0,1].
\end{align*}  Then by  \eqref{n-1assumption} and \eqref{assumpn-2}, it holds,
\begin{align}\label{zzf}\begin{split}
   &|\partial_{z}^2(u_{n-1}-\bar{u})|, |\partial_{z}^2(u_n-\bar{u})|\leq \varepsilon e^{-\frac{A}{x+1}}(z+\frac{u_n(x,y,0)}{\partial_zu_n(x,y,0)})^\alpha
   \\& \text{in} \quad[0,X]\times[0,Y^*]\times[0,1].\end{split}
\end{align}
Now we have the following corollary.
\begin{Corollary}\label{cr}It holds that
\begin{align*}\begin{split}
&\frac{\partial_zu_n(x,y,0)}{z\partial_{z}u_n(x,y,0)+u_n(x,y,0)}[\partial_{z}(u_{n-1}-u_n) -\frac{\partial_zu_n(x,y,0)\big((u_{n-1}-u_n)-(u_{n-1}-u_n)|_{z=0}\big)}{z\partial_zu_n(x,y,0)+u_n(x,y,0)}]
\\ \leq & \frac{\varepsilon e^{-\frac{A}{x+1}}(z+\frac{u_n(x,y,0)}{\partial_zu_n(x,y,0)})^{\alpha}}{1+\frac{\alpha}{3}}
\quad \text{in}\quad \Omega\cap\{0\leq y\leq Y^*\}.\end{split}
\end{align*}
\end{Corollary}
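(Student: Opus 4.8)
The plan is to recognize the left-hand side as precisely the quantity to which Lemma~\ref{tl2} applies. Fix a point $(x,y)$ with $0\le y\le Y^*$ and set
\[
f(z):=\partial_{z}(u_{n-1}-u_n)(x,y,z),\qquad
c:=\frac{u_n(x,y,0)}{\partial_z u_n(x,y,0)}\ \ge\ 0,
\]
where $c\ge 0$ because $u_n>0$ and $\partial_z u_n>0$ (the monotonicity in \eqref{assumpn-2}). Two elementary identities reorganize the statement: first $z\,\partial_z u_n(x,y,0)+u_n(x,y,0)=\partial_z u_n(x,y,0)\,(z+c)$, so the prefactor in the corollary equals $\frac{1}{z+c}$; second $\int_0^z f\,dz'=(u_{n-1}-u_n)(x,y,z)-(u_{n-1}-u_n)(x,y,0)$, which is exactly the numerator appearing inside the bracket. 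Substituting both, the left-hand side of the corollary is identically
\[
\frac{1}{z+c}\Big(f(z)-\frac{1}{z+c}\int_0^z f\,dz'\Big).
\]

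Next I would verify the hypothesis of Lemma~\ref{tl2}, namely $|\partial_z f|\le 2\varepsilon\,(z+c)^{\alpha}$ on $\Omega\cap\{0\le y\le Y^*\}$. Since $\partial_z f=\partial_z^{2}(u_{n-1}-\bar u)-\partial_z^{2}(u_n-\bar u)$ and $z+c=z+\frac{u_n(x,y,0)}{\partial_z u_n(x,y,0)}$, the bound is immediate for $0\le z\le 1$ from \eqref{zzf} and the triangle inequality; for $z\ge 1$ it still holds because $|\partial_z^{2}(u_{n-1}-\bar u)|\le\varepsilon$ and $|\partial_z^{2}(u_n-\bar u)|\le\varepsilon$ by \eqref{n-1assumption} and \eqref{assumpn-2}, while $(z+c)^{\alpha}\ge 1$. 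Lemma~\ref{tl2} then gives $\big|f(z)-\frac{1}{z+c}\int_0^z f\,dz'\big|\le \frac{\varepsilon(z+c)^{1+\alpha}}{1+\alpha/2}+|f(0)|$, and dividing by $z+c>0$ bounds the left-hand side of the corollary (in absolute value, hence a fortiori the one-sided claim) by $\frac{\varepsilon(z+c)^{\alpha}}{1+\alpha/2}+\frac{|f(0)|}{z+c}$.

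It remains to absorb the boundary contribution $\frac{|f(0)|}{z+c}$ into the slack between the constants $\frac{1}{1+\alpha/2}$ and $\frac{1}{1+\alpha/3}$. Because $\frac{1}{1+\alpha/3}-\frac{1}{1+\alpha/2}\ge\frac{\alpha}{12}$ for $\alpha\in(0,1)$ and $z+c\ge c\gtrsim\epsilon_0$ (by \eqref{1118-1}), it suffices to know that $|f(0)|=|\partial_z(u_{n-1}-u_n)(x,y,0)|$ is a sufficiently high power of $\varepsilon$ times a positive power of $\epsilon_0$; then $\frac{|f(0)|}{z+c}\le\frac{|f(0)|}{c}\le\varepsilon(z+c)^{\alpha}\big(\frac{1}{1+\alpha/3}-\frac{1}{1+\alpha/2}\big)$, whence
\[
\frac{\varepsilon(z+c)^{\alpha}}{1+\alpha/2}+\frac{|f(0)|}{z+c}\ \le\ \frac{\varepsilon(z+c)^{\alpha}}{1+\frac{\alpha}{3}},
\]
which is the asserted inequality. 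The bound on $f(0)$ is where the compatible approximate boundary data of Subsection~\ref{appbddata} enters: the $z=0$ values of $\partial_z u_n$ and $\partial_z u_{n-1}$ are fixed by the compatibility conditions for \eqref{appeqeuclidean}, which differ between consecutive iterations only through lower-order coefficients, and \eqref{changchun0106} --- whose $\phi_{2,1}$-part vanishes at $z=0$ since $\psi_{n-1}(x,y,0)=0$ --- together with the analogous estimate one step back controls this difference. This quantitative control of $f(0)$ at $z=0$ is the single delicate point; everything else is a verbatim substitution into Lemma~\ref{tl2}.
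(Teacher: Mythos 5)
Your proposal is correct and coincides with the paper's own proof of Corollary \ref{cr}: the same substitution $f=\partial_z(u_{n-1}-u_n)$, $c=\frac{u_n(x,y,0)}{\partial_z u_n(x,y,0)}$ into Lemma \ref{tl2}, the hypothesis $|\partial_z f|\le 2\varepsilon(z+c)^{\alpha}$ checked via \eqref{zzf}, division by $z+c$, and absorption of the boundary term $\frac{|f(0)|}{z+c}$ into the slack between $\frac{1}{1+\alpha/2}$ and $\frac{1}{1+\alpha/3}$. The only point to sharpen is your final sufficient condition: because $\epsilon_0$ is independent of $\varepsilon$, ``a high power of $\varepsilon$ times a positive power of $\epsilon_0$'' is not by itself enough — the worst case $z=0$ of your absorption requires roughly $|f(0)|\lesssim \alpha\,\varepsilon\,\epsilon_0^{1+\alpha}$, which the paper secures by the stronger assertion $|f(0)|\le C\varepsilon^{7}\epsilon_0^{2}$ coming from the compatible approximate boundary data (cf. \eqref{z0ep3}), rather than from \eqref{changchun0106} alone, which at $z=0$ only yields an $O(\varepsilon^{7}\epsilon_0)$ bound.
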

\begin{proof}
For any fixed $(x,y)\in[0,X]\times[0,Y^*]$, take
\begin{align*}
    f=\partial_{z}(u_{n-1}-u_n)(x,y,z), \quad c=\frac{u_n(x,y,0)}{\partial_zu_n(x,y,0)}
\end{align*} in Lemma \ref{tl2}. By \eqref{zzf},  $|\partial_z f|\leq |\partial_z^2(u_n-\bar{u})|+|\partial_z^2(u_{n-1}-\bar{u})|\leq2 \varepsilon  e^{-\frac{A}{x+1}} (z+c)^\alpha$. Hence, by Lemma \ref{tl2}, we have
\begin{align}\begin{split}
&\frac{1}{z+\frac{u_n(x,y,0)}{\partial_zu_n(x,y,0)}}[\partial_{z}(u_{n-1}-u_n) -\frac{(u_{n-1}-u_n)-(u_{n-1}-u_n)|_{z=0}}{z+\frac{u_n(x,y,0)}{\partial_zu_n(x,y,0)}}]\\
\leq & \frac{\varepsilon e^{-\frac{A}{x+1}}(z+c)^{\alpha}}{1+\frac{\alpha}{2}}+\frac{1}{z+c}|f(0)|
\\
\leq & \frac{\varepsilon e^{-\frac{A}{x+1}}(z+c)^{\alpha}}{1+\frac{\alpha}{3}}
\quad \text{in}\quad \Omega\cap\{0\leq y\leq Y^*\},\end{split}
\end{align}where we have used $|f(0)|\leq \varepsilon^7 e^{-\frac{A}{x+1}}C\epsilon_0^2$ by the compatible approximate boundary data.
\end{proof}

With  this corollary, we have the following lemma.

\begin{lemma}\label{dzun-1un} For some positive constant $\delta_5$ independent of $\varepsilon,$
it holds that
\begin{align}
\big|(\frac{u_n}{u_{n-1}}\partial_{z}u_n)|_{z=0}\partial_{z}(\frac{u_{n-1}}{u_n} )\big|\leq \frac{\varepsilon e^{-\frac{A}{x+1}}(z+\frac{u_n(x,y,0)}{\partial_zu_n(x,y,0)})^{\alpha}}
{1+\frac{\alpha}{5}}\quad \text{in}\quad \Omega\cap\{0\leq y\leq Y^*\}\cap\{z\leq \delta_5\}.
\end{align}
\end{lemma}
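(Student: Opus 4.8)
The plan is to expand the target quantity $(\frac{u_n}{u_{n-1}}\partial_z u_n)|_{z=0}\,\partial_z(\frac{u_{n-1}}{u_n})$ and recognize that the only delicate factor is $\partial_z(\frac{u_{n-1}}{u_n})$, which must be bounded by a power of $z+\frac{u_n(x,y,0)}{\partial_z u_n(x,y,0)}$ rather than just by a constant. The prefactor $(\frac{u_n}{u_{n-1}}\partial_z u_n)|_{z=0}$ is a bounded function of $(x,y)$ by \eqref{positivelbu} and the induction/bootstrap assumptions. Writing $\frac{u_{n-1}}{u_n}-1=\frac{u_{n-1}-u_n}{u_n}$, we compute
\begin{align*}
\partial_z\Big(\frac{u_{n-1}}{u_n}\Big)=\frac{\partial_z(u_{n-1}-u_n)}{u_n}-\frac{\partial_z u_n\,(u_{n-1}-u_n)}{u_n^2}.
\end{align*}
Near $z=0$, both $u_n$ and $u_{n-1}$ vanish linearly, so each of these two terms individually looks like $0/0$; the point is that the combination is controlled. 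First I would rewrite this expression, pulling out the common denominator and using the value of $\partial_z u_n$ at $z=0$ to compare with the ``linearized'' profile, so that it matches exactly the left-hand side of Corollary \ref{cr} (up to the bounded prefactor and terms that are already $O(\varepsilon^7\epsilon_0^2)$ from the compatible boundary data).

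\textbf{Key steps.} The steps, in order, are: (i) introduce $c=c(x,y):=\frac{u_n(x,y,0)}{\partial_z u_n(x,y,0)}$, which is comparable to $\epsilon_0$ by \eqref{1118-1}, and note $u_n(x,y,z)=\partial_z u_n(x,y,0)\,(z+c)+O((z+c)^2\cdot\text{stuff})$ for $z$ small by Taylor expansion together with \eqref{zzf}; the same for $u_{n-1}$ with its own $c$, and the two values of $c$ differ by $O(\varepsilon^7\epsilon_0^2\cdot\text{something})$ by the compatible boundary data. (ii) Substitute these expansions into $\partial_z(\frac{u_{n-1}}{u_n})$ to get, modulo harmless error terms,
\begin{align*}
\partial_z\Big(\frac{u_{n-1}}{u_n}\Big)\approx\frac{\partial_z u_n(x,y,0)}{z\,\partial_z u_n(x,y,0)+u_n(x,y,0)}\Big[\partial_z(u_{n-1}-u_n)-\frac{\partial_z u_n(x,y,0)\big((u_{n-1}-u_n)-(u_{n-1}-u_n)|_{z=0}\big)}{z\,\partial_z u_n(x,y,0)+u_n(x,y,0)}\Big],
\end{align*}
which is precisely the object estimated in Corollary \ref{cr}. (iii) Apply Corollary \ref{cr} to bound this by $\frac{\varepsilon(z+c)^\alpha}{1+\alpha/3}$ on $\Omega\cap\{0\le y\le Y^*\}$, and absorb the bounded prefactor $(\frac{u_n}{u_{n-1}}\partial_z u_n)|_{z=0}$ (which is close to $\partial_z u_n(x,y,0)$, hence bounded and in fact close to the bulk value) together with the $O(\varepsilon^7)$ error terms into the improvement of the constant from $1+\alpha/3$ to $1+\alpha/5$, choosing $\delta_5$ small enough (independently of $\varepsilon$) that the quadratic Taylor remainders in (i) are genuinely lower order than $(z+c)^\alpha$ on $\{z\le\delta_5\}$. (iv) Conclude the stated bound on $\Omega\cap\{0\le y\le Y^*\}\cap\{z\le\delta_5\}$.

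\textbf{Main obstacle.} The main technical point is step (i)–(ii): making the replacement of $u_n$ by its linearization $\partial_z u_n(x,y,0)(z+c)$ rigorous and checking that every discarded term is either $O(\varepsilon^7)$ (from the smallness of the compatible boundary data, cf. \eqref{z0ep3}, \eqref{dzcompat}) or is quadratically small in $(z+c)$ and hence beaten by the $(z+c)^\alpha$ factor once $\delta_5$ is fixed small. One must be careful that the denominator $z\,\partial_z u_n(x,y,0)+u_n(x,y,0)$ is comparable to $u_n(x,y,z)$ uniformly on $\{z\le\delta_5\}$, which follows from $\partial_z u_n>0$ and the lower/upper bounds in \eqref{positivelbu}, so there is no division-by-small-quantity issue beyond the $\epsilon_0$-scale already tracked by $c$. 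Apart from this bookkeeping, the proof is a direct consequence of Corollary \ref{cr}; indeed, this lemma is essentially a repackaging of that corollary multiplied by the bounded factor $(\frac{u_n}{u_{n-1}}\partial_z u_n)|_{z=0}$, which is why the constant only degrades slightly from $1+\alpha/3$ to $1+\alpha/5$.
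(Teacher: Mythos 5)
Your proposal is correct and follows essentially the same route as the paper: the paper's estimates \eqref{dzunun}--\eqref{1117-3} and \eqref{1117-5} are exactly your linearization step, showing that $(\frac{u_n}{u_{n-1}}\partial_z u_n)|_{z=0}\cdot\frac{1}{u_n}$ and $\frac{\partial_z u_n}{u_n}$ both equal $(1+o(1))\frac{1}{z+\tilde c}$ with $\tilde c=\frac{u_n(x,y,0)}{\partial_z u_n(x,y,0)}$, after which the boundary value $(u_{n-1}-u_n)|_{z=0}$ is handled by the compatibility smallness \eqref{z0ep3} and Corollary \ref{cr} is applied, with the $o(1)$-errors (controlled using $|\partial_z(u_{n-1}-u_n)|\lesssim \varepsilon^7\epsilon_0^2+\varepsilon(z+\tilde c)^{1+\alpha}$) absorbed into degrading the constant from $1+\frac{\alpha}{3}$ to $1+\frac{\alpha}{5}$ for $\delta_5$ small independently of $\varepsilon$. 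This matches your steps (i)--(iv), so no gap to report.
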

\begin{proof} Take $$\tilde{c}=\frac{u_n(x,y,0)}{\partial_zu_n(x,y,0)}.$$
First, we investigate $(\frac{u_n}{u_{n-1}}\partial_{z}u_n)|_{z=0}.$
 Since $\partial_z\bar{u}\geq c_0$ at $z=0$, $|\partial_z^2(u_{n}-\bar{u})|\leq 1$ in $[0,Y^*]$ by the  bootstrap assumption and $|\partial_z^2(u_{n-1}-\bar{u})|\leq 1$ by the induction assumption, we have, by the boundary data at $z=0$, that for some positive constants $k_4\ll\alpha$ and $\delta_5$  independent of $\varepsilon,$
\begin{align}\label{dzunun}\begin{split}
   \partial_{z}u_n(1-k_4)\leq&(\frac{u_n}{u_{n-1}}\partial_{z}u_n)|_{z=0}\leq   \partial_{z}u_n(1+k_4),\\ (1-k_4)\frac{1}{z+\tilde{c}}\leq&\frac{\partial_{z}u_n}{u_n}= \frac{\partial_{z}u_n}{\int_0^z \partial_{z}u_ndz'+u_n|_{z=0}}\leq (1+k_4)\frac{1}{z+\tilde{c}},\end{split}
\end{align} in $\Omega\cap\{y\leq Y^*\}\cap \{z\leq \delta_5\}$ which imply
\begin{align}\label{1117-3}
   (1-k_4)^2\frac{1}{z+\tilde{c}}\leq  (\frac{u_n}{u_{n-1}}\partial_{z}u_n)|_{z=0}\frac{1}{u_n}\leq  (1+k_4)^2\frac{1}{z+\tilde{c}},
\end{align}in $\Omega\cap\{y\leq Y^*\}\cap \{z\leq \delta_5\}.$
In fact,  the second line in \eqref{dzunun} comes from  the following  calculation. For any fixed $(x,y)\in[0,X]\times[0,Y^*],$ set $b=u_n(x,y,0), a=\partial_zu_n(x,y,0)\geq c_0$ by \eqref{positivelbu}.  Note $\tilde{c}=\frac{b}{a}$.  For small $z$, we have
\begin{align}\label{1117-5}\begin{split}
    \frac{\partial_{z}u_n}{u_n}=& \frac{\partial_{z}u_n}{\int_0^z \partial_{z}u_ndz'+u_n|_{z=0}}
    = \frac{a(1+o(1))}{\int_0^z a(1+o(1))dz'+b}\\
    =&(1+o(1))\frac{1}{\int_0^z (1+o(1))dz'+\tilde{c}}
     =(1+o(1))\frac{1}{z (1+o(1))+\tilde{c}}\\
     =&(1+o(1))^2\frac{1}{z +\tilde{c}}
     =(1+o(1))\frac{1}{z +\tilde{c}}.\end{split}
\end{align}
Second, we note that
 \begin{align*}
\partial_{z}(\frac{u_{n-1}}{u_n} )=&\partial_{z}(\frac{u_{n-1}-u_n}{u_n} )
=\frac{\partial_{z}(u_{n-1}-u_n)}{u_n} -\frac{\partial_{z}u_n(u_{n-1}-u_n)}{u_n^2} \\
=&\frac{1}{u_n}\big(\partial_{z}(u_{n-1}-u_n) -\frac{\partial_{z}u_n(u_{n-1}-u_n)}{u_n}\big).\end{align*}
Then\begin{align}\label{666-1-0205}\begin{split}
|\partial_{z}(\frac{u_{n-1}}{u_n} )|\leq&\frac{1}{u_n}|\partial_{z}(u_{n-1}-u_n) -\frac{\partial_{z}u_n\big((u_{n-1}-u_n)-(u_{n-1}-u_n)|_{z=0}\big)}{u_n}|
\\&+\varepsilon^6  e^{-\frac{A}{x+1}}C\epsilon_0,\end{split}\end{align} by the compatible approximate boundary condition \eqref{z0ep3} at $z=0.$
Set $f=\partial_{z}(u_{n-1}-u_n).$ Then
\begin{align}\label{1119-2}\begin{split}
    &\partial_{z}(u_{n-1}-u_n) -\frac{\partial_{z}u_n\big((u_{n-1}-u_n)-(u_{n-1}-u_n)|_{z=0}\big)}{u_n}
   \\ =&f- (1+o(1))\frac{1}{z+\tilde{c}}\int_0^z fdz'\\
   =&(1+o(1))(f- \frac{1}{z+\tilde{c}}\int_0^z fdz')+o(1)f,\end{split}
\end{align}where we have used \eqref{dzunun}. Then by Corollary \ref{cr},
we have, for some positive constant $\delta_5$ independent of $\varepsilon,$
\begin{align}\label{649}\begin{split}
   &\frac{\partial_z u_n}{u_n}| \partial_{z}(u_{n-1}-u_n)-\frac{\partial_{z}u_n}{u_n}\big((u_{n-1}-u_n)-(u_{n-1}-u_n)|_{z=0}\big)|
  \\ \leq& \frac{\varepsilon e^{-\frac{A}{x+1}}(z+\tilde{c})^{\alpha}}
{1+\frac{\alpha}{4}} \quad \text{in}\quad \Omega\cap\{0\leq y\leq Y^*\}\cap\{z\leq \delta_5\}.\end{split}
\end{align}
For the estimate on $o(1)f,$ by  the boundary condition \eqref{z0ep3}, \eqref{zzf} and integrating with respect to $z$,  we have\begin{align}
  |f|=|\partial_{z}(u_n-u_{n-1})|\leq \varepsilon^7 e^{-\frac{A}{x+1}}C\epsilon_0^2+\frac{2}{1+\alpha}\varepsilon  e^{-\frac{A}{x+1}} (z+\tilde{c})^{1+\alpha}.\end{align}
  Combining \eqref{dzunun}, \eqref{666-1-0205} and  \eqref{649}, we
  complete the proof of the lemma.
  \end{proof}

 Similarly, we can prove the following  lemmas which have been  used in the estimates on $\partial_{z}\nabla_{\eta,\xi}(u_{n}-\bar{u}).$

 \begin{lemma}\label{6969}For some   small positive constant  $\delta_6$ independent of $\varepsilon,$
 \begin{align*}
|\partial_z \bar{u}\partial_z(\frac{\int_0^z\partial_x\bar{u}-\partial_xu_{n-1}dz'}{u_{n-1}})|
\leq  \frac{ \varepsilon^2}{2+\frac{\alpha}{2}} (\frac{1}{x+1})^{\frac{\alpha}{2}}e^{-\frac{A}{x+1}} (z+\epsilon_0)^{\alpha},\quad \Omega\cap \{z\leq \delta_6\}.\end{align*}
 \end{lemma}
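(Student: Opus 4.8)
\textbf{Proof plan for Lemma \ref{6969}.}
The quantity to be controlled is $\partial_z\bar u\,\partial_z\!\big(\tfrac{\int_0^z(\partial_x\bar u-\partial_xu_{n-1})\,dz'}{u_{n-1}}\big)$, so the plan is to expand the inner $z$-derivative by the quotient rule,
\begin{align*}
\partial_z\Big(\frac{\int_0^z(\partial_x\bar u-\partial_xu_{n-1})\,dz'}{u_{n-1}}\Big)
=\frac{\partial_x\bar u-\partial_xu_{n-1}}{u_{n-1}}
-\frac{\partial_zu_{n-1}\int_0^z(\partial_x\bar u-\partial_xu_{n-1})\,dz'}{u_{n-1}^2},
\end{align*}
and then estimate each of the two resulting pieces after multiplying by $\partial_z\bar u$, which is bounded by $C\phi_{1,0}$ and, for $z$ small, by a constant. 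The first term requires $|\partial_x\bar u-\partial_xu_{n-1}|$: by the transformation identity \eqref{rlt}, $\partial_x(u_{n-1}-\bar u)=\nabla_\xi^{n-1}(u_{n-1}-\bar u)+(\nabla_\xi^{n-1}-\nabla_{\tau_1})\bar u+\tfrac{\int_0^z\partial_xu_{n-1}dz'}{u_{n-1}}\partial_z(u_{n-1}-\bar u)$; using the induction assumption \eqref{n-1assumption} (in particular $|\nabla_{\eta,\xi}(u_{n-1}-\bar u)|\le\varepsilon^2 C\phi_{1,1}$, the analogue from Lemma \ref{04160416e-v}), the difference estimate \eqref{xitau}–\eqref{gg} for $\nabla_\xi^{n-1}-\nabla_{\tau_1}$, and \eqref{250416commongr}, one gets $|\partial_x\bar u-\partial_xu_{n-1}|\le\varepsilon^2 C\,(z+\epsilon_0)\phi_{1,0}$ near $z=0$, hence the first term divided by $u_{n-1}\gtrsim c_0(z+\epsilon_0)$ is $\le\varepsilon^2 C\phi_{1,0}$ times a constant.

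For the second term, I would bound $|\int_0^z(\partial_x\bar u-\partial_xu_{n-1})\,dz'|$ by integrating the pointwise bound just obtained, giving $\le\varepsilon^2 C\,(z+\epsilon_0)^2\phi_{1,0}$ near $z=0$ (using the explicit form of $\phi_{1,1}$ in \eqref{phi1} and that $\phi_{1,1}\le C(z+\epsilon_0)\phi_{1,0}$ for $z$ small), while $\partial_zu_{n-1}\le C$ and $u_{n-1}^2\gtrsim c_0^2(z+\epsilon_0)^2$ by \eqref{positivelbu}; this makes the second term $\le\varepsilon^2 C\phi_{1,0}$ as well. Multiplying by $\partial_z\bar u\le C$ for $z\le\delta_6$, the whole expression is $\le\varepsilon^2 C\phi_{1,0}$ in $\Omega\cap\{z\le\delta_6\}$. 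The last quantitative step is to absorb this into the claimed bound $\tfrac{\varepsilon^2}{2+\alpha/2}(\tfrac{1}{x+1})^{\alpha/2}e^{Ax}(z+\epsilon_0)^\alpha$: since on $\{z\le\delta_6\}$ one has $\phi_{1,0}=e^{Ax}\le e^{Ax}(\tfrac{1}{x+1})^{\alpha/2}(z+\epsilon_0)^\alpha\cdot(z+\epsilon_0)^{-\alpha}(x+1)^{\alpha/2}$, and because $\varepsilon$ is chosen small relative to $\alpha$ while the constant $C$ is independent of $\varepsilon$, one has $C\varepsilon^2\le\tfrac{\varepsilon^2}{2+\alpha/2}$ once $\varepsilon$ is small; the spare factor $(z+\epsilon_0)^\alpha$ with $(z+\epsilon_0)\le\delta_6+ \epsilon_0$ bounded and $(x+1)^{-\alpha/2}\le1$ takes care of the homogeneity, so $\delta_6$ is chosen small enough (independent of $\varepsilon$) to make this work.

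The main obstacle is the first bound $|\partial_x\bar u-\partial_xu_{n-1}|\le\varepsilon^2 C(z+\epsilon_0)\phi_{1,0}$: one must convert the tangential-vector-field estimate $|\nabla_{\eta,\xi}(u_{n-1}-\bar u)|\le\varepsilon^2 C\phi_{1,1}$ (rather than $\phi_{1,0}$) together with the vector-field difference $g_1$ from \eqref{gg}, which carries an extra power of $(z+\epsilon_0)$, and the nonlocal coefficient $\tfrac{\int_0^z\partial_xu_{n-1}dz'}{u_{n-1}}$ whose size is $\le C\min\{z,1\}$, into a genuine pointwise gain of one power of $(z+\epsilon_0)$ over $\phi_{1,0}$; all three contributions must be shown to supply that power, and the $\partial_z(u_{n-1}-\bar u)$ piece must be handled via \eqref{250416-zz}. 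Everything else is the quotient-rule bookkeeping and the choice of $\delta_6$ and of $\varepsilon$ small relative to $\alpha$, exactly as in the proof of the preceding Lemma \ref{dzun-1un}.
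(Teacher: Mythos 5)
There is a genuine gap, and it sits exactly where you flag the "last quantitative step." Your plan bounds each of the two quotient-rule terms separately and arrives at a bound of the form $C\varepsilon^2\phi_{1,0}$, and then tries to absorb it into $\tfrac{\varepsilon^2}{2+\alpha/2}(x+1)^{-\alpha/2}e^{Ax}(z+\epsilon_0)^\alpha$. This absorption is impossible: both sides carry the same power $\varepsilon^2$, so "taking $\varepsilon$ small" does nothing, and there is no "spare factor $(z+\epsilon_0)^\alpha$" on your side — your bound lacks it entirely, while the target vanishes like $(z+\epsilon_0)^\alpha$ as $z\to0$ and $\epsilon_0\to0$, so no choice of $\delta_6$ rescues the estimate uniformly in $\epsilon_0$. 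Worse, even the optimal version of the separate-terms strategy fails at the level of the constant: with $|\partial_z f|\le \varepsilon^2 e^{Ax}(x+1)^{-\alpha/2}(z+\epsilon_0)^\alpha$ one gets at best $|f|\approx \tfrac{\varepsilon^2}{1+\alpha}e^{Ax}(x+1)^{-\alpha/2}(z+\epsilon_0)^{1+\alpha}$, and dividing by $u_{n-1}\sim z+\epsilon_0$ the first quotient-rule term alone already produces the constant $\tfrac{1}{1+\alpha}>\tfrac{1}{2+\alpha/2}$; adding the second term by the triangle inequality only makes it larger. The stated constant cannot be reached without using the cancellation \emph{between} the two terms, and this constant is not cosmetic: it propagates through Lemma \ref{xz1104} and Remark \ref{yw3} into the proof of Lemma \ref{11dj-1}, where the induction closes only because no multiplicative constant is lost at this stage.

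The paper's proof keeps the two terms together. Writing
\begin{align*}
\partial_z\bar u\,\partial_z\Big(\frac{\int_0^z f\,dz'}{u_{n-1}}\Big)
=\frac{\partial_z\bar u}{u_{n-1}}\Big(f-\frac{\partial_z u_{n-1}}{u_{n-1}}\int_0^z f\,dz'\Big),
\end{align*}
it uses $\frac{\partial_z u_{n-1}}{u_{n-1}}=(1+o(1))\frac{1}{z+\tilde c}$ and $\frac{\partial_z \bar u}{u_{n-1}}=(1+o(1))\frac{1}{z+\tilde c}$ with $\tilde c=(1+R_{\epsilon_0})^3\epsilon_0$ (via \eqref{1119-3}), and then applies the sharp averaging inequality of Lemma \ref{tl2}/Remark \ref{1117-6} to $f-\frac{1}{z+\tilde c}\int_0^z f\,dz'$: when $|\partial_z f|\le b(z+\tilde c)^\alpha$ this difference is bounded by $\tfrac{b(z+\tilde c)^{1+\alpha}}{2+\alpha}+|f(0)|$, i.e. the cancellation upgrades $\tfrac{1}{1+\alpha}$ to $\tfrac{1}{2+\alpha}$, and the gap between $\tfrac{1}{2+\alpha}$ and the claimed $\tfrac{1}{2+\alpha/2}$ is what absorbs the $(1+o(1))$ corrections, the $o(1)f/(z+\tilde c)$ remainder, and the compatible boundary term $|f(0)|\lesssim\varepsilon^6\epsilon_0$. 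If you want to salvage your write-up, replace the separate estimation by exactly this combined Hardy-type step (as in your cited Lemma \ref{dzun-1un}, which you mention but do not actually follow); the bookkeeping with $\phi_{1,0}$, $g_1$ and the nonlocal coefficient that you describe as the "main obstacle" is then unnecessary, since the induction hypothesis $|\partial_z\partial_x u_{n-1}-\partial_z\partial_x\bar u|\le\varepsilon^2\phi_{1,\alpha}$ feeds directly into the hypothesis on $\partial_z f$.
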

\begin{proof} Set $f=\partial_x\bar{u}-\partial_xu_{n-1}$ and $\tilde{c}=\frac{u_{n-1}(x,y,0)}{\partial_zu_{n-1}(x,y,0)}.$
By the compatible boundary data \eqref{z0ep3} and $\partial_zu_B>0$ at $z=0,$ we have
\begin{align}\label{1119-3}\begin{split}
   \tilde{c} =&\frac{ u_{n-1}}{\partial_zu_{n-1}}|_{z=0}= (1+R_{\epsilon_0}) \frac{\bar{u}}{\partial_zu_{n-1}}|_{z=0}\\
    =& (1+R_{\epsilon_0})^2 \frac{\bar{u}}{\partial_z\bar{u}}|_{z=0}
     = (1+R_{\epsilon_0})^2 \frac{u_B}{\partial_zu_B}|_{z=\epsilon_0}
     \\
     =& (1+R_{\epsilon_0})^3 \epsilon_0,\end{split}
\end{align} where $R_{\epsilon_0}$ stands for the terms satisfying $|R_{\epsilon_0}|\leq C\epsilon_0$ which may vary from line to line.
Then by the induction assumption and \eqref{1119-3}, we have, for $f=\partial_{x}u_{n-1}-\partial_{x}\bar{u},$ it holds
\begin{align}\label{0126huahua}\begin{split}
 |\partial_z f|=&|\partial_z\partial_{x}u_{n-1}-\partial_z\partial_{x}\bar{u}|
 \leq \varepsilon^2e^{-\frac{A}{x+1}} (\frac{1}{x+1})^{\frac{\alpha}{2}}(z+{\epsilon_0})^{\alpha}
\\  \leq& \varepsilon^2e^{-\frac{A}{x+1}} (\frac{1}{x+1})^{\frac{\alpha}{2}}(z+\tilde{c}(1+C\epsilon_0))^{\alpha}
 ,\quad \Omega\cap \{\frac{z+{\epsilon_0}}{\sqrt{x+1}}\leq \delta\}.\end{split}
\end{align}
Then similar to the estimate  \eqref{1119-2}, by  Remark \ref{1117-6} and \eqref{1119-3}, for some $\delta_6\ll\delta$  independent of $\varepsilon,$ we have, in $\Omega\cap \{z\leq \delta_6\},$
\begin{align*}
&\frac{\partial_z \bar{u}}{u_{n-1}}| ( \partial_x\bar{u}-\partial_xu_{n-1})-\frac{\partial_zu_{n-1}}{u_{n-1}}
\int_0^z\partial_x\bar{u}-\partial_xu_{n-1}dz'
|\\ =&|(1+o(1))\frac{1}{z+\tilde{c}}(f- (1+o(1))\frac{1}{z+\tilde{c}}\int_0^z fdz')|\\
=&|(1+o(1))^2\frac{1}{z+\tilde{c}}(f- \frac{1}{z+\tilde{c}}\int_0^z fdz')+o(1)\frac{f}{z+\tilde{c}}|
\\
\leq &\frac{ \varepsilon^2}{2+\frac{\alpha}{2}} (\frac{1}{x+1})^{\frac{\alpha}{2}}e^{-\frac{A}{x+1}} (z+\epsilon_0)^{\alpha},\end{align*}
where we have used $\frac{\partial_z u_{n-1}}{u_{n-1}}=(1+o(1))\frac{1}{z+\tilde{c}}$ by \eqref{1117-5},
\begin{align*}
    \frac{\partial_z \bar{u}}{u_{n-1}}=&\frac{(1+o(1))\partial_z u_{n-1}}{u_{n-1}}
    =(1+o(1))\frac{1}{z+\tilde{c}},
\end{align*}and \eqref{0126huahua} to estimate $|o(1)\frac{f}{z+\tilde{c}}|.$
Hence,  in $\Omega\cap \{z\leq \delta_6\},$
\begin{align*}
|\partial_z \bar{u}\partial_z(\frac{\int_0^z\partial_x\bar{u}-\partial_xu_{n-1}dz'}{u_{n-1}})|
=&\frac{\partial_z \bar{u}}{u_{n-1}}| ( \partial_x\bar{u}-\partial_xu_{n-1})-\frac{\partial_zu_{n-1}}{u_{n-1}}
\int_0^z\partial_x\bar{u}-\partial_xu_{n-1}dz'
|\\
\leq & \frac{ \varepsilon^2}{2+\frac{\alpha}{2}} (\frac{1}{x+1})^{\frac{\alpha}{2}}e^{-\frac{A}{x+1}} (z+\epsilon_0)^{\alpha}.\end{align*}
\end{proof}

\begin{lemma}\label{xz1104} It holds that
\begin{align}\begin{split}
 | \partial_z\nabla_{\tau_1}\bar{u}-\partial_z\nabla_{\xi,\eta}\bar{u}|
 \leq \frac{ \varepsilon^2}{2+\frac{5\alpha}{12}} (\frac{1}{x+1})^{\frac{\alpha}{2}}e^{-\frac{A}{x+1}} (z+\epsilon_0)^{\alpha},\quad \Omega\cap \{z+\epsilon_0\leq \varepsilon^6 \}.\end{split}\end{align}
\end{lemma}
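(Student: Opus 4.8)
\textbf{Proof proposal for Lemma \ref{xz1104}.}
The plan is to reduce the estimate on $\partial_z\nabla_{\tau_1}\bar u-\partial_z\nabla_{\xi,\eta}\bar u$ to the quantity already controlled in Lemma \ref{6969}. Recall from \eqref{xitau} that $\nabla_\xi-\nabla_{\tau_1}=g_1\partial_z$ and $\nabla_\eta-\nabla_{\tau_1}=g_2\partial_z$, where
\[
g_1=\frac{(u_{n-1}-\bar u)\int_0^z\partial_x\bar u\,dz'+\bar u\int_0^z(\partial_x\bar u-\partial_xu_{n-1})\,dz'}{u_{n-1}\bar u},
\]
and $g_2$ has the analogous form with $\partial_y$, $v_{n-1}$ in place of $\partial_x$, $u_{n-1}$. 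Hence $\partial_z\nabla_{\xi,\eta}\bar u-\partial_z\nabla_{\tau_1}\bar u=\partial_z(g_{1,2}\partial_z\bar u)$, and I would expand this by the Leibniz rule into $g_{1,2}\partial_z^2\bar u+(\partial_z g_{1,2})\partial_z\bar u$. The first term is harmless: by \eqref{gg} we have $|g_{1,2}|\leq (z+\epsilon_0)\varepsilon^2 C$ near $z=0$ and by \eqref{nnbaru} $|\partial_z^2\bar u|\leq C\bar u^2\phi_{1,0}\leq C\phi_{1,0}$, so this contributes a term of size $\varepsilon^2 C(z+\epsilon_0)\phi_{1,0}$, which is dominated by $\varepsilon^2(x+1)^{-\alpha/2}e^{Ax}(z+\epsilon_0)^\alpha$ once $z+\epsilon_0\leq\varepsilon^6$ and $\varepsilon$ is small relative to $\alpha$. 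So the whole matter reduces to bounding $|(\partial_z g_{1,2})\partial_z\bar u|$.

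Next I would split $\partial_z g_1$ into the two natural pieces coming from the two summands in the numerator. The piece $\partial_z\!\big(\frac{(u_{n-1}-\bar u)\int_0^z\partial_x\bar u\,dz'}{u_{n-1}\bar u}\big)$ is of lower order: using $|u_{n-1}-\bar u|\leq\varepsilon^6\phi_{1,1+2\alpha}$ and $|\partial_z(u_{n-1}-\bar u)|\leq\varepsilon^5\phi_{1,\alpha}$ from \eqref{n-1assumption}, together with the lower bound $u_{n-1}\geq c_0(z+\epsilon_0)$ near $z=0$ and $|\int_0^z\partial_x\bar u\,dz'|\leq C\bar u^2$, one gets a bound of the form $\varepsilon^5 C\phi_{1,\alpha}$ after multiplying by $\partial_z\bar u$, which is far below the target. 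The dominant piece is precisely $\partial_z\bar u\,\partial_z\!\big(\frac{\int_0^z(\partial_x\bar u-\partial_xu_{n-1})\,dz'}{u_{n-1}}\big)$ up to a harmless factor $\bar u/(u_{n-1}\bar u)\cdot$(derivative of the reciprocal), and this is exactly what Lemma \ref{6969} estimates by $\frac{\varepsilon^2}{2+\alpha/2}(x+1)^{-\alpha/2}e^{Ax}(z+\epsilon_0)^\alpha$. Carrying the factor $\bar u/u_{n-1}=1+o(1)$ near $z=0$ through, the constant degrades slightly from $2+\alpha/2$ to $2+5\alpha/12$; I would keep track of this carefully, as it is the only nontrivial bookkeeping. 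For $\nabla_\eta$ one repeats the same argument with $v_{n-1}$ in place of $u_{n-1}$, using the corresponding induction bounds on $v_{n-1}-\bar u$ and $\partial_yq_{n-1}$, and the identity $(1+\tilde q)\nabla_{\tilde\psi}=\nabla_\psi$ to convert $v_{n-1}$-derivatives to $u_{n-1}$-derivatives where needed.

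The main obstacle I anticipate is not any single estimate but the careful accounting of the constant in the exponent of the power of $(z+\epsilon_0)$: each time one trades $\partial_z\bar u/u_{n-1}$ or $\bar u/u_{n-1}$ for $(1+o(1))(z+\tilde c)^{-1}$ or $1+o(1)$ (as in \eqref{1117-5} and \eqref{1119-3}), the $o(1)$ term must be absorbed into the loss from $2+\alpha/2$ down to $2+5\alpha/12$, which forces $\delta$ (here the region $z+\epsilon_0\leq\varepsilon^6$) and $\epsilon_0$ to be small in a way quantified by $\alpha$; one must verify these smallness requirements are compatible with those already imposed in Lemma \ref{6969} and in the bootstrap. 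Once the reduction to Lemma \ref{6969} is in place and the lower-order terms are absorbed, the proof is complete.
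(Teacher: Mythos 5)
Your proposal is correct and follows essentially the same route as the paper: decompose via \eqref{xitau}, expand $\partial_z(g_{1,2}\partial_z\bar u)$ by the Leibniz rule, treat $g_{1,2}\partial_z^2\bar u$ and the $(u_{n-1}-\bar u)$-piece as lower order on $\{z+\epsilon_0\le\varepsilon^6\}$, control the dominant term by Lemma \ref{6969}, and absorb the remainders into the worsened constant $2+\frac{5\alpha}{12}$ for $\varepsilon$ small relative to $\alpha$. One small correction: in the summand of $g_1$ containing $\int_0^z(\partial_x\bar u-\partial_xu_{n-1})\,dz'$ the factor $\bar u$ cancels exactly against the denominator $u_{n-1}\bar u$, so no $\bar u/u_{n-1}=1+o(1)$ factor actually appears there; the degradation of the constant is needed only to absorb the lower-order terms, exactly as in the paper.
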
\begin{proof}
By \eqref{xitau}, \begin{align}\begin{split}
  -\partial_z\nabla_{\tau_1}\bar{u}+\partial_z\nabla_{\xi}\bar{u}=&(\frac{\int_0^z\partial_x\bar{u}dz'}{\bar{u}}-\frac{\int_0^z\partial_x u_{n-1}dz'}{u_{n-1}})\partial_z^2\bar{u}
\\
&+\partial_z(\frac{(u_{n-1}-\bar{u})\int_0^z\partial_x\bar{u}dz'}{u_{n-1}\bar{u}})\partial_z
\bar{u}+\partial_z(\frac{\int_0^z\partial_x\bar{u}-\partial_xu_{n-1}dz'}{u_{n-1}})\partial_z
\bar{u}.\end{split}\end{align}
By \eqref{nnbaru}, we have
\begin{align*}
& |\partial_z^2\bar{u}|\leq C(z+\epsilon_0),\quad |\frac{\int_0^z\partial_x\bar{u}dz'}{u_{n-1}\bar{u}}|\leq C,\quad
 |\partial_z\frac{\int_0^z\partial_x\bar{u}dz'}{u_{n-1}\bar{u}}|\leq \frac{C}{\bar{u}},
 \\&|u_{n-1}-\bar{u}|\leq\varepsilon^6 \phi_{1,1+2\alpha},\quad
 |\partial_zu_{n-1}-\partial_z\bar{u}|\leq \varepsilon^5 \phi_{1,\alpha}.
\end{align*}
Then by  Lemma  \ref{6969}, it holds that
\begin{align*}\begin{split}
 | \partial_z\nabla_{\tau_1}\bar{u}-\partial_z\nabla_{\xi}\bar{u}|
 \leq \varepsilon^3 \phi_{1,\alpha}C+\frac{ \varepsilon^2}{2+\frac{\alpha}{2}} (\frac{1}{x+1})^{\frac{\alpha}{2}}e^{-\frac{A}{x+1}} (z+\epsilon_0)^{\alpha},\quad \Omega\cap \{\sqrt{z+\epsilon_0}\leq \varepsilon^3 \},\end{split}\end{align*}
 where we have used for the small positive constant $\alpha,$
 \begin{align*}
    |\partial_z^2\bar{u}|\leq C(z+\epsilon_0)^{1-\alpha}(z+\epsilon_0)^\alpha
    \leq C\varepsilon^3(z+\epsilon_0)^\alpha\quad
    \text{in}\quad \Omega\cap\{z+\epsilon_0\leq \varepsilon^6 \}.
 \end{align*}
 Note that  for $\varepsilon$ small, $\sqrt{z+\epsilon_0}\leq \varepsilon^3 $ implies $z\leq \delta_6$ where  $\delta_6$ is defined in Lemma  \ref{6969} which is independent of $\varepsilon$.
Similarly, we can derive
\begin{align*}\begin{split}
 | \partial_z\nabla_{\tau_1}\bar{u}-\partial_z\nabla_{\eta}\bar{u}|
 \leq \varepsilon^3 \phi_{1,\alpha}C+\frac{ \varepsilon^2}{2+\frac{\alpha}{2}} (\frac{1}{x+1})^{\frac{\alpha}{2}}e^{-\frac{A}{x+1}} (z+\epsilon_0)^{\alpha},\quad \Omega\cap \{\sqrt{z+\epsilon_0}\leq \varepsilon^3 \}.\end{split}\end{align*}
\end{proof}

\bigbreak

\noindent{\bf Acknowledgements}.
The research of W. Shen is supported by NSFC(Grant 12371208). The research of  Y. Wang is supported by NSFC(Grant 12371236 and Grant 12001383) and the National Key Research $\&$ Development Program(Grant 2024YFA1014900). The research of T. Yang is supported by the General Research Fund of Hong Kong (Project No. 11303521). He would also like to thank the Kuok Group foundation for its generous support. The authors would also like to thank the support by the Research Center for Nonlinear Analysis in The Hong Kong Polytechnic University.


\begin{thebibliography}{99}



\bibitem{AWXY} R. Alexandre, Y.  Wang, C.-J. Xu and T. Yang, {\it Well-posedness of the Prandtl equation in Sobolev spaces},
J. Amer. Math. Soc. , 28(2015), 745-784.
\bibitem{Blasius}H. Blasius, {\it  Grenzschichten in Fl$\ddot{u}$ssigkeiten mit kleiner Reibung,} Z. Angew. Math.
Phys. 56: 1-37.
\bibitem{CGIM}
 C. Collot, T. E. Ghoul, S. Ibrahim and N. Masmoudi, {\it  On singularity formation for the two-dimensional unsteady Prandtl system around the axis,} J. Eur. Math. Soc. 24, 3703-3800 (2022).
\bibitem{CGM} C. Collot, T. E. Ghoul and N. Masmoudi, {\it Unsteady separation for the inviscid two-dimensional
Prandtl's system,} arXiv:1903.08244, (2019).



\bibitem{DM}  A.-L. Dalibard and  N. Masmoudi,  {\it Separation for the stationary Prandtl equation},
Publ. Math. Inst. Hautes \'Etudes Sci., 130 (2019), 187-297.

\bibitem{DG} H. Dietert and D. G\'erard-Varet, {\it Well-posedness of the Prandtl equations without any structural assumption},  Ann. PDE. 5 (2019),  Paper No. 8, 51 pp.



\bibitem{EE}  W. E and B. Engquist, {\it  Blowup of solutions of the unsteady Prandtl's equation}, Comm. Pure Appl. Math., 50(1997), 1287-1293.
\bibitem{FTZ2024}  M. Fei,  T. Tao and Z. Zhang,  {\it On the zero-viscosity limit of the Navier-Stokes equations in $\R^3_+$ without analyticity,}
 J. Math. Pures Appl. (9) 112(2018), 170-229.

\bibitem{GZ20}
C. Gao and L. Zhang, {\it  On the Steady Prandtl Boundary Layer Expansions,} Sci. China Math. 66, 1993-2020 (2023).
\bibitem{GD} D. G\'{e}rard-Varet and E. Dormy, {\it On the ill-posedness of the Prandtl equation}, J. Amer. Math. Soc.,  23 (2010), 591-609.
\bibitem{GM18}D. G\'{e}rard-Varet and Y. Maekawa, {\it Sobolev stability of Prandtl expansions for the steady Navier-Stokes equations,} Arch. Rational Mech. Anal. 233 (3) (2019), 1319-1382.
  \bibitem{GMM}  D. G\'{e}rard-Varet, Y. Maekawa and N. Masmoudi, {\it Gevrey stability of Prandtl expansions for 2D
Navier-Stokes flows,} Duke Math. J. 167 (13) (2018), 2531-2631.

\bibitem{GM} D. G\'{e}rard-Varet and N. Masmoudi, {\it Well-posedness for the Prandtl system without analyticity or monotonicity}, Ann. Sci. \'{E}c. Norm. Sup\'{e}r, 48(2015), 1273-1325.
\bibitem{GerN} D.  G\'{e}rard-Varet  and T. Nguyen, {\it Remarks on the ill-posedness of the Prandtl equation,} Asymptot.  Anal. 77 (2012), no. 1-2, 71-88.

\bibitem{G24} E. Grenier,  {\it On the nonlinear instability of Euler and Prandtl equations,} Comm. Pure
 Appl. Math. 53 (2000), no. 9, 1067-1091.


 \bibitem{GGN1}  E.  Grenier, Y. Guo and T. Nguyen, {\it Spectral instability of characteristic boundary layer flows,} Duke Math. J. 165 (2016), no. 16, 3085-3146.


    \bibitem{GGN2}  E.  Grenier, Y. Guo and T. Nguyen, {\it Spectral instability of general symmetric shear flows in a two-dimensional channel,} Adv. Math. 292 (2016), 52-110.




      \bibitem{GreN} E.  Grenier  and T. Nguyen, {\it $L^\infty$ instability of Prandtl layers,} Ann. PDE 5 (2019), no. 2, Paper No. 18, 36 pp.



              \bibitem{GI1} Y. Guo and S. Iyer, {\it Regularity and expansion for steady Prandtl equations,} Commun. Math. Phys. 382
(2021), 1403-1447.

\bibitem{GI2}Y. Guo and S. Iyer, {\it Validity of steady
Prandtl layer expansions,}  Comm. Pure Appl. Math, 76(2023), no. 11, 3150-3232.

\bibitem{GN} Y. Guo and T. Nguyen, {\it A note on Prandtl boundary layers,} Comm. Pure Appl. Math. 64 (2011), no. 10, 1416-1438.
\bibitem{GN1}Y. Guo and T. Nguyen, {\it Prandtl boundary layer expansions of steady Navier-Stokes flows over a moving plate,} Ann. PDE 3 (2017), no. 1, Paper No. 10, 58 pp.
 \bibitem{GWZ24}  Y. Guo, Y. Wang and Z. Zhang, {\it Dynamic Stability for Steady Prandtl Solutions,} Ann. PDE 9, 16 (2023).







\bibitem{IV} M. Ignatova and V. Vicol, {\it  Almost global existence for the Prandtl boundary layer equations},  Arch. Rational  Mech. Anal., 220(2016), 809-848.





   \bibitem{I1} S. Iyer, {\it On global-in-x stability of Blasius profiles,} Arch. Ration. Mech. Anal. 237 (2020), no. 2, 951-998.
\bibitem{I2} S. Iyer, {\it Global steady Prandtl expansion over a moving boundary III}, Peking Math. J. 3 (2020), no. 1, 47-102.

\bibitem{I3} S. Iyer, {\it  Global steady Prandtl expansion over a moving boundary II,} Peking Math. J. 2 (2019), no. 3-4, 353-437.

\bibitem{I4} S. Iyer, {\it    Global steady Prandtl expansion over a moving boundary I,} Peking Math. J. 2 (2019), no. 2, 155-238.
    \bibitem{I5} S. Iyer, {\it  Steady Prandtl layers over a moving boundary: nonshear Euler flows,} SIAM J. Math. Anal. 51 (2019), no. 3, 1657-1695.
\bibitem{I6} S. Iyer, {\it Steady Prandtl boundary layer expansions over a rotating disk,} Arch. Ration. Mech. Anal. 224 (2017), no. 2, 421-469.

\bibitem{Iy2024} S. Iyer, {\it Stability of the favorable Falkner-Skan profiles for the stationary Prandtl equations,
}
Adv. Math. 487 (2026), Paper No. 110749, 78 pp.


\bibitem{IN3} S. Iyer and N. Masmoudi,  {\it Global Global inviscid limit of 2D, stationary Navier-Stokes and stability of Prandtl expansions,} Forum of Mathematics, Pi. (2026), 14:e13.
\bibitem{JLY}    H. Jia, Z. Lei and C. Yuan,
{\it Sharp Asymptotic Stability of Blasius Profile in the Steady Prandtl Equation,}
 arXiv:2408.13747(2024).


\bibitem{Ka84}
T. Kato,  {\it Remarks on zero viscosity limit for nonstationary Navier-Stokes flows with boundary,}
Seminar on nonlinear partial differential equations, 85-98, Math. Sci. Res. Inst. Publ., 2, (1984).
\bibitem{KVW} I. Kukavica, V. Vicol and F.  Wang, {\it The van Dommelen and Shen singularity in the Prandtl equations},  Adv.  Math., 307 (2017), 288-311.

    \bibitem{LY}  W.-X. Li and T. Yang, {\it  Well-posedness in Gevrey function spaces for the Prandtl equations with non-degenerate critical points},
              	J. Eur. Math. Soc.,  22 (2020), no. 3, 717-775.

   \bibitem{LY24}  W.-X. Li, N. Masmoudi and T. Yang, {\it
Well-posedness in Gevrey function space for 3D Prandtl equations without structural assumption,}
Comm. Pure Appl. Math. 75 (2022), no.8, 1755-1797.
\bibitem{LCS} M. C. Lombardo, M. Cannone and M. Sammartino, {\it Well-posedness of the boundary layer equations,} SIAM J. Math. Anal. 35 (2003), no. 4, 987-1004.
 \bibitem{LWY24}   C.-J. Liu,  Y.-G. Wang and T. Yang, {\it On the ill-posedness of the Prandtl equations in three
dimensional space}, Arch. Ration. Mech. Anal. 220 (2016), no. 1, 83-108.

\bibitem{LWY24-1}
C.-J. Liu,  Y.-G. Wang and T. Yang, {\it
A well-posedness theory for the Prandtl equations in three space variables,}
Adv. Math. 308 (2017), 1074-1126.

\bibitem{Ma}    Y. Maekawa, {\it On the inviscid problem of the vorticity equations for viscous incompressible flows
in the half-plane,} Comm. Pure. Appl. Math., 67, (2014), 1045-1128.

\bibitem{MW} N. Masmoudi and T. Wong, {\it Local-in-time existence and uniqueness of solutions to the Prandtl equations by energy methods},  Comm. Pure Appl. Math.,   68(2015),  1683-1741.

\bibitem{Olei} O. A. Oleinik and V. N. Samokhin, {\it Mathematical models in boundary layer theory}, Applied Mathematics and Mathematical Computation 15 Chapman \& Hall/CRC, Boca Raton, Fla., 1999.
\bibitem{Pr}L. Prandtl,{\it Uber flussigkeits-bewegung bei sehr kleiner reibung,} In Verhandlungen des III Internationalen Mathematiker-Kongresses, Heidelberg, Teubner, Leipzig, 1904, pp. 484-491.

\bibitem{SC1} M. Sammartino and R. Caflisch, {\it Zero viscosity limit for analytic solutions of the Navier-Stokes equation on a half-space.
I. Existence for Euler and Prandtl equations},  Comm. Math.
Phys., 192 (1998),  433-461.

\bibitem{SC2} M. Sammartino and R. Caflisch, {\it Zero viscosity limit for analytic solutions of the Navier-Stokes equation on a half-space. II. Construction of the Navier-Stokes solution,} Comm. Math. Phys. 192
(1998), no. 2, 463-491.
\bibitem{Sch00} H. Schlichting  and K. Gersten,  {\it Boundary Layer Theory. 8th edition,} Springer-Verlag 2000.
\bibitem{Serrin} J. Serrin, {\it Asymptotic behaviour of velocity profiles in the Prandtl boundary layer theory}, Proc. R. Soc. Lond. A, 299(1967),  491-507.
\bibitem{SWZ} W. Shen, Y. Wang and Z. Zhang, {\it Boundary layer separation and local behavior for the steady Prandtl equation}, Adv. Math. 389 (2021), Paper No. 107896, 25 pp.

\bibitem{WWZ2024} C. Wang, Y. Wang and P. Zhang,  {\it On the global small solution of 2-D Prandtl system with initial data in the optimal Gevrey class,}
Adv. Math. 440(2024), Paper No. 109517, 69 pp.


  \bibitem{WWZ17} C. Wang, Y. Wang and Z. Zhang, {\it Zero-Viscosity Limit of the Navier-Stokes Equations in the Analytic
Setting,} Arch Rational Mech Anal  224, 555-595 (2017).

  \bibitem{YWZ} Y. Wang and Z. Zhang, {\it Global $C^\infty$ regularity of the steady Prandtl equation with favorable pressure gradient,} Ann. Inst. Henri Poincar$\acute{e}$, Anal. Non Linaire 38 (2021) 309-324.
\bibitem{asyBgeneral} Y. Wang and Z. Zhang, {\it Asymptotic Behavior of the Steady Prandtl Equation,}  Math. Ann. 387, 1289-1331 (2023).

\bibitem{WZ} Y. Wang and S. Zhu, {\it Back flow of the two-dimensional unsteady Prandtl boundary layer under an adverse pressure gradient}, SIAM J. Math. Anal.,  52 (2020), 954-966.



\bibitem{XZ} Z. Xin and L. Zhang, {\it On the global existence of solutions to the Prandtl's system},  Adv. Math., 181 (2004),  88-133.




\end{thebibliography}
 \end{document}